\documentclass[10pt,a4paper]{amsart}

\usepackage[utf8]{inputenc}
\usepackage[T1]{fontenc}
\usepackage{amsmath}
\usepackage{amsfonts}
\usepackage{amssymb}

\usepackage{amsthm}
\usepackage{tikz}
\usetikzlibrary{cd}
\theoremstyle{plain}
\usepackage{enumitem}
\usepackage{mathtools}
\usepackage{stmaryrd}
\usepackage{hyperref}
\usepackage{cleveref}

\newtheorem{thm}{Theorem}[section]
\newtheorem{cor}[thm]{Corollary}
\newtheorem{lem}[thm]{Lemma}
\newtheorem{prop}[thm]{Proposition}
\theoremstyle{definition}
\newtheorem{rem}[thm]{Remark}
\newtheorem{definition}[thm]{Definition}
\newtheorem{ex}[thm]{Example}

\newtheorem{que}[thm]{Question}
\theoremstyle{plain}
\newtheorem{thmintro}{Theorem}

\newtheorem*{queintro*}{Question}

\numberwithin{equation}{section}

\newcommand{\Q}{\mathbb{Q}}

\newcommand{\R}{\mathbb{R}}
\newcommand{\C}{\mathbb{C}}
\newcommand{\Z}{\mathbb{Z}}

\newcommand{\N}{\mathbb{N}}
\newcommand{\CP}{\mathbb{CP}}

\newcommand{\cbba}{\operatorname{CBBA}}
\newcommand{\cdga}{\operatorname{cdga}}
\newcommand{\bico}{\operatorname{BiCo}}

\newcommand{\fg}{\mathfrak{g}}

\newcommand{\cB}{\mathcal{B}}

\newcommand{\cI}{\mathcal{I}}

\newcommand{\cL}{\mathcal{L}}

\newcommand{\im}{\operatorname{im}}
\newcommand{\del}{\partial}
\newcommand{\delbar}{{\bar{\partial}}}

\newcommand{\coker}{\operatorname{coker}}
\newcommand{\pr}{\operatorname{pr}}

\newcommand{\Id}{\operatorname{Id}}

\newcommand{\Ho}{\operatorname{Ho}}
\newcommand{\cone}{\operatorname{cone}}

\newcommand{\HAred}{\widetilde{H}_A}
\newcommand{\HBCred}{\widetilde{H}_{BC}}

\DeclareMathOperator{\Hom}{Hom}
\mathtoolsset{centercolon=true}
\newcommand{\Cdot}{{\raisebox{-0.7ex}[0pt][0pt]{\scalebox{2.0}{$\cdot$}}}}
\mathchardef\mhyphen="2D

\crefname{condition}{condition}{conditions}
\Crefname{condition}{Condition}{Conditions}

\let\oldabstract\abstract
\let\oldendabstract\endabstract
\makeatletter
\renewenvironment{abstract}
{%
	{\list{}{\addtolength{\leftmargin}{3em} 
			\listparindent 0em%
			\itemindent    \listparindent%
			\rightmargin   \leftmargin%
			\parsep        \z@ \@plus\p@}%
		\item\relax}%
	{\endlist}%
	\oldabstract}
{\oldendabstract}
\makeatother
\newcommand{\img}[2][1]{\begin{gathered}\includegraphics[scale=#1]{#2}\end{gathered}}
\newcommand{\Lpic}{\raisebox{-1pt}{$\img[0.75]{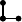}$}}
\newcommand{\hlinepic}{\raisebox{-1pt}{$\img[0.75]{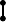}$}}
\newcommand{\linepic}{\img{line}}
\newcommand{\revLpic}{\raisebox{-1pt}{$\img[0.75]{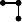}$}}
\pagestyle{plain}
\title{Pluripotential homotopy theory}

\begin{document}
\author{Jonas Stelzig}
\begin{abstract}
	We build free, bigraded bidifferential algebra models for the forms on a complex manifold, with respect to a strong notion of quasi-isomorphism and compatible with the conjugation symmetry. This answers a question of Sullivan. The resulting theory naturally accomodates higher operations involving double primitives. As applications, we obtain various refinements of the homotopy groups, sensitive to the complex structure. Under a simple connectedness assumption, one obtains minimal models which are unique up to isomorphism and allow for explicit computations of the new invariants.
\end{abstract}
\maketitle

\section*{Introduction}

Consider the following vague
\begin{queintro*}
		What is the `holomorphic' extra structure on the homotopy type of a complex manifold?
\end{queintro*}

To explain what shape an answer could take, consider the case of cohomology:  The presence of a complex structure on a manifold $X$ equips the complex of $\C$-valued differential forms $A_X$ with a bigrading for which the differential $d=\del+\delbar$ decomposes into components of type $(1,0)$ and $(0,1)$. As a consequence of this simple observation there is a large collection of cohomology theories sensitive to the complex structure. For example, the de Rham cohomology is naturally a bifiltered vector space and one has the Dolbeault, Bott-Chern and Aeppli cohomologies \cite{Dol56}, \cite{BC65}, \cite{Aep62}
\[
H_{\delbar}:=\frac{\ker\delbar}{\im\delbar},\qquad H_{BC}:=\frac{\ker\del\cap\ker\delbar}{\im\del\delbar},\qquad H_A:=\frac{\ker \del\delbar}{\im\del+\im\delbar},
\] 
and many more (e.g. \cite{Fr55}, \cite{Var86}, \cite{Sch07}, \cite{PSU21}).

On the other hand, if one also takes into account the multiplicative structure, the graded-commutative differential graded algebra (cdga) of complex forms $A_X$, together with conjugation, represents the real homotopy type of a (say, simply-connected) manifold \cite{DGMS75}, \cite{Su77}, \cite{BG76}. To access the homotopy information, one has to choose a nilpotent model, i.e. a de Rham quasi-isomorphism $\Lambda V\to A_X$ from a cdga which is free as an algebra and satisfies a nilpotency condition recalled below (this amounts to a cofibrant replacement with respect to an appropriate model category structure). One is thus led to the following
\begin{queintro*}[Sullivan]
	Is there a free bigraded model for the cdga of forms on a complex manifold, invariant by conjugation?
\end{queintro*}

We will give a positive answer to Sullivan's question both in a conceptual and a computational sense. Before stating it, let us discuss quasi-isomorphisms. Given a map $f:\Lambda V\to A_X$ where both sides are bigraded and $f$ respects this structure, one may ask more of $f$ than inducing an isomorphism in total cohomology. For example, one may require $H_{\delbar}(f)$ to be an isomorphism, which implies $H_{dR}(f)$ to be an isomorphism if both sides are bounded, by a spectral sequence argument. We will use an even stronger notion that does not require boundedness assumptions: Namely, let us say $f$ is a bigraded, or pluripotential, quasi-isomorphism if the induced maps $H_{BC}(f)$ and $H_A(f)$ are isomorphisms. This quasi-isomorphism property is universal in a sense made precise in \Cref{thmintro: Quisos} below. In particular, the induced maps in row-, column and total cohomology are isomorphisms.

The conceptual answer to Sullivan's question then reads:
\begin{thmintro}\label{thmintro: Model Cat}
	The category of augmented, graded-commutative, bigraded, bidifferential algebras (cbba's) with a real structure carries a model category structure such that weak equivalences are bigraded quasi-isomorphisms and the cofibrant replacement of any cbba is nilpotent.
\end{thmintro}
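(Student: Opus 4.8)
The plan is to obtain the structure by transfer along the free–forgetful adjunction
\[
\mathrm{Sym}\colon \bico^{\R}\rightleftarrows\cbba^{\R}\colon U,
\]
where $\bico^{\R}$ is the category of bicomplexes with a real structure (bigraded $\C$-vector spaces carrying anticommuting differentials $\del,\delbar$ together with a compatible conjugation) and $\mathrm{Sym}$ is the free augmented graded-commutative construction. First I would put a cofibrantly generated model structure on $\bico^{\R}$ whose weak equivalences are the bigraded quasi-isomorphisms, and then invoke Kan's recognition theorem to transfer it to $\cbba^{\R}$, declaring a map to be a fibration or a weak equivalence exactly when $U$ sends it to one. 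Since every augmented cbba is fibrant (the augmentation onto the ground ring is surjective), the verification of the transfer reduces to producing a single functorial path object.

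For the base structure on $\bico^{\R}$ I would exploit the structure theory of double complexes: every object decomposes functorially into indecomposable \emph{squares} and \emph{zigzags}, with squares acyclic for both $H_{BC}$ and $H_A$, and with the pair $(H_{BC},H_A)$ detecting every other indecomposable. Hence a map is a bigraded quasi-isomorphism precisely when its mapping cone has vanishing $H_{BC}$ and $H_A$, which by the classification means it is a direct sum of squares. The generating cofibrations are the inclusions of the elementary cell bicomplexes (single dots, and squares relative to their boundaries), and the generating acyclic cofibrations are the inclusions $0\hookrightarrow S$ of squares; smallness is automatic over a field, so the small object argument yields functorial factorizations, and the lifting and retract axioms reduce to linear algebra performed separately in each bidegree, equivariantly for the conjugation.

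The crux is the transfer step, i.e. showing that $\mathrm{Sym}$ carries acyclic cofibrations of bicomplexes to bigraded quasi-isomorphisms of cbba's. The main obstacle is that, unlike total or Dolbeault cohomology, $H_{BC}$ and $H_A$ obey no naive Künneth formula, so one cannot factor the computation through tensor products in the usual way. I would resolve this by constructing an explicit \emph{interval object}: the free cbba on a single generator $t$ in bidegree $(0,0)$ together with $\del t,\delbar t,\del\delbar t$ — that is, $\mathrm{Sym}$ of a square — playing the role of the polynomial forms $\Lambda(t,dt)$ on the unit interval, equipped with two augmentations evaluating $t$ at the endpoints. Tensoring an arbitrary $A$ with this interval produces a functorial path object $A\to A^{I}\to A\times A$, which is exactly what the path-object criterion for transfer requires. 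The technical heart is then to check that the two evaluations are bigraded quasi-isomorphisms and that tensoring with the interval preserves $H_{BC}$ and $H_A$; since in characteristic zero $\mathrm{Sym}(V)=\bigoplus_n\mathrm{Sym}^n(V)$ as bicomplexes, this reduces to a symmetric-power computation at the level of squares and zigzags, carried out using the decomposition above.

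Finally, the cofibrant objects are by construction retracts of $\mathrm{Sym}(I)$-cell complexes, i.e. cbba's that are free as bigraded algebras on a bicomplex $V$ carrying the cellular filtration coming from the order in which generators are attached, with the bidifferential sending each new generator into the subalgebra generated by the previous ones. Reindexing this filtration yields precisely the nilpotency condition recalled above, so the cofibrant replacement of any cbba is free and nilpotent. The real structure is preserved throughout: conjugation is an exact involution commuting with $\mathrm{Sym}$ and with the interval object, so the generating sets, the factorizations and the nilpotency filtration can all be chosen equivariantly and descend to $\cbba^{\R}$.
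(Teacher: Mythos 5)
Your proposal is correct and shares the paper's basic strategy --- transfer along the free--forgetful adjunction from the Frobenius-algebra model structure on (real) bicomplexes, with fibrations the surjections and weak equivalences the maps inducing isomorphisms on $H_{BC}$ and $H_A$ --- but it verifies the decisive factorization/transfer axiom by a genuinely different route. The paper follows Bousfield--Gugenheim: it factors an arbitrary map as an acyclic fibration following a cofibration by an explicit transfinite composition of pushouts along $\Lambda(\bullet[p+1,q+1])\hookrightarrow\Lambda(\square[p,q])$, i.e.\ of bigraded Hirsch extensions, the two key inputs being the Hirsch-extension lifting lemma (\Cref{lem: Hirsch Lifting}) and the criterion that a map surjective on $H_A$ and injective on $H_{BC}$ is already a bigraded quasi-isomorphism (\Cref{lem: A-surj BC-inj}). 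You instead run the small object argument and Quillen's path-object criterion, with $\Lambda(\square)\otimes A$ as interval; the paper builds exactly this path object (\Cref{lem: contractibility of big forms} and the discussion following it), but only after the model structure exists, in order to define homotopies. Your route is shorter and gives cofibrant generation for free; note, though, that the acyclicity of $\mathrm{Sym}^{\geq 1}(\square)\otimes A$ is seen most cleanly not by a symmetric-power computation but from the fact that $\square\otimes B$ is projective for every $B$ (the Hopf-algebra/monoidal argument the paper uses), and that to extract nilpotency of the cofibrant replacement you still must unwind each cell attachment $\Lambda(Z\hookrightarrow\square[p,q])$ into two or three Hirsch extensions by adjoining the generators of the square from the top corner downwards, since the differential of the bottom generator involves the other new generators and the attachment is therefore not a single Hirsch extension. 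The paper's hands-on construction makes this Hirsch-extension structure manifest from the start, and its lifting lemma is reused later (uniqueness of minimal models, lifting against the quotient maps in the existence proof), which is why the explicit approach is not dispensable in the larger scheme of the paper.
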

Here, as in the singly graded case, a cbba is called nilpotent if it admits a presentation as a free bigraded algebra $\Lambda V$ with a well-ordered bigraded basis for $V$ such that the differential of every basis element lies in the subalgebra generated by the smaller basis elements.

Equipping the space of generators of a cofibrant replacement of $A_X$ augmented by evaluation at a point $x\in X$, with the linear part of the differentials, one can functorially associate to any pointed complex manifold a `homotopy bicomplex' $\pi^{\Cdot,\Cdot}(X,x)$. For simply connected (resp. nilpotent) spaces, the total cohomology of $\pi^{\Cdot,\Cdot}(X,x)$ recovers the dual of the complexified homotopy groups (and the dual of the Mal'cev completion of $\pi_1(X,x)$). Further, one obtains a homotopy version $\pi_\spadesuit$ of any other cohomological functor $H_{\spadesuit}$ (e.g. $H_{\delbar},H_{BC}$, Schweitzer cohomologies, ... or also universal diagrams involving these). In particular, one obtains a commutative diagram of maps and spectral sequences
\[
\begin{tikzcd}
	&\pi_{BC}^{p,q}(X,x)\ar[ld]\ar[d]\ar[rd]&\\
	\pi_{\delbar}^{p,q}(X,x)\ar[rd]\ar[r,Rightarrow]&(\pi_{p+q}(X,x)\ar[d]\otimes \C)^\vee &\pi_{\del}^{p,q}(X,x)\ar[ld]\ar[l, Rightarrow]\\
	&\pi_A^{p,q}(X,x)&
\end{tikzcd}
\]

The cofibrant replacements arising from \Cref{thmintro: Model Cat} make the functorial character of the theory evident, but they are very large. Just as in ordinary rational homotopy theory, for effective calculations, one wants `minimal' models. In the singly graded case, a nilpotent model is called minimal if $d$ has no linear part. Analogously, we call a cbba model bigradedly minimal if $\del\delbar$ has no linear part. The computational answer to Sullivan's question then reads:
\begin{thmintro}\label{thmintro: MiMo}
	For any connected compact complex manifold which is holomorphically simply connected, there exists a connected, real, degree-wise finite dimensional, bigradedly minimal model for the cbba of forms, which is unique up to isomorphism.
\end{thmintro}
Here, a connected compact complex manifold $X$ is called holomorphically simply connected, if ${H_A^1(X):=H_A^{1,0}(X)\oplus H_A^{0,1}(X)=0}$. This holds for example on compact K\"ahler manifolds with $b_1(X)=0$. A cbba $M$ is called connected if it is concentrated in non-negative total degree and satisfies $M^0=\C$. When a connected model exists, it is automatically an augmented model for any augmentation of the original algebra and so the choice of base-point becomes irrelevant (c.f. \Cref{Cor: independence of base-point}). In the non-simply connected case, connected, degree-wise finite-dimensional models may or may not exist and we give examples for both cases. The case of a compact K\"ahler manifold with the Hodge diamond of a complete intersection will be studied in detail, in particular we obtain a formality result (c.f. \Cref{thm: formality complete intersections}).

To prove the above results, we first revisit the additive theory, i.e. we study the (symmetric monoidal) category of bicomplexes which, as the category of (bigraded) representations of a Frobenius algebra, has a natural model category structure. The associated homotopy category $\Ho(\bico)$ can be naturally identified with the stable category of bicomplexes as in \cite{Hap88}, \cite{KQ20} and the notion of bigraded quasi-isomorphism introduced above appears naturally in this context, namely:

\begin{thmintro}\label{thmintro: Quisos}
	For a map $f:A\to B$ of bicomplexes the following statements are equivalent:
	\begin{enumerate}
		\item The map $f$ is an isomorphism in $\Ho(\bico)$.
		\item The maps $H_{BC}(f)$ and $H_A(f)$ are isomorphisms.
		\item For any additive functor $H$ from the category of bicomplexes to an additive category, vanishing on projective bicomplexes, $H(f)$ is an isomorphism.
	\end{enumerate}
If $A,B$ are bounded the above are further equivalent to:
\begin{enumerate}[resume]
	\item The maps $H_{\delbar}(f)$ and $H_{\del}(f)$ are isomorphisms.
\end{enumerate}
\end{thmintro}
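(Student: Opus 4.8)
The plan is to prove the cycle (1) $\Rightarrow$ (3) $\Rightarrow$ (2) $\Rightarrow$ (1) and to treat (4) separately. Throughout I use that $\Ho(\bico)$ is the stable category of the symmetric Frobenius algebra $\Lambda=\C\langle\del,\delbar\rangle/(\del^2,\delbar^2,\del\delbar+\delbar\del)$, so that projective and injective objects coincide (the sums of squares) and $f$ is invertible in $\Ho(\bico)$ iff $\cone(f)$ is projective. The two soft implications are then immediate. For (1) $\Rightarrow$ (3), an additive functor $H$ vanishing on projectives kills every map factoring through a projective and hence descends to $\Ho(\bico)$, where it sends the invertible $f$ to an invertible $H(f)$. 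For (3) $\Rightarrow$ (2) it is enough that $H_{BC}$ and $H_A$ are additive, $\C$-vector-space-valued, and vanish on the square module --- a direct four-dimensional computation --- so they are instances of the functors in (3).

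The content is (2) $\Rightarrow$ (1). The first ingredient is a representability lemma for the dots $\C(p,q)$ (the module $\C$ in bidegree $(p,q)$ with vanishing differentials): one has natural isomorphisms
\[
H_{BC}^{p,q}(A)\cong \Hom_{\Ho(\bico)}(\C(p,q),A),\qquad H_A^{p,q}(A)\cong \Hom_{\Ho(\bico)}(A,\C(p,q))^{\vee},
\]
since a $\del,\delbar$-closed class is exactly a map from a dot (trivial modulo $\im\del\delbar$, i.e. modulo maps through squares), and dually for functionals. The second ingredient is a characterization of projectivity: $X$ is projective iff $H_{BC}(X)=0=H_A(X)$. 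For this I would invoke the decomposition of any double complex into squares and zigzags and observe that every zigzag contains at least one sink and one source (a lone dot being both), so it contributes nontrivially to both $H_{BC}$ and $H_A$; hence the simultaneous vanishing leaves only squares.

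To combine these, note that since $\Lambda$ is symmetric, $\Ho(\bico)$ is $1$-Calabi--Yau: its Serre functor is the suspension $[1]$ up to a bidegree shift. I would feed the triangle $A\to B\to\cone(f)\to A[1]$ into the long exact sequences of the functors $\Hom_{\Ho}(\C(p,q),-)$ and $\Hom_{\Ho}(-,\C(p,q))$. Granting that $H_{BC}(f)$ and $H_A(f)$ are isomorphisms, every unshifted term is controlled; the only dangerous terms are the shifted ones $\Hom_{\Ho}(\C(p,q),A[1])=\Hom_{\Ho}(\C(p,q)[-1],A)$, and here Serre duality rewrites these as duals of Aeppli groups of $A$, which the hypothesis also controls. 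It follows that $H_{BC}(\cone f)=0=H_A(\cone f)$, so $\cone(f)$ is projective and $f$ is invertible in $\Ho(\bico)$. This interplay is exactly why both $H_{BC}$ and $H_A$ must be assumed, and I expect the bookkeeping of bidegrees across the Serre functor (equivalently, the precise identification of the syzygy of a dot) to be the main obstacle.

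Finally, under boundedness I would prove (4) $\Leftrightarrow$ (2). The implication from the first three conditions to (4) is free, since $H_{\del}$ and $H_{\delbar}$ likewise vanish on squares and so fall under (3). For the converse I would again reduce to showing $\cone(f)$ projective, now using that boundedness forces each zigzag in its decomposition to be finite, and that a finite zigzag is detected by $H_{\del}$ or $H_{\delbar}$ (an even zigzag by exactly one of the two, an odd zigzag by both); thus $H_{\del}=H_{\delbar}=0$ again characterizes projectivity in the bounded setting. Boundedness is essential here: the two-sided infinite staircase is acyclic for both $H_{\del}$ and $H_{\delbar}$ yet has nonvanishing $H_{BC}$ and $H_A$, so the map from $0$ into it verifies (4) but not (2); I would record this example to delimit the statement.
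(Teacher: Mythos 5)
Your overall architecture --- stable module category of the Frobenius algebra, the cone criterion, representability of the cohomologies on dots, and the square/zigzag decomposition --- is the same as the paper's, and the implications $(1)\Rightarrow(3)\Rightarrow(2)$, your characterization of projectives, and your treatment of (4) (including the two-sided staircase as the delimiting example) are all correct. The gap is in the combination step of $(2)\Rightarrow(1)$: the Serre-duality identification you invoke is false. For the symmetric algebra $\Lambda(\del,\delbar)$ the Serre functor is the \emph{desuspension} up to a bidegree twist, so $\Hom_{\Ho(\bico)}(\bullet[p,q],L(A))\cong\Hom_{\Ho(\bico)}(L^{-1}(\bullet[p,q]),A)$ is the space of stable maps from a \emph{reverse-L} zigzag into $A$; by the paper's \Cref{BC A as hom sets} this is the Schweitzer group $H_{S_{p,q}}^{p+q}(A)$, not a dual of an Aeppli group of $A$ (chasing your Serre duality correctly produces $D\,H_A(L^2A)$, which is again a Schweitzer group of $A$). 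Concretely, if $A$ is the length-two zigzag $\C^{0,0}\xrightarrow{\;\del\;}\C^{1,0}$ then $\Hom_{\Ho(\bico)}(L^{-1}(\bullet[p,q]),A)=0$ for every $(p,q)$ while $H_A(A)\neq 0$, so these ``dangerous terms'' are genuinely not controlled by the hypothesis that $H_{BC}(f)$ and $H_A(f)$ are isomorphisms, and neither your covariant sequence for $\Hom(\bullet[p,q],-)$ nor the contravariant one for $\Hom(-,\bullet[p,q])$ closes up as written.

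The missing ingredient is the \emph{covariant} representability of Aeppli cohomology, $[L(\bullet[p,q]),A]\cong H_A^{p-1,q-1}(A)$, proved by the same direct computation you carry out for $H_{BC}$ (a degree-$(-1,-1)$ homotopy $h$ on the L-shape has three components, whence $\varphi_h(x)=\del\delbar h(x)-\del h(\delbar x)+\delbar h(\del x)$ sweeps out $\im\del+\im\delbar$). With it, the three consecutive covariant functors $[L(\bullet[p,q]),-]$, $[\bullet[p,q],-]$, $[L^{-1}(\bullet[p,q]),-]$ evaluated on the triangle $A\to B\to\cone(f)\to L(A)$ give the exact portion
\[
H_A^{p-1,q-1}(A)\to H_A^{p-1,q-1}(B)\to H_A^{p-1,q-1}(\cone f)\to H_{BC}^{p,q}(A)\to H_{BC}^{p,q}(B),
\]
and surjectivity of $H_A(f)$ together with injectivity of $H_{BC}(f)$ forces $H_A(\cone f)=0$; since, as you correctly observe, vanishing of $H_A$ alone already forces a bicomplex to be a sum of squares, $\cone(f)$ is projective and $f$ is invertible, with no appeal to Serre duality and no need to compute $H_{BC}(\cone f)$. (A further small inaccuracy: your dual formula should read $\Hom_{\Ho(\bico)}(A,\C(p,q))\cong H_A^{p,q}(A)^\vee$; the version you state fails when $H_A$ is infinite-dimensional, although this would not affect the detection of isomorphisms.)
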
 
\Cref{thmintro: Quisos} recovers partial results in the bounded case obtained by elementary methods in \cite{Ste18} and puts them into a general algebraic context. It is a main technical input for Theorems \ref{thmintro: Model Cat} and \ref{thmintro: MiMo}, since $H_{BC}$ and $H_A$ are invariant by conjugation and using them to characterize quasi-isomorphisms one can argue -- in principle -- along the lines of the singly graded case, even though the technical details, in particular for \Cref{thmintro: MiMo}, are still considerable. As an outgrowth of our additive considerations, we prove several results that may be of independent interest: A formula which exhibits Dolbeault, Bott-Chern, Aeppli and, more generally, Schweitzer cohomology, as mapping spaces in $\Ho(\bico)$ giving rise to a long exact sequence in Schweitzer cohomology, and a K\"unneth type theorem for Aeppli- and Bott Chern cohomology (\Cref{cor: geometric ABC Kuenneth}).

The underlying idea of our main constructions is -- perhaps deceptively -- simple. At the most elementary level it consists in replacing $d$ by $\del\delbar$ in all the `right' places. In other words, one replaces ordinary potentials ($y=dx$) by pluripotentials ($y=\del\delbar x$). For example, in the singly graded theory, models are built by iteratively building pushouts of diagrams
\[
\begin{tikzcd}
\Lambda(\bullet)\ar[r]\ar[d]&\Lambda(\img{line})\\
M,&{}
\end{tikzcd}
\]
where $\bullet$ denotes a one-dimensional complex concentrated in a single degree and $\img{line}$ an isomorphism $d:\C\longrightarrow\C$. Depending on whether the vertical map sends the generator of $\bullet$ to an exact element or not, this corresponds to adding a cohomology class or enforcing a relation in cohomology. In perfect analogy, our constructions will use successive pushouts of the form
\[
\begin{tikzcd}
	\Lambda(\bullet)\ar[r]\ar[d]&\Lambda(\square)\\
	M,&{}
\end{tikzcd}
\]
where $\square$ denotes a four-dimensional bicomplex with $\del\delbar\neq 0$ (a square of four one-dimensional spaces in neighbouring bidegrees, connected by isomorphisms) and $\bullet\to \square$ is the inclusion identifying $\bullet$ with the top right corner, i.e. the sub-bicomplex given by $\im \del\delbar$.

\textbf{Related works.}
The development of homotopical versions of the cohomological story has been mostly focussed on the K\"ahler or algebraic case, see for instance \cite{Mor78}, \cite{Hai87a}, \cite{Hai87b}, \cite{NA87}. An exception is \cite{NT78}, where a homotopical analogue of Dolbeault cohomology and the Fr\"olicher spectral sequence is established for general complex manifolds, albeit at the cost of breaking the inherent conjugation symmetry, see also \cite{HT90}. In recent years, higher multiplicative operations involving `double primitives' (i.e. pluripotentials) have emerged \cite{AT15}, \cite{Ta17}, \cite{MS22}, which can be nontrivial even on manifolds satisfying the $\del\delbar$-property, \cite{ST22}, and which do not fit into the existing theories.

In the compact case, the theory developed here includes the Dolbeault cohomotopy and Fr\"olicher spectral sequence of \cite{NT78}, \cite{HT90} and the induced filtrations on the homotopy groups of compact K\"ahler manifolds obtained in \cite{DGMS75}, \cite{Mor78} via the principle of two types. It is also a natural framework for the pluripotential higher operations, a point which is elaborated in detail in \cite{MS22}.

\textbf{Sins of omission:} From a purely algebraic standpoint, one could discuss many results in a more general setup. For instance we always work over $\C$, but all proofs of results not concerning complex manifolds in sections \ref{sec: +}, resp. \ref{sec: wedge} carry over to arbitrary fields, resp. arbitrary fields of characteristic zero, and some can be adapted for modules over more general rings. One can also consider $n$-gradings for $n\geq 2$. Since our ultimate goal is to study complex manifolds we did not go down these roads.
For complex geometry itself, it will be very interesting to study the interaction of the bigraded invariants built here with the rational or integral structure. Furthermore, one may now ask which results and techniques of rational homotopy theory carry over to the holomorphic setting and which of the algebraic constructions given here have `geometric' counterparts, say in a suitable enlargement of the category of complex manifolds. We leave this for future work. Finally, instead of working with cbba's as we do here, there is also a complementary operadic approach using bigraded homotopy transfer methods. This will be explored in the forthcoming PhD thesis of Anna Sopena-Gilboy.

\textbf{Acknowledgements:} I am grateful to J. Cirici and the Universitat de Barcelona for an invitation to Barcelona and to Dennis Sullivan and the CUNY Graduate Center for an invitation to New York, both in early 2022, where I had the opportunity to present and discuss early versions of these results. Further, I thank S. Boucksom, J. Cirici, C. Deninger, D. Kotschick, A. Milivojevic, J. Morgan, D. Sullivan, S. Wilson, L. Zoller for useful questions, conversations or comments. Last but not least, I thank the anonymous referee for a careful reading and useful suggestions that improved the presentation.

\textbf{Notations and conventions:} For a bigraded object $V=V^{\Cdot,\Cdot}$, we write $V^\Cdot=\bigoplus_{p+q=\Cdot} V^{p,q}$ for the associated singly graded object using the total degree. For a pure (bi)degree element $v\in V$ of a bigraded object we use the notation $|v|$ to denote either total degree or bidegree, as will be clear from context, e.g. $|v|=(p,q)$ or $|v|=k$. We refer to \cite{Hov99} for background and definitions concerning model category structures. For any two objects $A,B$ of a model category, we denote by $[A,B]$ the set of morphisms in the homotopy category.\vspace{-1ex}
\newpage
\tableofcontents
\section{Direct sums and tensor products}\label{sec: +}
\subsection{The category of bicomplexes} 
\subsubsection{Basic definitions}
We denote by $\bico$ the category of bicomplexes (or double complexes) of $\C$-vector spaces. Objects are bigraded $\C$-vector spaces $A=\bigoplus_{p,q\in\Z} A^{p,q}$ together with an endomorphism $d$ of total degree $1$ which splits into components $d=\del+\delbar$ of degrees $(1,0)$ and $(0,1)$ and satsifies $d^2=0$. Morphisms are $\C$-linear maps of the underlying vector spaces that preserve the bigrading and commute with the differential. Even though we use the suggestive notation $\del,\delbar$, we do not in general assume bicomplexes to be equipped with a real structure in the sense introduced below.

The category $\bico$ has `shift' endofunctors $[r,s]$, defined by $(A[r,s])^{p,q}:=A^{p-r,q-s}$ and differential $d_{A[r,s]}=(-1)^{r+s}d$ and internal homs $\underline{\Hom}(A,B)$, defined by 
\[
\underline{\Hom}(A,B)^{p,q}:=\prod_{r,s\in\Z}\Hom_{\C-vs}(A^{p,q},B[r,s]^{p,q}),
\]
with differential 
\begin{equation}\label{eqn: diff hom cplx}
	d(\phi):=(a\mapsto [d,\varphi](a)=d\varphi(a)-(-1)^{|\varphi|}\varphi(d(a))).
\end{equation}We write $DA:=\underline{\Hom}(A,\C)$ for the dual complex and $D_nA:=DA[n,n]$. The tensor product 
\[(A\otimes B)^{p,q}=\bigoplus_{\substack{r+u=p\\s+v=q}} A^{r,s}\otimes_\C B^{u,v}.
\]
with differential $d(a\otimes b)=da\otimes b+(-1)^{|a|}a\otimes db$ equips $\bico$ with a closed symmetric monoidal structure. In particular, internal hom and tensor product are adjoint so that one has isomorphisms, natural in all entries
\begin{equation}\label{eqn: hom-tensor adjunction}
(\_\otimes A):\bico\leftrightarrows\bico:\underline{\Hom}(A,\_)
\end{equation}
For any finite dimensional bicomplex $A$, one has $A\cong DDA$ and there is an identification of functors $(DA\otimes\_)\cong \underline{\Hom}(A,\_)$. Thus, from \eqref{eqn: hom-tensor adjunction}, in this case $A\otimes \_$ has a left-adjoint:
\begin{equation}\label{eqn: tensor right adjoint}
(DA\otimes\_):\bico\leftrightarrows\bico:(\_\otimes A)
\end{equation}
\subsubsection{Real structures} We denote by $\R\bico$ the category of fixed points of the involution $\sigma$ on $\bico$ which maps a bicomplex $A$ to its complex conjugate bicomplex $\bar{A}$. The latter denotes the bicomplex which in degree $(p,q)$ is the space $A^{q,p}$ with the conjugate $\C$-vector space structure and for which $\del$ and $\delbar$ are interchanged. Objects of $\R\bico$ are pairs $(A,\sigma)$, where $\sigma:A\to \bar A$ is an isomorphism of bicomplexes. Equivalently, we may consider $\sigma$ as a $\C$-antilinear involution $A\to A$ such that $\sigma(A^{p,q})=A^{q,p}$ and $\sigma\del\sigma=\delbar$. An object of $\R\bico$ may also be considered as an ordinary (cochain) complex over the reals together with a bicomplex structure on its complexification, compatible with total grading and differential s.t. the conjugation on the coefficients acts as above. For every complex manifold $X$, the bicomplex of smooth, $\C$-valued differential forms $A_X$, with involution given by complex conjugation, is an object in $\R\bico$. The discussion of shifts, duals etc. has straightforward analogues for the category $\R\bico$. In fact, virtually all constructions made in this paper are either symmetric in $\del$ and $\delbar$ or are easily adapted to be and therefore extend to $\R\bico$.
\subsubsection{Bicomplexes as modules}
Following \cite{KQ20}, the category $\bico$ can also be described as the category of bigraded modules over the bigraded algebra $\Lambda(\del,\delbar)=H^\ast(S^1\times S^1;\C)$, where we write $\del,\delbar$ for the fundamental classes of the two factors and we consider the bigrading s.t. $|\del|=(1,0)$ and $|\delbar|=(0,1)$. As a bicomplex, this algebra looks as follows:
\[
\begin{tikzcd}
	\langle\delbar\rangle\ar[r]&\langle\del\delbar\rangle\\
	\C\ar[u]\ar[r]&\langle\del\rangle\ar[u]
\end{tikzcd}
\]
where all arrows are isomorphisms. We will thus write symbolically $\square:=\Lambda(\del,\delbar)$ if we consider this algebra as an object in $\bico$. Note that there is an antilinear conjugation action exchanging $\del$ and $\delbar$ and the category of equivariant modules for this action is $\R\bico$.
\subsubsection{Indecomposable bicomplexes} Recall that a bicomplex is called indecomposable if it cannot be decomposed into the direct sum of two nontrivial subcomplexes. The indecomposable bicomplexes fall in two classes: Squares, i.e. bicomplexes isomorphic to $\square[p,q]$, for $p,q\in \Z$ and zigzags, which can formally be defined as bicomplexes $Z$ such every nonzero component has dimension $1$ and the undirected support graph, with set of vertices $\{(p,q)\mid Z^{p,q}\neq 0\}$ and an edge between $(p,q)$ and $(r,s)$ whenever there is a nonzero differential $Z^{p,q}\to Z^{r,s}$, is homeomorphic to a non-empty interval (with or without either endpoint). Since $\del^2=\delbar^2=[\del,\delbar]=0$, any zigzag is concentrated in one or two total degrees. If a zigzag is bounded, it has finite dimension, equal to the number of nonzero components, and we call this dimension its length. Up to shift and isomorphism, the zigzags of length $1$ or $2$ look as follows, where the superscript denotes the bidegree and all maps are the identity:
\[
\C^{0,0},\quad
\begin{tikzcd}
	\C^{0,0}\ar[r]&\C^{1,0},
\end{tikzcd}
\quad
\begin{tikzcd}
	\C^{0,1}\\
	\C^{0,0}\ar[u]
\end{tikzcd},\quad
\begin{tikzcd}
	\C^{0,1}&\\
	\C^{0,0}\ar[r]\ar[u]&\C^{1,0},	
\end{tikzcd}
\begin{tikzcd}
	\C^{0,1}\ar[r]&	\C^{1,1}\\
	&	\C^{1,0}\ar[u].
\end{tikzcd}
\]
We will refer to these as dots and lines and (reverse) L's. Up to isomorphism and shift, there are two zigzags with any given length $\geq 2$.

\begin{thm}[\cite{KQ20},\cite{Ste18}]\label{thm:decomposition}
	Any indecomposable bicomplex is a square or a zigzag and any bicomplex is a direct sum of indecomposable subcomplexes. Such a decomposition is unique up to (noncanonical) isomorphism and ordering.
\end{thm}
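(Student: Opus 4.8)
The plan is to treat the statement as a classification-plus-Krull–Schmidt result for the category of bigraded modules over the Frobenius algebra $\square=\Lambda(\del,\delbar)$, and to separate it into three parts: the explicit classification of indecomposables, the existence of a decomposition into indecomposables, and its uniqueness. The key structural input I would exploit is that $\square$ is local and self-injective, so that up to shift $\square$ is the unique indecomposable projective object, which is simultaneously the unique indecomposable injective; in particular any submodule isomorphic to a shift of $\square$ splits off automatically.

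For the classification I would first dispose of the case $\del\delbar\neq 0$. If $x\in A^{p,q}$ satisfies $\del\delbar x\neq 0$, then the cyclic submodule $\langle x\rangle=\C\langle x,\del x,\delbar x,\del\delbar x\rangle$ is the image of the surjection $\square[p,q]\twoheadrightarrow\langle x\rangle$, and since the simple socle $\langle\del\delbar\rangle$ of $\square[p,q]$ survives this surjection, its kernel vanishes and the map is an isomorphism. Thus $\langle x\rangle\cong\square[p,q]$ is injective and hence a direct summand of $A$, so any indecomposable $A$ with $\del\delbar\neq 0$ equals a single square. It remains to treat indecomposables with $\del\delbar=\delbar\del=0$: here $A$ is a representation of two anticommuting, square-zero operators whose composite vanishes, i.e.\ of a type-$A$ (zigzag) quiver with relations, and I would show by choosing bases adapted to $\im\del,\,\im\delbar,\,\ker\del,\,\ker\delbar$ that every component can be normalized to be at most one-dimensional with support an interval — that is, $A$ is a zigzag. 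This normal-form computation is the combinatorial heart of the argument.

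For existence I would first split off a maximal injective (square) part: the square-splitting step above, combined with a Zorn-type maximality argument, writes $A=S\oplus A'$ with $S$ a direct sum of squares and $\del\delbar|_{A'}=0$, by maximality. The complementary piece $A'$ is then decomposed into zigzags by the same basis-level construction, choosing compatible complements to the images and kernels of $\del$ and $\delbar$; since this only uses that we work over a field, it carries no finiteness hypothesis. For finite-dimensional $A$ one may instead simply induct on dimension, peeling off one indecomposable summand at a time.

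Uniqueness I would deduce from the Azumaya form of the Krull–Schmidt theorem. For this it suffices to verify that every indecomposable — each square and each zigzag, whether finite, half-infinite, or bi-infinite — has a local endomorphism ring: a bidegree-preserving endomorphism commuting with the differentials, which are isomorphisms along each edge of the support, is forced to act by a single scalar on each connected piece, so $\operatorname{End}\cong\C$. Azumaya's theorem then gives that any two decompositions into such indecomposables agree up to isomorphism and reordering. I expect the genuine difficulty to lie not in the finite-dimensional bookkeeping but in the unbounded, infinite-dimensional case: guaranteeing both that the decomposition exists and that it is unique when infinitely many summands are spread over unboundedly many bidegrees, where Krull–Schmidt can fail for general modules, and one must lean on the local endomorphism rings together with splittings that are insensitive to dimension.
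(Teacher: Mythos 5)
The paper does not actually prove \Cref{thm:decomposition}; it imports it from \cite{KQ20} and \cite{Ste18}, so there is no internal proof to compare against. Your strategy is, in substance, the one used in those references: split off the squares as the indecomposable projective--injectives over the graded Frobenius algebra $\Lambda(\del,\delbar)$, classify what remains as interval (string) modules, and get uniqueness from Azumaya's version of Krull--Schmidt once all indecomposables are shown to have endomorphism ring $\C$. The square-splitting step is complete and correct (the socle argument, injectivity of arbitrary direct sums of squares over this finite-dimensional algebra, and the Zorn maximality argument all go through), and the uniqueness step is likewise fine: your worry that ``Krull--Schmidt can fail for general modules'' is already answered by your own observation that every square and every zigzag, including the one- and two-sided infinite ones, has local endomorphism ring, so Azumaya applies with no finiteness hypothesis.

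The one place where your proposal names the difficulty without discharging it is the decomposition of the $\del\delbar=0$ remainder into zigzags. Two points deserve to be made explicit there. First, the reduction to a quiver problem on a single pair of adjacent total degrees uses that $\im d\subseteq\ker d$ when $\del\delbar=0$ (any $\del v$ or $\delbar v$ is then killed by both differentials), which lets one split the remainder into two-row pieces plus dots; you should say this, since otherwise the total degrees interact. Second, each two-row piece is a representation of the alternating $A_\infty^\infty$ quiver (every vertex a source or a sink, no relations), and one must prove interval-decomposability for such representations with no bound on the support and no finite-dimensionality of the components. This is exactly the technical core of \cite{Ste18} and \cite{KQ20} (functorial filtrations / chained two-subspace decompositions), and ``choosing bases adapted to $\im\del$, $\im\delbar$, $\ker\del$, $\ker\delbar$'' is the right idea but is not yet an argument: the choices at each source and sink must be made compatibly along the whole antidiagonal. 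So the proposal is the correct route, with the genuine remaining work concentrated in that normal-form step rather than, as you suggest, in the uniqueness statement.
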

Note that infinite dimensional zigzags will be unbounded in one or both anti-diagonal directions.
 \begin{definition}\label{def: locally bounded}
	A bicomplex is called \textbf{locally bounded} if it does not contain any infinite-length zigzags as direct summands.
\end{definition}
For example, first quadrant complexes, i.e. $A^{p,q}=0$ whenever $p<0$ or $q<0$, are automatically locally bounded. By the usual construction of the spectral sequence of a filtered complex applied to the row and column filtrations and a case by case inspection as in \cite[p.14]{KQ20}, we have
\begin{lem}\label{lem: locally bounded}
	For any bicomplex $A$ there are two `Fr\"olicher' spectral sequences with row, resp. column cohomology as their first pages. They both converge to the total cohomology if and only if $A$ is locally bounded.
\end{lem}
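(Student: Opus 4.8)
The plan is to build the two spectral sequences in the standard way from the row and column filtrations, and then to verify the convergence criterion one indecomposable summand at a time, reducing to this case by the decomposition theorem \Cref{thm:decomposition}.

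First I would set up the two spectral sequences as those of the filtered complex $(A,d)$. The \emph{column filtration} $F^p_{\mathrm{col}}A=\bigoplus_{p'\geq p}A^{p',\bullet}$ is preserved by $d=\del+\delbar$, and on the associated graded the induced differential is $\delbar$, so its $E_1$-page is the column cohomology $H_{\delbar}$; symmetrically, the row filtration $F^q_{\mathrm{row}}A=\bigoplus_{q'\geq q}A^{\bullet,q'}$ yields $H_{\del}$ on $E_1$. Both filtrations are exhaustive and Hausdorff in each fixed total degree, so in each case one obtains a well-defined page $E_\infty$ together with the canonical comparison map $E_\infty\to\gr H$ into the associated graded of the filtered total cohomology $H$; here \emph{convergence} means this comparison map is an isomorphism.

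The key reduction is to pass to indecomposables. By \Cref{thm:decomposition} write $A=\bigoplus_i A_i$ with each $A_i$ a square or a zigzag. Since $\bico$ is a category of modules and direct sums are exact, the formation of cycles and boundaries, of every page $E_r$ (including $E_\infty$, described through the global cycles and boundaries rather than as an iterated limit over $r$), of $H$, and of the filtration on $H$ all commute with arbitrary direct sums; moreover, by naturality, the comparison map for $A$ is the direct sum of those for the $A_i$. A direct sum of linear maps is an isomorphism iff each summand is, and each $A_i$ is a retract of $A$, so both spectral sequences of $A$ converge iff both spectral sequences of each $A_i$ converge. As $A$ is locally bounded exactly when every $A_i$ is bounded (\Cref{def: locally bounded}), it remains to treat indecomposables.

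For the bounded indecomposables, namely squares $\square[p,q]$ and finite zigzags, both filtrations are finite in every total degree, so each spectral sequence degenerates at a finite page and converges. The heart of the argument is that every infinite zigzag breaks convergence of at least one spectral sequence. Since consecutive edges of a zigzag alternate between $\del$ and $\delbar$ and the total degree stays bounded, an infinite zigzag is an infinite `staircase' along an anti-diagonal, unbounded in one or both directions; here I would argue by direct computation. For the bi-infinite staircase every row and every column is a length-two acyclic complex, so $H_{\del}=H_{\delbar}=0$ and both $E_1$-pages vanish, while the total cohomology is nonzero (a single surviving class), so \emph{both} spectral sequences fail. For a semi-infinite staircase one of $H_{\del},H_{\delbar}$ acquires a single class supported at the endpoint which survives all higher differentials for degree reasons (the $E_1$-page is concentrated at that one spot), while the total cohomology vanishes, so the corresponding spectral sequence fails; the mirror orientation, exchanging $\del$ and $\delbar$, breaks the other one. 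In every case at least one spectral sequence does not converge, which combined with the reduction finishes the proof.

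The main obstacle I anticipate is the bookkeeping in this last step: one must confirm that the extra $E_1$-class produced by an infinite staircase genuinely survives to $E_\infty$ and that it really yields a discrepancy with $\gr H$, and one must enumerate the finitely many orientation types of staircase and match each to the spectral sequence it breaks. The only delicate homological point in the reduction is that $E_\infty$ and the filtration on $H$ commute with infinite direct sums, which is immediate once $E_\infty$ is described via global cycles and boundaries.
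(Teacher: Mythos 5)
Your overall route is the same as the paper's, which likewise constructs the two spectral sequences from the row and column filtrations and then reduces, via \Cref{thm:decomposition}, to a case-by-case inspection of indecomposables (the paper simply delegates that inspection to \cite[p.14]{KQ20}). The reduction to summands and the treatment of squares, finite zigzags, and semi-infinite staircases are all fine: for a semi-infinite staircase one of $H_{\del},H_{\delbar}$ is a single class at the endpoint, it survives to $E_\infty$ for degree reasons, and the total cohomology vanishes, so the comparison map genuinely fails to be an isomorphism there.

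There is, however, a real inconsistency in the bi-infinite case, traceable to your definition of convergence. For the two-sided staircase you correctly compute $H_{\del}=H_{\delbar}=0$ and $H^{\Cdot}(A)\neq 0$, but with your stated criterion (``the comparison map $E_\infty\to\gr H$ is an isomorphism'') this example does \emph{not} witness failure of convergence: one has $E_\infty=0$, and also $\gr H=0$, because the unique class in $H^1$ is representable by a cocycle in $F^p$ for every $p$, so $\bigcap_p F^pH^1=H^1$ and the induced filtration on the abutment is not Hausdorff. The comparison map is then the isomorphism $0\to 0$, and under your definition both spectral sequences of a non-locally-bounded complex would ``converge,'' contradicting the statement you are proving. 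The fix is to use the standard stronger notion: ``converges to the total cohomology'' must include that the induced filtration on $H^{\Cdot}(A)$ is exhaustive and Hausdorff (so that $\gr H$ actually determines $H$), and for the bi-infinite staircase it is precisely Hausdorffness that fails, not the isomorphy of the comparison map. With that adjustment (and the corresponding remark that Hausdorffness of the filtration on $H(\bigoplus_i A_i)$ is equivalent to Hausdorffness on each $H(A_i)$, which holds since the filtration is compatible with the direct sum), your argument goes through and matches the paper's.
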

Let us call a bicomplex minimal if $\del\delbar \equiv 0$. E.g., zigzags are minimal, but squares are not. We note the following consequence of \Cref{thm:decomposition} for later use:
\begin{cor}[Minimal $\oplus$ Contractible decomposition]\label{cor: minimal + contractible}
	Any bicomplex $A$ admits a (noncanonical) decomposition $A=A^{zig}\oplus A^{sq}$, where $A^{sq}$ is a direct sum of squares and $A^{zig}$ satisfies $\del\delbar\equiv 0$.
\end{cor}

\subsection{The homotopy category of bicomplexes}
The algebra $\Lambda(\del,\delbar)$ is a Frobenius algebra and as such the category of modules over it is equipped with a canonical model category structure (c.f. \cite{Hov99}) and the associated homotopy category is a triangulated category as in \cite{Hap88}. Without claiming any originality, in this section we spell out these general results for the category of bicomplexes, following \cite{Hap88}, \cite{Hov99} and \cite{KQ20}.

Being the category of bigraded modules over a bigraded Frobenius algebra, projective objects in $\bico$ coincide with injective objects. In fact, projective objects are precisely the direct sum of squares. The same holds in $\R\bico$, indecomposable projective objects are direct sums of either conjugation invariant squares on the diagonal or pairs of conjugate squares.

Let us say a map $A\to B$ in $\bico$ is nullhomotopic if it factors through a projective object and two maps $f,g$ are homotopic $f\simeq g$ if their difference $f-g$ is nullhomotopic. We denote the space of nullhomotopic maps between $A,B$ by
\[
\Hom_{\bico}^0(A,B):=\{f\simeq 0\}\subseteq \Hom_{\bico}(A,B)
\] 
The following Lemma shows how this notion is a bigraded version of the familiar notion of chain homotopy, c.f. \cite{KQ20}.

\begin{lem}\label{lem: nullhomotopic morphisms}
	Let $f:A\to B$ be a map of bicomplexes. The following assertions are equivalent:
	\begin{enumerate}
		\item $f$ is nullhomotopic.
		\item There exists a map $h:A\to B[1,1]$ of bigraded vector spaces s.t. $f=[\del,[\delbar,h]]=\del\delbar h-\del h\delbar +\delbar h\del -h\delbar\del$.\footnote{This notion was independently suggested to the author by P. Deligne.}
	\end{enumerate}
\end{lem}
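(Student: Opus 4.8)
My plan is to prove the two implications separately, with everything reduced to a single explicit computation on one square.

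The starting point is the observation that the identity of the basic square $\square=\Lambda(\del,\delbar)$ is itself of the asserted form. Concretely, define the bigraded map $h_\square\colon\square\to\square[1,1]$ (i.e. of bidegree $(-1,-1)$) by sending the top corner $\del\delbar$ to the bottom corner $1$ and all other generators to $0$. A direct four-line check, using only $\del^2=\delbar^2=0$ and the Koszul relation $\delbar\del=-\del\delbar$, shows $[\del,[\delbar,h_\square]]=\Id_\square$. Since (as recalled before the statement) the projective objects of $\bico$ are exactly the direct sums of squares, this computation propagates to any projective: for $P=\bigoplus_i\square[p_i,q_i]$ one assembles the shifted homotopies into a single $h_P\colon P\to P[1,1]$ with $[\del,[\delbar,h_P]]=\Id_P$.

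For the implication \emph{(1)$\Rightarrow$(2)}, suppose $f=\beta\circ\alpha$ factors through a projective $P$, with chain maps $\alpha\colon A\to P$ and $\beta\colon P\to B$. I would then write $f=\beta\,\Id_P\,\alpha=\beta\,[\del,[\delbar,h_P]]\,\alpha$ and push $\alpha,\beta$ through the commutators: because $\alpha$ and $\beta$ commute with both $\del$ and $\delbar$ and have even total degree, one gets $\beta\,[\del,[\delbar,h_P]]\,\alpha=[\del,[\delbar,\beta h_P\alpha]]$. Setting $h:=\beta h_P\alpha\colon A\to B[1,1]$ yields the required bigraded map. This step is purely formal once the projective case is settled.

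For the implication \emph{(2)$\Rightarrow$(1)}, given a bigraded $h\colon A\to B[1,1]$ I would build an explicit factorization of $[\del,[\delbar,h]]$ through the sum of squares $\square\otimes B^\flat$, where $B^\flat$ denotes the underlying bigraded vector space of $B$ with zero differential. Writing $e_0,e_1,e_2,e_3$ for the basis $1,\del,\delbar,\del\delbar$ of $\square$, the chain-map condition forces a unique chain map $\alpha_h\colon A\to\square\otimes B^\flat$ with prescribed top component $h$, namely $\alpha_h=e_0\otimes h\del\delbar-e_1\otimes h\delbar+e_2\otimes h\del+e_3\otimes h$; verifying that this commutes with $\del$ and $\delbar$ is the routine solving of the four component equations. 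Composing with the counit $\varepsilon_B\colon\square\otimes B^\flat\to B$ (the module action $e_i\otimes v\mapsto e_i\cdot v$) and simplifying, a short computation gives $\varepsilon_B\circ\alpha_h=\del\delbar h-\del h\delbar+\delbar h\del+h\del\delbar=[\del,[\delbar,h]]$. Since $\square\otimes B^\flat$ is a direct sum of squares, hence projective, this exhibits $f=[\del,[\delbar,h]]$ as factoring through a projective, i.e. as nullhomotopic. The main obstacle throughout is bookkeeping the Koszul signs in the differential of $\square$ (the sign in $\delbar\del=-\del\delbar$): it is precisely this sign that produces the $-h\delbar$ component of $\alpha_h$ and that makes the term $-h\delbar\del$ in the four-term expression match $+h\del\delbar$ in the composite, so the whole argument hinges on carrying it consistently.
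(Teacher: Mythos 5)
Your argument is correct and follows essentially the same route as the paper: both directions rest on the observation that $\Id_\square=[\del,[\delbar,k]]$ for the corner-to-corner contraction of a square, with the converse obtained by factoring $[\del,[\delbar,h]]$ explicitly through a direct sum of squares of the form $\square\otimes(\text{bigraded vector space})$. The only (immaterial) difference is that the paper factors through $\square\otimes A[-1,-1]$ via an explicit inclusion of $A$ followed by a map built from $h$, whereas you factor through $\square\otimes B^\flat$ via a map built from $h$ followed by the module action --- a mirror image of the same construction.
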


\begin{proof}
	Given any square $S$ with generator $x$, define a map $k:=k_S:S\to S[1,1]$ by $k(\del\delbar x)=x$ and $k(x)=k(\del x)=k(\delbar x)=0$. Denoting $\varphi_k:=[\del,[\delbar,k]]$, we have $\varphi_k=\Id_S$. This shows the statement is true for the identity map on a square, and hence on any projective object. Next, assume we have a factorisation \[f:A\overset{i}{\longrightarrow} P\overset{p}{\longrightarrow}B\] with $P$ projective and choose a map $k:P\to P[1,1]$ as above s.t. $\Id_P=[\del,[\delbar,k]]$. Then define $h:=p\circ k\circ i$. 
	
	Conversely, given $h:A\to B[1,1]$, set $P:=\square\otimes A[-1,-1]$, where we consider $\square=\Lambda(\del,\delbar)$ as a bicomplex as usual but consider $A[-1,-1]$ with $\del=\delbar=0$. With this definition, the inclusion $i:A\to P$ defined by $i(a)=\del\delbar\otimes a-\del\otimes \delbar a + \delbar \otimes \del a - 1\otimes \delbar\del a$ is a map of bicomplexes. Then, defining a map $p:P\to B$ by $p:=m\circ (\Id\otimes h)$ where $m:\Lambda(\del,\delbar)\otimes B\to B$ is the module structure, we have $f=p\circ i$.
\end{proof}
\begin{rem}
	If in the previous Lemma $f$ is a map in $\R\bico$, one may take $h$ to be purely imaginary (i.e. $\bar h=-h$).
\end{rem}
We say a map $f:A\to B$ is a bigraded chain homotopy equivalence if there exists a map $g:B\to A$ s.t. $f\circ g\simeq \Id_B$ and $g\circ f\simeq\Id_A$. We will give further characterisations of this property in \Cref{sec: bigraded quisos} below.

\begin{thm}
The category $\bico$ (resp. $\R\bico$) carries a cofibrantly generated model category structure, where 
\begin{enumerate}
\item Fibrations are the surjective maps of bicomplexes.
\item Weak equivalences are the bigraded chain homotopy equivalences.
\item Cofibrations are the injective maps of bicomplexes.
\end{enumerate}
\end{thm}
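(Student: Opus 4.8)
The plan is to verify Quillen's model category axioms directly, leaning on the two structural facts isolated above: that a map is nullhomotopic exactly when it factors through a projective (\Cref{lem: nullhomotopic morphisms}), and that in $\bico$ the projective objects coincide with the injective objects, namely the direct sums of squares. Since $\bico$ is the abelian category of bigraded modules over $\Lambda(\del,\delbar)$ it is complete and cocomplete, so the limit--colimit axiom is immediate. The weak equivalences are, by definition, the isomorphisms of the additive quotient $\bico$ modulo nullhomotopic maps, so they satisfy two-out-of-three and are closed under retracts; monomorphisms and epimorphisms are obviously closed under retracts as well. The content is therefore concentrated in the lifting and factorization axioms, and the organizing observation I would isolate as a lemma is the following pair of characterizations: a map is a trivial fibration (a surjective weak equivalence) iff it is surjective with projective kernel, and dually a map is a trivial cofibration iff it is injective with projective cokernel.

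The easy halves are formal: if $0\to K\to A\to B\to 0$ has $K$ projective then, $K$ being also injective, the sequence splits, so $A\cong K\oplus B$ and the projection is a weak equivalence because $\Id_{K\oplus B}-\iota_B\pr_B=\iota_K\pr_K$ factors through the projective $K$. The substantive direction --- that a surjective weak equivalence $p\colon A\to B$ has projective kernel $K$ --- is where I expect the real work, and I would run it in two steps. First, upgrade the homotopy equivalence to an honest splitting: if $g$ is a homotopy inverse with $pg=\Id_B-\psi$ and $\psi$ factors through a projective $Q$ as $B\xrightarrow{a}Q\xrightarrow{b}B$, lift $b$ through the epimorphism $p$ to $\tilde b\colon Q\to A$ using projectivity of $Q$, and replace $g$ by $g+\tilde b a$, which satisfies $p(g+\tilde b a)=\Id_B$; hence $A\cong K\oplus B$ with $p=\pr_B$. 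Second, since $\pr_B$ is a weak equivalence with one-sided inverse $\iota_B$, the inclusion $\iota_B$ is automatically a two-sided homotopy inverse, so $\iota_B\pr_B\simeq\Id_{K\oplus B}$, whence $\iota_K\pr_K\simeq 0$ and therefore $\Id_K\simeq 0$; a bicomplex whose identity is nullhomotopic is a retract of the projective through which the identity factors, hence projective. The trivial-cofibration statement follows by the evident dualization, using injectivity to correct a retraction. I would flag this lemma as the main obstacle: the naive attempt to deduce projectivity of $K$ from nullhomotopy of the inclusion $K\hookrightarrow A$ fails --- the inclusion of a line into the square $\square$ is nullhomotopic although the line is not projective --- so it is essential to exploit the full homotopy equivalence via the splitting trick.

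For factorization I would use that $\bico$ has enough projectives and injectives, which holds since $\Lambda(\del,\delbar)$ is Frobenius (concretely, by \Cref{thm:decomposition} every bicomplex is both a quotient of and embeds into a direct sum of squares), together with the easy half of the characterization. Given $f\colon A\to B$, pick a monomorphism $\iota\colon A\hookrightarrow I$ into a projective ($=$injective) $I$; then $A\xrightarrow{(f,\iota)}B\oplus I\xrightarrow{\pr_B}B$ factors $f$ as a cofibration followed by the projection $\pr_B$, which is surjective with projective kernel $I$, i.e. a trivial fibration. Dually, choosing an epimorphism $\pi\colon P\twoheadrightarrow B$ from a projective $P$, the factorization $A\xrightarrow{\iota_A}A\oplus P\xrightarrow{(f,\pi)}B$ writes $f$ as the split inclusion $\iota_A$, a trivial cofibration with projective cokernel $P$, followed by the epimorphism $(f,\pi)$, a fibration.

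The lifting axioms then reduce, via the characterizations, to splitting arguments. To lift a cofibration $i\colon A\hookrightarrow B$ against a trivial fibration $p\colon X\to Y$, I use that $\ker p$ is injective: a section of $p$ gives a first candidate lift whose deviation from the prescribed map on $A$ is a map $A\to\ker p$, which extends along the monomorphism $i$ by injectivity of $\ker p$, and adding this correction produces the lift. Symmetrically, to lift a trivial cofibration $i$ (with projective cokernel $C$, so $B\cong A\oplus C$) against a fibration $p$, one takes the given map on the $A$-summand and lifts the induced map $C\to Y$ through the epimorphism $p$ using projectivity of $C$. Finally, every construction above is symmetric in $\del$ and $\delbar$ and compatible with the conjugation $\sigma$: the projectives in $\R\bico$ are the $\sigma$-stable sums of squares, the homotopies may be chosen purely imaginary (the remark after \Cref{lem: nullhomotopic morphisms}), and each step can be performed $\sigma$-equivariantly, which yields the model structure on $\R\bico$ as well.
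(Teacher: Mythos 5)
Your proof is correct, but it takes a genuinely different route from the paper: the paper disposes of this theorem in one line by citing the general result that the category of (bigraded) modules over a Frobenius algebra carries the stable model structure (Hovey, \cite{Hov99}), whereas you verify Quillen's axioms from scratch. Your organizing lemma --- a surjection is a trivial fibration iff its kernel is projective, an injection is a trivial cofibration iff its cokernel is projective --- is exactly the right pivot, and your two-step argument for the hard direction (first upgrade the stable section to an honest section by lifting the error term through the epimorphism via projectivity, then conclude $\Id_K\simeq 0$ and hence that $K$ is a retract of a projective) is sound; so are the factorizations via enough injectives/projectives and the lifting arguments via extension along monos into the injective kernel, resp.\ lifting off the projective cokernel. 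The warning you flag is well taken: nullhomotopy of the inclusion $K\hookrightarrow A$ alone does not give projectivity of $K$ (the line inside $\square$), so the full two-sided homotopy inverse really is needed. What your approach buys is self-containedness and transparency --- it isolates the single feature of $\bico$ that makes everything work, namely projective $=$ injective $=$ direct sums of squares, and it avoids the small-object argument entirely, giving explicit functorial-looking factorizations $A\to B\oplus I\to B$ and $A\to A\oplus P\to B$. What the paper's citation buys is brevity and the immediate transfer to $\R\bico$ and to the triangulated structure on the homotopy category, which Hovey and Happel develop in the same generality. One small point worth making explicit if you write this up: every object of $\bico$ embeds into and is a quotient of a direct sum of squares; this is cleanest via the free/cofree module $\Lambda(\del,\delbar)\otimes M$ rather than via \Cref{thm:decomposition}, since the latter would require checking that unbounded zigzags also embed into sums of squares.
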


\begin{proof}
	This is (the bigraded version of) a special case of a general result on categories of modules over Frobenius algebras, see \cite{Hov99}.
\end{proof}

\begin{rem}
	In the literature one also finds model category structures on the category of bicomplexes that use other notions of weak equivalences, e.g. maps that induce isomorphisms in total cohomology or on some page of the spectral sequences c.f. \cite{MR18}, \cite{CESLW20}. As we see in \Cref{sec: bigraded quisos}, a bigraded chain homotopy equivalence is a weak equivalence in all these other structures.
\end{rem}
We denote by $\Ho(\bico)$ the homotopy category of $\bico$. Every object in $\bico$ is both fibrant and cofibrant, and so one may compute the morphisms as homotopy classes
\begin{equation}\label{eqn: homotopy classes in bico}
[A,B]=\frac{\Hom_{\bico}(A,B)}{\Hom_{\bico}^0(A,B)}.
\end{equation}
Again as a specialization of general results on stable categories 
\cite{Hap88}, \cite[Prop. 4.19]{Cis10}, $\Ho(\bico)$ can be equipped with the structure of a triangulated category as follows (c.f. also \cite{KQ20}):

A shift functor $\bico\to \bico$, which for reasons that will become obvious momentarily, we denote by $L$, is defined by $L(A):=\Lpic\otimes A$, where \[
\Lpic:=\square[-1,-1]/\langle\del\delbar\rangle=\begin{tikzcd}
\langle\delbar\rangle&\\
\C\ar[u]\ar[r]&\langle\del\rangle,
\end{tikzcd}
\]
with $\C$ sitting in degree $(-1,-1)$.
A homotopy-inverse to $L$ is given by $L^{-1}(A):=\revLpic\otimes A$, where
\[
\revLpic:=\ker(\square\to \C)=\begin{tikzcd}
\langle\delbar\rangle\ar[r]&\langle\del\delbar\rangle\\
&\langle\del\rangle\ar[u]
\end{tikzcd}
\]
\begin{lem}\label{lem: L^n}
In ${\Ho(\bico)}$, there are natural identifications $L\circ L^{-1}\cong \Id$. More generally, the action of $L^n$ is naturally identified with tensoring with a length $2|n|+1$ zigzag.
\end{lem}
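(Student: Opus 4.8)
The plan is to carry out everything inside $\Ho(\bico)$, using two structural facts about the monoidal structure there. First, the tensor product descends to a bifunctor on $\Ho(\bico)$: if $f\simeq 0$ then $f$ factors through a projective, i.e. a sum of squares, and since $\square\otimes B$ is again projective for every bicomplex $B$, the map $f\otimes\Id_B$ factors through a projective and hence is nullhomotopic. Second, every square is isomorphic to $0$ in $\Ho(\bico)$, because $\Id_\square$ is nullhomotopic (the map $k$ of \Cref{lem: nullhomotopic morphisms} realises $\Id_\square=[\del,[\delbar,k]]$). Combining these, $\square\otimes A\cong 0$ in $\Ho(\bico)$, so any isomorphism of bicomplexes $X\cong Y\oplus(\text{squares})$ induces a \emph{natural} isomorphism of endofunctors $X\otimes(-)\cong Y\otimes(-)$ on $\Ho(\bico)$. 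The one input I would record carefully is that $\square\otimes B$ is projective for all $B$: this holds because $\square=\Lambda(\del,\delbar)$ is free of rank one over itself and the monoidal structure is the diagonal ($\otimes_\C$) action, so $\square\otimes B$ is free; alternatively it can be read off \Cref{thm:decomposition} by inspecting $\square\otimes(\text{square})$ and $\square\otimes(\text{zigzag})$.

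Granting this, the first identity reduces to a single finite computation: decompose the nine-dimensional bicomplex $\Lpic\otimes\revLpic$ into indecomposables. A direct inspection gives $\Lpic\otimes\revLpic\cong \C^{0,0}\oplus\square[-1,0]\oplus\square[0,-1]$, the unit $\C^{0,0}$ sitting at the unique overlapping bidegree. Discarding the square summands yields $\Lpic\otimes\revLpic\cong \C^{0,0}$ in $\Ho(\bico)$, and tensoring with $A$ gives the natural isomorphism $L\circ L^{-1}\cong\Id$; the symmetric computation (or commutativity of $\otimes$) gives $L^{-1}\circ L\cong\Id$, so $L$ is an equivalence with inverse $L^{-1}$. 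Conceptually, I would add a remark that the two short exact sequences $0\to\C^{0,0}\to\square[-1,-1]\to\Lpic\to 0$ and $0\to\revLpic\to\square\to\C^{0,0}\to 0$ exhibit $L$ and $L^{-1}$ as the suspension and loop functors of the Frobenius-stable structure (tensoring each with $A$ produces a natural short exact sequence with projective middle term), which makes their mutual inverseness automatic.

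For the general statement I would argue by induction on $n\ge 0$, the case $n<0$ following from the conjugation symmetry exchanging $\Lpic$ with $\revLpic$ and $\del$ with $\delbar$. Since $L^n=\Lpic^{\otimes n}\otimes(-)$, by the first paragraph it suffices to show that $\Lpic^{\otimes n}$ is, up to a direct sum of squares, a single zigzag $Z^{(n)}$ of length $2n+1$. The base case $n=1$ is $\Lpic$ itself. For the inductive step I would establish the local statement that for a length $2n+1$ zigzag $Z$ of the relevant type (both extremal vertices sinks, as for $\Lpic$) one has $\Lpic\otimes Z\cong Z'\oplus\square^{\oplus n}$, suitably shifted, with $Z'$ a zigzag of the same type and length $2n+3$; the dimension count $3(2n+1)=(2n+3)+4n$ already fixes the number of square summands, and the two new extremal vertices of $Z'$ are the ones created at the source corner of $\Lpic$.

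The main obstacle is precisely this last step: controlling $\Lpic\otimes Z$ on the nose, i.e. verifying that the splicing collapses to exactly \emph{one} longer zigzag plus squares rather than to several shorter zigzags. Rather than tracking every differential by hand, I expect the cleanest way to pin it down is to detect the zigzag part cohomologically: by \Cref{thmintro: Quisos} the isomorphism class of $\Lpic^{\otimes n}$ in $\Ho(\bico)$ is governed by $H_{BC}$ and $H_A$, which vanish on squares, and a computation showing that $H_{BC}(\Lpic^{\otimes n})$ and $H_A(\Lpic^{\otimes n})$ occupy exactly the staircase of adjacent bidegrees (with dimensions $n+1$ and $n$) characteristic of a single length $2n+1$ zigzag forces the indecomposable non-square summand to be $Z^{(n)}$. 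Throughout, naturality is automatic, since every identification is obtained by tensoring a fixed isomorphism of bicomplexes with $\Id_A$.
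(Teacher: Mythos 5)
Your first half is correct and, if anything, more self-contained than the paper's: the paper disposes of $L\circ L^{-1}\cong\Id$ by remarking that the shift of a stable/triangulated structure is automatically an auto-equivalence, whereas you verify it by the explicit decomposition $\Lpic\otimes\revLpic\cong\C^{0,0}\oplus\square[0,-1]\oplus\square[-1,0]$, which I checked and which is right; your preliminary observations (tensoring with $\square$ gives projectives, hence $\otimes$ descends to $\Ho(\bico)$ and square summands may be discarded naturally) are also sound and implicit in the paper's use of the symmetric monoidal model structure. For the general statement the paper cites \cite{Ste18} and illustrates the mechanism on $\Lpic\otimes\Lpic$, where the square generated by $x\otimes y$ splits off and the quotient is visibly a single length-five zigzag; so your inductive set-up is in the same spirit, and you have correctly isolated the crux, namely that $\Lpic\otimes Z$ must collapse to \emph{one} longer zigzag plus squares rather than several shorter ones.

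The proposed resolution of that crux, however, has a genuine gap. \Cref{thmintro: Quisos} says that a \emph{map} inducing isomorphisms on $H_{BC}$ and $H_A$ is an isomorphism in $\Ho(\bico)$; it does not say that two bicomplexes with abstractly isomorphic $H_{BC}$ and $H_A$ are isomorphic there, and they need not be. Concretely, on the staircase in question, a single length-$5$ zigzag $(0,2)\leftarrow(0,1)\to(1,1)\leftarrow(1,0)\to(2,0)$ and the direct sum of the length-$3$ zigzag $(0,2)\leftarrow(0,1)\to(1,1)$ with the length-$2$ zigzag $(1,0)\to(2,0)$ have identical $H_{BC}$ and $H_A$ in every bidegree (one-dimensional at each sink, resp.\ each source), yet they are not isomorphic in $\Ho(\bico)$. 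So the "staircase of dimensions $n+1$ and $n$" does not by itself force connectedness of the zigzag part, which is exactly the point at issue. The gap is fixable within your framework: either track the staircase differentials directly (as the paper does in its worked example), or enlarge the list of detecting invariants to include $H_{\del}$ and $H_{\delbar}$. The latter is clean: the ordinary K\"unneth formula for the row and column differentials gives $\dim H_{\del}(\Lpic^{\otimes n})=\dim H_{\delbar}(\Lpic^{\otimes n})=1$, and since $H_{\del}\oplus H_{\delbar}$ counts precisely the endpoints of the zigzag summands (and vanishes on squares), total dimension $2$ forces the non-square part to be a single zigzag, whose length and position are then pinned down by your $H_{BC}$ and $H_A$ computation.
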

\begin{proof}
The first part says the shift functor is an auto-equivalence, as is true in any triangulated category. The more general statement is essentially proved in \cite[§3]{Ste18}, though without giving an explicit isomorphism, which can be easily provided. In order not to get lost in notation, we give an example instead of a general calculation: Considering $\Lpic\otimes \Lpic$ and writing $x,y$ for generators in bottom left degree, we  find that the subcomplex $S\subseteq \Lpic\otimes\Lpic$ generated (as a bicomplex) by $x\otimes y$ is a square. Since injective-projective objects necessarily split off as direct summands, the projection to the quotient is thus an isomorphism in ${\Ho(\bico)}$ onto a length five zigzag. Concretely: 
	\[
	\begin{tikzcd}
	\langle \delbar x\rangle&\\
	\langle x\rangle\ar[r]\ar[u]&\langle\del x\rangle	
	\end{tikzcd}\otimes 
	\begin{tikzcd}
	\langle \delbar y\rangle&\\
	\langle y\rangle\ar[r]\ar[u]&\langle\del y\rangle	
\end{tikzcd}
\]
\[
=\begin{tikzcd}
\langle\delbar(xy)\rangle\ar[r]&\langle\del\delbar(xy)\rangle\\
\langle xy\rangle\ar[r]\ar[u]&\langle\del(xy)\rangle\ar[u]
\end{tikzcd}\oplus
\begin{tikzcd}
\langle\delbar x\delbar y\rangle&&\\
\langle x\delbar y\rangle\ar[r]\ar[u]&\langle\del x\delbar y\rangle&\\
&\langle \del x y\rangle \ar[r]\ar[u]&\langle \del x \del y\rangle
\end{tikzcd}
	\]	
\end{proof}

We may define a cone functor by setting \[
\cone(f:A\to B):=\coker(A\to B\oplus \square[-1,-1]\otimes A)
\] where the map is given by $a\mapsto (f(a),i\del\delbar\otimes a)$. Explicitly, as a bigraded vector space, we have an isomorphism $\cone(f)\cong B\oplus L(A)$, with the usual differential on $B$ and 
$d(1\otimes a)=d\otimes a+1\otimes da$, $d(\del\otimes a)=-f(ia)-\del\otimes da$ and $d(\delbar\otimes a)=f(ia)-\delbar\otimes da)$.
Finally, distinguished triangles are those isomorphic to
\[
A\overset{f}{\longrightarrow} B\longrightarrow \cone(f)\to L(A).
\]


The tensor product and internal homs make $\bico$ into a symmetric monoidal model category \cite[4.2.15]{Hov99}. In particular, they induce well-defined functors on the homotopy category and we still have adjunctions
\begin{equation}\label{eqn: tensor-hom adjunction stable cat}
(\_\otimes A):{\Ho(\bico)}\leftrightarrows{\Ho(\bico)}:\underline{\Hom}(A,\_)
\end{equation}
and thus, for any $A$ isomorphic in $\Ho(\bico)$ to a finite dimensional bicomplex:
\begin{equation}\label{eqn: tensor right adjoint stable cat}
(DA\otimes\_):{\Ho(\bico)}\leftrightarrows{\Ho(\bico)}:(\_\otimes A)
\end{equation}
and analogously for $\Ho(\R\bico)$.
\subsection{Aeppli, Bott-Chern and Schweitzer cohomology as mapping spaces}

Recall that we have functors from bicomplexes to bigraded vector spaces given by Bott-Chern and Aeppli cohomologies
\[
H_{BC}:=\frac{\ker\del\cap\ker\delbar}{\im \del\delbar}\qquad H_A:= \frac{\ker\del\delbar}{\im\del+\im\delbar}.
\]
If $A$ is a bicomplex, the bigrading $H_{BC}(A)=\bigoplus H_{BC}^{p,q}(A)$, $H_A(A)=\bigoplus H_A^{p,q}(A)$ is simply induced by that of $A$. 
\begin{ex}\label{ex: HBC and homotopy classes}
For bicomplexes $A,B$, there is a natural identification $[A,B]=H_{BC}^{0,0}(\underline{\Hom}(A,B))$. This follows from \Cref{eqn: homotopy classes in bico},  \Cref{lem: nullhomotopic morphisms} and \Cref{eqn: diff hom cplx}.
\end{ex}
The Bott-Chern and Aeppli cohomology groups are cohomology groups in particular degrees of a complex defined by M.~Schweitzer and J.~P.~ Demailly. More precisely, for any bicomplex $A$ and a pair of integers $p,q\in\Z$, define a complex $(\cL_{p,q}^\Cdot(A),d)$ via
\begin{align*}
	\cL_{p,q}^k(A)&:=\bigoplus_{\substack{r+s=k\\r<p,s<q}} A^{r,s}&\text{if }k\leq p+q-2,\\
	\cL_{p,q}^k(A)&:=\bigoplus_{\substack{r+s=k+1\\r\geq p,s\geq q}} A^{r,s}&\text{if }k\geq p+q-1,
\end{align*}
with differential given by
\[
\cdots\overset{\pr\circ d}{\longrightarrow}\cL_{p,q}^{p+q-3}(A)\overset{\pr\circ d}{\longrightarrow}\cL_{p,q}^{p+q-2}(A)\overset{\del\delbar}{\longrightarrow}\cL_{p,q}^{p+q-1}(A)\overset{d}{\longrightarrow}\cL_{p,q}^{p+q}(A)\overset{d}{\longrightarrow}\cdots
\]
Denoting its cohomology groups by $H_{S_{p,q}}^k(A):=H^k(\cL_{p,q}(A))$, one has $H_{BC}^{p,q}(A)=H^{p+q-1}_{S_{p,q}}(A)$ and $H_A^{p,q}(A)=H^{p+q}_{S_{p+1,q+1}}(A)$.

Write $\bullet$ for the double complex consisting of a single one-dimensional vector space $\C$ in degree $(0,0)$. Then:

\begin{thm}\label{BC A as hom sets}
	There are natural isomorphisms
	\[
[\bullet[p,q], A]\cong H^{p,q}_{BC}(A)\quad\text{and}\quad [\Lpic[p,q],A]\cong H_A^{p-1,q-1}(A)
	\]
	and, more generally for any $k\in\Z$,
	\[
	[\Lpic^{\otimes k}[p,q],A]\cong H_{S_{p,q}}^{p+q-k-1}(A).
	\]
\end{thm}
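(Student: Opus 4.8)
The plan is to reduce the general formula to the Bott--Chern case $k=0$ via the symmetries of $\Ho(\bico)$, and then to transport the cohomological degree through the Schweitzer complex by means of a long exact sequence. I would begin with the Bott--Chern identity. Since $\bullet$ is concentrated in a single bidegree and carries the zero differential, unwinding the definition of the internal hom gives a natural isomorphism $\underline{\Hom}(\bullet[p,q],A)\cong A[-p,-q]$. Combined with \Cref{ex: HBC and homotopy classes} this yields
\[
[\bullet[p,q],A]=H^{0,0}_{BC}\bigl(\underline{\Hom}(\bullet[p,q],A)\bigr)=H^{0,0}_{BC}(A[-p,-q])=H^{p,q}_{BC}(A),
\]
which is the first asserted isomorphism and, because $H^{p,q}_{BC}=H^{p+q-1}_{S_{p,q}}$, the case $k=0$ of the general statement.

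Next I would reduce the general case to this one. Since $\Lpic^{\otimes k}[p,q]=L^{k}(\bullet[p,q])$ and $L$ is an auto-equivalence of $\Ho(\bico)$ by \Cref{lem: L^n}, one gets a natural isomorphism $[\Lpic^{\otimes k}[p,q],A]\cong[\bullet[p,q],L^{-k}A]=H^{p,q}_{BC}(L^{-k}A)$, where for $k<0$ one reads $\Lpic^{\otimes k}$ as $\revLpic^{\otimes|k|}$. Equivalently, one may argue on the source side using finite-dimensionality: $\underline{\Hom}(\Lpic^{\otimes k}[p,q],A)\cong D(\Lpic^{\otimes k})[-p,-q]\otimes A\cong L^{-k}(A)[-p,-q]$, using that $D\Lpic\cong\revLpic$ and that $D$ is monoidal on finite-dimensional objects. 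It then remains to identify $H^{p,q}_{BC}(L^{-k}A)=H^{p+q-1}_{S_{p,q}}(L^{-k}A)$ with $H^{p+q-1-k}_{S_{p,q}}(A)$.

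The heart of the argument is a shifting lemma: for every bicomplex $B$ there are natural isomorphisms $H^{m}_{S_{p,q}}(L^{-1}B)\cong H^{m-1}_{S_{p,q}}(B)$ and $H^{m}_{S_{p,q}}(LB)\cong H^{m+1}_{S_{p,q}}(B)$. To prove it I would tensor the short exact sequences $0\to\revLpic\to\square\to\bullet\to 0$ and $0\to\bullet\to\square[-1,-1]\to\Lpic\to 0$ with $B$, obtaining short exact sequences of bicomplexes whose middle terms $\square\otimes B$ and $\square[-1,-1]\otimes B$ are projective, hence (by \Cref{thm:decomposition}) direct sums of squares, exactly as in the proof of \Cref{lem: nullhomotopic morphisms}. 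Because each $\cL^{\bullet}_{p,q}(-)$ is assembled from direct summands of graded pieces with natural differentials, it is an exact functor, so these give short exact sequences of Schweitzer complexes and thus long exact sequences in $H^{\bullet}_{S_{p,q}}$. The lemma then follows from the vanishing of $H^{\bullet}_{S_{p,q}}$ on squares, which reduces to a direct verification that $\cL^{\bullet}_{p,q}(\square[a,b])$ is acyclic for all $a,b$; with the middle terms acyclic, the connecting homomorphisms furnish the claimed isomorphisms.

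Finally I would iterate: applying the shifting lemma $|k|$ times to $H^{p+q-1}_{S_{p,q}}(L^{-k}A)$ produces $H^{p+q-1-k}_{S_{p,q}}(A)$, which is the general formula; specializing $k=1$ and using $H^{p-1,q-1}_{A}=H^{p+q-2}_{S_{p,q}}$ recovers the Aeppli statement. All isomorphisms obtained are natural in $A$, since the internal-hom identification, \Cref{ex: HBC and homotopy classes}, the equivalence $L$, and the connecting homomorphisms of the functorial short exact sequences all are. The main obstacle I anticipate is the bookkeeping in the shifting lemma: pinning down the direction and magnitude of the degree shift, checking acyclicity of $\cL^{\bullet}_{p,q}$ on a single square in every position relative to the quadrants $\{r<p,\ s<q\}$ and $\{r\ge p,\ s\ge q\}$, and keeping the signs in the connecting maps consistent.
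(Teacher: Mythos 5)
Your proposal is correct, but it reaches the general formula by a genuinely different route than the paper. For the case $k=0$ you argue via $[\,\bullet[p,q],A]=H_{BC}^{0,0}(\underline{\Hom}(\bullet[p,q],A))$ and the identification $\underline{\Hom}(\bullet[p,q],A)\cong A[-p,-q]$; the paper instead computes $\Hom_{\bico}(\bullet[p,q],A)$ and $\Hom^0_{\bico}(\bullet[p,q],A)$ by hand (and explicitly notes that your variant via \Cref{ex: HBC and homotopy classes} also works), and it treats the Aeppli case by the same direct computation on $\Lpic[p,q]=\langle x,\del x,\delbar x\rangle$. For general $k$ the paper invokes \Cref{lem: L^n} to replace $\Lpic^{\otimes k}$ by a zigzag of length $2|k|+1$ and then repeats the hands-on computation of morphisms and nullhomotopies, omitting the (admittedly cumbersome) details. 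You instead move $L^k$ across the adjunction to get $[\Lpic^{\otimes k}[p,q],A]\cong H^{p,q}_{BC}(L^{-k}A)=H^{p+q-1}_{S_{p,q}}(L^{-k}A)$ and prove a dimension-shifting lemma $H^{m}_{S_{p,q}}(L^{\mp 1}B)\cong H^{m\mp1}_{S_{p,q}}(B)$ from the short exact sequences $0\to\revLpic\to\square\to\bullet\to 0$ and $0\to\bullet\to\square[-1,-1]\to\Lpic\to 0$ tensored with $B$, using that $\cL_{p,q}(-)$ is exact and acyclic on squares. This trades the paper's elementary but notation-heavy zigzag computation for a structural argument: your shifting lemma makes the induction on $k$ trivial, gives naturality for free, and is of independent interest (it is essentially the statement that $H_{S_{p,q}}$ intertwines the triangulated shift with the cohomological degree shift); the only extra cost is the finite case-check that $\cL_{p,q}(\square[a,b])$ is acyclic for every position of the square relative to the two quadrants, which is anyway implicit in the paper's assertion that Schweitzer cohomologies are cohomological functors. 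Your degree bookkeeping is consistent: $H^{p+q-1}_{S_{p,q}}(L^{-k}A)\cong H^{p+q-1-k}_{S_{p,q}}(A)$, and $k=1$ recovers $H_A^{p-1,q-1}(A)=H^{p+q-2}_{S_{p,q}}(A)$.
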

\begin{proof}
	We have \[
	[\bullet[p,q],A]=\Hom_{\bico}(\bullet[p,q],A)/\Hom_{\bico}^0(\bullet[p,q],A).
	\]
Let $x=1[p,q]$ be the generator for $\bullet[p,q]$. Then there is an isomorphism \begin{align*}
	\Hom_{\bico}(\bullet[p,q],A)&\cong\ker\del\cap\ker\delbar\cap A^{p,q}\\
	\varphi&\mapsto \varphi(x).\end{align*}
 On the other hand, by \Cref{lem: nullhomotopic morphisms}, the set of null-homotopic morphisms is given by the collection of $\varphi_h=[\del,[\delbar,h]]$ for $h:\bullet[p,q]\to A$ a linear map of bidegree $(-1,-1)$. In that case, $\varphi_h(x)=\del\delbar h(x)$, so that under the same identification $\Hom_{\bico}^0(\bullet[p,q],A)\cong\im\del\delbar$. This proves the first isomorphism. Note that we could have also argued via \Cref{ex: HBC and homotopy classes}.
	
	For the second, we again denote by $x=1[p-1,q-1]$ a bicomplex generator of $\Lpic[p,q]$, so that as a vector space $\Lpic[p,q]=\langle x,\del x,\delbar x\rangle$. As before, sending $\varphi\mapsto \varphi(x)$ gives an identification of $\Hom_{\bico}(\Lpic[p,q],A)\cong \ker\del\delbar\cap A^{p-1,q-1}$. On the other hand, to give a linear map $h:\Lpic[p,q]\to A$ of degree $(-1,-1)$ it is necessary and sufficient to give the three elements $h(x)\in A^{p-2,q-2}$, $h(\del x)\in A^{p-1,q-2}$ and $h(\delbar x)\in A^{p-2,q-1}$ and $\varphi_h(x)=\del\delbar h(x) - \del h(\delbar x) + \delbar h(\del x)$ so that $\Hom_{\bico}^0(\Lpic[p,q],A)=(\im\del+\im\delbar)\cap A^{p,q}$ 
	
For the general formula, use \Cref{lem: L^n} to replace $\Lpic^{\otimes k}$ by an appropriate zigzag of length $2|k|+1$. Then the same arguments as above, with more cumbersome notation, yield the conclusion. We omit the details.	
\end{proof}

\begin{cor}\label{cor: les Schweitzer}
	For any triangle $A\to B\to C\to LA$ in ${\Ho(\bico)}$, there is a long exact sequence
	\begin{multline}
	...\to H_{A}^{p-1,q-1}(C)\to H_{BC}^{p,q}(A)\to H_{BC}^{p,q}(B)\to H_{BC}^{p,q}(C)\\\to H_{S_{p,q}}^{p+q}(A)\to H_{S_{p,q}}^{p+q}(B)\to H_{S_{p,q}}^{p+q}(C)\to H_{S_{p,q}}^{p+q+1}(A)\to \cdots
	\end{multline}
\end{cor}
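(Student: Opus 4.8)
The plan is to read the sequence off by applying the representable functor $[\bullet[p,q],-]$ to the given triangle and translating each term via \Cref{BC A as hom sets}. The only genuine input beyond that theorem is the general, purely formal fact that a representable functor on a triangulated category is homological.

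First I would invoke that fact: rotating the distinguished triangle $A\to B\to C\to LA$ and applying the covariant functor $[\bullet[p,q],-]$ to each rotation produces a long exact sequence
\[
\cdots \to [\bullet[p,q], L^{-1}C] \to [\bullet[p,q], A] \to [\bullet[p,q], B] \to [\bullet[p,q], C] \to [\bullet[p,q], LA] \to \cdots,
\]
with the maps induced by those of the (rotated) triangle. Since all objects of $\bico$ are fibrant and cofibrant and $\Ho(\bico)$ is triangulated with shift $L$, this needs nothing new.

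Next I would identify the terms. Because $L$ is an auto-equivalence of $\Ho(\bico)$ by \Cref{lem: L^n}, and because $L$ commutes with the bidegree shift (as $Y[p,q]\cong Y\otimes\bullet[p,q]$ while $L(-)=\Lpic\otimes(-)$, and $\otimes$ is symmetric monoidal), there are natural isomorphisms $[\bullet[p,q],L^nY]\cong[L^{-n}\bullet[p,q],Y]=[\Lpic^{\otimes(-n)}[p,q],Y]$ in the notation of \Cref{BC A as hom sets}. For the three middle terms ($n=0$) this is the first isomorphism of that theorem, $[\bullet[p,q],-]\cong H_{BC}^{p,q}(-)$. For the left neighbour ($n=-1$) it becomes $[\bullet[p,q],L^{-1}C]\cong[\Lpic[p,q],C]\cong H_A^{p-1,q-1}(C)$, the second isomorphism. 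For the right neighbour ($n=1$) it becomes $[\bullet[p,q],LA]\cong[\Lpic^{\otimes(-1)}[p,q],A]\cong H_{S_{p,q}}^{p+q}(A)$, the general formula with $k=-1$. Substituting these naturally-identified terms into the window above reproduces exactly the asserted long exact sequence.

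The argument is essentially formal, so I do not expect a serious obstacle; the one thing to watch is the bookkeeping of the degree shift, namely checking that pushing $L^n$ across the bracket lands in Schweitzer degree $p+q+n-1$, so that the Bott--Chern, Aeppli and Schweitzer identifications indeed emerge for $n=0,-1,1$ respectively. Naturality of all three identifications in \Cref{BC A as hom sets} is what guarantees that the connecting maps transport to the expected maps between these cohomologies, which is the only point requiring a moment's care.
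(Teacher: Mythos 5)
Your proposal is correct and follows essentially the same route as the paper: apply the homological functor $[\bullet[p,q],-]$ to the rotations of the triangle, then use the adjunction $[E,L^r(F)]\cong[L^{-r}E,F]$ from \eqref{eqn: tensor right adjoint stable cat} together with \Cref{BC A as hom sets} to identify the terms. The degree bookkeeping ($p+q-k-1$ with $k=-n$, giving Bott--Chern, Aeppli and $H_{S_{p,q}}^{p+q}$ for $n=0,-1,1$) checks out.
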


\begin{proof}
By general properties of triangulated categories, there is a long exact sequence
\begin{multline}
...\to [\bullet[p,q],L^{-1}(C)]\to [\bullet[p,q],A]\to [\bullet[p,q],B]\\\to [\bullet[p,q],C]\to [\bullet[p,q],L(A)]\to ...
\end{multline}
and this implies the statement using \Cref{BC A as hom sets} and that for all bicomplexes $E,F$ and integers $r\in\Z$ one has $[E,L^r(F)]=[L^{-r}(E),F]$ by \eqref{eqn: tensor right adjoint stable cat}.
\end{proof}
\begin{rem}\label{rem: Dolbeault les}
	Using the complex $\linepic:=(\C\to \C)$ concentrated in degrees $(0,0)$ and $(1,0)$, we see that $[\linepic[p,q],A]\cong H_{\delbar}^{p,q}(A)$ and we recover the long exact sequence in column cohomology. Considering longer even length zigzags $Z$ going from $(0,0)$ to $(r,-r+1)$, there is a surjection $[Z[p,q], A]\to E_r^{p,q}(A)$, which is in general not an isomorphism, as can be seen e.g. by taking $Z$ to be length $4$ and computing $[Z,\linepic[1,-1]]$.
\end{rem}
\begin{rem}
	An elementary construction of the long exact sequence in \Cref{cor: les Schweitzer} is as follows: Assume the triangle $A\to B\to C\to LA$ is associated to a short exact sequence $0\to A\to B\to C\to 0$ of bicomplexes. The sequence of simple complexes $0\to \cL_{p,q}(A)\to \cL_{p,q}(B)\to\cL_{p,q}(C)\to 0$ is still exact and one can consider its associated long exact sequence as usual.
\end{rem}
\begin{ex}
	For any map $f:X\to Y$ of compact complex manifolds, one obtains a triangle $A_Y\to A_X\to \cone(f)\to LA_Y$ in ${\Ho(\bico)}$ and a corresponding long exact sequence in Schweitzer cohomology and Dolbeault cohomology. E.g. one may apply this to submersions $f:X\to Y$ or embeddings $i:Z\to X$, in which case the cone is (up to shift) quasi-isomorphic to the relative forms $A_{X/Y}=\coker f^*$ or $A(X,Z):=\ker i^*$.
\end{ex}


To further illustrate the mapping-space long exact sequence in an abstract setting, we apply it for a computation that will be used later on, in \Cref{sec: Bigraded Minimal Models: Existence}:

Let $M$ be a double complex and $[c]\in H_{BC}^{p+1,q+1}(M)$ for some $p,q\in\Z$. Define $M\oplus_c \square[p,q]$ to be the pushout of the diagram
\[\begin{tikzcd}
	\bullet[p+1,q+1]\ar[r]\ar[d]&\square[p,q]\\
	M,
\end{tikzcd}
\] where the horizontal map sends the generator to the top right tip and the vertical map sends the generator to $c$. 
\begin{lem}[Bott-Chern-classes squared away I]\label{lem: ABC cohomology of squared away BC-class}
	Write $j=p+q$. 
	The cohomology $M\oplus_c \square[p,q]$ may be computed as follows:
	\begin{enumerate}
		\item If $[c]=0\in H_{BC}(M)$, we have $M\oplus_c\square[p,q]\cong M\oplus \Lpic[p+1,q+1]$ and so
			\begin{align*}
			H_{BC}^{i}(M\oplus_c \square[p,q])&\cong\begin{cases}
				H_{BC}^i(M)\oplus \C^2&\text{if }i=j+1\\
				H_{BC}^i(M)&\text{else.}
			\end{cases}\\
			H_{A}^{i}(M\oplus_c \square[p,q])&\cong\begin{cases}
				H_{A}^{i}(M)\oplus\C&\text{if } i=j\\
				H_A^i(M)&\text{else.}
			\end{cases}
		\end{align*}
		
		\item If $[c]\neq 0\in H_{BC}(M)$, we have
			\begin{align*}
			H_{BC}^{i}(M\oplus_c \square[p,q])&\cong\begin{cases}
				H_{BC}^i(M)\oplus R&\text{if }i=j+1\\
				H_{BC}^i(M)/[c]&\text{if }i=j+2\\
				H_{BC}^i(M)&\text{else.}
			\end{cases}\\
			H_{A}^{i}(M\oplus_c \square[p,q])&\cong\begin{cases}
				H_{A}^{i}(M)/[c]_A&\text{if } i=j+2\\
				H_A^i(M)&\text{else,}
			\end{cases}
		\end{align*}
		where $R$ is a vector space of dimension at most $2$, which vanishes if $[c]_A\neq 0$.	
	\end{enumerate}
	
\end{lem}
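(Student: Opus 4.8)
The plan is to realize the pushout explicitly and then read off both cohomologies, either by a splitting argument (when $[c]=0$) or by a direct, degree-by-degree comparison with $M$ (when $[c]\neq 0$). First I would unwind the pushout. Since $\bullet[p+1,q+1]\hookrightarrow\square[p,q]$ is the inclusion of the top tip, the pushout $N:=M\oplus_c\square[p,q]$ has underlying bigraded space $M\oplus\langle x,\del x,\delbar x\rangle$, with $|x|=(p,q)$ and $\del\delbar x$ identified with the chosen representative $c\in\ker\del\cap\ker\delbar\cap M^{p+1,q+1}$. The inclusion $M\hookrightarrow N$ is a subcomplex with quotient $\square[p,q]/\bullet[p+1,q+1]\cong\Lpic[p+1,q+1]$, so one obtains a short exact sequence $0\to M\to N\to\Lpic[p+1,q+1]\to 0$ and a corresponding triangle in $\Ho(\bico)$. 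This is the structural backbone of the argument and also the input for the alternative route through the mapping-space sequence of \Cref{cor: les Schweitzer}.

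For part (1), when $[c]=0$ I would choose $b\in M^{p,q}$ with $\del\delbar b=c$ and set $x':=x-b$. Then $\del\delbar x'=0$, and $\langle x',\del x',\delbar x'\rangle$ is a subcomplex isomorphic to $\Lpic[p+1,q+1]$ which splits off as a direct summand, giving $N\cong M\oplus\Lpic[p+1,q+1]$. By additivity of $H_{BC}$ and $H_A$ it then suffices to compute these for the $L$-shaped zigzag, where a direct inspection gives $H_{BC}(\Lpic[p+1,q+1])$ one-dimensional in bidegrees $(p+1,q)$ and $(p,q+1)$ (total degree $j+1$) and $H_A(\Lpic[p+1,q+1])$ one-dimensional in bidegree $(p,q)$ (total degree $j$), which are exactly the stated corrections.

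For part (2) I would compute everything directly. The new generators live only in bidegrees $(p,q),(p+1,q),(p,q+1)$ and $c$ in $(p+1,q+1)$, so $H_{BC}^{a,b}(N)=H_{BC}^{a,b}(M)$ and $H_A^{a,b}(N)=H_A^{a,b}(M)$ outside this window, and it remains to examine the four corner bidegrees. In degree $(p+1,q+1)$ the element $c$ becomes $\del\delbar$-exact (for $H_{BC}$) and $(\del+\delbar)$-exact (for $H_A$), yielding $H_{BC}^{j+2}(N)=H_{BC}^{j+2}(M)/[c]$ and $H_A^{j+2}(N)=H_A^{j+2}(M)/[c]_A$. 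In degree $(p,q)$ a class $m+\mu x\in\ker\del\delbar$ forces $\del\delbar m=-\mu c$, which, because $[c]\neq 0$ means $c\notin\im\del\delbar$, forces $\mu=0$; hence $H_A^{p,q}(N)=H_A^{p,q}(M)$ and there is no correction in total degree $j$. A symmetric bookkeeping (absorbing the new generators $\del x,\delbar x$ into the $(\del+\delbar)$-image) shows that $H_{BC}^{p,q}(N)$ and the $H_A$ groups at $(p+1,q)$ and $(p,q+1)$ are likewise unchanged.

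The crux is the term $R$ in $H_{BC}^{j+1}$. Writing a $(p+1,q)$-class of $N$ as $m+\lambda\del x$, closedness forces $\del m=0$ and $\delbar m=\lambda c$; modulo the image of $H_{BC}^{p+1,q}(M)$ the class is detected by $\lambda$, so the correction is one-dimensional exactly when there is a $\del$-closed $m\in M^{p+1,q}$ with $\delbar m=c$, and vanishes otherwise; symmetrically for $(p,q+1)$ with $\del m'=c$. This already gives $\dim R\leq 2$. The main point, and the step I expect to be most delicate to phrase cleanly, is that either of these conditions forces $c\in\im\del+\im\delbar$, i.e. $[c]_A=0$; contrapositively, $[c]_A\neq 0$ kills both corrections, so $R=0$. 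I would double-check the signs ($\delbar\del x=-\del\delbar x$) and the exactness of the short exact sequence tails, and optionally cross-check the entire computation against \Cref{cor: les Schweitzer} applied to the triangle above, where these same corrections reappear as (co)kernels of connecting maps sending the relevant generators to $[c]$, to $[c]_A$, and back to $[c]$.
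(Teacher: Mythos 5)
Your proposal is correct. Part (1) is exactly the paper's argument: replace $x$ by $x'=x-b$ to split off a copy of $\Lpic[p+1,q+1]$ and use additivity. For part (2) you take a slightly different route: the paper feeds the short exact sequence $0\to M\to M\oplus_c\square[p,q]\to\Lpic[p+1,q+1]\to 0$ into the long exact sequence of \Cref{cor: les Schweitzer} and identifies the connecting map as evaluation of the generator on $[c]$, whereas you carry out the equivalent kernel/image bookkeeping by hand in the four affected bidegrees. The two are really the same computation in different packaging: your conditions ``$\del m=0$, $\delbar m=\lambda c$'' (and their symmetric counterparts) are precisely the statement that the restriction map $H_{BC}^{j+1}(M\oplus_c\square[p,q])\to H_{BC}^{j+1}(\Lpic[p+1,q+1])$ hits the generators, and your observation that either condition forces $[c]_A=0$ is the paper's vanishing argument for that map. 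The long-exact-sequence formulation is more economical and reuses machinery already set up; your direct version is more self-contained and makes the bound $\dim R\le 2$ and its failure modes completely explicit. Both are valid, and your optional cross-check against \Cref{cor: les Schweitzer} is in fact the paper's proof.
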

\begin{proof}
	First assume $c=\del\delbar b$ and denote by $a$ a generator for $\square$, by $x$ a generator for $\cL[p+1,q+1]$. Then the isomorphism $M\oplus_c\square[p,q]\cong M\oplus L[p+1,q+1]$ is induced by the identity on $M$ and $b\mapsto a-x$. The cohomology calculation is then clear.
	
	If $[c]\neq 0$, consider the short exact sequence \[0\to M\to M\oplus_c \square[p,q]\to L[p+1,q+1]\to 0,\]which has an associated long exact sequence. We write down the relevant portion, where $H=H_{BC}$ or $H=H_A$:
	\[
	\dots\to H^i(\bullet[p+1,q+1])\to H^i(M)\to H^i(M\oplus_c\square[p,q])\to H^i(\Lpic[p+1,q+1])\to \dots,
	\]
	where we have used $L^{-1}(\Lpic[p+1,q+1])=\bullet[p+1,q+1]$. The left map sends the generator to $[c]$. Further, we have $H^{j+1}_{BC}(\Lpic[p+1,q+1])=\C^2$, $H^{j}_A(\Lpic[p+1,q+1])=\C$ and $H^i(\Lpic[p+1,q+1])=0$ else. It thus remains to analyze the right map in the two cases where $H^i(\Lpic[p+1,q+1])\neq 0$. If $[c]_{BC}\neq 0$, i.e. $c\not\in \im(\del\delbar)$, we see that it is trivial when mapping to $H^{j}_A(\Lpic[p+1,q+1])$ and under the additional assumption that $[c]_A\neq 0$, i.e. $c\not\in(\im\del+\im\delbar)$, we see that it is trivial when mapping to $H_{BC}^{j+1}$.
\end{proof}

\begin{ex}The following sketch illustrates a situation where $\dim R=2$. Dots and diamonds stand for one-dimensional vector spaces and lines for isomorphisms.
\[
\img[1.5]{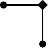}~\oplus_{\img{diamond}}~\img[1.5]{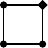}~\cong~\img[1.5]{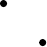}~\oplus~  \img[1.5]{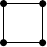}
\]
It is an instructive exercise to draw similar pictures for situations with $\dim R=1$ or $\dim R=0$.
\end{ex}

\subsection{Pluripotential quasi-isomorphisms}\label{sec: bigraded quisos}
We give a more in-depth study of maps in $\bico$ that induce isomorphisms in ${\Ho(\bico)}$ and prove Theorem \ref{thmintro: Quisos} from the introduction.

\begin{lem}\label{lem: characterization acyclicity}
Let $A$ be a bicomplex. The following statements are equivalent:
\begin{enumerate}
\item $A\cong 0$ in ${\Ho(\bico)}$.
\item $A$ is a direct sum of squares.
\item $H_{BC}(A)=0$.
\item $H_A(A)=0$.
\end{enumerate}
If $A$ is locally bounded, these statements are further equivalent to:
\begin{enumerate}[resume]
	\item $H_{\del}(A)=H_{\delbar}(A)=0$.
\end{enumerate}
\end{lem}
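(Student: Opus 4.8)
The plan is to reduce everything to the structure theorem \Cref{thm:decomposition}, which lets me test each property on the indecomposable summands of $A$, i.e. on squares and zigzags. I begin with the equivalence of the first four conditions. For $(1)\Leftrightarrow(2)$ I use that $A\cong 0$ in $\Ho(\bico)$ is equivalent to $\Id_A$ being nullhomotopic, i.e. to $\Id_A$ factoring through a projective object $P$; this exhibits $A$ as a retract (hence direct summand) of $P$. Since the projectives are exactly the direct sums of squares and, by the Krull--Schmidt uniqueness in \Cref{thm:decomposition}, a summand of a sum of squares is again a sum of squares, this is the same as $(2)$; the converse is immediate since a sum of squares is projective, so $\Id_A$ factors through itself. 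For $(1)\Rightarrow(3)$ and $(1)\Rightarrow(4)$ I simply invoke \Cref{BC A as hom sets}, which identifies $H_{BC}^{p,q}(A)=[\bullet[p,q],A]$ and $H_A^{p-1,q-1}(A)=[\Lpic[p,q],A]$; both vanish once $A\cong 0$.

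It remains to close the loop with $(3)\Rightarrow(2)$ and $(4)\Rightarrow(2)$, and this is where the combinatorics of zigzags enters. Writing $A=\bigoplus_i Z_i$ as in \Cref{thm:decomposition}, squares are acyclic for both $H_{BC}$ and $H_A$, so it suffices to show that every zigzag $Z$ has $H_{BC}(Z)\neq 0$ and $H_A(Z)\neq 0$. As zigzags are minimal ($\del\delbar\equiv 0$) one has $H_{BC}(Z)=\ker\del\cap\ker\delbar$ and $H_A(Z)=Z/(\im\del+\im\delbar)$. The key observation is that in a zigzag the edge orientations must alternate: a directed path $v_1\to v_2\to v_3$ of length two would force one of $\del^2$, $\delbar^2$, $\del\delbar$, $\delbar\del$ to be nonzero, which is excluded. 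Consequently every vertex is either a local source (both incident edges outgoing) or a local sink (both incoming), and these alternate along the support path, so a nonempty zigzag always has at least one sink and at least one source. A sink lies in $\ker\del\cap\ker\delbar$, giving a class in $H_{BC}(Z)$, and a source is not in $\im\del+\im\delbar$, giving a class in $H_A(Z)$. Hence vanishing of $H_{BC}$ (resp. $H_A$) forces the absence of zigzag summands, i.e. $(2)$. This argument is uniform in the finite and infinite case.

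Finally I treat the extra condition under local boundedness. The implication $(2)\Rightarrow(5)$ is clear, since squares are row- and column-acyclic and $H_\del,H_{\delbar}$ commute with direct sums. For $(5)\Rightarrow(2)$ I again decompose $A$; local boundedness means all zigzag summands are \emph{finite}, so it is enough to check that no finite zigzag is simultaneously $\del$- and $\delbar$-acyclic. Here the crux is an edge count: in any fixed column the $\delbar$-differentials cannot chain (that would give $\delbar^2\neq 0$), so each vertex meets at most one $\delbar$-edge and the column cohomology is the span of the $\delbar$-isolated vertices; thus $H_{\delbar}(Z)=0$ forces the $\delbar$-edges to form a perfect matching of the $n$ vertices, requiring $n$ even and exactly $n/2$ such edges. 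Symmetrically $H_\del(Z)=0$ requires $n/2$ disjoint $\del$-edges. Since $\del$- and $\delbar$-edges are disjoint, both conditions together need $n$ edges, whereas the support path of a length-$n$ zigzag has only $n-1$; this contradiction gives the claim, and hence $(5)\Rightarrow(2)$. The main obstacle throughout is precisely this zigzag bookkeeping rather than any categorical input; I also note that local boundedness is genuinely needed, as the bi-infinite alternating zigzag is both $\del$- and $\delbar$-acyclic (its $\del$- and $\delbar$-edges each match all vertices) yet is not a sum of squares.
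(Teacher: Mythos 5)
Your proof is correct and follows essentially the same route as the paper: reduce via the decomposition theorem (\Cref{thm:decomposition}) to checking each statement on squares and on zigzags, where $H_{BC}$ and $H_A$ are detected at sinks and sources (the paper's ``corners with incoming/outgoing arrows'') and the row/column cohomologies are detected at vertices missing a $\del$- or $\delbar$-edge. Your edge-counting verification for condition (5) and the retract/Krull--Schmidt argument for $(1)\Leftrightarrow(2)$ just make explicit what the paper leaves as a case-by-case check.
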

\begin{proof}
The equivalence of the first two statements follows from the definitions, keeping in mind that projective objects are exactly direct sums of squares. To show $(2)\Leftrightarrow (3)$, (resp. $(4)$), use \Cref{thm:decomposition} and that $H_{BC}$ (resp. $H_A$) commutes with direct sums and check on every bicomplex separately. $H_{BC}$ (resp. $H_A$) is nonzero exactly on dots and corners of zigzags with incoming (resp. outgoing) arrows. For the statement on row and column cohomology, note that these are nonzero exactly on `endpoints' of zigzags.
\end{proof}
\begin{rem}
	One can avoid using the full \Cref{thm:decomposition} and prove as in \cite[Prop. 5.17]{DGMS75} by elementary means that under the assumption of, say, $H_{BC}(A)=0$, one has a decomposition $A=\bigoplus S_{p,q}$, where $S_{p,q}$ denotes the subcomplex generated by a complement of $\ker\del\delbar\subseteq A^{p,q}$.
\end{rem}
\begin{rem}
An unbounded projective $A$ still satisfies $H_{\del}(A)=H_{\delbar}(A)=0$. However, the converse is not true, as one can see by considering a two-sided infinite zigzag. 
\end{rem}

\begin{thm}\label{prop: characterization quisos}
Let $f:A\to B$ be a map of bicomplexes. The following statements are equivalent:
\begin{enumerate}
\item\label{cond: Iso in SC} $f$ is a bigraded chain homotopy equivalence, i.e. an isomorphism in ${\Ho(\bico)}$.
\item\label{cond: cone squares} $\cone(f)$ is a direct sum of squares.
\item\label{cond: zigs iso} For any choice of decomposition $A=A^{sq}\oplus A^{zig}$ and $B=B^{sq}\oplus B^{zig}$ as in \Cref{cor: minimal + contractible}, the map $f:A^{zig}\to B^{zig}$ is an isomorphism.
\item\label{cond: A+BC iso} $H_{BC}(f)$ and $H_A(f)$ are isomorphisms.
\end{enumerate}
If $A,B$ are locally bounded, these are further equivalent to $H_{\del}(f)$ and $H_{\delbar}(f)$ being isomorphisms.
\end{thm}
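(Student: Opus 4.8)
The plan is to use the triangulated structure on $\Ho(\bico)$ together with the decomposition theorem to reduce everything to a rigidity statement about zigzags. First I would establish the backbone equivalence $(\ref{cond: Iso in SC})\Leftrightarrow(\ref{cond: cone squares})$: in any triangulated category a morphism is an isomorphism precisely when its cone is a zero object, and by \Cref{lem: characterization acyclicity} the zero objects of $\Ho(\bico)$ are exactly the direct sums of squares. Applied to the distinguished triangle $A\overset{f}\to B\to\cone(f)\to LA$, this identifies $(\ref{cond: Iso in SC})$ with $\cone(f)$ being a sum of squares. Everything else is then a comparison of the remaining conditions against the vanishing of $\cone(f)$.

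For $(\ref{cond: cone squares})\Rightarrow(\ref{cond: A+BC iso})$ I would note that a sum of squares is a zero object, so every representable functor, in particular $H_{BC}=[\bullet[\Cdot,\Cdot],-]$, $H_A=[\Lpic[\Cdot,\Cdot],-]$ and all the Schweitzer groups $H_{S_{p,q}}=[\Lpic^{\otimes k}[\Cdot,\Cdot],-]$ of \Cref{BC A as hom sets}, vanish on $\cone(f)$; feeding this into the long exact sequence of \Cref{cor: les Schweitzer} (and its shifts, which interlace all Schweitzer degrees) forces every $H_{S_{p,q}}(f)$, hence $H_{BC}(f)$ and $H_A(f)$, to be an isomorphism. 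To bring in $(\ref{cond: zigs iso})$ I would fix decompositions $A=A^{sq}\oplus A^{zig}$, $B=B^{sq}\oplus B^{zig}$ as in \Cref{cor: minimal + contractible} and let $f^{zig}:A^{zig}\to B^{zig}$ be the induced map. Since squares carry no $H_{BC}$ or $H_A$, the canonical projections identify $H_{BC}(f)$ with $H_{BC}(f^{zig})$ and $H_A(f)$ with $H_A(f^{zig})$; moreover, as the square summands are zero objects, $f$ and $f^{zig}$ are isomorphic in the arrow category of $\Ho(\bico)$, so $(\ref{cond: zigs iso})$ (that $f^{zig}$ is an isomorphism) immediately yields $(\ref{cond: Iso in SC})$. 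Granting the backbone equivalence, the whole theorem therefore reduces to the single implication $(\ref{cond: A+BC iso})\Rightarrow(\ref{cond: zigs iso})$.

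This last step is the one genuinely additive input and I expect it to be the main obstacle: \emph{a map $g:M\to N$ between minimal bicomplexes (direct sums of zigzags) inducing isomorphisms on $H_{BC}$ and $H_A$ is itself an isomorphism.} Here I would invoke \Cref{thm:decomposition} to write $M,N$ as sums of indecomposable zigzags and exploit the rigidity of zigzags: every internal differential is an isomorphism, and $H_{BC}\oplus H_A$ detects precisely the endpoint generators (sources are seen by $H_A$, sinks by $H_{BC}$, dots by both). Consequently a component $Z\to W$ of $g$ that is nonzero on an endpoint class is forced, by propagating the isomorphisms along the support graph, to be an isomorphism onto a summand, and a Krull--Schmidt matching argument on the endpoint bases upgrades "$H_{BC}(g)$ and $H_A(g)$ are isomorphisms" to bijectivity of $g$. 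This elementary but bookkeeping-heavy combinatorial core is exactly what recovers the bounded results of \cite{Ste18}.

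Finally, for the locally bounded addendum I would first check that $\cone(f)\cong B\oplus L(A)$ is again locally bounded, since tensoring with the length-three zigzag $\Lpic$ (\Cref{lem: L^n}) sends finite zigzags to finite zigzags and preserves the absence of infinite-length summands. Using that $H_{\delbar}$ and $H_{\del}$ are representable by $\linepic[\Cdot,\Cdot]$ and its conjugate (\Cref{rem: Dolbeault les}), the triangle $A\to B\to\cone(f)\to LA$ yields long exact sequences in row and column cohomology; if $H_{\del}(f)$ and $H_{\delbar}(f)$ are isomorphisms, a five-lemma reading forces $H_{\del}(\cone f)=H_{\delbar}(\cone f)=0$, whence $\cone(f)$ is a sum of squares by the locally bounded case of \Cref{lem: characterization acyclicity}, giving $(\ref{cond: cone squares})$. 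The converse runs the same exact sequences in reverse, using that squares have vanishing row and column cohomology.
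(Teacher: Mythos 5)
Your skeleton is sound and overlaps with the paper's proof at most joints: the backbone $(\ref{cond: Iso in SC})\Leftrightarrow(\ref{cond: cone squares})$ via the triangulated axioms and \Cref{lem: characterization acyclicity}, the implication $(\ref{cond: cone squares})\Rightarrow(\ref{cond: A+BC iso})$ via the long exact sequence of \Cref{cor: les Schweitzer}, and the identification of $f$ with $f^{zig}$ in the arrow category of $\Ho(\bico)$. Where you genuinely diverge is in closing the cycle. You funnel everything into $(\ref{cond: A+BC iso})\Rightarrow(\ref{cond: zigs iso})$ and propose to prove that by combinatorial rigidity of zigzags; the paper instead runs the \emph{same} long exact sequence in the other direction: if $H_{BC}(f)$ is injective and $H_A(f)$ is surjective, exactness of $\cdots\to H_A(A)\to H_A(B)\to H_A(\cone f)\to H_{BC}(A)\to H_{BC}(B)\to\cdots$ already forces $H_A(\cone f)=0$, so $\cone(f)$ is a sum of squares by \Cref{lem: characterization acyclicity} and $(\ref{cond: Iso in SC})$ follows (this is in effect \Cref{lem: A-surj BC-inj}). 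So the step you single out as the main obstacle is avoidable with tools you already placed on the table. Your alternative is essentially the route of \cite{Ste18}; it works, but the part you leave as ``bookkeeping'' is the load-bearing part: a component of $g$ need not respect the source/sink decomposition (a source can map to a source plus a sink), so ``$H_A(g)$ is an isomorphism'' only controls the source block modulo sinks, and upgrading this to bijectivity of $g$ for arbitrary, possibly infinite, direct sums of possibly unbounded zigzags requires a genuine argument, not just propagation along one support graph.

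Two further points. First, your rigidity lemma is not wasted effort: the implication $(\ref{cond: Iso in SC})\Rightarrow(\ref{cond: zigs iso})$ does require knowing that a map of minimal bicomplexes which is an isomorphism in $\Ho(\bico)$ is an actual isomorphism, and the paper's one-line treatment of clause $(\ref{cond: zigs iso})$ is terse on exactly this; so some such statement is needed for $(\ref{cond: zigs iso})$, just not for the main equivalence. Second, in the locally bounded addendum you justify local boundedness of $\cone(f)$ by writing $\cone(f)\cong B\oplus L(A)$; that is an isomorphism of bigraded vector spaces only, not of bicomplexes, and extensions do not preserve local boundedness (the two-sided infinite zigzag concentrated in two total degrees is an extension of a sum of dots by a sum of dots). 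This point needs its own argument, or one should argue directly on $f^{zig}:A^{zig}\to B^{zig}$, whose source and target consist of finite zigzags by hypothesis.
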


\begin{definition}
	A map satisfying the above equivalent conditions will be called a \textbf{pluripotential quasi-isomorphism} or a \textbf{bigraded quasi-isomorphism}.
\end{definition}
We introduce two names for the same concept for the following reasons: The term bigraded quasi-isomorphism is ambiguous since it could also refer to a map of bicomplexes which is a usual quasi-isomorphism (i.e. it induces an isomorphism in total cohomology). On the other hand, it has also been used with the current meaning e.g. in \cite{SW22}, \cite{MS22}. The term pluripotential quasi-isomorphism does not have this ambiguity and conveys the crucial point of \Cref{prop: characterization quisos} \eqref{cond: A+BC iso} that information on existence and uniqueness to the $\partial\bar\partial$-equation is preserved along these quasi-isomorphisms. In this paper, we will use both terms interchangeably, favouring `bigraded' in the abstract algebraic setup and `pluripotential' in context of complex manifolds.

\begin{proof}
The equivalence of \Cref{cond: Iso in SC} and \Cref{cond: cone squares} is an instance of the following general fact in triangulated categories that follows directly from the axioms: A map is an isomorphism if and only if its cone isomorphic to zero. For the equivalence with \Cref{cond: zigs iso}, note that $A^{zig}\to A$ and $B\to B^{zig}$ are isomorphisms in ${\Ho(\bico)}$.

Next, consider the long exact sequence given by \Cref{cor: les Schweitzer}
\begin{multline}
...\to H_{A}^{p-1,q-1}(A)\to H_{A}^{p-1,q-1}(B)\to H_{A}^{p-1,q-1}(\cone(f))\\\to H_{BC}^{p,q}(A)\to H_{BC}^{p,q}(B)\to ...
\end{multline}
and using \ref{lem: characterization acyclicity}, we see that \Cref{cond: Iso in SC} is equivalent to \Cref{cond: A+BC iso}.

The statement about row and column cohomology follows again using the associated long exact sequences as in \Cref{rem: Dolbeault les} and \Cref{lem: characterization acyclicity}.
\end{proof}

\begin{definition}[\cite{Ste21}]
Let $V$ be an additive category. An additive functor $H:\bico\to V$ is called cohomological if it satisfies $H(P)=0$ for any direct sum of squares $P$.
\end{definition}
Typical examples of such functors for us are $H_{BC}$, $H_A$, $H_{\delbar}$, the Schweitzer cohomologies, etc. Additive functors commute with finite direct sums. If $H$ commutes even with infinite direct sums, it is enough to require $H(\square[p,q])=0$ for $p,q\in\Z$. In practice, all our examples will be of this form.
\begin{rem}
	In \cite{Ste21}, an additional finiteness condition and linearity instead of additiveness were imposed, both of which are irrelevant here. In \cite{Ste18} the same notion appeared without name and with the inaccurate wording `$H$ vanishes on squares' which should either be read as `$H$ vanishes on direct sums of squares' or be complemented by the assumption that $H$ commutes with arbitrary direct sums.
\end{rem}

We recover the following result, proved in \cite{Ste18} for bounded complexes by different means. 


\begin{cor}\label{cor: coh functors and quisos}
Let $f:A\to B$ be a map of bicomplexes s.t. $H_{BC}(f)$ and $H_A(f)$ are isomorphisms. Then $H(f)$ is an isomorphism for any cohomological functor. 

If $A,B$ are locally bounded, it suffices to require $H_{\del}(f)$ and $H_{\delbar}(f)$ to be isomorphisms.
\end{cor}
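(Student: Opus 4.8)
The plan is to deduce the statement formally from \Cref{prop: characterization quisos} together with a single observation: every cohomological functor is invariant under the homotopy relation. First I would record that, by the equivalence of \ref{cond: Iso in SC} and \ref{cond: A+BC iso} in \Cref{prop: characterization quisos}, the hypothesis that $H_{BC}(f)$ and $H_A(f)$ be isomorphisms is precisely the statement that $f$ is an isomorphism in $\Ho(\bico)$. Concretely this yields a map $g\colon B\to A$ with $f\circ g\simeq\Id_B$ and $g\circ f\simeq\Id_A$.

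The crux is then to show that any cohomological functor $H\colon\bico\to V$ sends nullhomotopic maps to $0$. By the definition of nullhomotopy (c.f. \Cref{lem: nullhomotopic morphisms}), a nullhomotopic map $\varphi\colon A\to B$ factors as $A\xrightarrow{i}P\xrightarrow{p}B$ through a projective object $P$, i.e. through a direct sum of squares. Since $H$ is additive and $H(P)=0$, the map $H(i)\colon H(A)\to H(P)=0$ vanishes, whence $H(\varphi)=H(p)\circ H(i)=0$. Consequently, for homotopic maps $\varphi\simeq\psi$ one has $H(\varphi)-H(\psi)=H(\varphi-\psi)=0$, so $H$ descends to a well-defined functor on the homotopy category via the description \eqref{eqn: homotopy classes in bico} of its morphisms.

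With homotopy-invariance in hand, the conclusion is immediate: applying $H$ to $f\circ g\simeq\Id_B$ and $g\circ f\simeq\Id_A$ gives $H(f)\circ H(g)=\Id_{H(B)}$ and $H(g)\circ H(f)=\Id_{H(A)}$, so $H(f)$ is an isomorphism with inverse $H(g)$. For the locally bounded case I would argue identically, invoking instead the final clause of \Cref{prop: characterization quisos}, which guarantees that $H_{\del}(f)$ and $H_{\delbar}(f)$ being isomorphisms already forces $f$ to be an isomorphism in $\Ho(\bico)$ under the local boundedness assumption.

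I do not expect a genuine obstacle here: the entire content is the homotopy-invariance of $H$, which is a one-line consequence of additivity together with the vanishing on direct sums of squares, and everything else is the formal behaviour of isomorphisms under a functor. The only point requiring mild care is ensuring that the factorization through a projective is through an honest direct sum of squares, so that the vanishing $H(P)=0$ applies---but this is exactly the identification of projective objects with direct sums of squares recalled before \Cref{lem: nullhomotopic morphisms}.
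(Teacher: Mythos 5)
Your argument is correct and is essentially the paper's own proof: both use \Cref{prop: characterization quisos} to upgrade $f$ to a bigraded homotopy equivalence, then observe that $H$ kills maps factoring through projectives (direct sums of squares) by additivity and the vanishing hypothesis, so that applying $H$ to $f\circ g\simeq\Id$ and $g\circ f\simeq\Id$ yields the inverse. No gaps.
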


\begin{proof}
It is enough to show that $H$ factors through ${\Ho(\bico)}$. By the universal property of localization this is the case if it sends bigraded homotopy equivalences to isomorphisms in $V$. Since $H$ sends projective objects to zero it sends any map factoring over projectives to zero. Then, for then for any bigraded homotopy equivalence $f$ with quasi-inverse $g$, one has $0=H(f\circ g-\Id)=H(f)H(g)-\Id$ and analogously $H(g)H(f)=\Id$.\end{proof}

\begin{rem}[Terminology]
	In \cite{Ste18} and following works, maps between bounded complexes such that $H_{\del}(f)$ and $H_{\delbar}(f)$ are isomorphisms were called $E_1$-isomorphisms.\\
	In the context of triangulated categories, there is a different notion of cohomological functor, namely an additive functor to an abelian category which sends triangles to long exact sequences. This is a stronger condition than what is considered here, where one only has that functor $H$ factors through the (triangulated) homotopy category, but no assumption about the behaviour on long exact sequences is made.
\end{rem}

%
%

\subsection{The real motives of complex manifolds}

Given any complex manifold $X$, the bicomplex $A_X$ defines an object in $\R\bico$. One may think of the associated object in $\Ho(\R\bico)$ as the `real motive' of $X$. In fact, by definition, any cohomological functor factors through $\Ho(\R\bico)$ and, as we explain in this section, analytically equivalent maps (resp. correspondences) define the same map in $\Ho(\R\bico)$. This answers a question the author was asked by C. Deninger and S. Boucksom (independently).

\begin{lem}\label{lem: Dolbeault Kuenneth and duality}
	Let $X,Y$ be compact connected complex manifolds. The following maps are isomorphisms in $\Ho(\R\bico)$:
	\begin{enumerate}
		\item (K\"unneth formula) $A_X\otimes A_Y\to A_{X\times Y}$, given by $\omega\otimes \eta\mapsto \pr_X^*\omega\wedge\pr_Y^*\eta$.
		\item (Duality) $A_X\to DA_X$, given by $\omega\mapsto\int_X \omega\wedge\_$.
	\end{enumerate}
\end{lem}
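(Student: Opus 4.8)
\emph{The plan} is to reduce both statements, via \Cref{prop: characterization quisos}, to classical facts about Dolbeault cohomology. First I would observe that both maps are morphisms in $\R\bico$: equivariance under $\sigma$ is the routine identities $\overline{\pr_X^*\omega\wedge\pr_Y^*\eta}=\pr_X^*\bar\omega\wedge\pr_Y^*\bar\eta$ and $\overline{\int_X\omega\wedge\eta}=\int_X\bar\omega\wedge\bar\eta$. Since $A_X$, $A_Y$ and $A_{X\times Y}$ are concentrated in a bounded range of bidegrees, so are $A_X\otimes A_Y$ and $D_nA_X$; hence all bicomplexes in sight are locally bounded (\Cref{def: locally bounded}). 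By \Cref{prop: characterization quisos}, which holds verbatim in $\R\bico$, it therefore suffices to check that each map induces isomorphisms on $H_\del$ and $H_{\delbar}$ in order to conclude it is an isomorphism in $\Ho(\R\bico)$. As the maps are $\sigma$-equivariant and $\sigma$ interchanges $\del$ and $\delbar$, giving a natural isomorphism $H_\del\cong\overline{H_{\delbar}}$, it is enough to treat $H_{\delbar}$.

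For the Künneth map, the key algebraic observation is that $\delbar$ has bidegree $(0,1)$ and hence preserves the first grading, so that for each fixed $p$ the complex $((A_X\otimes A_Y)^{p,\bullet},\delbar)$ is the direct sum over $r+u=p$ of the tensor products of $\delbar$-complexes $(A_X^{r,\bullet},\delbar)\otimes(A_Y^{u,\bullet},\delbar)$. Over the field $\C$ the algebraic Künneth theorem has no $\mathrm{Tor}$ term, so $H_{\delbar}(A_X\otimes A_Y)\cong H_{\delbar}(A_X)\otimes H_{\delbar}(A_Y)$, and under this identification the induced map is precisely the Dolbeault--Künneth morphism $[\omega]\otimes[\eta]\mapsto[\pr_X^*\omega\wedge\pr_Y^*\eta]$. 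This is an isomorphism for compact complex manifolds, which settles part (1).

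For the duality map, I would first note that, writing $n=\dim_\C X$, the assignment $\omega\mapsto\int_X\omega\wedge\_$ lands naturally in the shifted dual $D_nA_X=DA_X[n,n]$, since for $\omega\in A_X^{p,q}$ the functional is supported on $A_X^{n-p,n-q}$ and $(D_nA_X)^{p,q}=(A_X^{n-p,n-q})^\vee$. Because $\C$ is a field and sits in a single degree, the $\delbar$-differential on $DA_X$ is the transpose of that on $A_X$, so that $\Hom(\_,\C)$ being exact lets taking $\delbar$-cohomology commute with dualization: $H_{\delbar}^{p,q}(D_nA_X)\cong(H_{\delbar}^{n-p,n-q}(A_X))^\vee$. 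Under this identification the induced map is the Serre duality pairing $H_{\delbar}^{p,q}(X)\to(H_{\delbar}^{n-p,n-q}(X))^\vee$, $[\omega]\mapsto\int_X\omega\wedge\_$, which is perfect for compact complex manifolds; this gives part (2).

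The geometric content is carried entirely by the two classical analytic inputs -- the Dolbeault--Künneth formula and Serre duality on compact complex manifolds -- and these are the only genuinely nontrivial ingredients. The main point requiring care, and the one I would write out most explicitly, is the homological bookkeeping that legitimizes passing between the bicomplex-level maps and these classical isomorphisms: namely that $H_{\delbar}$ of the tensor product and of the dual are computed one column at a time, and therefore reduce to ordinary Künneth and duality over $\C$ (where the absence of $\mathrm{Tor}$ and the identity $H(C^\vee)=H(C)^\vee$ are what make the argument work), together with keeping track of the $(n,n)$-shift in the target of the duality map.
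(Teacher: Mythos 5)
Your proposal is correct and follows essentially the same route as the paper: both reduce to showing the maps are $E_1$-isomorphisms (isomorphisms on $H_{\del}$ and $H_{\delbar}$) and then invoke the boundedness clause of \Cref{prop: characterization quisos}, with the Dolbeault--K\"unneth formula and Serre duality as the geometric inputs. The only difference is that you spell out the homological bookkeeping (column-wise algebraic K\"unneth with no $\mathrm{Tor}$, exactness of dualization, and the $[n,n]$-shift) that the paper delegates to a citation of \cite{Ste18}.
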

\begin{proof}
	Both maps are compatible with the real structures. As noted in \cite{Ste18}, they are $E_1$-isomorphisms as a consequence of the Dolbeault K\"unneth formula, resp. Serre duality \cite{Se55}. Since all involved complexes are bounded, this means they are also pluripotential quasi-isomorphisms by \Cref{prop: characterization quisos}.
\end{proof}



Let us now assume $X$ to be compact and connected and let $Cyc^p(X)$ denote the free abelian group on closed irreducible, reduced complex analytic subspaces of $X$ of pure codimension $p$. We have a map $Cyc^p(X)\to (D_nA_X)^{p,p} $ which sends $Z\subseteq X$ to its current of integration $[Z]:=(\eta\mapsto \int_Z\eta|_Z)\in (D_nA_X)^{p,p}$. This current is obviously $\del$ and $\delbar$-closed and we can compose with a projection to obtain a `fundamental class' map
\begin{align}\label{cycle class map}
	\begin{split}
Cyc^p(X)&\longrightarrow H_{BC}^{p,p}(D_nA_X)\cong H_{BC}^{p,p}(X)\\
Z&\longmapsto [Z].
\end{split}
\end{align}
Assume we are given a compact connected complex manifold $P$, an analytic subspace $W\subseteq X\times P$ of pure codimension $p$ and two points $a,b\in P$ s.t. the fibres $X_a:=\pi_P^{-1}(a)$, $X_b:=\pi_P^{-1}(b)$ of the projection $\pi_P:X\times P\to P$ intersect $W$ transversally. Then $Z_a:=W\cap X_a$ and $Z_b:=W\cap X_b$ are said to be elementary analytically equivalent. We denote by $Cyc^p_0(X)\subseteq Cyc^p(X)$ the subgroup generated by differences of elementary analytically equivalent cycles. Two cycles $Z,Z'\in Cyc^p(X)$ are said to be analytically equivalent if their difference lies in $Cyc^p_0(X)$.  We call the quotient by this equivalence relation the analytic Chow group: $CH_{an}^p(X):=Cyc^p(X)/\sim_{an}$


\begin{lem}
	If $Z\sim_{an} Z'$ are two analytically equivalent cycles on $X$, their fundamental classes coincide: $[Z]=[Z']\in H_{BC}(X)$. 
\end{lem}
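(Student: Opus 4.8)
The plan is to reduce to a single elementary analytic equivalence and then show that the two slicing maps agree already in $\Ho(\R\bico)$. Since $\sim_{an}$ is generated by elementary analytic equivalences and $Z\mapsto[Z]$ is additive, it suffices to treat one family: let $W\subseteq X\times P$ be of pure codimension $p$ with fibres $X_a,X_b$ transversal to $W$, and let $i_a,i_b\colon X\to X\times P$, $x\mapsto(x,a)$ resp.\ $(x,b)$, be the two inclusions. The current of integration $[W]$ represents a class in $H_{BC}^{p,p}(X\times P)$ (currents and forms are bigradedly quasi-isomorphic, hence compute the same Bott--Chern cohomology). The first step is the classical transversal-restriction statement $i_a^*[W]=[Z_a]$ and $i_b^*[W]=[Z_b]$ in $H_{BC}^{p,p}(X)$: because $i_a$ is transversal to $W$, the pullback of the current $[W]$ along $i_a$ is defined and equals the current of integration over $W\cap X_a=Z_a$.

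It therefore suffices to prove $i_a^*=i_b^*$ as morphisms in $\Ho(\R\bico)$, for then applying the cohomological functor $H_{BC}$ (which factors through $\Ho(\R\bico)$, c.f.\ \Cref{cor: coh functors and quisos}) yields $i_a^*[W]=i_b^*[W]$. Here the Künneth isomorphism $K\colon A_X\otimes A_P\xrightarrow{\sim}A_{X\times P}$ of \Cref{lem: Dolbeault Kuenneth and duality}(1) is the key device: a direct computation on elementary tensors $\omega\otimes\eta$ gives $i_a^*\circ K=\Id_{A_X}\otimes\mathrm{ev}_a$ and $i_b^*\circ K=\Id_{A_X}\otimes\mathrm{ev}_b$, where $\mathrm{ev}_a,\mathrm{ev}_b\colon A_P\to\bullet$ denote pullback along the points $a,b$ (so $\mathrm{ev}_a(f)=f(a)$ on functions and $\mathrm{ev}_a=0$ in positive degree). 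Since $\otimes$ descends to a bifunctor on $\Ho(\R\bico)$, it is enough to prove $\mathrm{ev}_a=\mathrm{ev}_b$ in $\Ho(\R\bico)$.

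For this last point I would use \Cref{ex: HBC and homotopy classes} to identify $\Ho(\R\bico)(A_P,\bullet)=[A_P,\bullet]=H_{BC}^{0,0}(DA_P)$, and then the duality isomorphism $DA_P\simeq A_P[-n_P,-n_P]$ in $\Ho(\R\bico)$ furnished by \Cref{lem: Dolbeault Kuenneth and duality}(2), where $n_P=\dim_\C P$; passing to $H_{BC}$ gives $H_{BC}^{0,0}(DA_P)\cong H_{BC}^{n_P,n_P}(P)$. For compact connected $P$ this top group is $\cong\C$, with isomorphism given by integration $\int_P$ (dually, the pairing with the generator $1\in H_A^{0,0}(P)$). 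Under these identifications $\mathrm{ev}_a,\mathrm{ev}_b$ become the Dirac currents $\delta_a,\delta_b$, i.e.\ the fundamental classes $[\{a\}],[\{b\}]\in H_{BC}^{n_P,n_P}(P)$; both pair to $1$ with the constant function, hence coincide, so $\mathrm{ev}_a=\mathrm{ev}_b$ and therefore $[Z_a]=[Z_b]$.

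The main obstacle is the transversal-restriction identity $i_a^*[W]=[Z_a]$ of the first step. In contrast to the purely formal $\Ho(\R\bico)$-manipulations, this is a genuinely analytic statement and is precisely where the transversality hypothesis in the definition of elementary analytic equivalence enters: one must justify pulling back the current $[W]$ along $i_a$ and identifying the result with $[Z_a]$, e.g.\ via slicing of currents or by regularizing $[W]$ to a smooth representative and exploiting that $i_a$ meets $W$ transversally.
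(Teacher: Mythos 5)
Your argument is correct and follows essentially the same route as the paper: reduce to a single elementary equivalence, observe that $[a]=[b]$ in $H_{BC}^{\dim P,\dim P}(P)$ because this group is detected by pairing with $H_A^{0,0}(P)=\C$ (constants), and transfer this equality via the transversal-intersection compatibility of fundamental classes. The only differences are cosmetic: you restrict $[W]$ to the slices $X_a,X_b$ whereas the paper restricts $\pi_P^*([a]-[b])$ to $W$, and you package the duality computation as the identity $\mathrm{ev}_a=\mathrm{ev}_b$ in $\Ho(\R\bico)$ propagated through the K\"unneth isomorphism; for the analytic restriction identity that you correctly isolate as the one nontrivial input, the paper cites \cite{Wu22}.
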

\begin{proof}
	Without loss of generality, we can restrict to the case of elementary analytically equivalent subspaces, so let $P,W,a,b,$ and $Z_a, Z_b$ be as above. Note that $[a]-[b]=0\in H_{BC}^{\dim P,\dim P}(P)$. In fact, by duality it suffices to check this by pairing with $H^{0,0}_A(P)=\C$ and for every constant function $f$ one has $f(a)-f(b)=0$. Then, $0=i^*\pi^*([a]-[b])=[Z_a]-[Z_b]$, where $i:W\to X\times P$ is the inclusion and we have used that $i^*([X_a])=[Z_a]$ and $i^*([X_b])=[Z_b]$ (c.f. \cite{Wu22}, which contains an argument even for integral Bott-Chern cohomology).
\end{proof}
\begin{cor}
	For any compact complex manifold $X$, \eqref{cycle class map} induces a well-defined map $CH_{an}^p(X)\to H^{p,p}_{BC}(X)$.
\end{cor}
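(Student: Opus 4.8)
The plan is to show that the cycle class map $Cyc^p(X) \to H^{p,p}_{BC}(X)$ of \eqref{cycle class map} factors through the analytic equivalence relation, i.e.\ that $[Z] = [Z']$ whenever $Z \sim_{an} Z'$. Since the map is a group homomorphism and $Cyc^p_0(X)$ is generated by differences of elementary analytically equivalent cycles, it suffices to check that $[Z_a] = [Z_b]$ for any pair $Z_a, Z_b$ arising as transverse fibre intersections $W \cap X_a$, $W \cap X_b$ of a single analytic subspace $W \subseteq X \times P$. But this is precisely the content of the preceding Lemma. Thus the entire proof is essentially an appeal to functoriality: a well-defined homomorphism on $Cyc^p(X)$ that vanishes on the generators of a subgroup descends to the quotient by that subgroup.

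Concretely, I would argue as follows. The assignment $Z \mapsto [Z]$ extends $\Z$-linearly to a homomorphism $\phi \colon Cyc^p(X) \to H^{p,p}_{BC}(X)$, using that $Cyc^p(X)$ is free abelian on the irreducible subspaces and that Bott--Chern cohomology is a $\C$-vector space. By the previous Lemma, every generator $Z_a - Z_b$ of $Cyc^p_0(X)$ satisfies $\phi(Z_a - Z_b) = [Z_a] - [Z_b] = 0$, so $Cyc^p_0(X) \subseteq \ker \phi$. Since $\sim_{an}$ is by definition the equivalence relation whose associated subgroup of differences is exactly $Cyc^p_0(X)$, the universal property of the quotient $CH^p_{an}(X) = Cyc^p(X)/Cyc^p_0(X)$ yields a unique factorization $\overline{\phi} \colon CH^p_{an}(X) \to H^{p,p}_{BC}(X)$ through the natural projection, as required.

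There is genuinely no hard step here: all the analytic content—namely that integration currents of transversally equivalent cycles represent the same Bott--Chern class, which rests on the identity $[a] = [b]$ in $H^{\dim P,\dim P}_{BC}(P)$ via the duality of \Cref{lem: Dolbeault Kuenneth and duality} and pullback compatibility—has already been absorbed into the preceding Lemma. The only point meriting a word is that analytic equivalence is an \emph{equivalence relation} in the usual sense (reflexive, symmetric, transitive) compatible with the group structure, so that the quotient $CH^p_{an}(X)$ is honestly a quotient group and not merely a quotient set; this is immediate once one observes that $Cyc^p_0(X)$ is defined as a subgroup. If any obstacle were to surface, it would be purely bookkeeping: confirming that the isomorphism $H^{p,p}_{BC}(D_nA_X) \cong H^{p,p}_{BC}(X)$ used to land the fundamental class in the correct group is the same identification employed in the Lemma, so that the vanishing statement transports correctly. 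This is guaranteed by \Cref{lem: Dolbeault Kuenneth and duality}(2), which identifies $A_X \cong DA_X$ in $\Ho(\R\bico)$ and hence induces a canonical isomorphism on every cohomological functor, in particular on $H_{BC}$.
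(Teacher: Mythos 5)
Your proposal is correct and matches the paper's (implicit) argument: the corollary is stated as an immediate consequence of the preceding Lemma, exactly via the factorization through the quotient by $Cyc^p_0(X)$ that you spell out. The paper offers no further proof, so your write-up simply makes explicit the routine descent step.
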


\begin{cor}
	For two compact complex manifolds $X,Y$ of dimensions $n,m$, there is a map
	\begin{align*}
		CH_{an}^p(X\times Y)\longrightarrow 
		&[A_Y[p-m,p-m],A_X].
\end{align*}\end{cor}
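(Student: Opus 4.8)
The plan is to factor the desired map through the Bott--Chern fundamental class of cycles on the compact complex manifold $X\times Y$. Applying the preceding lemma and corollary to $X\times Y$ in place of $X$, the cycle class map \eqref{cycle class map} already descends to a well-defined homomorphism $CH_{an}^p(X\times Y)\to H^{p,p}_{BC}(X\times Y)$. It therefore suffices to produce a natural identification
\[
H^{p,p}_{BC}(X\times Y)\;\cong\;[A_Y[p-m,p-m],A_X]
\]
and to compose the two. The target map is then simply the action of an analytic cycle on $X\times Y$ as a correspondence between the real motives of $Y$ and $X$.

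To build the identification I would start from \Cref{ex: HBC and homotopy classes}, which gives
\[
[A_Y[p-m,p-m],A_X]=H^{0,0}_{BC}\bigl(\underline{\Hom}(A_Y[p-m,p-m],A_X)\bigr),
\]
and then simplify the internal hom inside $\Ho(\R\bico)$. First, pulling the shift out of the first slot yields $\underline{\Hom}(A_Y[p-m,p-m],A_X)\cong\underline{\Hom}(A_Y,A_X)[m-p,m-p]$. Next, since $Y$ is compact, $A_Y$ is bounded with finite-dimensional Bott--Chern and Aeppli cohomology, so by \Cref{thm:decomposition} it is isomorphic in $\Ho(\R\bico)$ to its zigzag summand, which is finite-dimensional; replacing $A_Y$ by such a finite-dimensional model makes the functor identification $\underline{\Hom}(A_Y,\_)\cong DA_Y\otimes\_$ underlying the adjunction \eqref{eqn: tensor right adjoint stable cat} applicable, giving $\underline{\Hom}(A_Y,A_X)\cong DA_Y\otimes A_X$. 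The duality isomorphism of \Cref{lem: Dolbeault Kuenneth and duality}, namely $A_Y\xrightarrow{\sim}D_mA_Y=DA_Y[m,m]$, rewrites this as $DA_Y\cong A_Y[-m,-m]$, so that by symmetry of the tensor product $\underline{\Hom}(A_Y,A_X)\cong (A_X\otimes A_Y)[-m,-m]$. Combining the two shifts collapses everything to $(A_X\otimes A_Y)[-p,-p]$, whence
\[
[A_Y[p-m,p-m],A_X]\cong H^{p,p}_{BC}(A_X\otimes A_Y)\cong H^{p,p}_{BC}(X\times Y),
\]
the final step using the K\"unneth isomorphism of \Cref{lem: Dolbeault Kuenneth and duality} together with the fact that $H_{BC}$, being cohomological, is invariant in $\Ho(\R\bico)$.

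I expect the genuinely substantive input to be the dualizability of $A_Y$: the whole chain of manipulations is formal within the closed symmetric monoidal structure of $\Ho(\R\bico)$ \emph{once} one knows that $A_Y$ is isomorphic to a finite-dimensional bicomplex, which is where the compactness of $Y$ enters through finiteness of its Bott--Chern and Aeppli cohomology and the structure theorem. The remaining care is purely in the bookkeeping of the shifts $[p-m,p-m]$, $[m,m]$ and $[-m,-m]$ as they pass through duality and K\"unneth, which I have tracked above. Finally, since the current of integration of a complex-analytic cycle is of type $(p,p)$ and conjugation-invariant, the construction is compatible with the real structures, so the resulting map indeed lands in the $\R\bico$-mapping set as stated.
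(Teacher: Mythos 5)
Your proposal is correct and follows essentially the same route as the paper: compose the cycle class map $CH_{an}^p(X\times Y)\to H_{BC}^{p,p}(X\times Y)$ with a chain of identifications built from the K\"unneth and Serre duality quasi-isomorphisms of \Cref{lem: Dolbeault Kuenneth and duality} and the dualizability of $A_Y$ (via its finite-dimensional zigzag model). The only cosmetic difference is that you unwind the adjunction \eqref{eqn: tensor right adjoint stable cat} explicitly inside the internal hom using \Cref{ex: HBC and homotopy classes}, whereas the paper applies it directly at the level of mapping sets starting from $[\bullet[p,p],A_{X\times Y}]$; your shift bookkeeping checks out.
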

\begin{proof}
	This follows from the following chain of canonical identifications:
	\begin{align*}
		H_{BC}^{p,p}(X\times Y)&\cong [\bullet[p,p],A_{X\times Y}]\\
		&\cong [\bullet[p,p],A_X\otimes A_Y]\\
		&\cong [DA_Y[p,p],A_X]\\
		&\cong [A_Y[p-m,p-m], A_X].
	\end{align*}
Here, we have used \Cref{lem: Dolbeault Kuenneth and duality} in the second and fourth line. The third line uses \eqref{eqn: tensor right adjoint stable cat}, which is applicable since $A_Y$ is quasi-isomorphic to a finite dimensional bicomplex by Theorem \ref{thm:decomposition} and the fact that Dolbeault cohomology is finite dimensional and nonzero on any bounded zigzag (c.f. \cite{Ste18}).
\end{proof}

E.g. if $f:X\to Y$ is a map and $\Gamma_f\subseteq X\times Y$ its graph (so it has dimension $n$ and codimension $p=m$), one obtains a class in $[A_Y,A_X]$ which coincides with the pullback map. In particular, analytically equivalent maps give rise to homotopic maps in $\bico$ and hence to the same pullback maps in every cohomological functor. The above Corollary also shows that the same holds for maps induced by correspondences.

\subsection{ABC-cohomology and minimal models for bicomplexes} For any bicomplex $A$, by \Cref{cor: minimal + contractible} there is a decomposition $A=A^{zig}\oplus A^{min}$ into a minimal and a contractible part. Such a decomposition is a choice and generally not compatible with maps. In this section we give a functorial meaning to the summand $A^{zig}$.

We note that the differentials $\del$ and $\delbar$ induce maps of bidegree $(1,0)$ and $(0,1)$:
\[
\del,\delbar: H_A(A)\longrightarrow H_{BC}(A)
\]
and the zero map on $H_{BC}$. Thus, we can consider $H_{A}\oplus H_{BC}$ as a functor 
\[
\text{bicomplexes}\longrightarrow\text{minimal bicomplexes}.
\] However, in general $H_{A}(A)\oplus H_{BC}(A)$ is not isomorphic to $A$ in ${\Ho(\bico)}$. For instance, if $A=\C$ is a dot, concentrated in a single bidegree, then 
\[H_A(A)\oplus H_{BC}(A)=\C^2\neq\C= A\]
and the two sides can also not be isomorphic in ${\Ho(\bico)}$ since they have different cohomology. We will now present various ways to fix this `double counting'. For this, it is convenient to introduce reduced versions of $H_A$ and $H_{BC}$. Consider the natural transformation induced by the identity
\begin{equation}\label{eqn: BC to A}
H_{BC}\longrightarrow H_A.
\end{equation}
Now we define $\widetilde{H}_{BC}$, resp. $\widetilde{H}_{A}$, resp $H_{dot}$ the functors which send a bicomplex to the (objectwise) kernel, resp. the cokernel, resp. the image of \eqref{eqn: BC to A}, which we may describe explicitly as:
\begin{align}
\widetilde{H}_{BC}&=\frac{\im\del\cap\ker\delbar+\im\delbar\cap\ker\del}{\im\del\delbar}\\ 
\widetilde{H}_{A}&=\frac{\ker\del\delbar}{\im\del+\im\delbar + \ker\del\cap\ker\delbar}\\
H_{dot}&=\frac{\ker\del\cap\ker\delbar}{\im\del\cap\ker\delbar+\im\delbar\cap\ker\del}
\end{align}
By definition, for any $A\in \bico$, there is an exact sequence
\[
0\longrightarrow \HBCred (A)\longrightarrow H_{BC}(A)\longrightarrow H_A(A)\longrightarrow \HAred(A)\longrightarrow 0,
\]
and one checks that there are natural identifications
\begin{align}
\HBCred&=\im(H_A\oplus H_A\overset{\del+\delbar}{\longrightarrow} H_{BC})\\\label{eqn: HBCred as image}
 H_{dot}&=\ker( H_A\overset{(\del,\delbar)}{\longrightarrow} H_{BC}\oplus H_{BC}).
\end{align}

\begin{definition}
For any bicomplex $A$, the $ABC$-cohomology is the bicomplex
\[
H_{ABC}(A):=H_{A}(A)\oplus \HBCred(A)
\]
and the $BCA$-cohomology is the bicomplex
\[
H_{BCA}(A):=H_{BC}(A)\oplus \HAred(A).
\]
\end{definition}

For any bicomplex $A$ with $\del\delbar\equiv 0$, we have a short exact sequences
\begin{align}
	0\longrightarrow H_{BC}(A)\longrightarrow A\longrightarrow \HAred(A)\longrightarrow 0,\label{eqn: BCA}\\
	0\longrightarrow \HBCred(A)\longrightarrow A\longrightarrow H_A(A)\longrightarrow 0.\label{eqn: ABC}
\end{align}

We note that $\del\delbar\equiv 0$ on $H_{ABC}$ and $H_{BCA}$, so both constitute functors
\[
\text{bicomplexes}\longrightarrow\text{minimal bicomplexes}.
\] 
In fact, the result is quasi-isomorphic to $A$
\begin{prop}
	For any bicomplex $A$, there are (non-canonical) isomorphisms in $\Ho(\bico)$
	\[
	A\cong H_{ABC}(A)\cong H_{BCA}(A).
	\]
\end{prop}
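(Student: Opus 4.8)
The plan is to reduce to the minimal case and there to produce an \emph{honest} isomorphism of bicomplexes, then transport it to $\Ho(\bico)$. First I would invoke the decomposition $A=A^{zig}\oplus A^{sq}$ of \Cref{cor: minimal + contractible}. Since $A^{sq}$ is a direct sum of squares it is isomorphic to $0$ in $\Ho(\bico)$ by \Cref{lem: characterization acyclicity}, so the projection $A\to A^{zig}$ is an isomorphism in $\Ho(\bico)$. On the other hand $H_{BC}$ and $H_A$ commute with arbitrary direct sums and vanish on squares, hence so do $\widetilde{H}_{BC}=\ker(H_{BC}\to H_A)$ and therefore $H_{ABC}$ and $H_{BCA}$; consequently $H_{ABC}(A)=H_{ABC}(A^{zig})$ and $H_{BCA}(A)=H_{BCA}(A^{zig})$. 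It thus suffices to treat a minimal bicomplex, where I expect to obtain an actual isomorphism of bicomplexes.

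So assume $\del\delbar\equiv 0$. Then $d^2=0$ forces $\delbar\del=0$ as well, whence $\im\del\subseteq\ker\delbar$ and $\im\delbar\subseteq\ker\del$, and the explicit formulas specialize to $\widetilde{H}_{BC}(A)=\im\del+\im\delbar$ and $H_A(A)=A/(\im\del+\im\delbar)$, so that \eqref{eqn: ABC} becomes a genuine sub/quotient sequence of bigraded spaces. I would then pick a bigraded splitting $A=\widetilde{H}_{BC}(A)\oplus C$ with $C\xrightarrow{\sim}H_A(A)$ the induced isomorphism, and define $\phi\colon A\to H_{ABC}(A)=H_A(A)\oplus\widetilde{H}_{BC}(A)$ to be this isomorphism on $C$ and the identity on $\widetilde{H}_{BC}(A)$.

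The one point that requires verification is that $\phi$ is a map of bicomplexes. On the summand $\widetilde{H}_{BC}(A)=\im\del+\im\delbar$ the differential vanishes on both sides. On $C$, the differential of $A$ sends $c$ to $\del c+\delbar c\in\widetilde{H}_{BC}(A)$, while the induced differential of $H_{ABC}(A)$ sends $[c]_A$ to $\del[c]_A+\delbar[c]_A=[\del c]_{BC}+[\delbar c]_{BC}$; since for minimal $A$ one has $H_{BC}(A)=\ker\del\cap\ker\delbar\supseteq\widetilde{H}_{BC}(A)$, with classes represented by themselves, the two agree. Being a bijective chain map, $\phi$ is then an isomorphism of bicomplexes, giving $A\cong H_{ABC}(A)$. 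The statement $A\cong H_{BCA}(A)$ is entirely symmetric: now one uses \eqref{eqn: BCA}, which for minimal $A$ reads $0\to\ker\del\cap\ker\delbar\to A\to\widetilde{H}_A(A)\to 0$, splits it, and builds the analogous isomorphism $H_{BCA}(A)\to A$.

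I do not anticipate a serious technical obstacle; the subtle point is conceptual. The symbol `$\oplus$' in the definitions of $H_{ABC}$ and $H_{BCA}$ denotes a direct sum of the underlying bigraded spaces carrying a \emph{nonzero} induced differential $\del,\delbar\colon H_A\to\widetilde{H}_{BC}$, not a direct sum of bicomplexes. Hence one must resist the temptation to simply split the triangle attached to \eqref{eqn: ABC} in $\Ho(\bico)$: that would replace $H_{ABC}(A)$ by $\widetilde{H}_{BC}(A)\oplus H_A(A)$ with zero differential, which is the wrong object (e.g.\ it has strictly larger $H_{BC}$, as one sees already for a single line). Keeping this distinction in mind and checking the chain-map identity through the minimal-case identifications of $H_{BC}$ and $\widetilde{H}_{BC}$ as honest sub- and subquotient spaces of $A$ is the heart of the matter.
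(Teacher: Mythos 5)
Your proof is correct, and it takes a somewhat different route from the paper's. The paper's own proof is a one-liner: it checks that $H_{ABC}$ and $H_{BCA}$ vanish on squares and reproduce each zigzag, and then invokes the full decomposition into indecomposables from \Cref{thm:decomposition}. You instead use only the coarser splitting $A=A^{zig}\oplus A^{sq}$ of \Cref{cor: minimal + contractible} and then, on the minimal part, produce a uniform explicit isomorphism of bicomplexes by splitting \eqref{eqn: ABC} (resp.\ \eqref{eqn: BCA}) as bigraded vector spaces and checking the chain-map identity via the identifications $H_{BC}(A)=\ker\del\cap\ker\delbar$ and $\widetilde{H}_{BC}(A)=\im\del+\im\delbar$ valid when $\del\delbar\equiv 0$. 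All the individual steps check out: the vanishing of both functors on squares and their compatibility with direct sums (you should say a word about $\widetilde{H}_A$ as well, since $H_{BCA}$ needs it, but it is the cokernel of the same natural transformation and the argument is identical), the inclusion $\im\del\subseteq\ker\delbar$ forced by $d^2=0$ in the minimal case, and the verification that $\phi(dc)=d\phi(c)$ because Bott--Chern classes of a minimal bicomplex are honest elements. What your approach buys is the slightly stronger statement that a \emph{minimal} bicomplex is isomorphic to its $ABC$- and $BCA$-cohomology as an actual bicomplex, with no enumeration of zigzag types and no dependence on the uniqueness part of the classification; what it costs is length. Your closing remark about the summands of $H_{ABC}$ being coupled by a nonzero induced differential, so that one cannot simply split the triangle of \eqref{eqn: ABC} in $\Ho(\bico)$, correctly identifies the one conceptual trap in this statement.
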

\begin{proof}
	One checks by an elementary computation that all expressions are zero on squares and the identity on zigzags. The result then follows from \Cref{thm:decomposition}.
\end{proof}
\begin{rem}[Duality]
	For any bicomplex one has a canonical identification $H_{BCA}(DA)\cong DH_{ABC}(A)$ and thus, by \Cref{lem: Dolbeault Kuenneth and duality}, for any compact complex $n$-fold $X$ there are canonical isomorphisms $H_{BCA}^{p,q}(X)\cong (H_{ABC}^{n-p,n-q}(X))^\vee$. To obtain a self-dual version, one may consider $\HBCred\oplus H_{dot}\oplus \HAred$.
\end{rem}
\subsection{K\"unneth formulae}
For two bicomplexes $A,B$, there are natural maps

\[	H_{BC}(A)\otimes H_{BC}(B)\longrightarrow H_{BC}(A\otimes B)\]
and
 \[	H_{BC}(A)\otimes H_A(B)\oplus H_{A}(A)\otimes H_{BC}(B)\longrightarrow H_A(A\otimes B).\]

Contrary to what one might guess, these are in general neither injective nor surjective.

\begin{ex}
	If $A=L=\langle 1,\del ,\delbar \rangle$, $B=L^{-1}=\langle \del,\delbar,\del\delbar\rangle$, then Bott-Chern cohomology of the tensor product $A\otimes B$ is both one-dimensional and generated by the class $[\delbar\otimes\del + 1\otimes\del\delbar + \del\otimes\delbar]$. On the other hand, $H_{BC}(A)=\langle[\del],[\delbar]\rangle$ and $H_{BC}(B)=\langle[\del\delbar]\rangle$ and the first map is zero for bidegree reasons. 
\end{ex}
The K\"unneth formulae for Bott-Chern and Aeppli cohomology will give explicit expressions for the kernel and cokernel of these maps. Before going through the somewhat technical statement and proof, here is the simple idea: If $A=A^{zig}\oplus A^{sq}$ and $B=B^{zig}\oplus B^{sq}$ as in \Cref{cor: minimal + contractible}, then since the tensor product with a contractible complex is contractible the zigzag part of $A\otimes B$ can be computed from $A^{zig}\otimes B^{zig}$. Since the zigzag part can be unambiguously defined via $ABC$-cohomology, we expect an formula of the form
\[
H_{ABC}(X\times Y)\cong H_{ABC}(H_{ABC}(X)\otimes H_{ABC}(Y))
\]
Since $H_{ABC}$ is a sum of two components, this would split into two formulas, one for $H_A$ and one for $\HBCred$. The same comments apply to $H_{BCA}=H_{BC}\oplus\HAred$. We will now unwrap the parts referring to $H_A$ and $H_{BC}$ and define explicit maps:
\begin{thm}[ABC K\"unneth formulae]\label{thm: algebraic ABC Kuenneth}
	Let $A,B$ be bounded bicomplexes. 
	\begin{enumerate}
		\item There is a natural short exact sequence
		\begin{equation}\label{eqn: SES BC Kuenneth}
			0\longrightarrow \frac{H_{BC}(A)\otimes H_{BC}(B)}{\del\delbar(H_A(A)\otimes H_A(B))}\longrightarrow H_{BC}(A\otimes B)
			\longrightarrow K\longrightarrow 0 
		\end{equation}
		where $K$ is the following space
		\[
		K:=(\ker\del\cap\ker\delbar)(\HAred(A)\otimes H_{BC}(B)\oplus H_{BC}(A)\otimes \HAred(B))
		\]
			\item There is a natural short exact sequence
		\begin{equation}\label{eqn: SES Aeppli Kuenneth}
			0\longrightarrow L\longrightarrow H_A(A\otimes B)\longrightarrow (\ker\del\delbar)(H_A(A)\otimes H_A(B))\longrightarrow 0,
		\end{equation}
		where 
		\[
		L:=\frac{\HBCred(A)\otimes H_A(B)\oplus H_A(A)\otimes \HBCred(B)}{\del(H_A(A)\otimes H_A(B))+ \delbar(H_A(A)\otimes H_A(B))}
		\]
	\end{enumerate}
\end{thm}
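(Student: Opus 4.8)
The plan is to reduce both statements to the case where $A$ and $B$ are indecomposable zigzags, prove (1) there, and then deduce (2) by duality. First I would apply \Cref{cor: minimal + contractible} to split $A=A^{zig}\oplus A^{sq}$ and $B=B^{zig}\oplus B^{sq}$. Since each $\square[p,q]$ is projective, tensoring with a square is contractible, i.e. again a direct sum of squares, so $A\otimes B$ agrees with $A^{zig}\otimes B^{zig}$ up to a summand of squares; as $H_{BC},H_A,\HBCred,\HAred$ all vanish on squares (\Cref{lem: characterization acyclicity}), every term in both sequences depends only on the minimal parts, and I may assume $A,B$ are minimal. All four functors, the tensor product, and the decorated subspaces commute with direct sums in each variable (for instance $\del\delbar(H_A(A)\otimes H_A(B))$ is the image of the map $H_A(A)\otimes H_A(B)\to H_{BC}(A)\otimes H_{BC}(B)$ built from $\del,\delbar:H_A\to H_{BC}$, and this image splits over a bidecomposition). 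Because the two maps are natural transformations, a decomposition of $A$ and $B$ into indecomposables expresses each sequence as a direct sum of the corresponding sequences for pairs $(Z_1,Z_2)$ of bounded zigzags, and exactness of a direct sum of sequences of vector spaces is exactness termwise. Hence it suffices to treat $A=Z_1$, $B=Z_2$ finite-dimensional.

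For the maps in (1): the left-hand arrow is induced by the product $\mu_{BC}: H_{BC}(A)\otimes H_{BC}(B)\to H_{BC}(A\otimes B)$, $[\alpha]\otimes[\beta]\mapsto[\alpha\otimes\beta]$. For $\del\delbar$-closed representatives $\eta,\xi$ of Aeppli classes one computes $\del\delbar(\eta\otimes\xi)=\pm\,\del\eta\otimes\delbar\xi\pm\delbar\eta\otimes\del\xi$, a $\del\delbar$-exact combination of products of Bott--Chern classes; this shows $\del\delbar(H_A(A)\otimes H_A(B))\subseteq\ker\mu_{BC}$, and the first claim to prove is the reverse inclusion, so that the left term is exactly $\im\mu_{BC}$. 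The space $K$ is then the cokernel: it records Bott--Chern classes $[\alpha\otimes\beta]$ of $A\otimes B$ where, say, $[\alpha]\in\HAred(A)$ is only an Aeppli class and $[\beta]\in H_{BC}(B)$, but the product $\alpha\otimes\beta$ is genuinely $\del$- and $\delbar$-closed; that closedness requirement is precisely the operation $(\ker\del\cap\ker\delbar)(-)$ in its definition.

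To verify (1) on $A=Z_1$, $B=Z_2$ I would compute $Z_1\otimes Z_2$ explicitly via the zigzag calculus (\Cref{lem: L^n} and \cite{Ste18}): it decomposes into squares and zigzags combinatorially from the two shapes, and $H_{BC}(Z_1\otimes Z_2)$ is read off from the incoming corners of the resulting zigzags, which one matches against $\im\mu_{BC}$ and $K$. To organize this and avoid a blind case analysis, I would instead tensor the canonical sequence $0\to H_{BC}(A)\to A\to\HAred(A)\to 0$ (valid for minimal $A$) with $B$ and feed it into the long exact sequence of \Cref{cor: les Schweitzer}; since $H_{BC}(A)$ is a sum of dots, this instantly yields injectivity on the quotient and identifies $\im\mu_{BC}$ with the kernel term, while the symmetric half comes from tensoring on the other side. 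The remaining point -- matching the ``new'' closed products with $K$, equivalently showing that the higher Schweitzer connecting terms appearing in that long exact sequence contribute nothing extra -- is the main obstacle, and this is exactly where the explicit zigzag structure is indispensable; the careful tracking of signs in the formula for $\del\delbar(\eta\otimes\xi)$ is a secondary nuisance.

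Finally I would deduce (2) by duality. After the reductions $A,B$ are finite-dimensional, so $A\cong DDA$ and $D(A\otimes B)\cong DA\otimes DB$. Applying (1) to the pair $(DA,DB)$ and dualizing the resulting short exact sequence, I would rewrite each term using $H_{BC}(DA)\cong DH_A(A)$ and $\HAred(DA)\cong D\HBCred(A)$ (the isomorphism $H_{BCA}(DA)\cong DH_{ABC}(A)$, compatible with \Cref{lem: Dolbeault Kuenneth and duality}). Under $D$ the quotient $H_{BC}(A)\otimes H_{BC}(B)/\del\delbar(\cdots)$ becomes the subspace $(\ker\del\delbar)(H_A(A)\otimes H_A(B))$ and $K$ becomes $L$; the one thing to check is that the two decorations -- ``intersect with $\ker\del\cap\ker\delbar$'' and ``quotient by $\im\del+\im\delbar$'' -- are interchanged by duality, which is the mirror of the obstacle above, now on the dual side. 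This gives the second sequence and completes the proof.
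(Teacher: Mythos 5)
Your skeleton---define the product map $i$, reduce to indecomposable zigzags via \Cref{cor: minimal + contractible} and naturality, verify case by case---is essentially the paper's, and your well-definedness computation for $\del\delbar(\eta\otimes\xi)$ matches. But there is a genuine gap: the second natural map of sequence \eqref{eqn: SES BC Kuenneth}, i.e.\ the surjection $H_{BC}(A\otimes B)\to K$ (equivalently a natural map $p:K\to\coker(i)$), is never constructed. You describe $K$ as ``the cokernel'' and your reduction step explicitly invokes ``the two maps are natural transformations,'' yet only $i$ is ever defined. This matters because the decomposition into indecomposables is non-canonical: checking $\coker(i)\cong K$ pair-by-pair on zigzags produces only an abstract isomorphism depending on the chosen decomposition, not the asserted \emph{natural} short exact sequence, and the direct-sum reduction itself presupposes the map is already in hand. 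In the paper this construction is the technical heart of part (1): given $\mathfrak{c}\in K$ one must choose witnesses $e^\del_j,f^\del_j,g^\del_j,h^\del_j$ (and $\delbar$-counterparts) for the conditions $\del\mathfrak{c}=\delbar\mathfrak{c}=0$, write down an explicit corrected representative in $\ker\del\cap\ker\delbar\subseteq A\otimes B$, and verify independence of all choices modulo $\im(i)$. The ``main obstacle'' you flag is the downstream verification on zigzags, which is comparatively routine once the map exists.

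For part (2) your duality route genuinely differs from the paper, which simply repeats the case analysis; the identifications $H_{BC}(DA)\cong DH_A(A)$, $\HAred(DA)\cong D\HBCred(A)$ and the exchange of ``intersect with $\ker$'' and ``quotient by $\im$'' under $D$ are all correct. However it inherits the same naturality issue and adds one more: bounded does not imply finite-dimensional (consider infinitely many dots in a single bidegree), so $A\cong DDA$ fails in general and dualizing the sequence for $(DA,DB)$ does not directly yield a natural sequence for $(A,B)$; applying $D$ only to indecomposables again sacrifices naturality. The duality idea would give a clean proof of (2) if you first define the natural maps of \eqref{eqn: SES Aeppli Kuenneth} directly for arbitrary $A,B$ (which, as the paper notes, is easier than in case (1)) and then use duality only to verify exactness on the finite-dimensional indecomposables.
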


\begin{proof}
	Concerning \eqref{eqn: SES BC Kuenneth}, we define the left-hand map via 
	\[
	[a]_{BC}\otimes [b]_{BC}\mapsto [a\otimes b]_{BC}
	\] 
	and denote it by $i$. It is well-defined since, up to sign, \[
	\del\delbar([c]_{A}\otimes [d]_A)=[\del c]_{BC}\otimes[\delbar d]_{BC}\pm[\delbar c]_{BC}\otimes [\del d]_{BC}\mapsto [\del\delbar(c\otimes d)]_{BC}=0.
	\]
	On the other hand, there is a natural homomorphism from $p:K\to \coker(i)$ as follows:
	
	Consider a class $\mathfrak{c}\in K$. It can be written as
	\[
	\mathfrak c=\sum_{i} [a_i]_{\tilde A}\otimes [b_i]_{BC}+[c_i]_{BC}\otimes[d_i]_{\tilde A},
	\] 
	where we assume the individual factors to be of pure degree.
	The conditions $\del\mathfrak{c}=0$ translate into the existence of $e^\del_j,f^\del_j,g^\del_j,h^\del_j$ of pure degree such that:
	\[
	\sum_i\del a_i\otimes b_i+(-1)^{|c_i|}c_i\otimes \del d_i=\sum \del\delbar e_j^\del\otimes f_j^\del+~g^\del_j\otimes \del\delbar h^\del_j
	\]
	and similarly $e^\delbar_k,f^\delbar_k,g^\delbar_k,h^\delbar_k$ satisfying the analogous equation with the roles of $\del$ and $\delbar$ reversed. We may then put
	\begin{align*}
	p(\mathfrak c):=&[\sum_i a_i\otimes b_i+c_i\otimes d_i\\
	&-\sum_j(\delbar e^\del_j\otimes f^\del_j+(-1)^{|g^\del_j|} g^{\del}_j\otimes \delbar f^\del_j)\\
	&+\sum_k(\del e^\delbar_k\otimes f^\delbar_k+(-1)^{|g^\delbar_k|} g^{\delbar}_k\otimes \del f^\delbar_k)]
	\end{align*}
	One verifies that this defines indeed a class in $\coker(i)$ which is independent of the choices made and that the resulting map is a homomorphism. For example, if one makes a different choice $\tilde{e}_j^\del=e_j^\del + e'$ with a $\del\delbar$-closed element $e'$, the representative for $p(\mathfrak c)$ changes by $\delbar e'\otimes f_j^\del\in H_{BC}(A)\otimes H_{BC}(B)$. 
	
	Now that the maps are in place, it remains to show injectivity of $i$ and that $p$ is an isomorphism. For this, one may assume $A$ and $B$ indecomposable in which case this is a straightforward calculation. We will only indicate the results for every possible pair of indecomposable bicomplexes:
	
	If either $A$ or $B$ is a square, all terms in the short exact sequence vanish, so there is nothing to show. If $A$ is a dot and $B$ an arbitrary zigzag, then $A\otimes B$ is a shifted version of $B$, and $\del(H_A(A))=\delbar(H_A(A))=0$ so the denominator of the left hand term and $K$ vanish and $i$ is an isomorphism. Finally, assume $A,B$ are both zigzag of length $\geq 2$, concentrated in degrees $k(A),k(A)+1$ and $k(B),k(B)+1$. Then the tensor product $A\otimes B$ is a bicomplex concentrated in degrees $l,l+1,l+2$ with $l=k(A)+k(B)$. In degree $l$, all terms in \eqref{eqn: SES BC Kuenneth} vanish. In degree $l+1$, the left hand side vanishes and $p:K\to H_{BC}(A\otimes B)$ is an isomorphism (note also that there is no choice involved in the definition of $p$ in this total degree). Finally, in degree $l+2$, $K=0$ and $i$ is an isomorphism.


	The proof for \eqref{eqn: SES Aeppli Kuenneth} is similar, but here the definition and well-definedness of the maps is more immediately clear. Again one may verify exactness on a case-by-case inspection of the combinations of indecomposable bicomplexes, which yields the following results: For $A$ or $B$ a square, the whole sequence is zero. For $A$ a dot and $B$ any zigzag, $\del H_A(A)=\delbar H_A(A)=\HBCred(A)=0$ so the left hand side vanishes and the right hand map is an isomorphism $H_A(A\otimes B)\cong H_A(A)\otimes H_A(B)$. Now, consider the case of both $A,B$ zigzags of length $\geq 2$ in degrees $k(A),k(A)+1$, resp. $k(B),k(B)+1$, so that $A\otimes B$ is concentrated in degrees $l,l+1,l+2$ with $l=k(A)+k(B)$. All spaces in the sequence will be zero except possibly in degrees $l,l+1$. In degree $l$, the left hand side is zero and the right hand map is an isomorphism. In degree $l+1$, the right hand side is zero and the left hand map is an isomorphism.
\end{proof}

\begin{cor}\label{cor: geometric ABC Kuenneth}Let $X,Y$ be compact complex manifolds and consider the projections $\pr_X,\pr_Y$ to both factors. Then there are short exact sequences
	\begin{equation}\label{eqn: SES-BC mfds}
		0 \longrightarrow\frac{H_{BC}(X)\otimes H_{BC}(Y)}{\del\delbar(H_A(X)\otimes H_A(Y))}\overset{\pr^*_X\times \pr_Y^*}{\longrightarrow} H_{BC}(X\times Y)
		\longrightarrow K\longrightarrow 0,
	\end{equation}
	and 
	\begin{equation}\label{eqn: SES Aeppli Kuenneth mfds}
	0\longrightarrow L\longrightarrow H_A(X\times Y)\longrightarrow (\ker\del\delbar)(H_A(X)\otimes H_A(Y))\longrightarrow 0,
\end{equation}

	where $K,L$ are defined as in \Cref{thm: algebraic ABC Kuenneth}.
\end{cor}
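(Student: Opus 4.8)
The plan is to deduce the geometric K\"unneth formulae directly from their algebraic counterparts in \Cref{thm: algebraic ABC Kuenneth} by transporting the middle terms along the Dolbeault--K\"unneth quasi-isomorphism. The outer terms of the sequences \eqref{eqn: SES BC Kuenneth} and \eqref{eqn: SES Aeppli Kuenneth} are manufactured entirely out of $H_{BC}$, $H_A$, $\HAred$ and $\HBCred$ of the two factors, so with $A=A_X$ and $B=A_Y$ they already coincide with the spaces $K$ and $L$ appearing in \eqref{eqn: SES-BC mfds} and \eqref{eqn: SES Aeppli Kuenneth mfds}; only the middle term has to be re-expressed in terms of $X\times Y$.

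First I would observe that, since $X$ and $Y$ are compact, the bicomplexes $A_X$ and $A_Y$ are bounded (concentrated in bidegrees $(p,q)$ with $0\leq p,q\leq \dim X$, resp. $\dim Y$), so \Cref{thm: algebraic ABC Kuenneth} applies verbatim with $A=A_X$ and $B=A_Y$, yielding the short exact sequences \eqref{eqn: SES BC Kuenneth} and \eqref{eqn: SES Aeppli Kuenneth} for $A_X\otimes A_Y$. Next, by \Cref{lem: Dolbeault Kuenneth and duality} the K\"unneth map $\mu\colon A_X\otimes A_Y\to A_{X\times Y}$, $\omega\otimes\eta\mapsto \pr_X^*\omega\wedge\pr_Y^*\eta$, is a bigraded quasi-isomorphism. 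Since $H_{BC}$ and $H_A$ are cohomological functors (both vanish on squares), \Cref{cor: coh functors and quisos} shows that $H_{BC}(\mu)$ and $H_A(\mu)$ are isomorphisms. Substituting these isomorphisms for the middle terms of the two algebraic sequences then produces exactly \eqref{eqn: SES-BC mfds} and \eqref{eqn: SES Aeppli Kuenneth mfds}.

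The one point that requires genuine verification -- and which I expect to be the only real obstacle -- is that the left-hand map of \eqref{eqn: SES BC Kuenneth}, namely $[a]_{BC}\otimes[b]_{BC}\mapsto[a\otimes b]_{BC}$, is carried by $H_{BC}(\mu)$ precisely to the geometric map $\pr_X^*\times\pr_Y^*$ of \eqref{eqn: SES-BC mfds}. This is immediate from the definition of $\mu$: post-composing with $H_{BC}(\mu)$ sends $[\omega]_{BC}\otimes[\eta]_{BC}$ to $[\mu(\omega\otimes\eta)]_{BC}=[\pr_X^*\omega\wedge\pr_Y^*\eta]_{BC}$, which is what $\pr_X^*\times\pr_Y^*$ denotes. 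Because the sequences of \Cref{thm: algebraic ABC Kuenneth} are natural in both arguments and $\mu$ is a morphism of bicomplexes, the squares comparing the algebraic and geometric sequences commute automatically; in particular the same transport identifies the maps in the Aeppli sequence \eqref{eqn: SES Aeppli Kuenneth mfds}, so no further compatibility need be checked.
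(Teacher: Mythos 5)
Your proposal is correct and follows the same route as the paper, which deduces the corollary by applying \Cref{thm: algebraic ABC Kuenneth} to $A=A_X$, $B=A_Y$ and transporting the middle terms along the K\"unneth bigraded quasi-isomorphism of \Cref{lem: Dolbeault Kuenneth and duality}. Your additional checks (boundedness of $A_X$, $A_Y$ and the identification of the induced map with $\pr_X^*\times\pr_Y^*$) are exactly the details the paper leaves implicit.
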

\begin{proof}
	This follows from \Cref{thm: algebraic ABC Kuenneth}, and \Cref{lem: Dolbeault Kuenneth and duality}.
\end{proof}
Let us specialize the Bott-Chern formula to the case of degrees $(p,0)$, i.e. to closed holomorphic forms. We then recover the following result:
\begin{cor}
	For compact complex manifolds $X,Y$, there are injections
	\[
	\bigoplus_{a+b=p} H^{a,0}_{BC}(X)\otimes H^{b,0}_{BC}(Y)\hookrightarrow H_{BC}^{p,0}(X\times Y)
	\]
	for any given $p\in\Z$.
\end{cor}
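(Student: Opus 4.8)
The plan is to read the desired injection directly off the Bott–Chern Künneth sequence \eqref{eqn: SES-BC mfds} of \Cref{cor: geometric ABC Kuenneth}, after restricting to bidegree $(p,0)$. Since \eqref{eqn: SES-BC mfds} is a short exact sequence of bigraded vector spaces, its left-hand map
\[
\frac{H_{BC}(X)\otimes H_{BC}(Y)}{\del\delbar(H_A(X)\otimes H_A(Y))}\overset{\pr^*_X\times\pr_Y^*}{\longrightarrow}H_{BC}(X\times Y)
\]
is injective, and it is strict for the bigrading, so it stays injective after passing to the $(p,0)$-component. It therefore suffices to identify the $(p,0)$-component of the source with $\bigoplus_{a+b=p}H_{BC}^{a,0}(X)\otimes H_{BC}^{b,0}(Y)$.

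First I would handle the numerator. As $X,Y$ are compact, $H_{BC}^{a,b}$ vanishes whenever $a<0$ or $b<0$, so a summand $H_{BC}^{a_1,a_2}(X)\otimes H_{BC}^{b_1,b_2}(Y)$ can contribute to total bidegree $(p,0)$ only if $a_2+b_2=0$ with $a_2,b_2\ge 0$; this forces $a_2=b_2=0$ and $a_1+b_1=p$. Hence $\bigl(H_{BC}(X)\otimes H_{BC}(Y)\bigr)^{p,0}=\bigoplus_{a+b=p}H_{BC}^{a,0}(X)\otimes H_{BC}^{b,0}(Y)$.

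The one point that actually requires an argument is that the denominator $\del\delbar(H_A(X)\otimes H_A(Y))$ is trivial in bidegree $(p,0)$. Here I would invoke the explicit formula established in the proof of \Cref{thm: algebraic ABC Kuenneth}, namely that $\del\delbar$ sends $[c]_A\otimes[d]_A$ to $[\del c]_{BC}\otimes[\delbar d]_{BC}\pm[\delbar c]_{BC}\otimes[\del d]_{BC}$. In each of these two terms exactly one tensor factor has had its antiholomorphic degree raised by one (the factor $[\delbar d]_{BC}$, resp.\ $[\delbar c]_{BC}$), while all factors have nonnegative bidegree by compactness; hence the total antiholomorphic degree of any image is at least $1$. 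In particular nothing lands in bidegree $(p,0)$, so the denominator vanishes there.

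Combining these, the restriction of the injective map to bidegree $(p,0)$ becomes the claimed injection $\bigoplus_{a+b=p}H_{BC}^{a,0}(X)\otimes H_{BC}^{b,0}(Y)\hookrightarrow H_{BC}^{p,0}(X\times Y)$; for $p<0$ both sides vanish and the statement is vacuous. I expect the only delicate step to be the bidegree bookkeeping for the denominator, everything else being a direct consequence of exactness in \eqref{eqn: SES-BC mfds}.
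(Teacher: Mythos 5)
Your proposal is correct and is exactly the paper's intended argument: the corollary is stated as a specialization of the Bott--Chern K\"unneth sequence \eqref{eqn: SES-BC mfds} to bidegree $(p,0)$, where injectivity of the left-hand map of the short exact sequence gives the claim once one observes that the numerator in bidegree $(p,0)$ is $\bigoplus_{a+b=p}H_{BC}^{a,0}(X)\otimes H_{BC}^{b,0}(Y)$ and the denominator $\del\delbar(H_A(X)\otimes H_A(Y))$ vanishes there for first-quadrant degree reasons. Your bookkeeping for both points is accurate, so nothing is missing.
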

Conjugation gives the analogous result in degrees $(0,p)$. In degree $(1,0)$, we recover the following result, contained (without surjectivity in $H_A$) in \cite{CR22}.
\begin{cor}
	For compact complex manifolds $X,Y$, there are isomorphisms
	\[
	H_{BC}^{0,1}(X)\oplus H_{BC}^{0,1}(Y)\cong H^{0,1}_{BC}(X\times Y)
	\]
	and
	\[
	H_A^{0,1}(X)\oplus H_A^{0,1}(Y)\cong H^{0,1}_{A}(X\times Y)
	\]
\end{cor}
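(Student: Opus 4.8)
The plan is to read off both isomorphisms from the Künneth short exact sequences of \Cref{cor: geometric ABC Kuenneth} specialized to bidegree $(0,1)$, and to check that in this degree the two correction terms $K$ and $L$ vanish. Throughout I will use two elementary inputs valid for compact connected $X$: first, $H_{BC}^{p,q}$ and $H_A^{p,q}$ vanish whenever $p<0$ or $q<0$; second, the normalization $H_{BC}^{0,0}=H_A^{0,0}=\C$ with the natural map \eqref{eqn: BC to A} an isomorphism, so that $\HBCred^{0,0}=\HAred^{0,0}=0$ and the constant classes are annihilated by the induced $\del$ and $\delbar$. The second fact is where connectedness really enters: one has $H_A^{0,0}\cong(H_{BC}^{n,n})^\vee=\C$ by \Cref{lem: Dolbeault Kuenneth and duality}, equivalently there are no nonconstant functions $f$ with $\del\delbar f=0$ on a compact connected manifold.

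For the Bott--Chern statement I first compute the left-hand term of \eqref{eqn: SES-BC mfds} in degree $(0,1)$. By the degree constraint only $H_{BC}^{0,0}\otimes H_{BC}^{0,1}$ and $H_{BC}^{0,1}\otimes H_{BC}^{0,0}$ contribute, while the subtracted image $\del\delbar(H_A(X)\otimes H_A(Y))$ would require a tensor factor in bidegree $(-1,0)$ and hence vanishes; using $H_{BC}^{0,0}=\C$ this term is exactly $H_{BC}^{0,1}(X)\oplus H_{BC}^{0,1}(Y)$, mapping in via $\pr_X^*\times\pr_Y^*$. It remains to show $K^{0,1}=0$. Expanding the definition of $K$ from \Cref{thm: algebraic ABC Kuenneth} and using $\HAred^{0,0}=0$, the underlying space in degree $(0,1)$ is $\HAred^{0,1}(X)\oplus\HAred^{0,1}(Y)$, on which we impose $\ker\del\cap\ker\delbar$. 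For a class represented by $\alpha\in A^{0,1}(X)$ with $\del\delbar\alpha=0$, lying in this joint kernel forces $\del\alpha\in\im\del\delbar$ and $\delbar\alpha\in\im\del\delbar$; since $A^{0,2}$ meets $\im\del\delbar$ only in $0$, this gives $\delbar\alpha=0$ and $\del\alpha=\del\delbar f$, so $\alpha-\delbar f$ is $d$-closed and $[\alpha]$ already lies in the image of $H_{BC}\to H_A$, i.e.\ vanishes in $\HAred$. Hence $K^{0,1}=0$ and \eqref{eqn: SES-BC mfds} yields the first isomorphism.

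For the Aeppli statement I use \eqref{eqn: SES Aeppli Kuenneth mfds}. The right-hand term $(\ker\del\delbar)(H_A(X)\otimes H_A(Y))$ in degree $(0,1)$ is built from $H_A^{0,0}\otimes H_A^{0,1}$ and $H_A^{0,1}\otimes H_A^{0,0}$; since the constant class in $H_A^{0,0}$ is killed by the induced $\del$ and $\delbar$, every such tensor is automatically $\del\delbar$-closed, so this term equals $H_A^{0,1}(X)\oplus H_A^{0,1}(Y)$. It remains to see $L^{0,1}=0$. By the formula for $L$ together with $\HBCred^{0,0}=0$, its numerator in degree $(0,1)$ is $\HBCred^{0,1}(X)\oplus\HBCred^{0,1}(Y)$, while the denominator vanishes for degree reasons. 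Finally $\HBCred^{0,1}=0$: using the description $\HBCred=\im(\del+\delbar\colon H_A\oplus H_A\to H_{BC})$, in bidegree $(0,1)$ this image is $\del(H_A^{-1,1})+\delbar(H_A^{0,0})=\delbar(\C)=0$, since the constant class is $\delbar$-closed. Thus $L^{0,1}=0$ and \eqref{eqn: SES Aeppli Kuenneth mfds} gives the second isomorphism.

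The routine part is the degree bookkeeping that isolates the surviving tensor factors; the main obstacle is the vanishing of the two correction terms $K^{0,1}$ and $L^{0,1}$. Both reduce to low-degree vanishing of the reduced cohomologies, namely $(\ker\del\cap\ker\delbar)(\HAred^{0,1})=0$ and $\HBCred^{0,1}=0$, which in turn rest entirely on the normalization $H_A^{0,0}=\C$, i.e.\ on compactness and connectedness; without it (for instance upon dropping connectedness) these terms need not vanish and the clean direct-sum decomposition fails.
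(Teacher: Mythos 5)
Your proof is correct and follows the paper's own route: both specialize the K\"unneth sequences of \Cref{cor: geometric ABC Kuenneth} to bidegree $(0,1)$ and check that the correction terms $K$ and $L$ vanish there, using $H_{BC}^{0,0}=H_A^{0,0}=\C$ and the consequent vanishing of $\HAred^{0,0}$ and $\HBCred^{0,0}$. The only cosmetic difference is that you verify $(\ker\del\cap\ker\delbar)(\HAred^{0,1})=0$ by an explicit representative argument exploiting the first-quadrant position, whereas the paper implicitly relies on the general fact that the joint kernel of $\del,\delbar$ on $\HAred$ vanishes for any bicomplex, since $\ker(H_A\to H_{BC}\oplus H_{BC})=H_{dot}=\im(H_{BC}\to H_A)$.
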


\begin{proof}
	Note that we have $H_{BC}^{0,0}(X)=\C$. Thus, if we restrict to degree $(0,1)$, the left hand side of \eqref{eqn: SES-BC mfds} becomes $H_{BC}^{0,1}(X)\oplus H_{BC}^{0,1}(Y)$. On the other hand, we have $\HAred^{0,0}(X)=0$ and so 
	\begin{align*}	K&=(\ker\del\cap\ker\delbar)(\HAred^{0,1}(Y)\oplus \HAred^{0,1}(Y))\\
		&=(\ker\del\cap\ker\delbar)(\HAred^{0,1}(Y))\oplus (\ker\del\cap\ker\delbar)(\HAred^{0,1}(Y))=0.
	\end{align*}
	This shows the first isomorphism. For the second, we note that again because $\HAred^{0,0}(X)=0$ we have $\HBCred^{0,1}(X)=\HBCred^{0,0}(X)=0$ and so the left hand side of \eqref{eqn: SES Aeppli Kuenneth mfds} in degree $(0,1)$ is zero. Since $H_{BC}^{0,0}(X)\cong H_A^{0,0}(X)=\C$ the right hand side is equal to $H_A^{0,1}\oplus H_A^{0,1}$.	
\end{proof}

\begin{rem}
	Another way of proving the Bott-Chern case of this Corollary is to note that there is an isomorphism of holomorphic vector bundles $p_X^*\Omega_X^1\oplus p_Y^*\Omega_Y^1\cong \Omega_{X\times Y}^1$ and hence an identification of global sections (holomorphic forms), which restricts to an identification of the spaces of closed holomorphic forms. This gives the result in degree $(1,0)$, which, by conjugation, is equivalent to the statement in degree $(0,1)$.
\end{rem}

%
The same kind of arguments show:
\begin{cor}
	If $X,Y$ are compact complex manifolds and $X$ satisfies the $\del\delbar$-Lemma, the pullbacks $p_X^*, p_Y^*$ induce isomorphisms
	\begin{align*}
		H_{BC}(X)\otimes H_{BC}(Y)&\cong H_{BC}(X\times Y)\\
		H_{A}(X)\otimes H_A(Y)&\cong H_A(X\times Y).
	\end{align*}
\end{cor}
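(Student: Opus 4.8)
The plan is to reduce to the structural description of the $\del\delbar$-Lemma: for a compact $X$ it holds precisely when $A_X$ has no zigzag of length $\geq 2$ among its indecomposable summands, i.e. $A_X$ is a direct sum of squares and dots. In the vocabulary developed above this is the vanishing $\HBCred(X)=0$, equivalently (by the Duality remark together with \Cref{lem: Dolbeault Kuenneth and duality}, which identifies $\HBCred$ and $\HAred$ dually for compact manifolds) $\HAred(X)=0$; either says the natural map $H_{BC}(X)\to H_A(X)$ of \eqref{eqn: BC to A} is an isomorphism. Granting this, the proposition identifying $A\cong H_{ABC}(A)=H_A(A)\oplus\HBCred(A)$ in $\Ho(\bico)$ gives $A_X\cong H_A(X)$ with vanishing $\del,\delbar$, so that in $\Ho(\R\bico)$ one has $A_X\cong\bigoplus_i\bullet[p_i,q_i]$, the bidegrees $(p_i,q_i)$ running over a homogeneous basis of $H_{BC}(X)\cong H_A(X)$.

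I would then compute directly in the homotopy category. By the Künneth isomorphism of \Cref{lem: Dolbeault Kuenneth and duality} and additivity of the tensor product,
\[
A_{X\times Y}\cong A_X\otimes A_Y\cong\Big(\bigoplus_i\bullet[p_i,q_i]\Big)\otimes A_Y\cong\bigoplus_i A_Y[p_i,q_i].
\]
Applying the cohomological functors $H_{BC}$ and $H_A$, which are additive and commute with shifts, gives $H_{BC}(X\times Y)\cong\bigoplus_i H_{BC}(Y)[p_i,q_i]$ and $H_A(X\times Y)\cong\bigoplus_i H_A(Y)[p_i,q_i]$; since $\{(p_i,q_i)\}$ indexes a basis of $H_{BC}(X)\cong H_A(X)$, the right-hand sides are $H_{BC}(X)\otimes H_{BC}(Y)$ and $H_A(X)\otimes H_A(Y)$. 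It remains to check that these identifications are the ones induced by $p_X^*,p_Y^*$, which I would do by tracing a Bott--Chern representative through the chain: the $i$-th dot corresponds under $\omega\otimes\eta\mapsto p_X^*\omega\wedge p_Y^*\eta$ to $p_X^*c_i\wedge p_Y^*(-)$, so $[c_i]_{BC}\otimes[d]_{BC}\mapsto[p_X^*c_i\wedge p_Y^*d]_{BC}$, which is precisely $p_X^*\times p_Y^*$.

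Alternatively one can argue, in the spirit of the preceding corollaries, straight from the short exact sequences of \Cref{cor: geometric ABC Kuenneth}: under $\HAred(X)=0$ the summand $\HAred(X)\otimes H_{BC}(Y)$ of $K$ drops out, and since every class in $H_A(X)$ has a $\del$- and $\delbar$-closed representative the denominator $\del\delbar(H_A(X)\otimes H_A(Y))$ is zero, the dual vanishings disposing of $L$ and of the right-hand term of \eqref{eqn: SES Aeppli Kuenneth mfds}. The one genuinely delicate point --- and the main obstacle --- is the remaining piece $(\ker\del\cap\ker\delbar)(H_{BC}(X)\otimes\HAred(Y))$ of $K$, which is not killed by any hypothesis on $X$ at the level of cohomology spaces: its vanishing rests on the fact that a tensor $c\otimes d$ with $c$ closed can be $\del$- and $\delbar$-closed only if $\del d=\delbar d=0$, forcing $[d]$ to vanish in $\HAred(Y)$. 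Making this precise is cleanest after reducing to indecomposable summands, where each component is one-dimensional and the vanishing is exactly the ``dot $\otimes$ zigzag'' computation already appearing in the proof of \Cref{thm: algebraic ABC Kuenneth}. This is precisely the reduction that the structural characterization of the $\del\delbar$-Lemma supplies, which is why I would take the decomposition argument as primary and regard the exact sequences as a confirmation.
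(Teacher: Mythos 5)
Your proposal is correct. Note that the paper's own proof is exactly the route you relegate to the end: it reads the statement off the short exact sequences of \Cref{cor: geometric ABC Kuenneth} ``by the same kind of arguments'' as the preceding degree-$(0,1)$ corollary, checking that the hypothesis on $X$ kills the outer terms $K$, $L$ and the denominator $\del\delbar(H_A(X)\otimes H_A(Y))$. Your primary argument is genuinely different: you use the structural characterization of the $\del\delbar$-Lemma to replace $A_X$ by a direct sum of dots in $\Ho(\R\bico)$ and then invoke the K\"unneth quasi-isomorphism together with additivity and shift-invariance of $H_{BC}$ and $H_A$. This is shorter, makes the role of the hypothesis completely transparent, and gives at one stroke $H(X\times Y)\cong H_{BC}(X)\otimes H(Y)$ for any of the cohomological functors, not just the two in the statement; its only delicate point is the identification of the abstract isomorphism with the map induced by $p_X^*,p_Y^*$, which you handle correctly by realizing the dot decomposition through an honest bicomplex map $\bigoplus_i\bullet[p_i,q_i]\to A_X$ on simultaneously $\del$- and $\delbar$-closed representatives. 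You have also correctly isolated the one non-obvious vanishing in the exact-sequence route, namely $(\ker\del\cap\ker\delbar)\bigl(H_{BC}(X)\otimes\HAred(Y)\bigr)=0$: as you say, this follows from the injectivity of $(\del,\delbar)$ on $\HAred$, which is immediate from \eqref{eqn: HBCred as image} since $H_{dot}=\ker\bigl((\del,\delbar):H_A\to H_{BC}\oplus H_{BC}\bigr)$ is precisely what is quotiented out in forming $\HAred$. Either route is a complete proof; the decomposition argument is the cleaner of the two, while the exact-sequence argument stays entirely within the stated K\"unneth formulae.
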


\section{Wedge products}\label{sec: wedge}

\subsection{Commutative bigraded bidifferential algebras}
By a (graded) commutative bigraded bidifferential algebra (short: cbba) over $\C$, we mean a bicomplex $(A^{\Cdot,\Cdot},\del,\delbar)$ with the additional data of a product $\wedge:A\times A\to A$ of bidegree $(0,0)$ and a unit $\C\to A$ that makes $A$ into a unital graded-commutative algebra (with respect to the total degree) and for which $\del$ and $\delbar$ satisfy the Leibniz-rule. We denote the category of all cbba's by $\cbba$. 

\subsubsection{Free-forgetful adjunctions}For any bicomplex $B$, we can form the free cbba $\Lambda B$ on $B$, i.e. the free bigraded graded-commutative algebra on $B$ as a vector space, with differentials induced by those of $B$. Conversely, we have a forgetful functor sending a cbba to its underlying bicomplex. These constructions give a pair of adjoint functors
\begin{equation}\label{eqn: adjunction bico cbba}
	\Lambda:\bico\leftrightarrows\cbba:U
\end{equation}
We will also consider the category $\cbba_0$ of augmented cbba's, i.e. of pairs $(A,\varepsilon)$ where $A\in \cbba$ and $\varepsilon:A\to \C$ is a map in $\cbba$. For instance, for any complex manifold $X$ and a point $x\in X$, the evaluation map $\varepsilon_x: A_X\to \C$ is an augmentation. For any $(A,\varepsilon)$ in $\cbba_0$, we denote by $A^+=\ker\varepsilon$ the augmentation ideal. Note that a free cbba $\Lambda B$ has a canonical augmentation induced by $B\ni b\mapsto 0$. This yields an adjunction
\begin{equation}\label{eqn: adjunction bico cbba augmented}
	\Lambda: \bico\leftrightarrows\cbba_0:(~)^+.
\end{equation}

We will further consider the full subcategories $\cbba^{\geq 0}$ and $\cbba^{f.q.}$ of algebras concentrated in nonnegative total degree, resp. the first quadrant, i.e. such that $A^{p,q}=0$ whenever $p+q<0$ (resp. $A^{p,q}=0$ for $p<0$ or $q<0$). For any cbba $A$, the complex conjugate $\bar{A}$ of the underlying bicomplex is again naturally a cbba and a real structure on $A$ is a $\C$-linear isomorphism $\sigma: A\cong \bar{A}$. We denote the category of pairs $(A,\sigma)$ by $\R\cbba$ and the augmented version by $\R\cbba_0$. There are straightforward versions of the free-forgetful adjunction above for the categories $\R\bico\leftrightarrows \R\cbba$ and the bounded-below counterparts of the complex and real categories.

\subsubsection{Notions of connectedness}
A cbba $A$ is called connected if it is concentrated in non-negative total degrees and the unit $\C\to A^0$ is an isomorphism. It is called simply connected if it is connected and $A^1=0$. For example, the free algebra on a positively graded bicomplex is connected. $A$ is called cohomologically connected if $H_A^{< 0}(A)=0$ and $\C\cong H^0(A)$. A compact complex manifold $X$ is connected iff $A_X$ is cohomologically connected. $A$ is called cohomologically simply connected if it is cohomologically connected and $H^1_A(A)=0$. A compact complex manifold $X$ is called holomorphically simply connected if $A_X$ is cohomologically simply connected. Using \Cref{thm:decomposition}, one checks that for $X$ holomorphically simply connected one has $H^1_{\delbar}(X)=0$ and, in particular, $H_{dR}^1(X)=0$.

\subsection{A model category structure on $\cbba$}
To define a model category structure on $\cbba$, we transfer the model category structure from $\bico$ via the adjunction in \eqref{eqn: adjunction bico cbba}. More precisely, we claim to obtain a model category structure on $\cbba$ if we define a map of cbba's to be

\begin{enumerate}
	\item a fibration if it is surjective,
	\item a weak equivalence if it is a bigraded quasi-isomorphism,
	\item a cofibration if it satisfies the left-lifting property with respect to all acyclic fibrations.
\end{enumerate}
Recall that a fibration is said to be acyclic if it is also a weak equivalence and that a map $f:A\to B$ has the left-lifting property with respect to a map $p:C\to D$ if for any commutative diagram of solid arrows
\[
\begin{tikzcd}
	A\arrow[r]\arrow[d,"f"]&C\arrow[d,"p"]\\
	B\ar[ru,"q",dashed]\arrow[r]&D,
\end{tikzcd}
\]
there exists a map $q:B\to C$ making the two triangles commute. 
\begin{thm}\label{thm: model cat cbba}
Defining the fibrations, weak equivalences and cofibrations as above yields a model category structure on $\cbba$ (resp. $\R\cbba$).
\end{thm}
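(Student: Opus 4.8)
The plan is to \emph{transfer} the model structure on $\bico$ across the free--forgetful adjunction $\Lambda\colon\bico\leftrightarrows\cbba\colon U$ of \eqref{eqn: adjunction bico cbba}, via the standard recognition/transfer principle for cofibrantly generated model categories (Kan's recognition theorem, \cite[Thm.~2.1.19]{Hov99}). Concretely, one declares a map of cbba's to be a fibration (resp.\ weak equivalence) exactly when $U$ of it is one in $\bico$; by the definition of the model structure on $\bico$ this reproduces ``surjective'' and ``bigraded quasi-isomorphism'', and the cofibrations are then forced to be the maps with the left-lifting property against acyclic fibrations, as in the statement. To obtain a model structure it then suffices to verify: (i) $\cbba$ is bicomplete; (ii) the domains of the generating (acyclic) cofibrations stay small after applying $\Lambda$; and (iii) the \emph{acyclicity} condition, that every relative $\Lambda J$-cell complex is a weak equivalence, where $J$ is a chosen set of generating acyclic cofibrations for $\bico$.

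Points (i) and (ii) are formal. Limits in $\cbba$ are computed on underlying bicomplexes, and colimits exist since $\cbba$ is the category of algebras for the finitary monad $U\Lambda$ on the Grothendieck category $\bico$; hence $\cbba$ is locally presentable, every object is small, and (ii) holds automatically. As the category of modules over the Frobenius algebra $\Lambda(\del,\delbar)$, $\bico$ carries a cofibrantly generated model structure with monomorphisms as cofibrations and epimorphisms as fibrations; by \Cref{lem: characterization acyclicity} a monomorphism is a weak equivalence iff its cokernel is $\cong 0$ in $\Ho(\bico)$, i.e.\ a direct sum of squares. Since squares are injective, such a mono splits, so every acyclic cofibration has the form $A\hookrightarrow A\oplus P$ with $P$ a sum of squares. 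I would therefore take $J=\{\,0\to\square[p,q]\mid p,q\in\Z\,\}$: every monomorphism with sum-of-squares cokernel is a transfinite composition of pushouts of maps in $J$ (pushing out $0\to\square[p,q]$ along $0\to A$ yields $A\to A\oplus\square[p,q]$).

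The crux is the acyclicity condition (iii), and the choice of $J$ makes it tractable. Because $\Lambda(0)=\C$ is initial in $\cbba$, a pushout of $\Lambda(0\to\square[p,q])=(\C\to\Lambda\square[p,q])$ along the unit $\C\to M$ is just the inclusion
\[
M\cong M\otimes\C\longrightarrow M\otimes\Lambda(\square[p,q])
\]
induced by the unit of $\Lambda(\square[p,q])$. Now for any bicomplex $A$ the diagonal module $\square\otimes A$ is free over $\Lambda(\del,\delbar)$ by the usual Hopf-algebra untwisting (equivalently, a direct sum of squares by \Cref{thm:decomposition}); hence $\_\otimes M$ is exact, additive, and sends sums of squares to sums of squares, so it preserves the property ``$\cone$ is a sum of squares'' and therefore preserves bigraded quasi-isomorphisms by \Cref{prop: characterization quisos}. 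It thus suffices to show $\C\to\Lambda(\square[p,q])$ is a bigraded quasi-isomorphism. Writing $\Lambda(\square[p,q])=\C\oplus\bigoplus_{n\geq1}S^n(\square[p,q])$, each graded symmetric power $S^n(\square[p,q])$ with $n\geq1$ is a retract (char.\ $0$) of the tensor power $\square[p,q]^{\otimes n}$, which is a sum of squares by the same untwisting fact; hence $\bigoplus_{n\geq1}S^n(\square[p,q])$ is a sum of squares, so $H_{BC}(\Lambda\square[p,q])=H_{BC}(\C)$ and $H_A(\Lambda\square[p,q])=H_A(\C)$, and the unit is a bigraded quasi-isomorphism by \Cref{lem: characterization acyclicity} and \Cref{prop: characterization quisos}. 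Each cell attachment is therefore a split inclusion with sum-of-squares cokernel, and such maps are closed under pushout, transfinite composition and retract, so every relative $\Lambda J$-cell complex is of this form and in particular a weak equivalence, establishing (iii).

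The main obstacle is precisely this acyclicity step: controlling the possibly infinite-dimensional free algebra $\Lambda(\square[p,q])$ when the generator lies in even total degree. The observation that $\square\otimes A$ is always free over $\Lambda(\del,\delbar)$ is what simultaneously tames the symmetric powers and the functor $\_\otimes M$, and makes the argument uniform in $(p,q)$. Finally, the real case $\R\cbba$ is handled by running the whole argument $\sigma$-equivariantly over $\R\bico$: the module $\square\otimes A$ remains projective there, and the contracting data witnessing nullhomotopies can be chosen compatibly with conjugation (one may take purely imaginary homotopies, cf.\ the remark following \Cref{lem: nullhomotopic morphisms}), so the transfer principle applies verbatim.
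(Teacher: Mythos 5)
Your proof is correct, but it follows a genuinely different route from the paper's. Although the paper also frames the result as a transfer along the adjunction $\Lambda\colon\bico\leftrightarrows\cbba\colon U$, it does not invoke Kan's recognition theorem; instead it follows Bousfield--Gugenheim and builds the (cofibration, acyclic fibration) factorization of a map $f\colon A\to B$ by hand, as a colimit $L_f=\colim L_f(i)$: the first stage tensors $A$ with free algebras $T(|b|)$ on squares indexed by all pure-bidegree elements of $B$ (forcing surjectivity), and the higher stages are pushouts along $S(|w|)\to T(|b|)$ indexed by pairs witnessing that a Bott--Chern class of $L_f(i)$ becomes $\del\delbar$-exact in $B$; acyclicity of the resulting fibration is then deduced from the auxiliary \Cref{lem: A-surj BC-inj} ($H_A$ surjective and $H_{BC}$ injective already implies bigraded quasi-isomorphism), and cofibrancy of the inclusions from the Hirsch-extension lifting lemma (\Cref{lem: Hirsch Lifting}, \Cref{cor: cofibrations}). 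Your argument replaces this by the small-object machinery with the single set $\{\C\to\Lambda(\square[p,q])\}$ of generating acyclic cofibrations, reducing the acyclicity condition to the contractibility of $\Lambda(\square[p,q])^+$ and the freeness of $\square\otimes A$ --- facts the paper establishes anyway (\Cref{lem: nullhomotopic morphisms}, \Cref{lem: contractibility of big forms}, \Cref{ex: algebra on square}), and your identification of the acyclic cofibrations of $\bico$ as split injections with sum-of-squares cokernel is sound since injective cokernels split. What your route buys is brevity, a uniform treatment of both factorizations, and cofibrant generation of the transferred structure for free; what it costs is reliance on cofibrant generation of the stable module structure on $\bico$ (true by Hovey's treatment of Frobenius rings, but not stated in the paper) and some local-presentability and smallness bookkeeping that you rightly call formal but do not carry out. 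What the paper's route buys is the explicit functorial cofibrant replacement $CA=L_{\eta_A}$ that is later used to define the homotopy bicomplex, together with \Cref{lem: A-surj BC-inj}, while needing only the bare existence of the model structure on $\bico$.
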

Since for any model category, the category of objects with a map to some fixed object has an induced model category structure, we immediately obtain:

\begin{cor}
	There are induced model category structures on the categories of augmented cbba's $\cbba_0$ and $\R\cbba_0$.
\end{cor}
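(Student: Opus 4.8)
The plan is to recognize the category $\cbba_0$ of augmented cbba's as the over-category (comma category) $\cbba \downarrow \C$, where $\C$ denotes the cbba concentrated in bidegree $(0,0)$, and then to invoke the general fact that the over-category of a model category inherits a model structure created by the forgetful functor. Concretely, by definition an augmented cbba is a pair $(A,\varepsilon)$ with $\varepsilon : A \to \C$ a cbba morphism, and a morphism of augmented cbba's is a cbba map $f : A \to B$ satisfying $\varepsilon_B \circ f = \varepsilon_A$; this is precisely the data of an object, respectively a morphism, of $\cbba \downarrow \C$. The same identification gives $\R\cbba_0 \cong \R\cbba \downarrow (\C,\sigma_0)$, where $(\C,\sigma_0)$ carries the standard conjugation real structure, which is legitimate since bidegree $(0,0)$ is conjugation-invariant. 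Note that $\C$ need not be a terminal object of $\cbba$ for this to work: the over-category construction applies to an arbitrary fixed object.

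First I would record the general principle. For any model category $\mathcal{M}$ and any object $Z$, the over-category $\mathcal{M} \downarrow Z$ is again a model category, in which a morphism is a fibration, cofibration, or weak equivalence if and only if its image under the forgetful functor $U : \mathcal{M} \downarrow Z \to \mathcal{M}$ (discarding the structure map to $Z$) lies in the corresponding class of $\mathcal{M}$. This requires no hypotheses on $\mathcal{M}$ beyond being a model category: the over-category is complete and cocomplete because $\mathcal{M}$ is, the two-out-of-three and retract axioms transport verbatim along $U$, and both the lifting and factorization axioms hold because the data produced in $\mathcal{M}$ is automatically compatible over $Z$. For factorization, an $\mathcal{M}$-factorization $U(f) = p \circ i$ endows the intermediate object with a structure map by composing with the target's structure map, and $i,p$ then become morphisms over $Z$; for lifting, a lift $h$ in $\mathcal{M}$ of a square over $Z$ satisfies the compatibility $s_C \circ h = s_D \circ p \circ h = s_D \circ g = s_B$, so $h$ is itself a morphism over $Z$. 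Applying this with $\mathcal{M} = \cbba$ (resp. $\R\cbba$), a model category by \Cref{thm: model cat cbba}, and $Z = \C$ yields the desired model structures (this is the standard principle also recorded in \cite{Hov99}).

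It then remains to check that the inherited classes agree with the ones used throughout the paper. Under the identification $\cbba_0 \cong \cbba \downarrow \C$, the forgetful functor $U$ sends an augmented morphism to the underlying cbba map $f : A \to B$. Hence the inherited fibrations are exactly the surjections, the inherited weak equivalences are exactly the bigraded quasi-isomorphisms, and the inherited cofibrations are the maps having the left lifting property against acyclic fibrations, which is precisely the transferred definition of \Cref{thm: model cat cbba}. The real case is identical after replacing $\cbba$ by $\R\cbba$ and $\C$ by $(\C,\sigma_0)$.

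The main obstacle, such as it is, is not a calculation but a bookkeeping verification: confirming that $\cbba_0$ and $\R\cbba_0$ really are the comma categories $\cbba \downarrow \C$ and $\R\cbba \downarrow (\C,\sigma_0)$ on the nose, so that objects and morphisms match the augmented-cbba definitions, and confirming that the over-category construction is being applied to a genuine model category, supplied by \Cref{thm: model cat cbba}, rather than to a situation that would demand left- or right-transfer hypotheses. Once these are in place the corollary is immediate, which is exactly why it is phrased as being obtained at once from the preceding theorem.
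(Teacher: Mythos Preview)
Your proposal is correct and follows essentially the same approach as the paper: the paper simply invokes the general fact that for any model category, the over-category of objects with a map to a fixed object inherits a model structure, and applies this with $\cbba$ (resp. $\R\cbba$) and the object $\C$. Your write-up is more explicit about the bookkeeping (identifying $\cbba_0 \cong \cbba \downarrow \C$, checking that lifts and factorizations live over $\C$, and matching the inherited classes with the transferred ones), but the underlying argument is identical.
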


To explain the proof of \Cref{thm: model cat cbba}, we first look at some particular examples of cofibrations, which are the bigraded versions of Hirsch extensions (c.f. \cite{GM13})

\begin{definition} Let $A$ be a cbba.  A (bigraded) \textbf{Hirsch extension} of $A$ is an inclusion
	\begin{align*} 
		A&\rightarrow A\otimes\Lambda V\\
		a&\mapsto a\otimes 1,
	\end{align*}
where $V=V^k=\bigoplus_{p+q=k}V^{p,q}$ is a bigraded vector space concentrated in a single total degree $k$ and the bigraded algebra on the right is equipped with a differential that makes the inclusion into a map of cbba's and restricts to a linear map $d:V\rightarrow\ker d|_A$ of degree $(1,0)+(0,1)$.
\end{definition}
If $A\in \R\cbba$ was equipped with a real structure, by Hirsch extension we mean the same as above, while also requiring that $A\otimes \Lambda V$ is equipped with an involution extending that on $A$ and fixing $V$.

\begin{lem}[Hirsch extension lifting Lemma]\label{lem: Hirsch Lifting}

		Hirsch extensions are cofibrations.
\end{lem}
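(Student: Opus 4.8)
The plan is to check the defining lifting property head on. Let $p:C\to D$ be an arbitrary acyclic fibration, i.e. a surjective bigraded quasi-isomorphism, and suppose we are given a commutative square whose top edge is $\alpha:A\to C$, whose bottom edge is $\beta:A\otimes\Lambda V\to D$, and whose left edge is the Hirsch extension $f$. Since $A\otimes\Lambda V$ is free over $A$ on $V$ and $f$ is the canonical inclusion, any cbba map $q$ extending $\alpha$ is determined by its restriction to a basis of $V$; moreover, because $dV\subseteq A$, the chain-map condition $dq(v)=q(dv)=\alpha(dv)$ and the compatibility $p\circ q=\beta$ involve only $A$ and not the other generators, so the constraints decouple. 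Hence I would fix a bihomogeneous basis of $V$ and solve for the values $q(v)$ independently, reducing everything to lifting a single generator; the Leibniz rule then guarantees that the resulting algebra map is a chain map.

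So fix $v\in V^{p,q}$ and write $dv=\del v+\delbar v$ with $\del v\in A^{p+1,q}$ and $\delbar v\in A^{p,q+1}$; the cocycle condition $d(dv)=0$ unwinds into $\del(\del v)=0$, $\delbar(\delbar v)=0$ and $\delbar(\del v)+\del(\delbar v)=0$. A lift amounts to finding $q(v)\in C^{p,q}$ with $p(q(v))=\beta(v)$, $\del q(v)=\alpha(\del v)$ and $\delbar q(v)=\alpha(\delbar v)$. Using surjectivity of $p$ I would choose any $\tilde c\in C^{p,q}$ with $p(\tilde c)=\beta(v)$ and set $u_1:=\del\tilde c-\alpha(\del v)$ and $u_2:=\delbar\tilde c-\alpha(\delbar v)$. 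A direct check (using that $\alpha,\beta$ are chain maps and the relations above) shows $u_1,u_2$ lie in $K:=\ker p$ and satisfy $\del u_1=0$, $\delbar u_2=0$ and $\delbar u_1+\del u_2=0$. It then suffices to produce $\xi\in K^{p,q}$ with $\del\xi=u_1$ and $\delbar\xi=u_2$, for then $q(v):=\tilde c-\xi$ solves all three requirements.

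The key input is that $K$ is a direct sum of squares. This I would deduce from the short exact sequence $0\to K\to C\to D\to 0$: the associated long exact sequence of \Cref{cor: les Schweitzer}, together with the fact that $H_{BC}(p)$ and $H_A(p)$ are isomorphisms (\Cref{prop: characterization quisos}), forces $H_{BC}(K)=H_A(K)=0$, whence $K$ is projective by \Cref{lem: characterization acyclicity}. The heart of the argument is then the \emph{simultaneous} solvability of $\del\xi=u_1$, $\delbar\xi=u_2$ on such a $K$, which is where the bigraded setting genuinely departs from the classical one. On a sum of squares $H_{\del}(K)=0$, so I can first write $u_1=\del\xi_0$ with $\xi_0\in K^{p,q}$; the residual defect $w:=u_2-\delbar\xi_0\in K^{p,q+1}$ is then both $\del$- and $\delbar$-closed (using $\delbar u_1+\del u_2=0$), and since $H_{BC}(K)=0$ means $\ker\del\cap\ker\delbar=\im\del\delbar$ on $K$, we get $w=\del\delbar\eta$; then $\xi:=\xi_0-\del\eta$ satisfies $\del\xi=u_1$ and $\delbar\xi=u_2$, with every term staying in bidegree $(p,q)$. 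I expect this two-step solve, requiring the vanishing of both $H_{\del}$ and $H_{BC}$ of the kernel, to be the main obstacle.

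Finally, for the real case I would carry the construction out equivariantly. Generators $v\in V^{p,q}$ with $p\neq q$ occur in conjugate pairs $v,\bar v$; after lifting $v$ I set $q(\bar v):=\sigma\bigl(q(v)\bigr)$, which automatically satisfies the required equations because $\sigma\del=\delbar\sigma$ and $\alpha,p$ are real. For a real generator ($p=q$) I would choose $\tilde c$ real, so that $\sigma u_1=u_2$, and solve the system $\sigma$-equivariantly in the spirit of the Remark following \Cref{lem: nullhomotopic morphisms}, yielding a real $q(v)$. Assembling the $q(v)$ into the unique extending algebra map and invoking the Leibniz rule then exhibits $q$ as the desired lift of (real) cbba's, completing the verification that Hirsch extensions have the left-lifting property against acyclic fibrations.
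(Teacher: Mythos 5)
Your proof is correct, and while it starts the same way as the paper's -- reducing the lifting problem to a single bihomogeneous generator $v$, whose constraints decouple because $dV\subseteq A$ -- it then takes a genuinely different route. The paper never isolates the kernel: it performs a three-stage correction entirely inside the total space, first arranging $\del\delbar\tilde v'=\tilde f_A(\del\delbar v)$ together with the correct image under $g$ (using injectivity of $H_{BC}(g)$ and surjectivity of $g$ and $H_A(g)$), then fixing $\del$ and $\delbar$ separately by adding $\del\delbar$-exact terms (injectivity of $H_{BC}(g)$ again), and finally repairing the image by a $\del$- and $\delbar$-closed correction (surjectivity of $g$ and $H_{BC}(g)$). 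You instead transport the defect into $K=\ker p$, show $K$ is a direct sum of squares, and solve the simultaneous system $\del\xi=u_1$, $\delbar\xi=u_2$ there by your two-step $H_{\del}$-then-$H_{BC}$ argument; this is the bigraded analogue of the classical cdga proof (``acyclic fibrations have contractible kernels'') and makes the obstruction structure cleaner, at the cost of invoking slightly more of the triangulated machinery, namely the distinguished triangle of the short exact sequence $0\to K\to C\to D\to 0$. One small imprecision: the portion of the long exact sequence of \Cref{cor: les Schweitzer} you cite yields $H_{BC}(K)=0$ directly, but for $H_A(K)=0$ the adjacent term is a Schweitzer group of $D$, so you additionally need $p$ to be surjective on that group; this does hold, since a bigraded quasi-isomorphism induces isomorphisms on all cohomological functors (\Cref{cor: coh functors and quisos}), or one can argue at once that $LK\cong\cone(p)\cong 0$ in $\Ho(\bico)$ and conclude via \Cref{lem: characterization acyclicity}. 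Your conjugation-averaging in the real case is a spelled-out version of the paper's parenthetical remark about choosing a conjugation-stable basis, and is fine.
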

\begin{proof}
		Consider a solid commutative diagram of cbba's
\[
\begin{tikzcd}
	A\ar[r, "\tilde{f}_A"]\ar[d]&B\ar[d, "g"]\\
	A\otimes \Lambda V\ar[r,"f_{AV}"]\ar[ru, dashed]&C
\end{tikzcd}
\] where $A\rightarrow A\otimes \Lambda V$ is a Hirsch extension and $g:B\to C$ a surjective bigraded quasi-isomorphism.  We need to show there exists a map $\tilde{f}_{AV}:A\otimes \Lambda V\to B$ extending $\tilde{f}_A$ s.t. $g\circ\tilde{f}_{AV}=f_{AV}$. 
	
	Note that it suffices to describe $\tilde{f}_{AV}$ on $V$. To lighten up notation, let us assume $V$ is one-dimensional and let $v\in V$ be a basis element (in general, one should take a basis $(v_i)$; which is furthermore stable under conjugation if one is working in $\R\cbba$). To define $\tilde{f}_{AV}$ as an algebra map, we could simply pick arbitrary preimages  $\tilde{v}$ under $g$ for the element $f_{AV}(v)$ and set $\tilde{f}_{AV}(v)=\tilde{v}$. However, we have $\del v,\delbar v, \del\delbar v\in A$ and so we have to respect the conditions $\del \tilde{v} = \tilde{f}_A(\del v)$ and $\delbar\tilde{v}=\tilde{f}_A(\delbar v)$.
	
	In a first step, we will show that it is possible to pick preimages $\tilde{v}'$ of $f_{AV}(v)$ s.t. the weaker condition $\del\delbar \tilde{v}'=\tilde{f}_A(\del\delbar v)$ is satisfied: Note that $\del\tilde{f}_A(\del\delbar v)=\delbar\tilde{f}_A(\del\delbar v)=0$ so that we get a well-defined class $[\tilde f_A(\del\delbar v)] \in H_{BC}(B)$. Since $g\tilde f_A(\del\delbar v)=f_{AV}(\del\delbar v)=\del\delbar f_{AV}(v)$, this maps to the zero class in $H_{BC}(C)$. But $H_{BC}(g)$ is injective, so we get some element $b\in B$ with $\del\delbar b= \tilde f_A(\del\delbar v)$. So $g(b)-f_{AV}(v)$ maps to zero under $\del\delbar$. Since $g$ and $H_{A}(g)$ are surjective, this means there is a $\tilde{b}\in B$ with $\del\delbar\tilde b=0$ and $g(\tilde{b})=g(b)-f_{AV}(v)$. Set $\tilde{v}':=b-\tilde b$.
	
	In a next step we modify $\tilde{v}'$ in a such a way that it satisfies the original conditions. Note that $\del\tilde{v}'-\tilde{f}_{A}(\del v)$ lies in the kernel of $\del$ and $\delbar$ and so defines a class in Bott-Chern cohomology. This class maps to zero under $g$ since $g(\del\tilde v')=\del g(\tilde v')=\del f_{AV}(v)=g \tilde f_A(\del  v)$. Because $H_{BC}(g)$ is injective, this implies that there exists a $b_\del\in B$ s.t. $\del\delbar b_\del=\del\tilde v'-\tilde f_A(\del v)$. By an analogous argument, we get a $b_\delbar\in B$ s.t. $\del\delbar b_\delbar=\delbar\tilde v'-\tilde f_A(\del v)$. Therefore, $\tilde v'':=\tilde v'-\delbar b_\del + \del b_\delbar$ satisfies $\del\tilde v''=\tilde f_A(\del v)$ and $\delbar\tilde v''=\tilde f_A(\delbar v)$. It remains to fix its image under $g$. We do this as before: $g(\tilde v'')-f_{AV}(v)$ lies in the kernel of both $\del$ and $\delbar$ and so defines a Bott-Chern class. Since $g$ and $H_{BC}(g)$ are surjective, we obtain a $\tilde b'\in B$ s.t. $\del\tilde b'=\delbar\tilde b'=0$ and $g(\tilde b')=g(\tilde v'')-f_{AV}(v)$. Defining $\tilde v=\tilde v''-\tilde b'$, we are done.
\end{proof}

\begin{rem}
	There seem to be no obstacles to developing the theory of Hirsch extensions and obstruction cocycles to lifting in analogous fashion to \cite[§10-§11]{GM13}, using the bigraded mapping cone to define relative cohomology. Since for us the above somewhat more elementary arguments suffice, we omit this.
\end{rem}

We write $S(p,q):=\Lambda( \bullet[p,q])$ for the free algebra generated by a dot in degree $(p,q)$ and $T(p,q):=\Lambda (\square[p,q])$ for the free algebra generated by a square with base in degree $(p,q)$. This notation is a bit unfortunate for our purposes and will be used in this section only, to ease comparison with \cite{BG76}.

\begin{cor}\label{cor: cofibrations}
	The following maps are cofibrations, i.e. they satisfy the left lifting property with respect to surjective bigraded quasi-isomorphisms in $\cbba$.
	\begin{enumerate}
		\item\label{it: dot-square cof} The inclusion $S(p+1,q+1)\hookrightarrow T(p,q)$.
		\item\label{it: C-free cof} The inclusion $\C\hookrightarrow\Lambda(C)$ in degree $(0,0)$, where $C$ is any  bicomplex.
		\item\label{it: copr-po-infco cof} coproducts, pushouts and (possibly transfinite) compositions of cofibrations.	\end{enumerate}
\end{cor}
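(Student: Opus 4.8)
The plan is to establish each of the three items by exhibiting the relevant maps as special cases of, or as constructions built from, Hirsch extensions, which are already known to be cofibrations by \Cref{lem: Hirsch Lifting}.

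For item \eqref{it: dot-square cof}, I would observe that $T(p,q)=\Lambda(\square[p,q])$ is obtained from $S(p+1,q+1)=\Lambda(\bullet[p+1,q+1])$ by adjoining the three additional generators of $\square[p,q]$, namely the bottom-left corner $x$ in degree $(p,q)$ together with $\del x$ and $\delbar x$, while the top-right corner $\del\delbar x$ is precisely the generator of $\bullet[p+1,q+1]$. Thus the inclusion $S(p+1,q+1)\hookrightarrow T(p,q)$ is a Hirsch extension adjoining the bigraded space $V=\langle x\rangle$ concentrated in total degree $p+q$, with $d$ sending $x$ to $\del x+\delbar x$ and the closed element $\del\delbar x$ living in the base algebra. (One should double-check the total-degree bookkeeping: $\square[p,q]$ is generated by $x$ in a single total degree, so the Hirsch hypothesis that $V$ is concentrated in one total degree is met.) Hence item \eqref{it: dot-square cof} follows directly from \Cref{lem: Hirsch Lifting}.

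For item \eqref{it: C-free cof}, the inclusion $\C\hookrightarrow\Lambda(C)$ should be realized as a (possibly transfinite) composition of Hirsch extensions, filtering $C$ by a well-ordered bigraded basis and adjoining the basis elements one at a time (or by total degree). If $C$ is minimal, each step is literally a Hirsch extension; in general the free algebra on a square is handled by item \eqref{it: dot-square cof}, and the filtration argument reduces the general $C$ to successive such steps. This is where item \eqref{it: copr-po-infco cof} feeds back in, so it is cleanest to prove \eqref{it: copr-po-infco cof} first and then deduce \eqref{it: C-free cof}.

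For item \eqref{it: copr-po-infco cof}, I would invoke the standard closure properties of cofibrations in any model category, or more elementarily verify directly that the class of maps satisfying the left-lifting property against a fixed class is closed under coproducts, pushouts (cobase change), and transfinite composition. This is a formal diagram-chase: given a lifting problem against an acyclic fibration, one uses the universal property of the coproduct, pushout, or colimit to assemble the lift from lifts of the constituent maps, each of which exists by hypothesis. The main obstacle — though a mild one — is the bookkeeping in item \eqref{it: dot-square cof}, where one must verify that adjoining $\square[p,q]$ really is a Hirsch extension in the precise sense defined (a single total degree for $V$, with $d(V)\subseteq\ker d|_A$); once that identification is made, the rest is formal category theory, and the real structure case of each item follows by taking conjugation-stable bases throughout, exactly as in the proof of \Cref{lem: Hirsch Lifting}.
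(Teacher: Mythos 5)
Your overall strategy --- reduce everything to Hirsch extensions plus the formal closure properties of left-lifting classes --- is exactly the paper's, which disposes of \eqref{it: copr-po-infco cof} by citing the standard result (as in \cite[4.5]{BG76}), deduces \eqref{it: dot-square cof} from \Cref{lem: Hirsch Lifting}, and handles \eqref{it: C-free cof} by first treating indecomposable $C$ and then invoking \Cref{thm:decomposition} together with \eqref{it: copr-po-infco cof}. However, your treatment of item \eqref{it: dot-square cof} contains a genuine error: the inclusion $S(p+1,q+1)\hookrightarrow T(p,q)$ is \emph{not} a single Hirsch extension with $V=\langle x\rangle$. A Hirsch extension requires $d(V)\subseteq\ker d|_A$, i.e.\ $dx$ must lie in the base algebra $S(p+1,q+1)=\Lambda(\langle\del\delbar x\rangle)$; but $dx=\del x+\delbar x$ involves the two new generators in total degree $p+q+1$, which are not in the base. (Your parenthetical check conflates generation of $\square[p,q]$ as a bicomplex by the single element $x$ with generation of $T(p,q)$ as a free algebra, which needs all four elements $x,\del x,\delbar x,\del\delbar x$ spread over three total degrees.) The repair is immediate: factor the inclusion as two Hirsch extensions, first adjoining $V_1=\langle\del x,\delbar x\rangle$ in total degree $p+q+1$ with $d(\del x)=-\del\delbar x$, $d(\delbar x)=\del\delbar x$ landing in the base, then adjoining $V_2=\langle x\rangle$ in total degree $p+q$ with $dx=\del x+\delbar x$ landing in the first extension; a composition of maps with the left lifting property again has it.

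A second, milder imprecision is in item \eqref{it: C-free cof}: ``adjoining basis elements one at a time, or by total degree'' does not by itself produce a well-ordered (transfinite) composition, because $d$ raises total degree and a general bicomplex can be unbounded above, so the natural order of adjunction (top degree first) is not a well-order. The paper avoids this by using \Cref{thm:decomposition}: each indecomposable summand is concentrated in at most three total degrees, hence $\C\to\Lambda(C_i)$ is a \emph{finite} composition of Hirsch extensions, and the general case is the coproduct $\Lambda(C)=\bigotimes_i\Lambda(C_i)$, covered by \eqref{it: copr-po-infco cof}. You gesture at this but should make the reduction to indecomposables explicit.
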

\begin{proof}
\eqref{it: copr-po-infco cof} is a standard result about maps defined by having the lifting property with respect to some class of maps, c.f. \cite[4.5.]{BG76}. \eqref{it: dot-square cof} is a direct consequence of \Cref{lem: Hirsch Lifting}.  For \eqref{it: C-free cof}, first consider $C$ to be indecomposable. Then $C$ is concentrated in finitely many (at most three) total degrees and so $\C\hookrightarrow \Lambda(C)$ can be written as the composition of finitely many Hirsch extensions. The case of a general bicomplex follows from \Cref{thm:decomposition} and \eqref{it: copr-po-infco cof}.
\end{proof}

\begin{proof}[Proof of \Cref{thm: model cat cbba}]
 With the results we have, the proof is very similar to the singly graded case \cite[§4]{BG76}, so we only indicate how to adapt the most interesting axiom, namely that any map $f:A\to B$, can be functorially factored as 
 \[
f=(acyclic~fibration)\circ(cofibration)
\] This will use a slightly non-obvious characterization of weak equivalences (\Cref{lem: A-surj BC-inj} below). The proofs of the other axioms carry over immediately from \cite{BG76}. We will define the intermediate space as a colimit of a diagram
 \begin{equation}\label{diag: colim Bfi}
 	\begin{tikzcd}
 		A\ar[r,"\beta_1"]\ar[d,swap,"f"]&L_f(1)\ar[r,"\beta_2"]\ar[ld,swap,"\psi_1"]&L_f(2)\ar[r,"\beta_3"]\ar[lld,swap,"\psi_2"]&\hdots\\
 		B.
 	\end{tikzcd}
 \end{equation}
 where each $\beta_i$ will be a cofibration (hence also the colimit $A\to L_f:=\operatorname{colim}L_f(n)$). 
 
 Define $L_f(1)$ as follows:
 \[
 L_f(1):= A\otimes\bigotimes_{b\in B^h} T(|b|)
 \]
 Here $B^h=\bigcup_{p,q\in\Z} B^{p,q}\subseteq B$ denotes the set of elements of pure bidegree and for $b\in B^{p,q}$ we write $|b|=(p,q)$ for its bidegree. By construction, $A\to L_f(1)$ is a cofibration as a coproduct of cofibrations and the map $\psi_1:L_f(1)\to B$, given by the projection of the generators of $T(|b|)$ to $b$, is surjective, hence a fibration. 
 
 With the higher $L_f(i)$, we will make $H_A(\psi)$ surjective and kill the lack of injectivity of $H_{BC}(\psi_1)$. We construct them as follows: Given $L_f(i),\psi_i$, consider the set
 \[
 R:=\left\{(w,b)~\bigg|~\substack{\del_1 w=\del_2 w=0,\\ \psi_i(w)=\del\delbar b\\|w|+(1,1)=|b|}\right\}\subseteq (L_f(i))^h\times B^h
 \]
 and define $(L_f(i+1),\beta_{i+1})$ as the pushout
 \[
 \begin{tikzcd}
 	\bigotimes_{(w,b)\in R} S(|w|)\ar[r]\ar[d]& L_f(i)\ar[d,"\beta_{i+1}"]\\
 	\bigotimes_{(w,b)\in R}T(|b|)\ar[r]& L_f(i+1).
 \end{tikzcd}
 \]
 Since the left-hand map is a cofibration as a coproduct of cofibrations, so is $\beta_{i+1}$ and the universal property yields a map $\psi_{i+1}:L_f(i+1)\to B$. By construction, for any element $[w]\in\ker H_{BC}(\psi_i)$, we have $\beta_{i+1}[w]=0$. Thus, $\psi:=\lim\psi_i:L_f\to B$ is injective in $H_{BC}$. Furthermore, already $H_A(\psi_{2})$, and hence $H_A(\psi)$, is surjective (consider the pairs $(0,b)\in R$). That $\psi$ is acyclic now follows from the next Lemma.
\end{proof}
\begin{lem}\label{lem: A-surj BC-inj}
	A map $\varphi\in Map(\bico)$ of bicomplexes s.t. $H_A(\varphi)$ is surjective and $H_{BC}(\varphi)$ is injective is a bigraded quasi-isomorphism.
\end{lem}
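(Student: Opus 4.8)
The plan is to reduce the statement to the characterizations already established in \Cref{prop: characterization quisos}. By the equivalence of \Cref{cond: Iso in SC} and \Cref{cond: cone squares} there, together with \Cref{lem: characterization acyclicity}, it suffices to prove that $H_A(\cone(\varphi))=0$; this will force $\cone(\varphi)$ to be a direct sum of squares and hence $\varphi$ to be a bigraded quasi-isomorphism. To get at $H_A(\cone(\varphi))$ I would apply the long exact sequence of \Cref{cor: les Schweitzer} to the distinguished triangle $A\xrightarrow{\varphi}B\to\cone(\varphi)\to LA$. For each pair $(p,q)$ its relevant segment reads
\[
H_{A}^{p-1,q-1}(A)\xrightarrow{H_A(\varphi)}H_{A}^{p-1,q-1}(B)\xrightarrow{b}H_{A}^{p-1,q-1}(\cone(\varphi))\xrightarrow{c}H_{BC}^{p,q}(A)\xrightarrow{H_{BC}(\varphi)}H_{BC}^{p,q}(B),
\]
and the whole argument takes place inside this five-term piece.

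The crux --- and the slightly non-obvious point flagged before the Lemma --- is that the two one-sided hypotheses pinch the middle term from opposite directions. Surjectivity of $H_A(\varphi)$ in bidegree $(p-1,q-1)$ means that $\im H_A(\varphi)$ is all of $H_{A}^{p-1,q-1}(B)$, so by exactness $\ker b$ is everything and $b=0$; hence $\im b=0=\ker c$ and $c$ is injective. On the other side, injectivity of $H_{BC}(\varphi)$ in bidegree $(p,q)$ gives $\ker H_{BC}(\varphi)=0$, so $\im c=0$ and $c=0$. A map that is simultaneously injective and zero has trivial source, whence $H_{A}^{p-1,q-1}(\cone(\varphi))=0$. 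As $(p,q)$ was arbitrary, $H_A(\cone(\varphi))=0$.

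It then remains only to invoke the bookkeeping of the first paragraph: $H_A(\cone(\varphi))=0$ yields, via \Cref{lem: characterization acyclicity}, that $\cone(\varphi)$ is a direct sum of squares, which is exactly \Cref{cond: cone squares} of \Cref{prop: characterization quisos}, so $\varphi$ is a bigraded quasi-isomorphism. I do not expect any genuine obstacle here: the entire content lies in selecting the Schweitzer long exact sequence rather than, say, arguing through the zigzag/square decomposition of \Cref{thm:decomposition} (which would also work, at the cost of checking each indecomposable type by hand). Once the correct exact sequence is written down, the interleaving of $H_A$ and $H_{BC}$ makes the one-sided hypotheses combine precisely as needed, and no boundedness assumption enters anywhere.
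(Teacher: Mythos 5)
Your argument is correct and complete, but it follows a genuinely different route from the paper's. You pass to $\cone(\varphi)$ and run the five-term segment of the Schweitzer long exact sequence of \Cref{cor: les Schweitzer} for the triangle $A\xrightarrow{\varphi}B\to\cone(\varphi)\to LA$: surjectivity of $H_A(\varphi)$ kills the map into $H_A(\cone(\varphi))$, injectivity of $H_{BC}(\varphi)$ kills the map out of it, and exactness then forces $H_A(\cone(\varphi))=0$, so \Cref{lem: characterization acyclicity} and \Cref{cond: cone squares} of \Cref{prop: characterization quisos} finish the job. The paper never mentions the cone: it first notes that surjectivity of $H_A(\varphi)$ forces surjectivity of $\HBCred(\varphi)$, since $\HBCred=\im(H_A\oplus H_A\overset{\del+\delbar}{\longrightarrow}H_{BC})$ by \eqref{eqn: HBCred as image}, and then applies the four lemma to the ladder built on the exact sequence $0\to\HBCred\to H_{BC}\to H_A\to H_{BC}\oplus H_{BC}$ to upgrade the two one-sided hypotheses to isomorphisms of both $H_{BC}(\varphi)$ and $H_A(\varphi)$, i.e.\ to \Cref{cond: A+BC iso} of \Cref{prop: characterization quisos}. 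Your version trades the four lemma for the triangulated machinery already set up in Section 1 and makes transparent why exactly this pair of conditions suffices (they pinch $H_A(\cone(\varphi))$ from both ends of the sequence); the paper's version stays entirely at the level of the natural transformation $H_{BC}\to H_A$ and its kernel and cokernel. Both arguments use only material established before the lemma, and neither requires any boundedness assumption.
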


\begin{proof}
	If $H_A(\varphi)$ is surjective, also $\HBCred(\varphi)$ is surjective by \eqref{eqn: HBCred as image}. Then the result follows by the four Lemma applied to the diagram 
	\[
	0\to\HBCred(\varphi)\to H_{BC}(\varphi)\to H_A(\varphi)\to H_{BC}(\varphi)\oplus H_{BC}(\varphi).
	\]
\end{proof}

In particular, we obtain a cofibrant replacement functor (c.f. \cite[p.5]{Hov99}), i.e. using the notation from the above proof:

\begin{definition}
	For any cbba $A$ with unit $\eta_A:\C\to A$, the \textbf{cofibrant replacement} is $CA:=L_{\eta_A}$.
\end{definition}

The cofibrant replacement of a cbba is very big and we will be interested in finding smaller cofibrant models. For now we introduce some language:

\begin{definition}\label{def: nilpotent, minimal, model} 
	Let $A\in\cbba_{0}$ be an augmented cbba. 
	\begin{enumerate}
		\item $A$ is called a \textbf{nilpotent} if it is a (possible transfinite) composition of Hirsch extensions. 
		\item $A$ is called \textbf{minimal}, if it is nilpotent and $\im\del\delbar\subseteq A^+A^+$.
		\item A weak equivalence $M\rightarrow A$, where $M$ is a nilpotent algebra is called a \textbf{model} for $A$. If $M$ is minimal, it is called a \textbf{minimal model}.
	\end{enumerate}
\end{definition}
We note that an augmented cbba $A$ is nilpotent iff there is a presentation as a free commutative bigraded algebra $A=\Lambda V$ with a well-ordered basis $\{v_i\}_{i\in I}$ for the space of generators $V$ such that $dv_i\in \Lambda (\operatorname{span}\{v_j\mid j<i\})$. Furthermore, a nilpotent cbba is minimal iff the bicomplex of indecomposables $A^+/A^+A^+$, with induced differentials, is minimal. Again, there are obvious real versions of these definitions. 
By \Cref{lem: Hirsch Lifting} and \Cref{cor: cofibrations}, we obtain:
\begin{cor}[Lifting Lemma]\label{cor: nilpotent implies cofibrant}
	Nilpotent cbba's are cofibrant in $\cbba_0$. 
\end{cor}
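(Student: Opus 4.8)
The plan is to unwind the definitions and then simply assemble the lemmas already established; there is essentially no new content, so the task reduces to a bookkeeping argument. First I would recall that $\cbba_0$ carries the model structure induced from $\cbba$ as an over-category $\cbba\downarrow\C$, so that its cofibrations are exactly those maps whose underlying maps in $\cbba$ are cofibrations, and its initial object is $\C$ equipped with the identity augmentation. Consequently, to show that a nilpotent augmented cbba $A$ is cofibrant in $\cbba_0$, it suffices to show that the unit map $\eta_A\colon\C\to A$ is a cofibration in $\cbba$.

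By \Cref{def: nilpotent, minimal, model}, $A$ being nilpotent means precisely that $\eta_A$ can be written as a (possibly transfinite) composition of Hirsch extensions
\[
\C=A_0\to A_1\to A_2\to\cdots,
\]
with $A=\colim A_i$. By \Cref{lem: Hirsch Lifting}, each individual Hirsch extension $A_i\to A_{i+1}$ is a cofibration, i.e.\ has the left-lifting property against all acyclic fibrations. By \Cref{cor: cofibrations}\,\eqref{it: copr-po-infco cof}, the class of cofibrations is closed under (possibly transfinite) composition, so $\eta_A$ is itself a cofibration in $\cbba$, and hence $A$ is cofibrant in $\cbba_0$.

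The only point requiring care is the compatibility at the level of augmentations: one must check that the canonical augmentations arising at each stage (sending the newly adjoined generators to zero) assemble compatibly, so that the entire tower lives in $\cbba_0$ rather than merely in $\cbba$, and that passing to the induced over-category structure really does detect cofibrancy as claimed. This is immediate from the construction of Hirsch extensions as free additions on generators, but it is what legitimizes the reduction to $\cbba$. I do not expect a genuine obstacle here; the substantive work has been front-loaded into \Cref{lem: Hirsch Lifting} and \Cref{cor: cofibrations}, and the corollary is a direct corollary of those, as its placement suggests.
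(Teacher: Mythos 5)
Your proposal is correct and follows essentially the same route as the paper, which deduces the corollary directly from \Cref{lem: Hirsch Lifting} together with the closure of cofibrations under (transfinite) composition from \Cref{cor: cofibrations}; your additional remarks about the over-category structure on $\cbba_0$ and the compatibility of augmentations are just the bookkeeping the paper leaves implicit.
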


\begin{rem}[Bounded variants]\label{rem: bounded vars}
The same kind of constructions used to prove \Cref{thm: model cat cbba} allow one to transfer the model category structures from the non-negatively graded, resp. first quadrant, subcategories of $\bico$ to $\cbba^{\geq 0}$, resp. $\cbba^{f.q.}$, and the corresponding real versions. Namely, one defines again fibrations to be surjective maps, weak equivalences to be bigraded quasi-isomorphisms and cofibrations via the left-lifting property. The reader will have no trouble to verify that the augmentation maps $\Lambda (\bullet[p,q])\to \C$ are cofibrations in $\cbba^{\geq 0}$ for $0\leq p+q\leq1$ and cofibrations in $\cbba^{f.q.}$ whenever $p+q\geq 0$ and $p=0$ or $q=0$. Consequently, one may put $T(p,q):=\C$ in those bidegrees and run the same proof as above for the construction of fibration-cofibration factorizations. 
\end{rem}

\subsection{Homotopy for cbba's and complex manifolds}
We will define and study various `holomorphic' variants of the homotopy groups. 

We start by describing a functorial path object, i.e. a functorial factorization of the maps $A\to A\times A$ into an acyclic cofibration followed by a fibration.
\begin{definition}
	We denote by $\Omega^1_{big}:=\Lambda\langle t,\del t,\delbar t,\del\delbar t\rangle$, where $|t|=(0,0)$. 
\end{definition}

\begin{lem}\label{lem: contractibility of big forms}
	The inclusion $\C\to \Omega^1_{big}$ is a bigraded quasi-isomorphism.
\end{lem}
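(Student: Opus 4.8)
The plan is to identify $\Omega^1_{big}$ with the free cbba on a single square and then exploit the fact that the square is projective, so that all of its positive-weight symmetric powers are direct sums of squares.

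First I would observe that $\Omega^1_{big}=\Lambda(\square[0,0])=T(0,0)$: the bicomplex $\square[0,0]$ has basis $\{a,\del a,\delbar a,\del\delbar a\}$ in bidegrees $(0,0),(1,0),(0,1),(1,1)$, and setting $t:=a$ identifies the generators and their differentials with those of $\Omega^1_{big}$. By \Cref{prop: characterization quisos} it suffices to prove that $\C\to\Omega^1_{big}$ is an isomorphism in $\Ho(\bico)$, and by \Cref{lem: characterization acyclicity} this is equivalent to exhibiting a decomposition $\Omega^1_{big}\cong\C\oplus P$ with $P$ a direct sum of squares. To get at such a decomposition I would use the word-length (weight) grading of the free graded-commutative algebra, $\Lambda(\square[0,0])=\bigoplus_{n\ge 0}S^n(\square)$, where $S^n$ denotes the $n$-th graded-symmetric power. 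Since $\del$ and $\delbar$ are derivations, they preserve the number of tensor factors, so this is a decomposition in $\bico$; moreover $S^0(\square)=\C$ is exactly the image of the unit $\C\to\Omega^1_{big}$.

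The heart of the matter is then to show that $S^n(\square)$ is a direct sum of squares for every $n\ge 1$. Here I would avoid constructing an explicit contracting homotopy on the free algebra — the operator $[\del,[\delbar,-]]$ of \Cref{lem: nullhomotopic morphisms} is not a derivation, so extending the homotopy $k$ that contracts $\square$ multiplicatively is awkward — and argue instead at the level of $\Ho(\bico)$. The square is projective, hence $\square\cong 0$ in $\Ho(\bico)$; since the tensor product descends to a functor on the homotopy category (see \eqref{eqn: tensor-hom adjunction stable cat}), it sends $0$ to $0$, so $\square^{\otimes n}\cong 0$ and therefore $\square^{\otimes n}$ is a direct sum of squares by \Cref{lem: characterization acyclicity}. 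As we work over $\C$, the symmetrization idempotent $e=\tfrac{1}{n!}\sum_{\sigma\in\Sigma_n}\epsilon(\sigma)\,\sigma$ (with $\epsilon(\sigma)$ the appropriate Koszul sign) is a bicomplex endomorphism of $\square^{\otimes n}$ whose image is $S^n(\square)$, exhibiting $S^n(\square)$ as a direct summand of $\square^{\otimes n}$. A direct summand of a sum of squares is again a sum of squares, by the Krull–Schmidt uniqueness of \Cref{thm:decomposition} (equivalently, a summand of a projective is projective). Hence $S^n(\square)$ is a sum of squares for all $n\ge1$.

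Combining these steps, $\Omega^1_{big}\cong\C\oplus\bigoplus_{n\ge1}S^n(\square)$ is $\C$ plus a direct sum of squares, so $\C\to\Omega^1_{big}$ is an isomorphism in $\Ho(\bico)$ and thus a bigraded quasi-isomorphism. The only points requiring care — and hence the main, rather mild, obstacles — are verifying that the weight decomposition is genuinely compatible with $\del$ and $\delbar$, and that the symmetrizer splits $S^n(\square)$ off as a bicomplex summand, which is where characteristic $0$ enters; both are routine. I would also note that one should resist using the Frölicher criterion on $H_\del(f),H_\delbar(f)$ directly, since $\Omega^1_{big}$ is unbounded along the diagonal; working with $H_{BC}$ and $H_A$, or directly in $\Ho(\bico)$, avoids any boundedness hypothesis.
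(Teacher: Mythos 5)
Your argument is correct and is essentially the paper's own proof: the paper likewise identifies $\Omega^1_{big}$ with $\Lambda(\square)$, realizes it as a direct summand of the tensor algebra on $\square$, and uses that nontrivial tensor powers of a projective are projective. Your extra care with the symmetrizer idempotent and with avoiding the Fr\"olicher criterion on the unbounded complex only makes explicit what the paper leaves implicit.
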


\begin{proof}
Note that $\Omega^1_{big}$ is nothing but the free algebra on a square $\Lambda(\square)$, which is a direct summand in the tensor algebra over $\square$ which is contractible since nontrivial tensor products of projective objects are projective.
\end{proof}
\begin{rem}
	Alternatively, it follows from $0\simeq \square$ in $\bico$ using that $\Lambda$ as a part of a Quillen adjunction \eqref{eqn: adjunction bico cbba} preserves weak equivalences. Of course, it can also be seen by a direct calculation, see \Cref{ex: algebra on square}, which could be used to define an `integration' operator on $\Omega^1_{big}$.
\end{rem}
There are evaluation maps $\varepsilon_s:\Omega_{big}^1\to\C$, given by sending $t\to s$ and every other generator to $0$. Since $\C\to\Omega_{big}^{1}$ is a weak equivalence, so is $A\to\Omega_{big}^1\otimes A$ for any $A\in\cbba$. Further, for the map $(\varepsilon_0,\varepsilon_1):\Omega_{big}^1\otimes A\to A\times A$ is always a fibration (i.e. surjective): A preimage for $(a,b)\in A\times A$ is given by $(1-t)a+tb$. Thus, we have shown:

\begin{lem}
	The association $A\to \Omega^1_{big}\otimes A$ is a functorial path object in $\cbba$.
\end{lem}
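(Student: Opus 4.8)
The plan is to verify, for the factorization of the diagonal
\[
A \xrightarrow{j} \Omega^1_{big}\otimes A \xrightarrow{(\varepsilon_0,\varepsilon_1)} A\times A,\qquad j(a)=1\otimes a,
\]
the three properties required by the definition of a path object, namely that $j$ is an acyclic cofibration, that $(\varepsilon_0,\varepsilon_1)$ is a fibration, and that this is a functorial factorization of the diagonal. The last point is immediate: since $\varepsilon_0(1)=\varepsilon_1(1)=1$ we get $(\varepsilon_0,\varepsilon_1)(1\otimes a)=(a,a)$, and both $j$ and $(\varepsilon_0,\varepsilon_1)$ are manifestly natural in $A$, so $A\mapsto\Omega^1_{big}\otimes A$ is a functor and the two maps are natural transformations.

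Two of the three model-theoretic properties have in fact already been recorded in the discussion preceding the statement. That $j$ is a weak equivalence follows from \Cref{lem: contractibility of big forms} together with the fact that tensoring with $A$ preserves this quasi-isomorphism (concretely, as a bicomplex $\Omega^1_{big}\cong\C\oplus P$ with $P$ a sum of squares, whence $\Omega^1_{big}\otimes A\cong A\oplus(P\otimes A)$ with $P\otimes A$ again projective). That $(\varepsilon_0,\varepsilon_1)$ is a fibration is the surjectivity already exhibited, with $(1-t)a+tb$ serving as a preimage of $(a,b)$. Thus the only property not literally stated above is that $j$ is a \emph{cofibration}, upgrading the weak equivalence to an acyclic cofibration.

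For this remaining point I would observe that in $\cbba$ the monoidal product is the coproduct, so that $\Omega^1_{big}\otimes A=\Lambda(\square)\otimes A$ is precisely the pushout of $\C\to\Lambda(\square)$ along the unit $\eta_A\colon\C\to A$, and $j$ is the cobase change of $\C\to\Lambda(\square)$. Since $\C\hookrightarrow\Lambda(C)$ is a cofibration for every bicomplex $C$ by \Cref{cor: cofibrations}\eqref{it: C-free cof}, in particular for $C=\square$, and cofibrations are stable under pushout by \Cref{cor: cofibrations}\eqref{it: copr-po-infco cof}, the map $j$ is a cofibration. I do not anticipate a genuine obstacle in this argument: all the substantive content is housed in \Cref{lem: contractibility of big forms} and in the elementary surjectivity computation; the one point requiring care is exactly this cofibrancy of $j$, which the preceding paragraph leaves implicit and which the pushout description makes transparent.
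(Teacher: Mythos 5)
Your proposal is correct, and for the two properties the paper actually verifies it follows the same route: the weak equivalence of $A\to\Omega^1_{big}\otimes A$ is deduced from \Cref{lem: contractibility of big forms} (the paper invokes the Quillen adjunction / contractibility of $\Lambda(\square)$, you make the same point via the explicit splitting $\Omega^1_{big}\cong\C\oplus P$ with $P$ projective), and the fibration property is the identical surjectivity computation with $(1-t)a+tb$. The one place you go beyond the paper is the cofibrancy of $j$. The paper's own phrasing of what a functorial path object is (``an acyclic cofibration followed by a fibration'') is stronger than the standard definition in \cite{Hov99}, which asks only for a weak equivalence followed by a fibration, and the paper's proof establishes only the latter; so strictly against the paper's stated definition there is a small gap, which your pushout argument closes correctly: the tensor product is the coproduct of commutative algebras, $j$ is the cobase change of $\C\hookrightarrow\Lambda(\square)$ along the unit of $A$, and \Cref{cor: cofibrations} gives both that $\C\hookrightarrow\Lambda(\square)$ is a cofibration and that cofibrations are stable under pushout. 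This extra step costs nothing and makes the factorization usable on the left as well, so it is a genuine (if minor) improvement in completeness over the paper's two-sentence argument.
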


 We therefore say that two maps $f,g:A\to B$ in $\cbba$ are (right) homotopic if there exists a map $H:A\to \Omega_{big}^1\otimes B$ s.t. $\varepsilon_0\otimes\Id=f$ and $\varepsilon_1\otimes\Id=g$. We denote the relation of homotopy by $f\sim g$.
 Since all objects in $\cbba$ are fibrant, homotopy is an equivalence relation (see e.g. \cite[Prop. 1.2.5]{Hov99}). If $A$ is cofibrant, there is an identification 
 \[
 [A,B]\cong \Hom_{\cbba}(A,B)/\sim,
 \]
 where $\sim$ denotes the relation of homotopy and we write $[A,B]$ for the set of maps in $\Ho(\cbba)$. If we need to distinguish $[A,B]$ from the set of morphisms in ${\Ho(\bico)}$ between the underlying bicomplexes, we will write $[A,B]_{\cbba}$ and $[A,B]_{\bico}=[UA,UB]_{\bico}$.
 
 In $\cbba_0$, a functorial path object is given by $A\mapsto \Omega_{big}^1\tilde{\otimes} A$, where 
 \[
 \Omega_{big}^1\tilde{\otimes} A:=\C\oplus \Omega_{big}^1\otimes A^+
 \]
 with augmentation given by projection to the first summand. The corresponding notion of homotopy in $\cbba_0$ is given by a map $H:A\to \Omega_{big}^1\tilde{\otimes} B$. Further, we may consider $\Omega^1_{big}$ as an object of $\R\cbba$, by equipping it with the antilinear involution $\sigma$ mapping $\sigma(t)=t$, $\sigma(\del t)=\delbar t$ and $\sigma(\del\delbar t)=-\del\delbar t$. The previous discussion applies, mutatis mutandis, to $\cbba_0$, $\R\cbba$, $\R\cbba_0$ and their bounded below counterparts.

 Because of the adjunctions \eqref{eqn: adjunction bico cbba} and \eqref{eqn: adjunction bico cbba augmented}, and the definition of the model structure on $\cbba$, resp. $\cbba_0$ as a transferred model structure, we obtain induced derived functors \cite[1.3.7]{Hov99} 
 \begin{align*}
 	\Ho(\cbba)\to \Ho(\bico)\\
 	\Ho(\cbba_0)\to \Ho(\bico)
 \end{align*}
We now look at these more closely. The proof of the following two Lemmas is similar to the singly graded case:
 \begin{lem}\label{lem: ind map on homotopy classes}
	For two homotopic maps $f\sim g:A\to B$ in $\cbba$ (resp. $\cbba_0$), the underlying maps $f,g:UA\to UB$ (resp. $f^+,g^+: A^+\to B^+$) are homotopic in $\bico$. In particular, $H(f)=H(g)$ for any cohomological functor $H$.
\end{lem}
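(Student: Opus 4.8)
The plan is to argue entirely in $\Ho(\bico)$, reducing everything to the single observation that the two endpoint evaluations of the path object become equal there. This avoids any explicit manipulation of the generators $t,\del t,\delbar t,\del\delbar t$ and of the (rather large) algebra $\Omega^1_{big}$.

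The first step is to show that the underlying bicomplex maps $U\varepsilon_0,U\varepsilon_1:U\Omega^1_{big}\to\C$ are homotopic in $\bico$. By \Cref{lem: contractibility of big forms}, the unit $\iota:\C\to\Omega^1_{big}$ is a bigraded quasi-isomorphism, so it becomes an isomorphism in $\Ho(\bico)$. Since $\varepsilon_s\circ\iota=\Id_\C$ for $s=0,1$, right-cancelling this isomorphism gives $[U\varepsilon_0]=[U\varepsilon_1]$ in $\Ho(\bico)$. By \eqref{eqn: homotopy classes in bico} this says precisely that $U\varepsilon_0-U\varepsilon_1$ is nullhomotopic, i.e. it factors as $U\Omega^1_{big}\to P\to\C$ through a direct sum of squares $P$.

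The second step is the closure property that tensoring a nullhomotopic map with a fixed bicomplex preserves nullhomotopy. For this I would note that for any bicomplex $M$ the object $\square\otimes M$ is isomorphic to $0$ in $\Ho(\bico)$ — because $\square\cong 0$ there and $\otimes$ descends to a functor on $\Ho(\bico)$ — and hence is a direct sum of squares by \Cref{lem: characterization acyclicity}; consequently $P\otimes M$ is projective for every projective $P$. Now, given a right homotopy $H:A\to\Omega^1_{big}\otimes B$ witnessing $f\sim g$ we have $f=(\varepsilon_0\otimes\Id_B)\circ H$ and $g=(\varepsilon_1\otimes\Id_B)\circ H$, and applying the forgetful functor, using the identification $U(\varepsilon_s\otimes\Id_B)=U\varepsilon_s\otimes\Id_{UB}$ of underlying bicomplexes, yields
\[
Uf-Ug=\bigl((U\varepsilon_0-U\varepsilon_1)\otimes\Id_{UB}\bigr)\circ UH.
\]
The middle factor factors through $P\otimes UB$, which is projective by the previous remark, so $Uf-Ug$ is nullhomotopic; that is, $Uf$ and $Ug$ are homotopic in $\bico$.

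The augmented case is identical after replacing the path object by $\Omega^1_{big}\tilde{\otimes} B=\C\oplus\Omega^1_{big}\otimes B^+$ and restricting $H$ to augmentation ideals: one obtains $f^+-g^+=\bigl((U\varepsilon_0-U\varepsilon_1)\otimes\Id_{UB^+}\bigr)\circ UH^+$, which again factors through the projective $P\otimes UB^+$. For the final assertion, any cohomological functor $H$ is additive and vanishes on direct sums of squares, hence annihilates any map factoring through a projective; thus $H(f)-H(g)=H(Uf-Ug)=0$. The only point requiring an actual argument — and the one I would be most careful about — is the closure property that $\square\otimes M$ is projective for \emph{arbitrary} $M$, not merely projective $M$; everything else is formal bookkeeping with $\Ho(\bico)$ and the compatibility of the evaluations with the forgetful functor.
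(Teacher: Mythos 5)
Your proof is correct and follows essentially the same strategy as the paper: both arguments reduce to showing that $f-g$ factors through $Q\otimes B$ with $Q$ projective, combined with the closure fact that a projective tensored with an arbitrary bicomplex is again projective (which you justify correctly via $\square\cong 0$ in $\Ho(\bico)$ and \Cref{lem: characterization acyclicity}). The only difference is how the projective is produced: the paper uses the explicit splitting $\Omega^1_{big}=\C\oplus(\Omega^1_{big})^+$, on whose first summand $\varepsilon_0$ and $\varepsilon_1$ literally agree and whose second summand is already a direct sum of squares, whereas you obtain it more formally by observing that both evaluations are retractions of the quasi-isomorphism $\C\to\Omega^1_{big}$ and hence coincide in $\Ho(\bico)$; the two routes are interchangeable.
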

\begin{proof}
Consider a homotopy $H:A\to \Omega_{big}^1\otimes B$ between $f$ and $g$. The decomposition $\Omega_{big}^1=\C\oplus (\Omega_{big}^1)^+$ induces a decomposition $\Omega_{big}^1\otimes B=B\oplus (\Omega_{big}^1)^+\otimes B$ and since $\varepsilon_0=\varepsilon_1$ on the first summand, $f-g$ factors through the second summand. However, $(\Omega_{big}^1)^+$ is contractible (a direct sum of squares) and hence so is $(\Omega_{big}^1)^+\otimes B$. The argument for the augmented case is analogous.
\end{proof}

%
%

For any augmented cbba $(A,\varepsilon)$, denote by $QA:=Q(A,\varepsilon):=A^+/A^+A^+$ the space of indecomposables. With the induced differentials, $(QA,\del,\delbar)$ is a bicomplex. Further:
\begin{lem}\label{lem: indecomposables}
	The assignment $A\mapsto QA$ defines a functor $\cbba_0\to \bico$ which sends augmentedly homotopic maps of augmented cbba's to homotopic maps of bicomplexes.
\end{lem}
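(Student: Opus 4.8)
The plan is to separate the two assertions. Functoriality is routine: any morphism $f\colon A\to B$ in $\cbba_0$ preserves the augmentation, hence $f(A^+)\subseteq B^+$, and preserves products, hence $f(A^+A^+)\subseteq B^+B^+$; therefore $f$ descends to a $\C$-linear map $Qf\colon QA\to QB$. Since $\del,\delbar$ obey the Leibniz rule they preserve $A^+A^+$ and so induce well-defined differentials on $QA$, with which $Qf$ commutes; identities and composites are visibly respected, so $Q\colon\cbba_0\to\bico$ is a functor.

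The heart of the matter is homotopy invariance, and the key step I would carry out first is to compute the indecomposables of the functorial path object. I claim there is a natural isomorphism of bicomplexes
\[
Q\bigl(\Omega_{big}^1\tilde\otimes B\bigr)\;\cong\;\Omega_{big}^1\otimes QB.
\]
Indeed, by definition $(\Omega_{big}^1\tilde\otimes B)^+=\Omega_{big}^1\otimes B^+$, and a direct inspection of the product shows the decomposable elements are exactly $\Omega_{big}^1\otimes B^+B^+$: any product $(\omega\otimes b)(\omega'\otimes b')=\pm(\omega\omega')\otimes(bb')$ lies there, and conversely $\omega\otimes bb'=\pm(\omega\otimes b)(1\otimes b')$ is decomposable since $1\otimes b'$ already lies in the augmentation ideal. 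As $\otimes_\C$ is exact, quotienting gives the claim, naturally in $B$ and compatibly with the differentials (again by Leibniz).

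With this in hand the argument is formal. Given a homotopy $H\colon A\to\Omega_{big}^1\tilde\otimes B$ between $f$ and $g$, applying $Q$ and the identification above yields $QH\colon QA\to\Omega_{big}^1\otimes QB$ whose composites with $\varepsilon_0\otimes\Id_{QB}$ and $\varepsilon_1\otimes\Id_{QB}$ are $Qf$ and $Qg$, so that $Qf-Qg=\bigl((\varepsilon_0-\varepsilon_1)\otimes\Id_{QB}\bigr)\circ QH$. On the summand $\C\otimes QB$ of $\Omega_{big}^1\otimes QB=QB\oplus(\Omega_{big}^1)^+\otimes QB$ both evaluations restrict to the identity, so $(\varepsilon_0-\varepsilon_1)\otimes\Id_{QB}$ annihilates it and factors through the projection onto $(\Omega_{big}^1)^+\otimes QB$. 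Hence $Qf-Qg$ factors through $(\Omega_{big}^1)^+\otimes QB$, which is a direct sum of squares: $(\Omega_{big}^1)^+$ is a sum of squares (as recorded in the proof of \Cref{lem: ind map on homotopy classes}), and tensoring a sum of squares with any bicomplex again yields a sum of squares by \Cref{thm:decomposition}. A map factoring through a projective object is nullhomotopic by definition, so $Qf\simeq Qg$.

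The only genuine obstacle is the path-object computation; once the decomposables of $\Omega_{big}^1\tilde\otimes B$ are pinned down, recognising the relevant summand as projective and concluding nullhomotopy is immediate. I would double-check the Koszul-sign bookkeeping in $(\omega\otimes b)(\omega'\otimes b')$ and the compatibility of the isomorphism with $\del$ and $\delbar$, but these are routine and present no real difficulty.
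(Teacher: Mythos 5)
Your proof is correct and follows essentially the same route as the paper: the paper's proof consists precisely of the identification $Q(\Omega^1_{big}\tilde{\otimes}B)\cong\Omega^1_{big}\otimes QB$ followed by the factorisation of $Qf-Qg$ through the contractible summand $(\Omega^1_{big})^+\otimes QB$, exactly as you argue. You have merely supplied the details (the computation of the decomposables of the path object and the verification that the summand is a sum of squares) that the paper leaves implicit by referring back to the proof of \Cref{lem: ind map on homotopy classes}.
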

\begin{proof}
Let $H:A\to \Omega^1_{big}\tilde{\otimes} B$ an augmented homotopy between two maps $f,g:A\to B$. Note that 
\[
Q(\Omega^1_{big}\tilde{\otimes} B)=\Omega^1_{big}\otimes QB
\]
and hence we may argue as in the proof of \Cref{lem: ind map on homotopy classes}.
\end{proof}

Denoting by $C$ the cofibrant replacement functor in $\cbba_0$, we define:

\begin{definition}\label{def: homotopy bicomplex}
	The \textbf{homotopy bicomplex} of an augmented cbba $(A,\varepsilon)$ is given by $\pi(A,\varepsilon):=(QCA,\del,\delbar)$. 
\end{definition}

By \Cref{lem: indecomposables}, the homotopy bicomplex defines a functor
\[\pi: \Ho(\cbba_0)\to \Ho(\bico)\]
resp, taking real structures into account,
\[
\pi: \Ho(\R\cbba_0)\to \Ho(\R\bico).
\]
We may postcompose $\pi$ with any cohomological functor and obtain a (co-)homotopy version, e.g.
 \begin{align*}
 	\pi_\delbar^{p,q}(A,\varepsilon)&:=H_{\delbar}^{p,q}(\pi(A,\varepsilon))\\
 	\pi_\del^{p,q}(A,\varepsilon)&:=H_{\del}^{p,q}(\pi(A,\varepsilon))\\
 	\pi_{dR}^k(A,\varepsilon)&:=H_{dR}^{k}(\pi(A,\varepsilon))\\
 	\pi_{BC}^{p,q}(A,\varepsilon)&:=H_{\delbar}^{p,q}(\pi(A,\varepsilon))\\
 	\pi_A^{p,q}(A,\varepsilon)&:=H_A^{p,q}(\pi(A,\varepsilon))
 \end{align*}
 and, generalizing the last two, $\pi_{S_{p,q}}^k(A,\varepsilon):=H_{S_{p,q}}^k(\pi(A,\varepsilon))$ and $\pi_{ABC}(A,\varepsilon):=H_{ABC}(\pi(A,\varepsilon))$, $\pi_{BCA}(A,\varepsilon):=H_{BCA}(\pi(A,\varepsilon))$.
 If $A=(A_X,\varepsilon_x)$ for some pointed complex manifold $(X,x)$, we write $\pi(X,x):=\pi(A_X,\varepsilon_x)$, $\pi_{\delbar}^{p,q}(X,x):=\pi_\delbar^{p,q}(A_X,\varepsilon_x)$, etc.
 
 \begin{rem}
 	As for any bicomplex, by \Cref{thm:decomposition} the isomorphism type of $\pi(A,\varepsilon)$ in $\Ho(\bico)$ is determined by the multiplicities of all zigzags, or also by the minimal bicomplex $\pi_{ABC}(A,\varepsilon)$.
 \end{rem}  
 For any cohomological functor $H$ to some $\C$-linear category $C$ with an antilinear involution $\tau$, which is equivariant with respect to that involution $H(\bar A)=\tau H$, one obtains an induced functor on the categories of fixed points. In particular, one has an induced real structure on $\pi_{BC}^{\Cdot,\Cdot}(X,x),\pi_A^{\Cdot,\Cdot}(X,x)$ etc.

 We now make the relation to usual homotopy precise: 
 \begin{prop}\label{prop: homotopy groups via cbba vs actual homotopy groups}
 	For any connected complex manifold $X$ which has a nilpotent, finite-type underlying topological space, whenever $\pi_k(X,x)$ is abelian, there is an identification
 	\[
 	\pi_{dR}^k(X,x)^\vee\cong \pi_k(X,x)\otimes\C,
 	\]
 	compatible with the real structures on both sides.
 \end{prop}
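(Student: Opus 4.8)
The plan is to reduce the statement to the fundamental theorem of (real) rational homotopy theory by passing to the underlying singly graded theory. Write $T\colon\cbba_0\to\cdga$ for the functor sending a cbba $(A,\del,\delbar)$ to its underlying augmented commutative differential graded algebra with total grading and differential $d=\del+\delbar$; under $T$ a free bigraded algebra $\Lambda V$ becomes the free cdga on the total-degree graded space $V^\bullet$, and a nilpotent cbba in the sense of \Cref{def: nilpotent, minimal, model} becomes a Sullivan algebra, the well-ordered bigraded basis with $dv_i$ in the subalgebra generated by smaller generators being exactly Sullivan's nilpotency condition. Applying $T$ to the cofibrant replacement $CA_X\to A_X$ and using that a bigraded quasi-isomorphism induces an isomorphism on total (de Rham) cohomology --- $H_{dR}$ being a cohomological functor, c.f.\ \Cref{cor: coh functors and quisos} --- one sees that $T(CA_X)\to A_X$ is a Sullivan model, in the classical sense, of the cdga of complex forms. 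Working throughout in $\R\cbba_0$, the real structure on $CA_X$ makes $T(CA_X)$ the complexification of a real Sullivan model of the real de Rham algebra $A_X^{\R}$, so that all identifications below will be defined over $\R$ and hence compatible with conjugation.

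Next I would identify $\pi_{dR}^k(X,x)$ with the linear-part cohomology of this Sullivan model. The indecomposables functor commutes with $T$, i.e.\ the total complex of the homotopy bicomplex $\pi(X,x)=QCA_X$ is $Q\,T(CA_X)$, and the differential induced on indecomposables is the linear part $d_0=(\del+\delbar)_0$ of $d$ (all higher word-length terms die in $A^+/A^+A^+$). Hence
\[
\pi_{dR}^k(X,x)=H_{dR}^k\bigl(\pi(X,x)\bigr)=H^k\bigl(Q\,T(CA_X),\,d_0\bigr).
\]
It then remains a purely algebraic statement about Sullivan algebras: the linear-part cohomology of the indecomposables of any Sullivan model is the space of generators of its minimal model.

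To conclude, I would invoke the structure theory of Sullivan algebras: $T(CA_X)$ decomposes, up to isomorphism, as a tensor product $\Lambda V_{\min}\otimes E$ of its minimal model $\Lambda V_{\min}$ with a contractible Sullivan algebra $E$; on indecomposables this gives $Q\,T(CA_X)\cong V_{\min}\oplus QE$ with $d_0$ vanishing on $V_{\min}$ (minimality) and acyclic on $QE$ (contractibility), whence $H^k(Q\,T(CA_X),d_0)\cong V_{\min}^k$. By the fundamental theorem of rational/real homotopy theory for a nilpotent finite-type space \cite{Su77}, \cite{BG76}, \cite{DGMS75}, one has $V_{\min}^k\cong(\pi_k(X,x)\otimes\C)^\vee$ whenever $\pi_k(X,x)$ is abelian (for $k=1$ via the Mal'cev completion, which for abelian $\pi_1$ is just $\pi_1\otimes\C$). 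Since $X$ is of finite type these spaces are finite dimensional, so dualizing yields $\pi_{dR}^k(X,x)^\vee\cong\pi_k(X,x)\otimes\C$; the real form of the minimal model identifies the antilinear involution on the left with complex conjugation on $\pi_k(X,x)\otimes_{\R}\C$ on the right.

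The main obstacle is this last algebraic step together with the care needed in the merely nilpotent (non-simply connected) setting: one must verify that $T$ genuinely lands in Sullivan algebras in this generality, that the minimal-plus-contractible splitting is available and that the contractible factor contributes acyclically to the linear complex, and finally that Sullivan's identification of $V_{\min}$ with dual homotopy holds with complex coefficients compatibly with conjugation. Once the reduction to the total complex is in place, however, everything is classical.
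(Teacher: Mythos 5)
Your proposal is correct in substance but packages the argument differently from the paper. The paper never passes through the minimal model of the underlying cdga: it quotes the fundamental theorem in the mapping-space form $\pi_k(X,x)\otimes\Q\cong[A_{PL}(X),A_{PL}(S^k)]$, uses formality of $S^k$ and the de Rham theorem to replace the target by $H^\Cdot(S^k;\C)$ and the source by $A_X$, observes that $CA_X$ (a composition of Hirsch extensions) is cofibrant in $\cdga_{\C,0}$, and then invokes \cite[6.12, 6.16]{BG76} to identify $[CA_X,H^\Cdot(S^k;\C)]$ with $\pi_{dR}^k(X,x)^\vee$. Your route instead computes $H^k(Q\,T(CA_X),d_0)$ directly via the splitting of a Sullivan algebra into minimal tensor contractible; this is essentially a re-derivation of the BG76 statement the paper cites, so the mathematical content coincides, but your version makes the role of the linear part of $d$ on the indecomposables more transparent, while the paper's version avoids any structure theory beyond cofibrancy. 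The one step you flag as the ``main obstacle'' is indeed the point that needs an external input: $CA_X$ has generators in degree $0$ and in negative total degrees (the paper remarks on this explicitly), so $T(CA_X)$ is a $\Z$-graded, not a connected, Sullivan algebra, and the classical minimal-tensor-contractible decomposition is not directly available. The paper discharges exactly this by citing \cite{LM15}: since $A_X$ is cohomologically connected it admits a connected minimal model which is cofibrant in both $\cdga^{\geq 0}_{\C,0}$ and $\cdga_{\C,0}$, and homotopy classes of maps out of it agree in the two model structures. With that citation in hand your argument closes, most cleanly by replacing the tensor decomposition with the homotopy invariance of the derived indecomposables (the singly graded analogue of \Cref{lem: indecomposables}), which gives $H^k(Q\,T(CA_X),d_0)\cong V_{\min}^k$ without any splitting. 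So: same skeleton, a genuinely different middle, and one citation (\cite{LM15}) that you would need to make the $\Z$-graded step rigorous.
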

The commutativity assumption is to avoid technicalities: For a general nilpotent space, one should replace $\pi_1(X,x)\otimes\C$ by its complexified Mal'cev Lie-algebra, see \cite{BG76}.
\begin{proof}
	The fundamental theorem of rational homotopy theory yields a natural isomorphism 
	\[
	\pi_k(X,x)\otimes\Q\cong [A_{PL}(X),A_{PL}(S^k)]_{\cdga^{\geq 0}_{\Q,0}},
	\]
	where $\cdga^{\geq0}_{\Q,0}$ denotes the category of augmented rational cdga's concentrated in non-negative degrees with the transfered model structure from bounded-below cochain complexes and $A_{PL}$ the piecewise-linear forms. As noted in \cite{LM15}, if we denote $\cdga_{\Q,0}$ the model category structure of augmented rational $\Z$-graded cdga's without degree restrictions, we have an identification \[[A_{PL}(X),A_{PL}(S^k)]_{\cdga_{\Q,0}^{\geq 0}}\cong [A_{PL}(X),A_{PL}(S^k)]_{\cdga_{\Q,0}},\]
	 because $A_{PL}(X)$ is cohomologically connected (and so admits a minimal model, which is cofibrant in both categories).
	Furthermore, $S^k$ is formal and the de Rham theorem gives an chain of quasi isomorphisms connecting $A_{PL}(X)\otimes\C$ with $A_X$. We thus have: 
	\begin{align*}
		\pi_k(X,x)\otimes\C &\cong [A_X,H^\Cdot(S^k,\C)]_{\cdga_{\C,0}}\\
							&\cong [CA_X,H^\Cdot(S^k,\C)]_{\cdga_{\C,0}}\\
							&\cong \pi^k_{dR}(X,x)^\vee
	\end{align*} 
where $CA_X$ denotes the cofibrant replacement in $\cbba_0$, which, being a composition of Hirsch extensions, is also cofibrant in $\cdga_{\C,0}$. The last equality follows from \cite[6.12 and 6.16]{BG76}.\footnote{The proof there is for non-negatively graded cdga's, but the argument is the same when considering all cdga's.}
\end{proof}
 Note that as for any bicomplex, there are the row and column filtrations $F,\bar{F}$ on $\pi(A)$. These induce spectral sequences and filtrations on the total cohomology, which we still denote by $F,\bar{F}$.
 
 \begin{cor}\label{cor: Filtrations and specsec}
 	For any complex manifold $X$, there are two conjugate filtrations on $\pi_{dR}^k(X)$, which we call the Hodge filtrations. Furthermore, there are two conjugate spectral sequences with the first page $\pi_{\delbar}(X)$, resp. $\pi_{\del}(X)$.
 \end{cor}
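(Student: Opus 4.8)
The plan is to reduce the statement to the general formalism for bicomplexes applied to the homotopy bicomplex, and then to check that the resulting data are genuine invariants compatible with the conjugation symmetry.

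First I would recall that, by the functoriality established in \Cref{lem: indecomposables} and the subsequent discussion, the homotopy bicomplex $\pi(X)=\pi(A_X,\varepsilon_x)=(QCA_X,\del,\delbar)$ is a well-defined object of $\Ho(\R\bico)$: it carries a real structure $\sigma$ induced by complex conjugation on $A_X$, which is antilinear, sends $\pi(X)^{p,q}$ to $\pi(X)^{q,p}$, and satisfies $\sigma\del\sigma=\delbar$. Now, for any bicomplex the column filtration $F^p=\bigoplus_{p'\geq p}(\,\cdot\,)^{p',\ast}$ and the row filtration $\bar F^q=\bigoplus_{q'\geq q}(\,\cdot\,)^{\ast,q'}$ give rise to the two Fr\"olicher spectral sequences of \Cref{lem: locally bounded}, whose first pages are the column, resp. row, cohomologies. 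Applying this to $\pi(X)$ yields two spectral sequences with $E_1$-pages $\pi_{\delbar}^{p,q}(X)=H_{\delbar}^{p,q}(\pi(X))$ and $\pi_{\del}^{p,q}(X)=H_{\del}^{p,q}(\pi(X))$, together with the induced filtrations $F,\bar F$ on the total cohomology $\pi_{dR}^k(X)=H_{dR}^k(\pi(X))$.

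The point that requires care is well-definedness: the cofibrant replacement $CA_X$, and hence $\pi(X)$, is only determined up to isomorphism in $\Ho(\R\bico)$, so the spectral sequences and filtrations above must be shown to be invariants of this isomorphism class. The key observation is that each page $E_r$ $(r\geq 1)$ of either Fr\"olicher spectral sequence, as well as each filtration step $F^p H_{dR}^k$ and $\bar F^q H_{dR}^k$ of the total cohomology, is an additive functor on $\bico$ which vanishes on squares: a square is acyclic for $H_\del$, $H_\delbar$ and $H_{dR}$, so its two spectral sequences already vanish from the $E_1$-page onwards and its total cohomology is zero. Thus all these assignments are cohomological functors in the sense of the definition preceding \Cref{cor: coh functors and quisos}, and by that corollary they factor through $\Ho(\bico)$ and send bigraded quasi-isomorphisms to isomorphisms. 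Consequently they depend only on the isomorphism class of $\pi(X)$, and we obtain well-defined spectral sequences and filtrations (note that no convergence is being asserted, so the local boundedness hypothesis of \Cref{lem: locally bounded} is not needed here).

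Finally, the two structures are interchanged by the real structure. Since $\sigma$ is antilinear and exchanges bidegrees $(p,q)\leftrightarrow(q,p)$ together with $\del\leftrightarrow\delbar$, it carries the column filtration $F$ to the row filtration $\bar F$ and identifies the associated spectral sequences and the filtrations on $\pi_{dR}^k(X)$ up to conjugation; this gives the asserted pair of conjugate Hodge filtrations and of conjugate spectral sequences. I expect the only genuine obstacle to be the invariance argument of the third paragraph, namely verifying that the higher pages and the filtration on total cohomology are cohomological functors rather than merely the representable invariants of \Cref{rem: Dolbeault les}; once this is in place, the remaining assertions are immediate from the general bicomplex formalism.
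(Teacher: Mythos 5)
Your proposal is correct and follows essentially the same route as the paper, which simply applies the row and column filtrations of the bicomplex $\pi(X,x)$ and notes that conjugation exchanges them. Your third paragraph on invariance under bigraded quasi-isomorphism is sound (each page $E_r$, $r\geq 1$, and each filtration step is indeed an additive functor vanishing on squares), but it is not strictly needed for the statement as given, since $\pi(X,x)=QCA_X$ is defined via the functorial cofibrant replacement and is therefore an honest bicomplex rather than only an isomorphism class in $\Ho(\R\bico)$.
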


 \begin{rem}[Convergence of the spectral sequences]
	The spectral sequences starting with $\pi_{\delbar}(X,x)$ and $\pi_{\del}(X,x)$ converge to $\pi_{dR}(X,x)$ with its column, resp. row Hodge filtration if and only if $\pi(X,x)$ is locally bounded, see \Cref{lem: locally bounded}. Note that this does only depend on the isomorphism class of $\pi(X,x)$ in $\Ho(\bico)$. In particular, this is the case if there is a model for which all generators lie in a strictly convex region of the positive-degree half-plane. As we will see later on, this is the case for holomorphically simply connected manifolds or nilmanifolds. 
	
	If one works with the model category structure on $\cbba^{f.q.}$ instead as sketched in \Cref{rem: bounded vars}, one always obtains strong convergence, but the relation of the target to the usual homotopy groups is less clear.
\end{rem}

\begin{rem}[Diagrams]
	 One can also consider cohomological functors not landing in (bigraded) vector spaces but rather in categories of diagrams. For instance, the above constructions fit together to form a natural diagram
 	\[
 	\begin{tikzcd}
 		&\pi_{BC}^{p,q}(X,x)\ar[ld]\ar[d]\ar[rd]&\\
 		\pi_{\delbar}^{p,q}(X,x)\ar[rd]\ar[r,Rightarrow]&\pi_{dR}^{p+q}(X,x)\ar[d] &\pi_{\del}^{p,q}(X,x).\ar[ld]\ar[l, Rightarrow]\\
 		&\pi_A(X,x)&
 	\end{tikzcd}
 	\]
and one has the same maps between Schweitzer and de Rham, resp. Dolbeault cohomotopy as one for the respective cohomologies, e.g. there are maps $\pi_{S_{p,q}}^k(X)\to \pi_{dR}^{k+1}(X)$ for $k\geq p+q-1$ etc.
\end{rem}

 We now give an interpretation of various complex homotopy groups as spaces of maps in $\Ho(\cbba_0)$. This is a bigraded analogue of the aforementioned \cite[6.12 and 6.16]{BG76} used at the end of the proof of \Cref{prop: homotopy groups via cbba vs actual homotopy groups}. For any bicomplex $C$, define $S(C):=\Lambda(C)/\Lambda(C)^+\Lambda(C)^+$, i.e. make every product of two non-units trivial. For indecomposable, non-contractible $C$, the $S(C)$ may be considered as naive bigraded analogues of spheres. Then we have:

\begin{lem}
	For any augmented cbba $A$, there is an identification \[[A,S(C)]_{\cbba_0}\cong[\pi(A),C]_{\bico}.\]
\end{lem}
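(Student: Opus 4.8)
The plan is to recognize the asserted bijection as an instance of a derived adjunction: I would exhibit the indecomposables functor $\pi=LQ$ and the trivial-algebra functor $S$ as the derived functors of a Quillen adjunction between $\cbba_0$ and $\bico$.

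First I would observe that $S\colon\bico\to\cbba_0$, $C\mapsto\Lambda(C)/\Lambda(C)^+\Lambda(C)^+$, whose underlying bicomplex is the trivial‑product algebra $\C\oplus C$, is right adjoint to $Q\colon\cbba_0\to\bico$. Concretely, any cbba morphism $f\colon B\to S(C)$ sends $B^+B^+$ into $S(C)^+S(C)^+=0$, hence factors through a linear map $\bar f\colon QB=B^+/B^+B^+\to C=S(C)^+$; conversely any bicomplex map $QB\to C$ extends uniquely to an algebra map $B\to S(C)$ killing decomposables. Since on generators the differential of $S(C)$ is exactly that of $C$, such an $f$ commutes with $d$ if and only if $\bar f$ is a map of bicomplexes. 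This gives a natural bijection
\[
\Hom_{\cbba_0}(B,S(C))\;\cong\;\Hom_{\bico}(QB,C),
\]
i.e.\ $Q\dashv S$.

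Next I would check that $(Q,S)$ is a Quillen adjunction by verifying that $S$ is right Quillen. As the underlying bicomplex of $S(C)$ is $\C\oplus C$, the functor $S$ carries a surjection of bicomplexes to a surjective cbba map (a fibration), and carries a bigraded quasi-isomorphism $C\to C'$ to $\C\oplus C\to\C\oplus C'$, which induces isomorphisms on $H_{BC}$ and $H_A$ and is thus a weak equivalence by \Cref{prop: characterization quisos}. Hence $S$ preserves fibrations and weak equivalences, in particular acyclic fibrations, and is right Quillen. (As a consistency check, $Q$ sends the generating cofibrations $S(p+1,q+1)\hookrightarrow T(p,q)$ and $\C\hookrightarrow\Lambda(C)$ to the injections $\bullet[p+1,q+1]\hookrightarrow\square[p,q]$ and $0\hookrightarrow C$, and preserves all colimits, so it visibly preserves cofibrations.) For the conclusion I would then invoke the standard fact that a Quillen adjunction induces, for cofibrant source and fibrant target, a natural bijection on homotopy classes \cite[Ch.~1]{Hov99}. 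Taking the cofibrant replacement $CA\xrightarrow{\sim}A$ and using that every bicomplex is fibrant and cofibrant, this yields
\[
[A,S(C)]_{\cbba_0}=[CA,S(C)]_{\cbba_0}\cong[Q(CA),C]_{\bico}=[\pi(A),C]_{\bico},
\]
since $\pi(A):=QCA$ by definition; equivalently this is just the derived adjunction $LQ\dashv RS$ evaluated at the fibrant object $C$.

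The genuinely routine parts are the Hom-set adjunction and the Quillen property; the subtle point, which the Quillen formalism handles automatically, is the descent to homotopy classes. If one argued by hand instead, \Cref{lem: indecomposables} would give the easy direction that $f\sim g$ implies $\bar f\sim\bar g$ (via $Q$), and the main obstacle would be the converse: lifting a bicomplex null-homotopy of $\bar f-\bar g$ — an operator $h$ with $\bar f-\bar g=[\del,[\delbar,h]]$ in the sense of \Cref{lem: nullhomotopic morphisms} and \eqref{eqn: homotopy classes in bico} — to a cbba homotopy $A\to\Omega_{big}^1\tilde{\otimes}S(C)$. Routing the argument through the Quillen adjunction is precisely what makes this lifting free of charge, which is why I would prefer it to an explicit construction of homotopies.
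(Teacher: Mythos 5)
Your argument is correct, but it takes a genuinely different route from the paper. The paper never sets up the adjunction $Q\dashv S$; it proves the bijection by hand. After reducing to a cofibrant $A$ with trivial products (so $QA=A^+$), surjectivity is immediate, and for injectivity it starts from a bicomplex homotopy $h$ with $f^+-g^+=[\del,[\delbar,h]]$ as in \Cref{lem: nullhomotopic morphisms} and writes down the explicit cbba homotopy
\[
H(a)=(1-t)\otimes f^+a+t\otimes g^+a+\del t\otimes[\delbar,h]a-\delbar t\otimes[\del,h]a+\del\delbar t\otimes ha
\]
into the path object $\Omega_{big}^1\tilde{\otimes}S(C)$, verifying directly that it is a chain map; this is exactly the lifting step you delegate to the Quillen formalism. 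Your route is more structural: once one records the hom-set adjunction $Q\dashv S$ and checks that $S$ preserves surjections and bigraded quasi-isomorphisms (both immediate since $US(C)=\C\oplus C$ and $H_{BC},H_A$ are additive, using \Cref{prop: characterization quisos}), the identification is the derived adjunction $LQ\dashv RS$ evaluated at the automatically fibrant object $C$, with naturality for free; the parenthetical about generating cofibrations is not needed, since left Quillen-ness of $Q$ is formal from right Quillen-ness of $S$. What the paper's explicit formula buys is the remark following its proof: if $h$ is purely imaginary then $H$ is real, so the statement persists in $\R\cbba_0$ --- in your setup this comes equally cheaply by running the same Quillen adjunction between $\R\cbba_0$ and $\R\bico$.
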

\begin{proof}
Note that $\pi(S(C))=C$, so the map is induced by applying the functor $\pi$. Without loss of generality, we may assume $A$ to be cofibrant with $\pi(A)=QA$ and 
\[
[A,S(C)]_{\bico_0}=\Hom_{\cbba_0}(A,S(C))/\sim.
\]
We may also assume that $A=A/A^+A^+$, and hence $QA=A^+$. Surjectivity is then clear. For injectivity, assume we have two augmented maps $f,g: A\to S(C)$ such that $f^+\sim g^+: A^+\to C$ in $\bico$ via a linear homotopy $h:A\to C[1,1]$ s.t. $f^+-g^+=[\del,[\delbar,h]]$. Now define an augmentation preserving map $A\to \Omega_{big}^1\tilde{\otimes} S(C)$ by setting for any $a\in A^+$:
\[
H(a):=(1-t)\otimes f^+a + t\otimes g^+a+ \del t\otimes [\delbar,h]a - \delbar t\otimes [\del,h]a + \del\delbar t\otimes ha
\]
Then, sorting by terms, we have:
\begin{eqnarray*}
	\del H(a)-H(\del a)&=&\del t\otimes (g^+a-f^+a-[\del,[\delbar,h]]a)+\\
						&&\delbar t\otimes [\del,[\del,h]]a+\\
						&&\del\delbar t\otimes (-[\del,h]a+\del h a - h\del a)\\
					&=&0.
\end{eqnarray*}
and similarly for $\delbar$, so this is a map of bicomplexes and thus, since the multiplication is trivial, of cbba's. By construction $\varepsilon_0 H=f$ and $\varepsilon_1 H=g$. 
\end{proof}
\begin{rem}
	If both sides are equipped with real structures and  $h$ is imaginary, $H$ in the above proof is real, so the result remains valid in $\R\cbba_0$.
\end{rem}
 
\begin{cor}
	Let $A$ be an augmented cbba. Then there are natural identifications
	\begin{align*}
		(\pi_{\delbar}^{p,q}(A))^\vee&\cong[A,S(\linepic[p-1,q])]_{\cbba_0}\\
		(\pi_{\del}^{p,q}(A))^\vee&\cong[A,S(~\hlinepic~[p,q-1])]_{\cbba_0}\\
		(\pi_{BC}^{p,q}(A))^\vee&\cong[A,S(\revLpic[p-1,q-1])]_{\cbba_0}\\
		(\pi_{A}^{p,q}(A))^\vee&\cong [A,S(\bullet[p,q])]_{\cbba_0}
	\end{align*}
\end{cor}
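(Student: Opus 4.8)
The plan is to reduce the entire statement to the immediately preceding Lemma and then to recognize the four target $\Hom$-sets as the duals of the corepresentability results already established in \Cref{BC A as hom sets} and \Cref{rem: Dolbeault les}. By that Lemma, $[A,S(C)]_{\cbba_0}\cong[\pi(A),C]_{\bico}$ for every bicomplex $C$, so writing $M:=\pi(A)$ it suffices to produce, naturally in $M$, isomorphisms in $\Ho(\bico)$:
\[
\begin{aligned}
{}[M,\linepic[p-1,q]]_{\bico}&\cong(H_{\delbar}^{p,q}(M))^\vee,\\
[M,\hlinepic[p,q-1]]_{\bico}&\cong(H_{\del}^{p,q}(M))^\vee,\\
[M,\revLpic[p-1,q-1]]_{\bico}&\cong(H_{BC}^{p,q}(M))^\vee,\\
[M,\bullet[p,q]]_{\bico}&\cong(H_A^{p,q}(M))^\vee.
\end{aligned}
\]
The right-hand sides are exactly the duals of the groups computed in \Cref{BC A as hom sets} and \Cref{rem: Dolbeault les}, so what is being asserted is a duality between corepresenting a homotopy cohomology functor and representing its dual.

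To construct the map, I would use composition in the triangulated category $\Ho(\bico)$. In the Aeppli case, for instance, \Cref{BC A as hom sets} gives $H_A^{p,q}(M)\cong[\Lpic[p+1,q+1],M]$, and composing a class $\phi\in[M,\bullet[p,q]]$ with a class $\psi\in[\Lpic[p+1,q+1],M]$ produces $\phi\circ\psi\in[\Lpic[p+1,q+1],\bullet[p,q]]\cong H_A^{0,0}(\bullet)\cong\C$, again by \Cref{BC A as hom sets}. This is a natural pairing $[M,\bullet[p,q]]\times H_A^{p,q}(M)\to\C$, hence a natural map $[M,\bullet[p,q]]\to(H_A^{p,q}(M))^\vee$. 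The other three cases are identical, using $\bullet[p,q]$, $\linepic[p,q]$, $\hlinepic[p,q]$ as the corepresenting objects for $H_{BC}^{p,q},H_{\delbar}^{p,q},H_{\del}^{p,q}$, and in each case one checks the relevant composite $\Hom$-space is one-dimensional (e.g. $[\bullet[p,q],\revLpic[p-1,q-1]]\cong H_{BC}^{1,1}(\revLpic)\cong\C$ and $[\linepic[p,q],\linepic[p-1,q]]\cong H_{\delbar}^{1,0}(\linepic)\cong\C$). The $\del$-case follows from the $\delbar$-case by conjugation symmetry.

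It then remains to prove that each induced map to the dual is an isomorphism. Here I would decompose $M=\bigoplus_i Z_i$ into indecomposables via \Cref{thm:decomposition}; since $\Hom$ out of a direct sum is a product and each cohomology functor commutes with direct sums, both sides convert the decomposition into a product (using $(\bigoplus_i Y_i)^\vee=\prod_i Y_i^\vee$) and the pairing splits accordingly, reducing the claim to a single indecomposable $Z$. For $Z$ a square both sides vanish (squares are projective, so $[Z,C]=0$, and all four cohomologies vanish), and for a zigzag the check is a finite explicit computation via \eqref{eqn: homotopy classes in bico} and \Cref{lem: nullhomotopic morphisms}: the cohomology in question is nonzero and one-dimensional exactly at the endpoints (resp. corners) of $Z$, and there the corepresenting map into $Z$ and the representing map out of $Z$ compose to a generator of $\C$. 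I expect the main obstacle to be precisely this nondegeneracy verification — tracking the shift and sign conventions uniformly across the four cases, and in particular confirming that two-sided infinite zigzag summands contribute correctly to both sides (where $H_{\delbar}=H_{\del}=0$ but $H_{BC},H_A\neq0$), so that the four isomorphisms hold with no boundedness hypothesis on $M=\pi(A)$.
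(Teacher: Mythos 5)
Your proposal is correct, but it takes a genuinely different route from the paper's. After the same first reduction $[A,S(C)]_{\cbba_0}\cong[\pi(A),C]_{\bico}$ via the preceding Lemma, the paper's argument is essentially a two-line computation: it uses the duality functor $D$ and the adjunction \eqref{eqn: tensor right adjoint stable cat} to rewrite $[\pi(A),C]_{\bico}$ as $[DC,D\pi(A)]_{\bico}$ (with $DC$ again one of the small corepresenting zigzags), then applies \Cref{BC A as hom sets} and \Cref{rem: Dolbeault les} to the \emph{dual} complex $D\pi(A)$, and finishes with the linear-algebra identity $H^{-p,-q}_{\spadesuit}(DM)\cong(H^{p,q}_{\spadesuit}(M))^\vee$. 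You instead build the duality by hand: a composition pairing in the triangulated category $\Ho(\bico)$ with values in a one-dimensional $\Hom$-space, followed by a nondegeneracy check after decomposing $\pi(A)$ into indecomposables via \Cref{thm:decomposition} (the reduction is legitimate, since $[\bigoplus_i Z_i,C]=\prod_i[Z_i,C]$ and dualizing turns the direct sum of cohomologies into the matching product). What the paper's route buys is that no new computation is needed beyond the one already done in \Cref{BC A as hom sets}; what your route buys is an explicit description of the pairing and independence from the closed monoidal structure on $\Ho(\bico)$, using only \Cref{thm:decomposition}, \eqref{eqn: homotopy classes in bico} and \Cref{lem: nullhomotopic morphisms}. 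The one place where your write-up remains a sketch is exactly where you flag it: the indecomposable-by-indecomposable nondegeneracy verification, including the unbounded zigzags. This does go through --- for instance, for a two-sided infinite zigzag $Z$ one checks directly that every chain map from $Z$ to the shifted length-two zigzag is nullhomotopic, matching $H_{\delbar}(Z)=0$, while for bounded zigzags the pairing is evaluation at the relevant corner or endpoint --- but in your approach this check \emph{is} the content of the statement, so it would need to be carried out in full to call the proof complete.
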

 
\begin{proof}
	Let us only do one case, the others are similar:
\begin{align*}
	[A,S(\linepic[p-1,q])]_{\cbba_0}&=[\pi(A),\linepic[p-1,q]]_{\bico}\\
	&=[\linepic[-p,-q],D\pi(A)]_{\bico}\\
	&=H_\delbar^{-p,-q}(D\pi(A))\\
	&=(\pi_{\delbar}^{p,q}(A))^\vee,
\end{align*}
where we have used the adjunctions \eqref{eqn: tensor-hom adjunction stable cat} and \eqref{eqn: tensor right adjoint stable cat} and \Cref{rem: Dolbeault les}.
\end{proof}

\subsubsection{Comparison with existing complex cohomotopy theories}\label{rem: NT Morgan}

	Let us compare the resulting notions of cohomotopy groups with those arising from existing theories. To avoid an overly technical discussion, we will assume $A_X$ is cohomologically simply connected. By \Cref{thm: Mimo exists} below, it then has a simply connected real cbba model.
	
	In \cite{NT78}, the authors develop a Dolbeault cohomotopy theory by considering `differential bigraded algebra (DBA)' models $(\Lambda V^{\Cdot,\Cdot}_{NT},\delbar)\to (A_X,\delbar)$, i.e. $\Lambda V_{NT}$ is a free bigraded algebra with generators in non-negative total degree, $\delbar$ is a differential of type $(0,1)$ that turns $\Lambda V$ into a nilpotent cdga and the map is a quasi-isomorphism in $H_{\delbar}$. The cohomology of the complex of indecomposables $(V_{NT},\delbar_0)$ defines Dolbeault cohomotopy groups. Forgetting the $\del$-differential, a minimal cbba model for $A_X$ is DBA-model in the sense of \cite{NT78} and so the Dolbeault cohomotopy groups of \cite{NT78} and \Cref{cor: Filtrations and specsec} agree. 
	
	By \cite[Thm. 4.2.]{HT90}, every filtered cdga $(A,F)$ with a complete ($A\cong\varprojlim A/F^p$) and cocomplete ($\varinjlim F^{-p}\cong A$) filtration admits a filtered (again complete and cocomplete) model $\varphi_{HT}:(\Lambda V_{HT},F)\to (A,F)$ in the sense that $\varphi_{HT}$ induces an isomorphism on the first page of the spectral sequence associated with the filtration. Under the completeness assumption, this implies that $\varphi_{HT}$ is also a de Rham model. Such a model is obtained by perturbing the differential of a bigraded cdga model for the first page of the spectral sequence. It is unique up to filtered homotopy equivalence \cite[Thm. 8.1.]{HT90}. The complex of indecomposables thus carries a filtration, giving rise to a spectral sequence and filtrations on the de Rham cohomotopy. Again, a minimal cbba model for $A_X$ is a filtered model in the sense of \cite{HT90} and so the induced filtrations on the cohomotopy groups agree.

	We note that a Fr\"olicher cohomotopy sequence was already constructed in \cite{NT78} where the existence of filtered models is already claimed with a short proof in \cite[Prop. 1]{NT78}, but we were not able to follow the argument \cite{NT78} that the map from the model is filtration preserving. A second proof is given in \cite[Thm. 1]{NT78}, which contains a claim that a Dolbeault quasi-isomorphism of cbba's is also a de Rham quasi isomorphism, which is false without completeness assumptions as we illustrate in \Cref{ex: bba-model Hopf} below. At any rate, these issues seem minor and one could use the results of \cite{HT90} instead.

	For compact K\"ahler manifolds, Morgan constructs a minimal cdga $\Lambda V$ with two filtrations $F,\bar{F}$ and two homotopic maps $\rho,\rho':\Lambda V\to A_X$ such that $(\Lambda V,F,\rho)$ is a filtered model for $(A_X,F)$ and $(\Lambda V,\bar{F},\rho')$ is a filtered model for $(A_X,\bar{F})$ in the sense of \cite{HT90}. In particular, the induced filtrations on the homotopy groups are again identified with those of \Cref{cor: Filtrations and specsec}. 
	
	Finally, Morgan's theory was recast in a more functorial way in \cite{Cir15} which compares well with the present approach: Denote by $\R\cbba'_0$ the full subcategory of $\R\cbba_0$ consisting of cohomologically simply connected first quadrant cbba's which satisfy the $\partial\bar\partial$-Lemma (i.e. their underlying bicomplex is a direct sum of dots and squares). Denote by $MHD_\ast^1$ the category of augmented $1$-connected Mixed Hodge diagrams as in \cite[Def. 3.5.]{Mor78}, \cite{Cir15}. We obtain a functor $\Psi:\R\cbba'_0\to MHD^1_\ast$ as follows:	For any $(A,\sigma)$ in $\R\cbba'$, we obtain a real Mixed Hodge diagram (c.f. \cite[Def. 3.5.]{Mor78}) $\Psi(A):=((A',W),(A,W,F), \iota)$, where $A':=A^{\sigma=\Id}$, the filtration $W$ denotes in both cases the trivial filtration with only jump in degree $0$, $F$ denotes the column filtration and $\iota:A'\otimes_\R\C\cong A$ is the tautological isomorphism. We obtain an induced commutative diagram
	\[
	\begin{tikzcd}
		\Ho(\R\cbba'_0)\ar[r,"\Psi"]\ar[d,"\pi"]&\Ho(MHD_\ast^1)\ar[d,"\pi"]\\
		\Ho(\R\bico)\ar[r,"\Psi"]&\Ho(MHC),
	\end{tikzcd}
	\]
	where $MHC$ denotes the category of real Mixed Hodge complexes \cite{Del74}, $\pi$ denotes the derived functor of indecomposables constructed in \Cref{def: homotopy bicomplex}, resp. \cite{Cir15}, and we keep denoting $\Psi$ the analogous functor on bicomplexes.

	\begin{rem} Note that the models in \cite{DGMS75}, \cite{NT78} and \cite{HT90} are all bigraded, but they are not cbba models in the sense of this article: In fact, the bigrading in \cite{DGMS75} is such that $d$ is of total degree $0$ and the models in \cite{NT78} are not compatible with the real structure, nor are they generally de Rham models, as we illustrate in \Cref{ex: bba-model Hopf} below. The models in \cite{HT90} are de Rham models but they similarly do not see the real structure and the differential will generally not be of bidegree $(1,0)+(0,1)$ but have more components.
	\end{rem}

\subsection{Minimal models: uniqueness}

Our first goal will be to show that, just as in the singly graded case \cite[7.6., 7.8]{BG76}, minimal models are unique up to isomorphism. 

 \begin{thm}\label{thm: weak equiv is iso on mimos}
 	Any weak equivalence between two connected minimal cbba's is an actual isomorphism.
 \end{thm}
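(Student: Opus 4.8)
The plan is to reduce the assertion, via the model-category machinery already in place, to the purely additive fact that a bigraded quasi-isomorphism between \emph{minimal} bicomplexes is an honest isomorphism. Let $\phi\colon M\to N$ be a weak equivalence between connected minimal cbba's, viewed in $\cbba_0$ with the augmentations given by projection onto total degree $0$. Since $M$ and $N$ are nilpotent they are cofibrant by \Cref{cor: nilpotent implies cofibrant}, and every object of $\cbba_0$ is fibrant, the map to the terminal object $\C$ being the augmentation, which is surjective and hence a fibration. A weak equivalence between objects that are simultaneously fibrant and cofibrant is a homotopy equivalence \cite{Hov99}, so there is a homotopy inverse $\psi\colon N\to M$ with $\psi\phi\sim\Id_M$ and $\phi\psi\sim\Id_N$ in $\cbba_0$.

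Next I would pass to indecomposables. By \Cref{lem: indecomposables} the functor $Q\colon\cbba_0\to\bico$ carries homotopic maps to homotopic maps, so $Q\psi\circ Q\phi=Q(\psi\phi)\sim\Id_{QM}$ and $Q\phi\circ Q\psi=Q(\phi\psi)\sim\Id_{QN}$. Thus $Q\phi\colon QM\to QN$ is a bigraded chain homotopy equivalence, i.e. an isomorphism in $\Ho(\bico)$, hence a bigraded quasi-isomorphism in the sense of \Cref{prop: characterization quisos}.

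The one genuinely additive input enters now. Because $M$ and $N$ are minimal we have $\del\delbar\equiv 0$ on the indecomposables, so $QM$ and $QN$ are \emph{minimal} bicomplexes; by \Cref{cor: minimal + contractible} they contain no square summands, i.e. $QM=(QM)^{zig}$ and $QN=(QN)^{zig}$. Applying the equivalence of \Cref{cond: Iso in SC} and \Cref{cond: zigs iso} of \Cref{prop: characterization quisos} to $Q\phi$, the induced map on zigzag parts is an isomorphism; since here the zigzag parts are everything, $Q\phi$ is itself an isomorphism of bicomplexes. This step, where minimality is used and the decomposition theorem does the work, is the only real content of the proof and thus the main obstacle; everything around it is formal.

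Finally I would bootstrap from generators to the whole algebra. Choosing linear splittings identifies $M$ and $N$ with free bigraded algebras $\Lambda V$ and $\Lambda W$ on $V=QM$ and $W=QN$, both concentrated in strictly positive total degree by connectedness. A straightforward induction on total degree then shows $\phi$ is bijective in each degree: in total degree $k$ one has $(\Lambda V)^k=V^k\oplus(\Lambda^{\geq 2}V)^k$, where the decomposable summand involves only generators of degree $<k$, so $\phi$ restricts to an isomorphism there by the inductive hypothesis and to an isomorphism on $V^k$ since $Q\phi$ is one. Hence $\phi$ is an isomorphism of cbba's, with the base case of total degree $0$ trivial as both sides are $\C$. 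The identical argument applies in $\R\cbba_0$, choosing the homotopy inverse and the homotopies to be real.
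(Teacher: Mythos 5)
Your argument is sound and shares the paper's overall architecture: use that connected minimal cbba's are cofibrant (\Cref{cor: nilpotent implies cofibrant}) and everything is fibrant to extract a homotopy inverse, descend to indecomposables via \Cref{lem: indecomposables}, and then bootstrap from the generators to the whole algebra using positivity of the degrees of the generators. Where you genuinely diverge is the central step. The paper does not show that $Q\phi$ is an isomorphism directly; it shows that the composites $Q(\psi\phi)$ and $Q(\phi\psi)$ are isomorphisms by noting that they induce the identity on $\pi_{ABC}$ and feeding the short exact sequence \eqref{eqn: ABC} for the minimal bicomplexes $QM$, $QN$ into the five lemma --- an argument that avoids the decomposition theorem entirely. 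Your route --- $Q\phi$ is an isomorphism in $\Ho(\bico)$ between bicomplexes with no square summands, hence an honest isomorphism by \Cref{cor: minimal + contractible} and the equivalence of the first and third conditions of \Cref{prop: characterization quisos} --- is shorter and yields invertibility of $Q\phi$ itself before any bootstrapping, at the price of invoking the structure theorem \Cref{thm:decomposition}. Both are correct. One point to tighten in your final paragraph: $\phi$ need not preserve the chosen splitting $(\Lambda V)^k=V^k\oplus(\Lambda^{\geq 2}V)^k$ (it is only block-triangular with respect to it), and injectivity of $\phi$ on the decomposable summand is not an immediate formal consequence of the inductive hypothesis, especially without finite-dimensionality. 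The clean repair is the paper's: filter by powers of the augmentation ideal, observe that the associated graded of $\phi$ is $\Lambda(Q\phi)$ and hence an isomorphism, and use that this filtration is finite in each total degree because all generators lie in positive degree.
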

 \begin{proof}
 	Let $A,B$ be minimal. Since $A,B$ are both fibrant and cofibrant, a map $f:A\to B$ is a weak equivalence if and only if it is a homotopy equivalence. Thus, assuming $f$ to be a weak equivalence, we may pick a $g:B\to A$ such that $f\circ g\simeq \Id_B$ and $g\circ f\simeq \Id_A$. In particular, $\pi_{ABC}(f)$ and $\pi_{ABC}(g)$ are inverse isomorphisms. Given any presentation $A=\Lambda V_A$ as a free algebra with $V_A=V_A^{\geq 1}$, we may identify  $V_A\cong QA$. Since $\del\delbar=0$ on $QA$, by \eqref{eqn: ABC}, there is a short exact sequence
 	\[
 	0\to\widetilde{\pi}_{BC}(A)\to V_A\to\pi_A(A)\to 0.
 	\]
 	Hence, by the Five-Lemma, the induced map $Q(g\circ f):QA\to QA$ is an isomorphism, and then so are all the maps $(A^+)^n/(A^+)^{n+1}\to (A^+)^n/(A^+)^{n+1}$. Since $A$ is connected, all generators lie in positive degrees and so, we have $(A^+)^n\cap A^m=\{0\}$ for $n\gg m$. Applying the Five-Lemma repeatedly (but finitely many times in every degree) to the diagrams
 	\[
 	\begin{tikzcd}
 		0\ar[r]&(A^+)^{n}/(A^+)^{n+1}\ar[r]\ar[d]& A^+/(A^+)^{n+1}\ar[r]\ar[d]&A^+/(A^+)^n\ar[r]\ar[d]&0\\
 		0\ar[r]&(A^+)^n/(A^+)^{n+1}\ar[r]& A^+/(A^+)^{n+1}\ar[r]&A^+/(A^+)^n\ar[r]&0
 	\end{tikzcd}
 \] 	
 we conclude that $f\circ g: A^+\cong A^+$ is an isomorphism, hence also $f\circ g: A\cong A$ is an isomorphism, again by the Five Lemma applied to the  augmentation sequence. Arguing analogously for $f\circ g$, we find that $f$ and $g$ must be isomorphisms.
 \end{proof}

\begin{cor}[Uniqueness of connected minimal models]
Any two connected minimal models for a cbba $A$ are isomorphic (over $A$).
\end{cor}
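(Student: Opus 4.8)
\emph{Proof proposal.} The plan is to reduce the statement to \Cref{thm: weak equiv is iso on mimos} by constructing, for two connected minimal models $\phi\colon M\to A$ and $\phi'\colon M'\to A$, a weak equivalence $\psi\colon M\to M'$ compatible with the maps to $A$; the cited theorem then upgrades $\psi$ to an isomorphism automatically, exactly as in the singly graded case \cite[7.6, 7.8]{BG76}.

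First I would record that $M$ and $M'$ are both cofibrant---by \Cref{cor: nilpotent implies cofibrant}, since minimal cbba's are nilpotent---and both fibrant, as every object of $\cbba_0$ is fibrant (the augmentation, being surjective, is a fibration to the terminal object $\C$). With $\phi'\colon M'\to A$ a weak equivalence between fibrant objects and $M$ cofibrant, standard model-category theory gives a bijection $\phi'_\ast\colon [M,M']\to[M,A]$ on homotopy classes of maps; I would pull back the class of $\phi$ to obtain a map $\psi\colon M\to M'$ with $\phi'\circ\psi\simeq\phi$. Concretely, one factors $\phi'$ as an acyclic cofibration $M'\hookrightarrow W$ followed by an acyclic fibration $W\twoheadrightarrow A$, lifts $\phi$ through the latter using cofibrancy of $M$, and composes with a homotopy inverse of $M'\hookrightarrow W$ (a homotopy equivalence, as a weak equivalence between cofibrant-fibrant objects).

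Next I would check that $\psi$ is itself a weak equivalence. By \Cref{lem: ind map on homotopy classes}, every cohomological functor is invariant under the homotopy relation in $\cbba$, so applying $H_{BC}$ and $H_A$ to $\phi'\circ\psi\simeq\phi$ yields $H_{BC}(\phi')\circ H_{BC}(\psi)=H_{BC}(\phi)$ and the analogous identity for $H_A$. As $\phi$ and $\phi'$ are weak equivalences, $H_{BC}(\phi),H_{BC}(\phi'),H_A(\phi),H_A(\phi')$ are all isomorphisms, whence $H_{BC}(\psi)$ and $H_A(\psi)$ are isomorphisms; by \Cref{prop: characterization quisos} this means $\psi$ is a bigraded quasi-isomorphism, i.e.\ a weak equivalence.

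Finally, $\psi$ is a weak equivalence between connected minimal cbba's, so \Cref{thm: weak equiv is iso on mimos} forces it to be an isomorphism, and by construction $\phi'\circ\psi\simeq\phi$, so $\psi$ is an isomorphism of minimal models compatible with the maps to $A$ up to homotopy---which is the meaning of ``over $A$'' here. The only real subtlety is the lifting step producing $\psi$: since $\phi'$ is merely a weak equivalence and not a fibration, the lift exists only up to homotopy, but this is all that is needed, because the subsequent detection of weak equivalences via $H_{BC}$ and $H_A$ only sees homotopy classes.
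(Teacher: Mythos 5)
Your proposal is correct and follows essentially the same route as the paper: use cofibrancy of the nilpotent models (\Cref{cor: nilpotent implies cofibrant}) to produce a map $\psi\colon M\to M'$ with $\varphi'\circ\psi\sim\varphi$, deduce that $\psi$ is a weak equivalence, and conclude via \Cref{thm: weak equiv is iso on mimos}. The paper leaves the middle step as an implicit two-out-of-three; your explicit verification via homotopy invariance of $H_{BC}$ and $H_A$ (\Cref{lem: ind map on homotopy classes} and \Cref{prop: characterization quisos}) is a correct way to make it precise.
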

\begin{proof}
Given $\varphi:M\to A$ and $\varphi':N\to A$ two positively generated minimal models for a cbba $A$, by the lifting Lemma \ref{cor: nilpotent implies cofibrant}, we obtain a map $f:M\to N$ s.t. $\varphi'\circ f\sim\varphi$. Since $\varphi,\varphi'$ are weak equivalences, so is $f$ and we conclude by \Cref{thm: weak equiv is iso on mimos}.
\end{proof}
\subsection{Technical preparations}
The following Lemmas will be the main technical ingredients in the proof of the existence of minimal models for simply connected cbba's. The key point is that we need to `kill' redundant classes in Bott-Chern cohomology while controlling the rest of the cohomology and keeping the algebra minimal. For this, we have to investigate more closely than before the kind of extensions that remove the Bott-Chern classes.

For any cbba $M$ and $c\in M$ a pure bidegree element s.t. $\del c=\delbar c=0$, denote by $M_c:=M\otimes_c\Lambda(\square)$ the pushout of the following diagram:
\[
\begin{tikzcd}
	\Lambda(\bullet[|c|])\ar[r]\ar[d]&\Lambda (\square)\\
	M&
\end{tikzcd}
\]
where the horizontal map is induced by inclusion into the top corner.

\begin{lem}[Bott-Chern-classes squared away II]\label{lem: squared away}
	Let $M=\Lambda V$ be a cbba which as an algebra is freely generated in degrees $2\leq j\leq k+1$ s.t. $(dV)^{k+2}=(\del\delbar V)^{k+2}=0$. Let $c\in M$ be a pure bidegree element s.t. $\del c=\delbar c=0$. Then, the cohomology of $M_c$ can be expressed in terms of the inclusion $i:M\hookrightarrow M_c$, as follows:
	
	\begin{enumerate}
		\item\label{it: |c|=k [c]A neq 0} If $|c|=k$ and $[c]_A\neq 0$, then $i$ induces an isomorphism
		\[
		H_A^{<k}(M)\cong H_A^{<k}(M_c),\qquad H_{BC}^{<k}(M)\cong H_{BC}^{< k}(M_c)
		\]
		and a short exact sequence
		\[
		0\longrightarrow \langle [c]\rangle\longrightarrow H_{BC}^k(M)\overset{i_*}{\longrightarrow} H_{BC}^k(M_c)\longrightarrow 0
		\]
		\item\label{it: |c|=k indec} If $c$ is as in \eqref{it: |c|=k [c]A neq 0} and indecomposable, $i$ further induces  an isomorphism
		\[
		H_{BC}^{k+1}(M)\cong H_{BC}^{k+1}(M_c)
		\]
		and a short exact sequence
		\[
		0\longrightarrow \langle [c]_A\rangle\longrightarrow H_{A}^k(M)\longrightarrow H_{A}^k(M_c)\longrightarrow 0.
		\]
		\item\label{it: |c|=k+1} If $0\neq [c]\in \HBCred^{k+1}(M)$, $i$ induces isomorphisms
\[
				H_A^{\leq k}(M)\cong H_A^{\leq k}(M_c)
\]
and
\[
H_{BC}^{< k}(M)\cong H_{BC}^{< k}(M_c),\quad \HBCred^{k}(M)\cong \HBCred^{k}(M_c)
\]
		and short exact sequences
		\begin{align*}
			&0\longrightarrow H_{BC}^k(M)\longrightarrow H_{BC}^k(M_c)\longrightarrow R\longrightarrow 0\\
&0\longrightarrow \langle[c]\rangle\longrightarrow H_{BC}^{k+1}(M)\longrightarrow H_{BC}^{k+1}(M_c)\longrightarrow 0,
		\end{align*}
		where $R$ is a vector space of dimension at most $2$. Any subspace of $H_{BC}^k(M_c)$ that maps isomorphically to $R$ is necessarily indecomposable.
	\end{enumerate}
\end{lem}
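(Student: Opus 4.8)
The plan is to reduce the whole statement to the purely additive computation of \Cref{lem: ABC cohomology of squared away BC-class} (Bott-Chern-classes squared away~I). First I would make the structure of $M_c$ explicit. As a free graded-commutative $M$-module one has $M_c=M\otimes\Lambda\langle x,\del x,\delbar x\rangle$ with $|x|=|c|-(1,1)$, the differential being forced by $\del\delbar x=c$ (together with $dx=\del x+\delbar x$ and $d(\del x)=\pm c=-d(\delbar x)$). Inside $M_c$ the subspace $N:=M\oplus\langle x,\del x,\delbar x\rangle$ is a sub-bicomplex, and it is canonically isomorphic to the \emph{bicomplex} pushout $M\oplus_c\square[|c|-(1,1)]$. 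Hence $H_{BC}(N)$ and $H_A(N)$ are already known from \Cref{lem: ABC cohomology of squared away BC-class}: writing $j=|c|-2$ (total degree), gluing a nonzero $[c]$ changes $H_{BC}$ only in degrees $j+1,j+2$ (by a summand $R$ with $\dim R\le 2$, resp. by quotienting out $[c]$) and changes $H_A$ only in degree $j+2$ (by $[c]_A$). This already yields all three cases for $N$ in place of $M_c$; the real content is the comparison of $M_c$ with $N$.

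For this I would use the short exact sequence of bicomplexes $0\to N\to M_c\to W\to 0$, where $W:=M_c/N$ is spanned by the monomials involving at least one new generator together with a factor from $M^+$. Since $M$ is connected and generated in degrees $\ge 2$ (so $M^1=0$), the bottom total degree of $W$ is $|c|-1$: degree $k$ in cases~(\ref{it: |c|=k [c]A neq 0})--(\ref{it: |c|=k indec}) and degree $k+1$ in case~(\ref{it: |c|=k+1}). The decisive point is that $W$ carries \emph{no} Bott-Chern class in its bottom degree: for a bottom element $\overline{m x}$ one has $\del\,\overline{m x}=\overline{(\del m)\,x}\pm\overline{m\,\del x}$, whose two terms lie in the distinct free summands $M\cdot x$ and $M\cdot\del x$, so closedness forces $\overline{m\,\del x}=0$, i.e. $m=0$ by freeness. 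Feeding $H_{BC}(W)=0=H_A(W)$ in the relevant degrees into the long exact sequence of \Cref{cor: les Schweitzer} (applied to the triangle $N\to M_c\to W$) transports the computation of $H_{BC}(N),H_A(N)$ verbatim to $M_c$ in the asserted ranges. The hypotheses $(dV)^{k+2}=(\del\delbar V)^{k+2}=0$ are exactly what is needed in case~(\ref{it: |c|=k indec}), where the probed degrees $k,k+1$ lie \emph{above} the bottom of $W$ and products genuinely occur: partial minimality of $M$ through degree $k+1$ (together with $c$ indecomposable) is what keeps $W$ acyclic for $H_A$ in degree $k$ and for $H_{BC}$ in degree $k+1$. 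In cases~(\ref{it: |c|=k [c]A neq 0}) and~(\ref{it: |c|=k+1}) only the bottom-degree freeness argument is used.

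In case~(\ref{it: |c|=k+1}) concretely: $W$ lives in degrees $\ge k+1$, so $H_{BC}^{<k}$ and $H_A^{\le k}$ pass through unchanged, and the bottom-degree computation $H_{BC}^{k+1}(W)=0$ gives $H_{BC}^{k+1}(M_c)\cong H_{BC}^{k+1}(N)=H_{BC}^{k+1}(M)/\langle[c]\rangle$ (the second short exact sequence), while $H_{BC}^{k}(M_c)\cong H_{BC}^{k}(N)=H_{BC}^{k}(M)\oplus R$ gives the first. A direct count also yields $H_A^{k}(M_c)=H_A^k(M)$: the only new degree-$k$ elements are $\del x,\delbar x$, which are $\del\delbar$-closed yet simultaneously lie in $\im\del+\im\delbar$, hence contribute equally to numerator and denominator of $H_A^k$. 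Finally, for $\HBCred^{k}(M)\cong\HBCred^{k}(M_c)$ and the indecomposability assertion I would identify $R$ with the new dot-summands produced when the square is glued onto the zigzag of $M$ carrying $[c]$ (as in the $\dim R=2$ picture following \Cref{lem: ABC cohomology of squared away BC-class}, using \Cref{thm:decomposition}): a dot has $H_{BC}\to H_A$ an isomorphism, so $R$ injects into $H_A^k(M_c)=H_A^k(M)$, meets $\HBCred^k(M_c)$ trivially, and its classes are represented by indecomposable elements rather than by corners of longer zigzags.

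\textbf{Main obstacle.} The essential difficulty is not $N$ (handled by \Cref{lem: ABC cohomology of squared away BC-class}) but the quotient $W$ of products: one must show it is $H_{BC}$- and $H_A$-acyclic throughout the exact degree window probed by each case. This is where the partial-minimality hypotheses on $M$ are consumed (in case~(\ref{it: |c|=k indec})), and where the edge cases—e.g. $x$ of even degree, so that $x^2\neq 0$ already enters the bottom degree of $W$—must be checked via the free-module structure. The secondary subtlety is the fine bookkeeping in case~(\ref{it: |c|=k+1}) that separates $\HBCred$ from $H_{BC}$ and proves the surviving classes $R$ are indecomposable, which requires tracking the zigzag type of the classes and not merely their dimensions.
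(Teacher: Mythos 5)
Your overall architecture is sound and in fact parallels the paper's: both proofs reduce everything to the additive computation of \Cref{lem: ABC cohomology of squared away BC-class} applied to the sub-bicomplex $N=M\oplus_c\square\subseteq M_c$ spanned by $M$ and the three new generators, and the whole content is to show that the ``product part'' of $M_c$ does not interfere in the probed degrees. Cases \eqref{it: |c|=k [c]A neq 0} and \eqref{it: |c|=k+1} of your argument go through, because there the probed degrees sit at or below the bottom degree of $W=M_c/N$ and your freeness argument correctly kills the bottom-degree Bott--Chern classes of $W$.

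However, there is a genuine gap in case \eqref{it: |c|=k indec}, and it is located exactly where you place the ``main obstacle''. You claim that the hypotheses force $H_A^{k}(W)=0$ and $H_{BC}^{k+1}(W)=0$. This is false in general. Take any $d$-closed generator $v\in V^2$ (these exist in the intended applications, e.g.\ the K\"ahler class). Since $W^{k-1}=0$, one has $H_{BC}^{k+1}(W)=\ker\del\cap\ker\delbar\cap W^{k+1}$ and $H_A^{k}(W)=\ker\del\delbar\cap W^{k}$. Now $\delbar(v\,\del x)=\pm v\,\del\delbar x=\pm vc$ lies in $M\subseteq N$, so $\overline{v\,\del x}$ is $\del$- and $\delbar$-closed in $W$ and gives a nonzero class in $H_{BC}^{k+1}(W)$; similarly $\del\delbar(vx)\equiv \pm\del v\,\delbar x\mp\delbar v\,\del x\pmod N$ vanishes in $W$, so $\overline{vx}$ is a nonzero class in $H_A^{k}(W)$. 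The quotient $W$ is therefore \emph{not} acyclic in the degree window you need, and the long exact sequence of \Cref{cor: les Schweitzer} does not by itself transport the computation from $N$ to $M_c$ in this case. What is actually true --- and what must be proved --- is the finer statement that the maps $H_{BC}^{k+1}(M_c)\to H_{BC}^{k+1}(W)$ and $H_A^{k}(M_c)\to H_A^{k}(W)$ are \emph{zero}, i.e.\ that $M_c$ acquires no new $\del$- and $\delbar$-closed elements in degree $k+1$ and no new $\del\delbar$-closed elements in degree $k$ beyond those of $M$. (In the examples above this holds precisely because $vc$ cannot be $\del$-, $\delbar$- or $\del\delbar$-exact in $M^{k+2}$, which is where indecomposability of $c$ and $(dV)^{k+2}=(\del\delbar V)^{k+2}=0$ enter.) This statement about $M_c$ itself cannot be extracted from any vanishing of $H(W)$; it requires the explicit closedness computation in $M_c$, which is exactly what the paper carries out by identifying the truncations $\tau_k M_c=\tau_k(M\oplus_c\square)$. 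So your proof of cases \eqref{it: |c|=k [c]A neq 0} and \eqref{it: |c|=k+1} is an acceptable repackaging of the paper's, but case \eqref{it: |c|=k indec} needs to be redone: replace the false acyclicity claim for $W$ by a direct verification that $(\ker\del\cap\ker\delbar)\cap M_c^{k+1}=(\ker\del\cap\ker\delbar)\cap M^{k+1}$ and $(\ker\del\delbar)\cap M_c^{k}=(\ker\del\delbar)\cap M^{k}$.
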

Recall that an element of an augmented cbba $a\in A^+$ is called indecomposable if its projection onto the indecomposables is nonzero: $0\neq \pr(a)\in QA=A^+/A^+A^+$. Analogously, we say a subspace $W\subseteq A^+$ is indecomposable if its projection onto $QA$ is injective.

For the proof of \Cref{lem: squared away}, it will be convenient to consider the following truncations of a bicomplex:

\begin{definition}
	For any bicomplex $A$ and $k\in \Z$, we define truncated subcomplexes $\tau_k A\subseteq A$ by
	\[
	(\tau_k A)^j:=\begin{cases}
		0&\text{if }j>k+1\\
		(\ker\del\cap\ker\delbar)^{k+1} &\text{if }j=k+1\\
		(\ker\del\delbar)^{k}&\text{if }j=k\\
		A^j&\text{if }j\leq k-1.
	\end{cases}
	\]
\end{definition}
One checks that
\[
H_{BC}^j(\tau_kA)=\begin{cases}
	0&\text{if }j>k+1\\
	H_{BC}^j(A) &\text{if }j\leq k+1,
\end{cases}\quad H_A^j(\tau_kA)=\begin{cases}
	0&\text{if }j>k+1\\
	H_{dot}^{k+1}(A) &\text{if }j=k+1\\
	H_A^j(A)&\text{if }j\leq k.
\end{cases}
\]

\begin{proof}[Proof of \Cref{lem: squared away}]
	In all cases, let us denote by $p$ a generator for the bottom left space of the generating square in $\Lambda(\square)$ s.t. $\del\delbar p=c$ in $M_c$.

	For \eqref{it: |c|=k [c]A neq 0}, we consider the truncation $\tau_{k-1} M_c$. It is described as follows:
	\[
	(\tau_{k-1} M_c)^j=\begin{cases} 
		0&\text{if }j>k\\
		(\tau_{k-1} M)^{k}&\text{if }j=k\\
		(\tau_{k-1} M)^{k-1}\oplus \langle\del p,\delbar p\rangle&\text{if }j=k-1\\
		M^{k-2}\oplus \langle p\rangle &\text{if }j=k-2\\
		M^j&\text{if }j\leq k-3. 
	\end{cases}
	\]
	I.e. $\tau_{k-1}M_c=\tau_{k-1}M\oplus_c\square$ and the result follows from \Cref{lem: ABC cohomology of squared away BC-class}.

For \eqref{it: |c|=k indec}, we consider the filtration $\tau_k M_c$. It is described as follows
	\[
	(\tau_k M_c)^j=\begin{cases} 
		0&\text{if }j>k+1\\
		(\ker\del\cap\ker\delbar)\cap(M^{k+1}\oplus (V^2\otimes\langle \del p,\delbar p\rangle)\oplus (V^3\otimes\langle p\rangle))&\text{if }j=k+1\\
		(\ker\del\delbar)\cap (M^{k}\oplus (V^2\otimes\langle p\rangle)) &\text{if }j=k\\
		M^{k-1}\oplus \langle\del p,\delbar p\rangle &\text{if }j=k-1\\
		M^{k-2}\oplus \langle p\rangle &\text{if }j=k-2\\
		M^j&\text{if }j\leq k-3.
	\end{cases}
\]
Let us describe the first two expressions in terms of $\tau_k M$. Consider an element $e\in (\tau_kM_c)^{k+1}$ and write it as
\[
e=m+v'\otimes\del p+v''\otimes \delbar p + v\otimes p,
\]
with $v\in V^3$, $v',v''\in V^2$ and $m\in M$. Then 
\[
0=\del e=(\del m \pm v''\wedge c) +  (\del v'\mp v)\otimes \del p + \del v''\otimes \delbar p + \del v\otimes p
\]
and so in particular $\del v'\mp v=\del m \pm v''c=0$. By the same equation for $\delbar$, we obtain $\delbar v''\mp v=\delbar m\pm v'c=0$. Since no generator has differential in degree $k+2$, any exact element in degree $k+2$ has to be a sum of products of at least three generators. Since $c$ is indecomposable, this implies $v'c=v''c=0$ and so $v'=v''=0$. But then also $v=0$. We have shown $(\tau_k M_c)^{k+1}=(\tau_k M)^{k+1}$. In degree $k$, we argue similarly: For any $v\in V^2$, we have
	\begin{equation}\label{eqn: ddbar(vp)}
		\del\delbar (v\otimes p)=\del\delbar v\otimes p + \del v\otimes\delbar p - \delbar v\otimes\del p + v\wedge c.
	\end{equation}
	For this to lie in $M$, we necessarily have $\del v=\delbar v=0$ and then $\del\delbar (v\otimes p)=vc$ and arguing as before we see that $vc$ cannot be $\del\delbar$-exact in $M$. Thus $(\tau_k M_c)^k=(\tau_k M)^k$. In summary, we have shown $\tau_k M_c=M\oplus_c \square$ and so the statement about the cohomology computation follows again from \Cref{lem: ABC cohomology of squared away BC-class}. 
	
	Finally, for \eqref{it: |c|=k+1}, we consider also $\tau_k M_c$. This time, it is described as follows:
	\[
	(\tau_k M_c)^j=\begin{cases} 
		0&\text{if }j>k+1\\
		(\ker\del\cap\ker\delbar)\cap (M^{k+1}\oplus (V\otimes \langle p\rangle)) &\text{if }j=k+1\\
		(\tau_kM)^{k-1}\oplus\langle\del p, \delbar p\rangle &\text{if }j=k\\
		M^{k-1}\oplus\langle p\rangle &\text{if }j=k-1\\
		M^j&\text{if }j\leq k-1.
	\end{cases}
	\]
	and since $\del(m+v\otimes p)=\del m+\del v\otimes p\pm v\otimes\del p$, the degree $k+1$-part is identified with $\ker\del\cap\ker\delbar\cap M^{k+1}$. Thus, again $\tau_kM_c=M\oplus_c\square$ and we can apply \Cref{lem: ABC cohomology of squared away BC-class}.
	
For the additional statement about indecomposable classes we note that any representative of a class in $H_{BC}^k(M_c)$ that does not already lie in $H_{BC}^k(M)$ must, when written as an expression in products of generators, have a nonzero scalar multiple $\del p$ or $\delbar p$ as a summand and is hence indecomposable.
\end{proof}

In \eqref{it: |c|=k+1} of the previous Lemma, we restricted to the case of $[c]\neq 0$. However, if $[c]=0\in H_{BC}^{k+1}(M)$, then $M_c\cong M\otimes \Lambda L$ and we can even compute the entire cohomology from \Cref{thm: algebraic ABC Kuenneth}. In particular, we again have a short exact sequence
\[
0\longrightarrow H_{BC}^k(M)\longrightarrow H_{BC}^k(M_c)\longrightarrow R\longrightarrow 0,
\]
where $R$ has dimension exactly two in this case and again consists of indecomposable classes. 

Regardless of whether $[c]=0$ or not, during the construction of the minimal model, we may wish to get rid of the occuring classes in $R$, too. However, since these classes are indecomposable this would break minimality. A naive approach is to quotient out the ideal they generate. The next Lemmas show that this is indeed a viable solution.

\begin{lem}[Quotienting vs. squaring away]\label{lem: quot vs hom-quot for indec}
	Consider a cbba $M=(\Lambda V,\del,\delbar)$ which is free as a bigraded algebra. Let $c\in V$ an generator of pure bidegree which is $\del$- and $\delbar$-closed and $M_c=M(p,\del p,\delbar p \mid  \del\delbar p=c)$  as above. Then the projection map to the quotient by the ideal generated by $c$
	\[
	\pr: M_c\longrightarrow M/(c)
	\]
	is a weak equivalence.
\end{lem}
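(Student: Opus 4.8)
The plan is to realize $\pr$ as the quotient by its kernel and then to show that this kernel is contractible in $\Ho(\bico)$. Since $\pr$ is visibly surjective (it restricts to the quotient $M\to M/(c)$ on the subalgebra $M$ and kills $p,\del p,\delbar p$), we obtain a short exact sequence of bicomplexes $0\to K\to M_c\xrightarrow{\pr} M/(c)\to 0$, where $K=\ker\pr$ is the differential ideal $K=(c)\oplus I_p$, with $(c)\subseteq M$ the ideal generated by $c$ and $I_p\subseteq M_c$ the ideal generated by $p,\del p,\delbar p$. By the remark following \Cref{cor: les Schweitzer}, this short exact sequence induces a distinguished triangle $K\to M_c\to M/(c)\to LK$ in $\Ho(\bico)$. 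From the triangle, if $K\cong 0$ in $\Ho(\bico)$ then $\pr$ is an isomorphism in $\Ho(\bico)$, i.e. a bigraded quasi-isomorphism by \Cref{prop: characterization quisos}, hence a weak equivalence in $\cbba$. By \Cref{lem: characterization acyclicity} and \Cref{lem: nullhomotopic morphisms}, proving $K\cong 0$ amounts to producing a bigraded contracting homotopy, i.e. a map $h\colon K\to K[1,1]$ with $\Id_K=[\del,[\delbar,h]]$.

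To build $h$, I would write $M=\Lambda V$ with $V=V'\oplus\langle c\rangle$ and let $D_c\colon M_c\to M_c$ be the (super)derivation dual to the generator $c$, so that $D_c(c)=1$ and $D_c$ vanishes on all other generators; it has bidegree $-|c|$. Setting $h:=\mu_p\circ D_c$, where $\mu_p$ denotes multiplication by $p$, gives a linear map of bidegree $|p|-|c|=(-1,-1)$, and one checks at once that $h(K)\subseteq I_p\subseteq K$. The defining relations $\del\delbar p=c$ and $\del c=\delbar c=0$ force the homotopy identity on generators: for instance $h(c)=p$ gives $[\del,[\delbar,h]](c)=\del\delbar h(c)=\del\delbar p=c$, while $h(p)=h(\del p)=h(\delbar p)=0$ together with $\delbar\del p=-c$ gives $[\del,[\delbar,h]](p)=-h(\delbar\del p)=h(c)=p$, and $\del p,\delbar p$ are fixed by the same bookkeeping. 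The claim to establish is then $\Id_K=[\del,[\delbar,h]]$ on all of $K$.

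The hard part will be precisely the verification of this last identity on the entire ideal $K$, rather than merely on generators: because $\Phi:=[\del,[\delbar,h]]$ is a second-order operator and not a derivation, one cannot reduce to generators, and must instead run the computation on a general monomial $c^{n}\,\omega\,p^{a}(\del p)^{b}(\delbar p)^{e}$ with $\omega\in\Lambda V'$, tracking the Leibniz cross-terms produced when $\del,\delbar$ hit the various factors and using $\del\delbar p=c$ to cancel them in pairs; one must also split according to the parity of $|c|$, which decides whether $D_c$ is an ordinary polynomial derivative or an exterior contraction. I expect this to be the bulk of the work, and essentially routine once the signs are fixed. As a conceptual cross-check, and a possible alternative route, I note that pushout pasting identifies $\pr$ with the pushout of the weak equivalence $\Lambda\square\xrightarrow{\sim}\C$ of \Cref{lem: contractibility of big forms} along the structure map $\Lambda\square\to M_c$: indeed $M/(c)$ is the pushout of $M\leftarrow\Lambda(\bullet[|c|])\to\C$, while $M_c$ is the pushout of $M\leftarrow\Lambda(\bullet[|c|])\to\Lambda\square$, whose right leg is the cofibration of \Cref{cor: cofibrations}\,\eqref{it: dot-square cof}. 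This would conclude via a gluing lemma, but only after establishing left properness of $\cbba$, which we have not done; the explicit homotopy sidesteps this issue and is the route I would pursue.
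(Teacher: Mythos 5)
Your reduction is the same as the paper's: $\pr$ is surjective, so everything comes down to showing $K=\ker\pr$ is isomorphic to zero in $\Ho(\bico)$, which by \Cref{lem: characterization acyclicity} and \Cref{lem: nullhomotopic morphisms} amounts to a contracting homotopy. The gap is exactly in the step you defer as ``essentially routine'': the identity $[\del,[\delbar,h]]=\Id_K$ for $h=\mu_p\circ D_c$ is false. Your checks on the generators $c,p,\del p,\delbar p$ do go through, but the identity already fails on quadratic monomials. Take $c$ of even total degree and test on $c^2\in K$: since $c$ is $\del$- and $\delbar$-closed, only the first term of $\del\delbar h-\del h\delbar+\delbar h\del-h\delbar\del$ survives, and
\[
[\del,[\delbar,h]](c^2)\;=\;\del\delbar\,h(c^2)\;=\;\del\delbar(2pc)\;=\;2(\del\delbar p)\,c\;=\;2c^2 .
\]
For $|c|$ odd (so $c^2=0$ but $\del p,\delbar p$ are even) one finds similarly $[\del,[\delbar,h]](\del p\,\delbar p)=2\,\del p\,\delbar p$. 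What $\Phi:=[\del,[\delbar,h]]$ computes is not the identity but (as these examples suggest) the operator multiplying a monomial by its total weight in $c,p,\del p,\delbar p$. The argument is salvageable in the spirit of the polynomial Poincar\'e lemma: $\Phi$ commutes with $\del$ and $\delbar$ (double brackets against a square-zero odd operator vanish), so once one proves $\Phi$ is the weight operator --- hence invertible on $K$, where the weight is $\geq 1$ --- the map $h\circ\Phi^{-1}$ is a genuine contraction. But proving that is precisely the monomial-by-monomial computation, with the parity split in $|c|$, that you were hoping to avoid; it cannot be skipped.

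That bookkeeping is what the paper's proof consists of, in different packaging: it splits into the two parity cases and exhibits $\ker\pr$ directly as a sum of explicit squares $S^{ev}_{n,m}(e)$ (generated by $ep^nc^m$, $n\geq 1$) resp.\ $S^{odd}_{n,m}(e)$ (generated by $e(\del p)^n(\delbar p)^mp$) over a monomial basis of $\Lambda V'$, checks each has $\del\delbar\neq 0$ using orthogonality of the Leibniz terms, and concludes by \Cref{lem: characterization acyclicity}. Your alternative route via pushout pasting is correctly set aside: it would need left properness of $\cbba$ (or at least that pushing out along $\Lambda\square\to M_c$ preserves weak equivalences), which is not available. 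As written, the proposal has a genuine gap at its central identity.
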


\begin{proof}
	We have to show that $\cI=\ker \pr$ is acyclic. Pick some splitting $V=V'\oplus \langle c\rangle$ and define a bigraded algebra (without specified differentials) $M':=\Lambda V'$. Then, as bigraded algebras, there is a canonical identification $M_c=M'\otimes \Lambda (p,\del p,\delbar p, c)$.
	We distinguish two cases:
	
	\textbf{Case 1: deg(a) even.} In this case, as vector spaces,
	\[
	\cI=(M'[p,c])^{\deg_p+\deg_c\geq 1}\oplus M'[p,c]\del p\oplus M'[p,c]\delbar p\oplus M'[p,c]\del p\delbar p, 
	\]
	where the superscript in the first summand means there is at least one $p$ or one $c$ in each monomial.
	
	Let now $e\in M'-\{0\}$ be any nonzero element. For any integers $n\geq 1, m\geq 0$, we consider the following subcomplex of $\cI$:
	
	\[
	S_{n,m}^{ev}(e):=
	\begin{tikzcd}
		\langle\delbar(ep^nc^m)\rangle\ar[r]&\langle\del\delbar  (ep^nc^m)\rangle\\
		\langle ep^nc^m\rangle \ar[u]\ar[r]&\langle \del(e p^n c^m)\rangle\ar[u]
	\end{tikzcd}
	\]
	We compute:
	
	\begin{align*}
		\del (p^nc^me)=&np^{n-1}c^m\del pe + p^nc^m\del e\\
		\delbar\del(p^nc^m e)=&n(n-1)p^{n-2}c^m\delbar p\del p e-np^{n-1}c^{m+1}e \\
		&- np^{n-1}c^m\del p\delbar e+np^{n-1}\delbar pc^m\del e+p^nc^m\delbar\del e
	\end{align*}
	Now the five summands are all orthogonal with respect to the decomposition
	
	\[
	M_c=\bigoplus_{\substack{k,l\in\N\\\varepsilon\bar\varepsilon\in\{0,1\}}}M'p^kl^n(\del p)^\varepsilon(\delbar p)^{\bar\varepsilon}
	\]
	and so $\del\delbar(p^nc^me)$ is not zero, which means $S_{n,m}(e)$. Furthermore, denoting by $\cB(M')$ a vector space basis for $M'$ consisting of pure bidegree elements, we see that
	\[
	\cI=\bigoplus_{\substack{n\geq 1, m\geq 0\\
			e\in \cB(M')}} S_{n,m}^{ev}(e),
	\]
	i.e. $\cI$ is acyclic.
	
	\textbf{Case 2: deg(a) odd.} In this case, define for any $e\in M'-\{0\}$ and $n,m\geq 0$ the following subcomplex of $\cI$:
	\[
	S_{n,m}^{odd}(e):=
	\begin{tikzcd}
		\langle\delbar(e(\del p)^n(\delbar p)^m p)\rangle\ar[r]&\langle\del\delbar  (e(\del p)^n(\delbar p)^m p)\rangle\\
		\langle e(\del p)^n(\delbar p)^m p\rangle \ar[u]\ar[r]&\langle \del(e(\del p)^n(\delbar p)^m p)\rangle\ar[u]
	\end{tikzcd}
	\]
	As before, we calculate
	\begin{align*}
		\del ((\del p)^n(\delbar p)^m pe)=&m(\del p)^n(\delbar p)^{m-1}c p e + (\del p)^{n+1}(\delbar p)^me -(\del p)^n(\delbar p)^m p\del e\\
		\delbar\del((\del p)^n(\delbar p)^m pe)=&-(n+m+1)(\del p)^n(\delbar p)^{m}c e + m(\del p)^n(\delbar p)^{m-1}c p \delbar e \\
		&+(\del p)^{n+1}(\delbar p)^m \delbar e + n(\del p)^{n-1}(\delbar p)^m c p\del e \\
		&- (\del p)^n(\delbar p)^{m+1}\del e + (\del p)^n(\delbar p)^m p \delbar \del e
	\end{align*}
	Again, all summands are orthogonal with respect to the vector space decomposition
	\[
	M_c=\bigoplus_{\substack{k,l\in \N\\\varepsilon,\eta\in\{0,1\}}}M'(\del p)^k(\delbar p)^l p^\epsilon c^\eta.
	\]
	Therefore, each $S_{n,m}^{odd}(e)$ is acyclic and hence so is
	\[
	\cI=\bigoplus_{\substack{n,m\in N\\ e\in \cB(M')}} S_{n,m}^{odd}(e)
	\]
\end{proof}
\begin{ex}\label{ex: algebra on square}
	Taking $M=\Lambda \langle c\rangle$, we see that the algebra freely generated by a square, $M_c=\Lambda(p,\del p,\delbar  p,\del\delbar p)$ is contractible (as an augmented cbba). In fact,
	\[
	M_c=\C\oplus\bigoplus S_{n,m},
	\]
	where $S_{n,m}:=S_{n,m}^{ev/odd}(1)$.
\end{ex}

\subsection{Minimal models: existence}\label{sec: Bigraded Minimal Models: Existence}

\begin{thm}\label{thm: Mimo exists}
	Let $A$ be a cohomologically simply connected, augmented cbba. Then there exists a simply connected minimal model $\varphi:M\to A$. 
	Furthermore, if $H_A(A)$ and $H_{BC}(A)$ are degree-wise finite dimensional, then so is $M$ and if $A$ is in $\R\cbba$, one may take $\varphi$ in $\R\cbba$, too.
	
\end{thm}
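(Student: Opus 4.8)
The plan is to construct $M$ as a colimit $M=\colim_k M(k)$ of an increasing tower of simply connected minimal cbbas, built by induction on total degree in the spirit of the singly graded case \cite{BG76}, but controlling the pair $(H_{BC},H_A)$ rather than a single cohomology. Since $A$ is cohomologically simply connected I start with $M(1)=\C$, and I maintain the inductive hypothesis that $M(k)=\Lambda(V^{2\le\bullet\le k})$ is minimal and comes with a map $\varphi_k\colon M(k)\to A$ for which $H_{BC}^j(\varphi_k)$ and $H_A^j(\varphi_k)$ are isomorphisms for $j\le k$ and $H_{BC}^{k+1}(\varphi_k)$ is injective. Each passage $M(k)\rightsquigarrow M(k+1)$ will be a (transfinite) composition of Hirsch extensions and quotient steps, through all of which freeness and minimality are preserved.

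The inductive step has two phases. In \textbf{Phase A} I enlarge $M(k)$ by adjoining new generators, organised into dots and short zigzags with $\del\delbar$ vanishing on indecomposables so that minimality is preserved, chosen to surject onto the Aeppli and Bott-Chern classes of $A$ in total degree $k+1$; each adjunction is a Hirsch extension (a cofibration by \Cref{lem: Hirsch Lifting}) and its effect on $H_{BC}$ and $H_A$ is elementary. Phase A generally overshoots, leaving a nonzero kernel of $H_{BC}^{k+1}(\varphi)$, which---once $H_A^{k+1}$ is an isomorphism---lies in $\HBCred^{k+1}$. \textbf{Phase B} removes this kernel one class $[c]$ at a time, and here a dichotomy governs the move. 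If $c$ is decomposable I square it away via the extension $M\mapsto M_c$ of \Cref{lem: squared away}, case \eqref{it: |c|=k+1}: this kills $[c]$ and leaves $H_A^{\le k}$, $H_{BC}^{<k}$ untouched, but introduces a space $R$ of dimension at most two of new \emph{indecomposable} Bott-Chern classes in the \emph{lower} degree $k$. If instead $c$ is itself a generator (so that $M_c$ would fail to be minimal), I pass to the quotient $M/(c)$, which by \Cref{lem: quot vs hom-quot for indec} is weakly equivalent to $M_c$ and is again free, hence minimal. The indecomposable remainder $R$ produced by the first move is disposed of by the second, and iterating this interleaving drives $\ker H_{BC}^{k+1}$ to zero while restoring the isomorphisms below degree $k+1$, yielding the hypothesis at stage $k+1$.

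Passing to the colimit, $M=\colim_k M(k)$ is a simply connected minimal cbba and $\varphi=\colim_k\varphi_k$ induces isomorphisms on every $H_{BC}^j$ and $H_A^j$, since each is achieved at a finite stage and preserved thereafter; hence $\varphi$ is a bigraded quasi-isomorphism by \Cref{prop: characterization quisos}. (Alternatively one may arrange only that $H_A(\varphi)$ is surjective and $H_{BC}(\varphi)$ injective and invoke \Cref{lem: A-surj BC-inj}.) If $H_A(A)$ and $H_{BC}(A)$ are degree-wise finite dimensional, each phase adjoins only finitely many generators in each degree, so $V$, and thus $M$, is degree-wise finite dimensional. Finally, since $H_{BC}$ and $H_A$ carry conjugation symmetries, every choice---representatives, generators adjoined, and classes squared away or quotiented---can be made conjugation-equivariantly, in conjugate pairs or as diagonal fixed vectors, using the real refinements of \Cref{lem: Hirsch Lifting} and \Cref{lem: squared away}; this produces $\varphi$ in $\R\cbba$.

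The main obstacle is the organisation of Phase B. Squaring away a Bott-Chern class in degree $k+1$ forces the indecomposable remainder $R$ into the lower degree $k$ and requires extending $\varphi$ over a $\del\delbar$-primitive, so one must simultaneously ensure that the subsequent quotient steps remain compatible with the map to $A$, that already-settled lower degrees are not permanently disturbed, and that the cascade of downward corrections terminates after finitely many moves in each total degree. Coordinating this square-away/quotient alternation---rather than any single cohomology computation---is the real content of the proof, and it is exactly what \Cref{lem: squared away} and \Cref{lem: quot vs hom-quot for indec} are engineered to control.
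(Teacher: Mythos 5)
Your proposal follows essentially the same route as the paper's proof: induction on total degree with a surjectivity phase (Hirsch extensions by minimal zigzags hitting the cokernel of $H_A$) and an injectivity phase (squaring away decomposable Bott--Chern kernel classes via \Cref{lem: squared away} and quotienting the resulting indecomposable remainder via \Cref{lem: quot vs hom-quot for indec}), concluding with \Cref{lem: A-surj BC-inj}. The one mechanism you leave implicit is how the map to $A$ is defined after a quotient step: the paper lifts against the acyclic fibration $M'_R\to M'/(R)$, using that the inclusion of the previous stage is a cofibration, and composes the resulting section with the map already defined on $M'_R$.
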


The algorithm to produce such $M\to A$ is in principle the familiar one from cdga's: Work by induction on the degree and add generators to obtain cohomological surjectivity, then add more generators to enforce all necessarily relations. This is in principle also what happens in the proof of the fibration-cofibration replacement in \Cref{thm: model cat cbba} (but the maximal approach there produces generators in negative degrees). An important difference to the cdga case is that to enforce relations a given degree, we may have to add new generators in degree $2$ less, and this makes the verification of (degree-wise)termination of this algorithm technically more involved.

\begin{proof}

We will inductively build partial models $(M_k,\varphi_k)$, where 
\begin{enumerate}
\item $M_k$ is minimal, simply connected and generated in total degrees $2,...,k$.
\item $M_k$ is a nilpotent extension of $M_{k-1}$ arising by adding generators in degrees $k,k-1$, with $\del\delbar\equiv 0$ on the degree $k$ generators.
\item $\varphi_k:M_k\to A$ is a map of cbba's extending $\varphi_{k-1}$, such that $H_{BC}^{\leq k+1}(\varphi_k)$ is injective and $H_{A}^{\leq k-1}(\varphi_k)$ is surjective. 
\end{enumerate}

To start the induction note that for cohomologically simply connected $A$, the unit map $\C\to A$ induces an isomorphism in $H_A^{\leq 1}$ and is injective in $H_{BC}^{\leq 3}$. So from now on, we assume $k\geq 2$.

\textbf{Step 1: Surjectivity} Assume we are given $(M_k,\varphi_k)$. We will construct a nilpotent extension $M_k\subseteq \widetilde{M}_k$ which is still minimal and adds only generators in degree $k,k+1$ that have no differential in degree $k+2$ and a map $\widetilde{\varphi}:\widetilde{M}_k\to A$ extending $\varphi$ s.t. $H_{BC}^{\leq k+1}(\widetilde{\varphi}_k)$ is still injective and $H_{A}^{\leq k}(\widetilde{\varphi}_k)$ is surjective.

Consider a bigraded vector space $C\subseteq \ker\del\delbar\cap A$ which projects isomorphically onto $\coker H_A^{k}(\varphi_{k})$. Then set $M':=M_k\otimes\Lambda(L(C)[1,1])$ and define $\varphi':M'\to A$ to extend $\varphi$ via the map induced by the identity on $V$. Then the inclusion $M_k\to M'$ induces an injection in Bott-Chern and Aeppli cohomology and an isomorphism in $H_{BC}^{\leq k}$ and $H_{A}^{\leq k-1}$, whereas there is a canonical isomorphism $H_{A}^{k}(M')\cong H_A^k(M_k)\oplus V$. Thus $\varphi'$ is surjective in $H_A^{\leq k}$ and injective in $H_{BC}^{\leq k}$.  If $H_{BC}^{k+1}(\varphi')$ is injective, we are done here, and can go to the next step. 

However, $H_{BC}^{k+1}(\varphi')$ will generally not be injective, as \[H_{BC}(M')=H_{BC}^{k+1}(M_k)\oplus (\del\otimes C)\oplus(\delbar\otimes C)\]. So assume we have a bigraded vector space $\{0\}\neq R\subseteq M'^{k+1}$ (`relations') projecting isomorphically onto $\ker H_{BC}^{k+1}(\varphi')$ and define $P:=R[-1,-1]$ (`primitives'). Now form $M_R':=M'\otimes_R\Lambda(\square\otimes P)$, i.e. the pushout of the diagram
\[
\begin{tikzcd}
	\Lambda(R)\ar[r]\ar[d]&\Lambda (\square\otimes P)\\
	M',&
\end{tikzcd}
\]
where the vertical map is induced by inclusion and, writing $p_r:=r\in P=R[-1,-1]$, the horizontal map is induced by $r\mapsto \del\delbar\otimes p_r$.

This is a nilpotent extension of $M'$, and by \Cref{lem: squared away}, the inclusion $M'\subseteq M'_R$ induces isomorphisms $H_A^{\leq k}(M')\cong H_A^{\leq k}(M'_R)$ and $H_{BC}^{\leq k}(M')\cong H_{BC}^{\leq k}(M'_R)$ and a short exact sequence
\[
0\longrightarrow\ker H_{BC}^{k+1}(\varphi')\longrightarrow H_{BC}^{k+1}(M')\longrightarrow H_{BC}^{k+1}(M'_R)\longrightarrow 0.
\]
We can define have a map $\varphi_R':M_R'\to A$ extending $\varphi'$ as follows: For a bigraded basis $r_i$ of $r$, pick pure-degree elements $s_i\in A$ s.t. $\del\delbar s_i=\varphi(r_i)$ and define $\varphi_R'(1\otimes p_{r_i}):=s_i$. By definition, $H_{BC}^{\leq k+1}(\varphi'_R)$ is injective and $H_{A}^{\leq k}(\varphi_R')$ is surjective. 

On the first glance, it seems we can stop here. However, since $H_{BC}^{k+1}(\varphi)$ is injective, $R$ necessarily consists of indecomposable elements and so $M'_R$ will never be nilpotent. On the other hand, $\widetilde{M}_k:=M'/(R)$ is still a nilpotent extension of $M_k$ and minimal.

In summary, we have the following diagram:
	\[
	\begin{tikzcd}
		&&M_R'\ar[rd,"\varphi_R'"]\ar[dd,"\pr"]&\\
		M_k\ar[rru]\ar[rrd]\ar[r]&M'\ar[ur]\ar[dr]&&A,\\
		&&\widetilde{M}_k&
	\end{tikzcd}
	\]
where all the maps on the left are nilpotent extensions, hence cofibrant, and the vertical map (given by projection) is an acyclic fibration by \Cref{lem: quot vs hom-quot for indec}. Thus, by the left-lifting property, there exists a map $s: \widetilde{M}_k\to M_R'$ s.t. $\pr\circ s=\Id$ and the restriction $s|_{M_k}$ coincides with the inclusion $M_k\subseteq M_R'$. We may then define a map $\widetilde{\varphi}_k:\widetilde{M}_k\to A$ by $\varphi_R'\circ s$. Because it extends $\varphi$ on $M_k$ and $\pr$ and hence $s$ are weak equivalences, it is surjective in $H_{A}^{\leq k}$ and injective in $H_{BC}^{\leq k+1}$.

\textbf{Step 2: Injectivity} To avoid overloading notation, we now assume that \[(M_k,\varphi_k)=(\widetilde{M}_k,\widetilde\varphi_k)\]from the previous step and forget about all the intermediate spaces used there. 

We want to construct $(M_{k+1},\varphi_{k+1})$, i.e. we need to make the map in $H_{BC}^{k+2}$ injective. Let $K\subseteq \ker\del\cap\ker\delbar\cap (M_k)^{k+2}$ be a bigraded vector space which maps isomorphically onto $\ker H_{BC}^{k+2}(\varphi_k)$ and let $Q:=K[-1,-1]$, where we write $q_k=k\in K[-1,-1]$. Then denote by $M'$ the pushout of the diagram
\[
\begin{tikzcd}
	\Lambda(K)\ar[r]\ar[d]&\Lambda (\square\otimes Q)\\
	M_k.&
\end{tikzcd}
\]
Note that $M'$ is a nilpotent extension of $M$ and because $M_k$ has no generators in degree $k+2$, $M'$ is minimal. Further, by \Cref{lem: squared away}, the inclusion $M_k\to M'$ induces isomorphisms $H_{BC}^{\leq k}(M_k)\cong H_{BC}^{\leq k}(M')$ and $H_A^{\leq k+1}(M_k)\cong H_A^{\leq k+1}(M')$ and short exact sequences
\[
0\longrightarrow \ker H_{BC}^{k+2}(\varphi_k)\longrightarrow H_{BC}^{k+2}(M)\longrightarrow H_{BC}^{k+2}(M')\longrightarrow 0
\]
and 
\[
0\longrightarrow H_{BC}^{k+1}(M_k)\longrightarrow H_{BC}^{k+1}(M')\longrightarrow \mathcal{R}\longrightarrow 0,
\]
where $\mathcal{R}$ is a vector space of dimension at most $2\dim K$ and class in $H_{BC}^{k+1}(M')$ mapping to something nontrivial in $\mathcal{R}$ is represented by indecomposable elements. 
Define a map $\varphi':M'\to A$ extending $\varphi_k$ as follows: Choose a basis $\{k_i\}$ for $K$ and pick $l_i\in A$ of bidegrees $|k_i|-(1,1)$ s.t. $\del\delbar l_i= k_i$. Then set $\varphi'(q_{k_i}):=l_i$. Note that by construction $H_{BC}^{k+2}(\varphi')$ and $H_{BC}^{\leq k}(\varphi')$ are injective and $H_{A}^{\leq k}(\varphi')$ are surjective. However, since we (potentially) increased $H_{BC}^{k+1}$, we might have introduced a new kernel in that degree. Pick a space $R\subseteq M'^{k+1}$ which projects isomorphically onto $\mathcal{R}$. Note that it is an indecomposable space, as it must map isomorphically onto a subspace of $(\del\otimes Q)\oplus (\delbar\otimes Q)$ under the projection 
\[
R\to Q(M')^{k+1}=Q(M)^{k+1}\oplus (\del\otimes Q)\oplus (\delbar\otimes Q)\to (\del\otimes Q)\oplus (\delbar\otimes Q).\]
Now, set $M_{k+1}:=M'/(R)$, which is a nilpotent extension of $M_k$ and minimal. To compute its cohomology and define $\varphi_{k+1}$, we consider $M'_R:=M'\otimes_R\Lambda(\square\otimes P)$ where $P:=R[-1,-1]$. By \Cref{lem: quot vs hom-quot for indec}, the projection $M'_R\to M_{k+1}$ is a quasi-isomorphism. By \Cref{lem: squared away}, the inclusion $M'\to M'_R$ induces isomorphisms $H_{BC}^{\leq k}(M')\cong H_{BC}^{\leq k}(M'_R)$, $H_{BC}^{k+2}(M')\cong H_{BC}^{k+2}(M'_R)$ and $H_A^{\leq k+1}(M)\cong H_A^{\leq k+1}(M'_R)$ and a short exact sequence
\[
0\longrightarrow\langle [r]\rangle_{r\in R}\longrightarrow H_{BC}^{k+1}(M')\longrightarrow H_{BC}^{k+1}(M'_R)\longrightarrow 0
\]
and so the map $H_{BC}^{k+1}(M_k)\to H_{BC}^{k+1}(M'_R)$ is an isomorphism. Now we can define a map $\varphi'_R$ extending $\varphi'$ as the previous times, by picking $\del\delbar$-primitives $s_i$ for a basis $\{r_i\}$ for $R$ and setting $\varphi'_R(p_{r_i}):=s_i$ and by definition it will be injective in $H_{BC}^{\leq k+2}$ and surjective in $H_{BC}^{\leq k}$. Again, $M_R'$ may not be minimal and we argue as at the end of the surjectivity step to obtain from this map the required $\varphi_{k+1}:M_{k+1}\to A$ with all the required properties.
\end{proof}
\begin{rem}
	The algorithm presented in the proof is not optimal. In fact, the necessity for `re-minimalization' at the end of the surjectivity and injectivity steps indicates that we could have made more sensible choices in adding generators beforehand. In concrete examples, it will usually be clear how to make those sensible choices. Note also that if we are just interested in a connected nilpotent (not necessarily minimal) model, we can omit these steps. The resulting model will still be degree-wise finite dimensional if $H_{ABC}(A)$ was.
\end{rem}

\begin{cor}\label{Cor: independence of base-point}
	For holomorphically simply connected manifolds, the isomorphism class of the homotopy bicomplex $\pi(X,x)$ in $\Ho(\bico)$ is independent of the basepoint $x\in X$.
\end{cor}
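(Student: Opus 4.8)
The plan is to produce a single minimal cbba model that works for every base point at once, and then conclude by functoriality of $\pi$ on the homotopy categories. First I would note that, by definition of holomorphic simple connectedness, $A_X$ is cohomologically simply connected, so \Cref{thm: Mimo exists} furnishes a simply connected minimal model $\varphi \colon M \to A_X$. Crucially, this is constructed as a map of cbba's, with no reference to any particular augmentation, and being simply connected $M$ is in particular connected: $M^0 = \C$ and $M$ is concentrated in non-negative total degrees.

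The heart of the argument is the observation that a connected cbba carries a \emph{unique} augmentation. Indeed, any algebra map $\varepsilon \colon M \to \C$ has bidegree $(0,0)$, fixes the unit, and must annihilate everything of positive total degree since $\C$ sits in degree $0$; hence $\varepsilon$ is forced to be the projection $M \to M^0 = \C$. Consequently, for every point $x \in X$ the composite $\varepsilon_x \circ \varphi \colon M \to \C$ coincides with this canonical augmentation $\varepsilon_M$, so that $\varphi \colon (M,\varepsilon_M) \to (A_X,\varepsilon_x)$ is automatically a morphism in $\cbba_0$. As it is a bigraded quasi-isomorphism it is a weak equivalence there, and since $M$ is minimal, hence nilpotent, it is cofibrant in $\cbba_0$ by \Cref{cor: nilpotent implies cofibrant}. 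Thus $\varphi$ realises the \emph{fixed} object $(M,\varepsilon_M)$ as a cofibrant model of $(A_X,\varepsilon_x)$ simultaneously for all $x$.

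To finish I would invoke that $\pi$ is a well-defined functor $\Ho(\cbba_0) \to \Ho(\bico)$ (\Cref{lem: indecomposables} together with the discussion following \Cref{def: homotopy bicomplex}). The weak equivalence $\varphi$ is an isomorphism in $\Ho(\cbba_0)$, so $\pi(X,x) = \pi(A_X,\varepsilon_x) \cong \pi(M,\varepsilon_M)$ in $\Ho(\bico)$. The right-hand side is manifestly independent of $x$, whence all the bicomplexes $\pi(X,x)$ are mutually isomorphic in $\Ho(\bico)$, which is the claim.

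I expect the only delicate point — the ``main obstacle'' — to be the verification that one and the same $\varphi$ is augmentation-compatible for every base point; this is exactly the uniqueness-of-augmentation observation on connected cbba's made above. Everything else is formal once $\pi$ is known to descend to the homotopy categories, so beyond that step the proof is a short application of functoriality.
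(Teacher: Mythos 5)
Your proof is correct and follows exactly the paper's intended argument: the paper gives no separate proof of this corollary, but the introduction explains it precisely as you do — a connected (here, simply connected minimal) model carries a unique augmentation, hence is an augmented model for $(A_X,\varepsilon_x)$ for every $x$, and functoriality of $\pi$ on $\Ho(\cbba_0)$ finishes the argument. No gaps.
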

We will therefore often write $\pi(X)$ instead of $\pi(X,x)$.

\section{Examples and applications}

\subsection{Elementary examples} A real bigraded minimal model for projective space $\CP^n$ is given by
\[
M_{\CP^n}:=\Lambda(y,z,\del z,\delbar z~|~\del\delbar z=iy^{n+1})\qquad |y|=(1,1),~|z|=(n,n),
\]
where $y,z$ are real and $\varphi_{\CP^n}:M_{\CP^n}\to A_{\CP^n}$ sends $y$ to a K\"ahler form $\omega$ and all other generators map to zero. The isomorphism class of the homotopy bicomplex is thus given by a dot in degree $(1,1)$ and an $L$ with corner in degree $(n,n)$. 

\begin{rem}
	As this example shows, even if we start with a compact K\"ahler manifold, the homotopy bicomplex will generally not satisfy the $\partial\bar\partial$-Lemma.
\end{rem}

A real bigraded model for a torus $T^n=\C^n/\Gamma$ is given by 
\[
M_{T^n}:=\Lambda(x_1,...,x_n,\bar{x}_1,...,\bar{x}_n), \qquad |x_i|=(1,0), \quad d\equiv 0,
\]
where $x_i$ and $\bar{x}_i$ are conjugate and $\varphi_{T^n}:M_{T^n}\to A_{T^n}$ sends $x_i$ to $dz_i$ where $z_i$ are holomorphic coordinates on $\C^n$.

\subsection{Complete intersections}
We will compute the first stages of a minimal model for K\"ahler manifolds with the Hodge diamond of complete intersections of dimension $n>1$. As applications we compute the Bott-Chern and Aeppli cohomotopy groups and prove that such manifolds are formal in a strong sense.

Let $(X,\omega)$ be a compact Kähler manifold with the Hodge diamond of a complete intersection of dimension $n>1$. I.e., denoting by $\delta=1$ if $n/2\in\Z$ and $0$ else:
\[
h^{p,q}(X)=\begin{cases}1&p=q\neq n/2\\
d^{p,q}&p+q=n \text{ and }p\neq q\\
d^{p,q}+\delta&p+q=n\text{ and }p=q\\
0&\text{else},
\end{cases}
\]
where $d^{p,q}$ denotes the dimension of the space of primitive harmonic $(p,q)$-forms. E.g., for $n=2,3$ the Hodge diamonds look as follows:
\[
\resizebox{3cm}{!}{$\begin{array}{ccccc}
	&&1&&\\[3mm]
	&0&&0&\\[3mm]
	d^{2,0}&&d^{1,1}+1&&d^{0,2}\\[3mm]
	&0&&0&\\[3mm]
	&&1&&
\end{array}$}\qquad 
\resizebox{4cm}{!}{$\begin{array}{ccccccccc}
	& & & &1& & & &\\[3mm]
	& & &0& &0& & &\\[3mm]
	& &0& &1& &0& &\\[3mm]
	&d^{3,0}& &d^{2,1}& &d^{1,2}& &d^{0,3}&\\[3mm]
	& &0& &1& &0& &\\[3mm]
	& & &0& &0& & &\\[3mm]
	& & & &1& & & &\\[3mm]
\end{array}$
}
\]
Since $X$ is K\"ahler, it satisfies the $\del\delbar$-Lemma, so $H_{BC}(X)\cong H_{\delbar}(X)\cong H_A(X)$ and the entire additive quasi-isomorphism type of $A_X$ is determined by the Hodge numbers. Every complete intersection of dimension $n\geq 2$ is of course an example, but there are many others, e.g. any K\"ahler surface with $b_1=0$.

Let us build a minimal model $(M,\varphi)$ for $X$, following the above algorithm. Note that it is applicable since $A_X$ is bounded and from the conditions on the Hodge numbers, we see it is simply connected. We will only do so until we reach the stage where $H_{BC}^{\leq 2n}(\varphi)$ and $H_A^{\leq 2n}(\varphi)$ are isomorphisms, since afterwards the construction of $M$ and the map $\varphi$ becomes a formal procedure.

Set $M_3=\Lambda(x)$ with $|x|=(1,1)$ and define $\varphi_3:M_3\to A_X$ by $x\mapsto \omega$. This induces injections in $H_{BC}^{\leq 2n}$ and surjections in $H_A^{<n}$, so $(M_3,\varphi_3)=(M_n,\varphi_n)$. Next, we denote by $P$ the space of primitive harmonic forms in degree $n$ and consider formal copies $Q:=P$, $\del Q:=P[1,0]$, $\delbar Q:=P[0,1]$, where we denote $q_p:=p\in Q$, $\del q_p:=p\in \del Q$, $\delbar q_p:=p\in \delbar Q$. Then define
\[
M_{n+1}:=M_n(P,Q,\del Q,\delbar Q\mid\del\delbar q_p= i x p~\forall p\in P).
\]
Define $\varphi_{n+1}$ by $\varphi_{n+1}(p)=p$ and $\varphi_{n+1}(q_p)=\varphi_{n+1}(\del q_p)=\varphi_{n+1}(\delbar q_p)=0$. Note that since the $p$ are primitive, we have $\varphi_{n+1}(xp)=\omega\wedge p=0$ so that $\varphi_{n+1}$ is indeed a map of bicomplexes.

\begin{lem}
There are canonical identifications
\[H_{BC}^{\leq 2n}(M_{n+1})\cong (\Lambda(x,P)/(xP))^{\leq 2n},\] i.e. for all $k\leq 2n$ one has:
\[
H_{BC}^{k}(M_{n+1})\cong\begin{cases}
								\langle x^j\rangle & 2n\neq j=2k\neq n\\
								P\oplus \langle \delta x^{n/2}\rangle& k=n\\
								\Lambda^2(P)\oplus\langle x^n\rangle&k=2n\\
								0&\text{else.}
							\end{cases}
\]
\end{lem}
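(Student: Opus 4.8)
The plan is to compute $H_{BC}^{\leq 2n}(M_{n+1})$ directly, exploiting that the only generators carrying a nonzero differential are $q_p,\del q_p,\delbar q_p$, together with an auxiliary grading that $d$ respects. First I would record the differentials on generators: $dx=0$ and $dp=0$ for $p\in P$, while $\del q_p,\delbar q_p$ are the new degree-$(n+1)$ generators with $\del(\delbar q_p)=\del\delbar q_p=ixp$ and $\delbar(\del q_p)=-ixp$. I then introduce the \emph{heavy-count} grading on $M_{n+1}=\Lambda(x,P,Q,\del Q,\delbar Q)$, counting in each monomial the number of factors drawn from $P\cup Q\cup\del Q\cup\delbar Q$. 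Since $\del\delbar q_p=ixp$ trades one factor $q_p$ for one factor $p$ (and $x$ carries weight $0$), the differential preserves this grading, so $M_{n+1}=\bigoplus_{h\geq 0}M^{(h)}$ as bicomplexes and $H_{BC}$ splits as a direct sum. As every heavy generator has total degree $\geq n$, the summand $M^{(h)}$ is concentrated in total degrees $\geq hn$; hence in the range $\leq 2n$ only $h=0,1,2$ can contribute, with $M^{(2)}$ contributing only in the single degree $2n$.

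Next I would treat the three pieces separately. The summand $M^{(0)}=\Lambda(x)$ has zero differential, contributing $\langle x^j\rangle$ in each even degree $2j$. For $h=1$ I would observe that $M^{(1)}=\bigoplus_{p}W_p$ with $W_p=\bigoplus_{j\geq 0}x^j\langle p,q_p,\del q_p,\delbar q_p\rangle$, and decompose each $W_p$ as the dot $\langle p\rangle$ together with the squares
\[
\Sigma_{j,p}:=\langle x^jq_p,\ x^j\del q_p,\ x^j\delbar q_p,\ ix^{j+1}p\rangle\qquad(j\geq 0).
\]
This is a direct sum of sub-bicomplexes by inspection; the squares are acyclic for Bott--Chern by \Cref{lem: characterization acyclicity}, and the elements $x^{k}p$ with $k\geq 1$ occur precisely as their top corners. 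Thus $H_{BC}(M^{(1)})=P$, concentrated in degree $n$, while every class $x^kp$ with $k\geq 1$ is killed. Finally $M^{(2)}$ is concentrated in degrees $\geq 2n$ and vanishes in degree $2n-2$, so $\im\del\delbar=0$ there; a one-line check shows that among the degree-$2n$ monomials $pp'$, $pq_{p'}$, $q_pq_{p'}$ only the $pp'$ are both $\del$- and $\delbar$-closed (the others have a nonzero free-algebra generator in their differential), whence $H_{BC}^{2n}(M^{(2)})=\Lambda^2P$.

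Assembling the three contributions by total degree then yields $\langle x^j\rangle$ in each even degree $2j\neq n,2n$; the group $P\oplus\langle\delta x^{n/2}\rangle$ in degree $n$ (with $x^{n/2}$ present exactly when $n$ is even, i.e. $\delta=1$, and supplied by $M^{(0)}$ independently of $P$ thanks to the heavy-count splitting); $\langle x^n\rangle\oplus\Lambda^2P$ in degree $2n$; and $0$ in all remaining degrees. This is the stated case formula, and it is exactly $(\Lambda(x,P)/(xP))^{\leq 2n}$, since $(xP)^{\leq 2n}$ is spanned precisely by the now-exact classes $x^kp$ with $k\geq 1$, while the pure powers $x^j$ and the wedges $\Lambda^{\leq 2}P$ survive.

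The work here is bookkeeping rather than any single hard step. The points that must be got right are: that the heavy-count grading is genuinely $d$-stable, so that $H_{BC}$ decomposes over it and no cross-cancellation (e.g. between $x^{n/2}$ and $P$ in degree $n$) can occur; that the $W_p$ split off cleanly as dot-plus-squares simultaneously for all powers of $x$; and that no unexpected closed classes arise from $\C$-linear combinations of $q$-monomials across different $p$ in the borderline degrees $2n$ and $2n+1$. The heavy-count decomposition is exactly what makes all of this uniform in the parity of $n$ and avoids having to invoke the bounded-case squared-away lemmas.
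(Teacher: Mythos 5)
Your proof is correct and follows essentially the same route as the paper's: both decompose $M_{n+1}$ additively into the surviving dots $x^j$, $P$, $\Lambda^2(P)$, the Bott--Chern-acyclic squares spanned by $x^jq_p,\,x^j\del q_p,\,x^j\delbar q_p,\,x^{j+1}p$, and a remainder ($\Lambda^2(Q)\oplus Q\otimes P$ in degree $2n$) containing no $\del$- or $\delbar$-closed elements. Your heavy-count grading is a tidy repackaging of the same splitting rather than a different argument.
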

Furthermore, the natural map $H_{BC}(M_{2n+1})\to H_A(M_{2n+1})$ is an isomorphism in degrees $<2n$ and an injection in degree $2n$.
\begin{proof}
Consider the contractible bicomplexes 
\[
S_k(Q):=
\begin{tikzcd}
\delbar Q x^k\ar[r]&Px^k\\
Qx^k\ar[r]\ar[u]&\del Qx^k\ar[u]
\end{tikzcd}
\]
and denote by $B:=\C[x]\oplus \Lambda^{\leq 2}(P)\oplus \bigoplus_k S_k(Q)$. The inclusion $B\subseteq M_{n+1}$ is an isomorphism in degrees $<2n$ in degree $2n$ has the complement $C_{2n}:=\Lambda^2(Q)\oplus Q\otimes P$, which does not contain any $\del$ or $\delbar$-closed elements and extends to a bicomplex complement $M_{n+1}=B\oplus C$ in all degrees (note $S_k(Q)$ is injective and always splits off as a direct summand). Thus $H_{BC}^{\leq 2n}(M_{2n+1})=H_{BC}^{\leq 2n+1}(B)$ and the claim follows.
\end{proof}
Next, one needs to fix injectivity in $H_{BC}^{2n}$. Pick a (bigraded, real) space of relations $R\subseteq \langle x^n\rangle\oplus \Lambda^2(P)\subseteq M_{n+1}^{2n}$, that maps isomorphically onto the kernel of $H_{BC}^{2n}(\varphi_{n+1})$. Note that by the cohomology calculation above, it is possible to pick $R$ inside that subspace and that it is a vector space complement of $\langle x^n\rangle\subseteq \langle x^n\rangle\oplus \Lambda^2(P)$. Then, define $S:=R[-1,-1]$, $\del S:=R[0,-1]$, $\delbar S:=R[-1,0]$ and, writing again $s_r:=r\in R[-1,-1]$ etc., we define
\[
M':=M_{2n+1}(S,\del S,\delbar S \mid \del\delbar s_r = ir~\forall r\in R)
\] 
and $\varphi'$ to extend $\varphi_{n+1}$ by mapping $s_r$ to a $\del\delbar$-primitive of $\varphi(ir)$ (chosen linearly in $r$). We now have achieved surjectivity in $H_A^{\leq 2n}$ and injectivity in $H_{BC}^{\leq 2n}$.

Let us pause for a moment and note what we have shown so far:

\begin{prop}\label{prop: complete intersections}
A minimal model for a compact complex Kähler manifold $X$ with the Hodge diamond of a complete intersection of dimension $n\geq 2$ is given by a nilpotent extension
\[
M=M'\otimes \Lambda (D),
\]
where $M'=\Lambda(x,P,Q,\del Q,\delbar Q,S,\del S,\delbar S\mid \hdots)$ is as constructed as above and $D$ is concentrated in degrees $\geq 2n-1$.
\end{prop}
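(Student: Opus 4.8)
The plan is to resume the minimal-model algorithm of \Cref{thm: Mimo exists} from the pair $(M',\varphi')$ built above and to verify that every generator adjoined from now on has total degree $\geq 2n-1$. Two facts drive the argument: by construction $\varphi'$ is surjective on $H_A^{\leq 2n}$ and injective on $H_{BC}^{\leq 2n}$, and, since $\dim_\C X=n$, both $H_A(A_X)$ and $H_{BC}(A_X)$ vanish in total degrees $>2n$.

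First I would note that $H_A(\varphi')$ is already globally surjective: in degrees $>2n$ the target is zero, and in degrees $\leq 2n$ surjectivity holds by construction. Hence the surjectivity step of the algorithm contributes no further generators, and by \Cref{lem: A-surj BC-inj} the sole remaining obstruction to $\varphi$ being a bigraded quasi-isomorphism is the injectivity of $H_{BC}(\varphi)$. As $H_{BC}^{>2n}(A_X)=0$, securing injectivity amounts to killing the whole of $H_{BC}^{k}(M')$ for every $k>2n$.

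Next I would apply the injectivity step successively for $k=2n+1,2n+2,\dots$. Killing a class in $H_{BC}^{k}$ proceeds by adjoining a square $\square\otimes Q$ with $Q$ placed in degree $k-2$, so the new generators $Q,\del Q,\delbar Q$ live in degrees $k-2$ and $k-1$; for the smallest relevant value $k=2n+1$ these are $2n-1$ and $2n$, and they grow with $k$, so no generator ever drops below degree $2n-1$. Tracking cohomology via \Cref{lem: squared away}\eqref{it: |c|=k+1}, such an extension leaves $H_A^{\leq 2n}$ intact and removes the chosen class from $H_{BC}^{k}$, possibly creating an at most two-dimensional space of \emph{indecomposable} classes $\mathcal R$ in degree $k-1$. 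For $k>2n+1$ this $\mathcal R$ sits above the controlled range and is harmless; for $k=2n+1$ it lands in degree $2n$, where it would break injectivity, so I would remove it by the re-minimalization of the algorithm, i.e. by quotienting out the ideal it generates, a weak equivalence by \Cref{lem: quot vs hom-quot for indec} that only involves degree-$2n$ indecomposables.

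Assembling these nilpotent extensions yields $M=M'\otimes\Lambda(D)$ with $D$ concentrated in degrees $\geq 2n-1$; it is minimal since each step is a bigraded Hirsch extension with no linear $\del\delbar$-part, and $\varphi$ is a weak equivalence because $H_A(\varphi)$ is surjective and $H_{BC}(\varphi)$ injective (\Cref{lem: A-surj BC-inj}). Degree-wise finiteness of $D$ and termination in each degree come from the finiteness clause of \Cref{thm: Mimo exists}, using compactness of $X$. The main obstacle is the degree bookkeeping: one must confirm that, once $H_A$-surjectivity and $H_{BC}$-injectivity already hold through degree $2n$, nothing in the continuation forces a generator below degree $2n-1$, the critical case being the interaction between killing $H_{BC}^{2n+1}$ and preserving injectivity in degree $2n$.
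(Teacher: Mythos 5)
Your proposal is correct and follows essentially the same route as the paper, which states the Proposition as a summary of the preceding construction and leaves the continuation of the algorithm of \Cref{thm: Mimo exists} implicit; you supply exactly the intended degree bookkeeping (global $H_A$-surjectivity is automatic above degree $2n$, \Cref{lem: A-surj BC-inj} reduces everything to $H_{BC}$-injectivity, squares killing $H_{BC}^{\geq 2n+1}$ contribute generators only in degrees $\geq 2n-1$, and the indecomposable classes created in degree $2n$ are removed by the quotienting step of \Cref{lem: quot vs hom-quot for indec}).
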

Since $|D|\geq 2n-1$, we can read off the bigraded homotopy groups in lower degrees:
\begin{cor}
	For $X$ as above, and $k\leq 2n-2$, one has
	\[\dim\pi_{BC}^{k}(X)=\begin{cases}
								1&k=2\\
								b_n(X)-\delta&k=n\\
								2\cdot (b_n(X)-\delta)&k=n+1\\
								0&\text{else.}
							\end{cases}
	\]
	and for $k\leq 2n-2$:
	\[
	\dim \pi_{A}^k(X)=\begin{cases}
		1&k=2\\
		2\cdot (b_n(X)-\delta)& k=n\\
		\delta\cdot (b_n(X)-1)^2+(1-\delta)\cdot {b_n(X)\choose 2}&k=2n-2\\
		0&\text{else.}
	\end{cases}
	\]
\end{cor}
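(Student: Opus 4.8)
The plan is to read both families of invariants directly off the homotopy bicomplex $\pi(X)=QM$, where $M$ is the minimal model of \Cref{prop: complete intersections}. Since $M$ is minimal, $\del\delbar$ vanishes on the indecomposables, so by \Cref{thm:decomposition} the bicomplex $QM$ is a direct sum of zigzags, and $\pi_{BC}^\Cdot(X)=H_{BC}(QM)$, $\pi_A^\Cdot(X)=H_A(QM)$ are then determined by that decomposition. By \Cref{prop: complete intersections}, the generators of total degree $\leq 2n-2$ are: the class $x$ (a dot in bidegree $(1,1)$); the closed classes $P$ (dots in total degree $n$); for each $p$ the triple $q_p,\del q_p,\delbar q_p$; and the double primitives $s_r$ together with $\del s_r,\delbar s_r$. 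The key structural observation is that $\del\delbar q_p=ixp$ and $\del\delbar s_r=ir$ are \emph{decomposable}, hence zero in $QM$; so in $QM$ each triple $q_p,\del q_p,\delbar q_p$ is an $\Lpic$-shaped zigzag with corner $q_p$ in degree $n$ and the two tips in degree $n+1$, and each $s_r,\del s_r,\delbar s_r$ is an $\Lpic$ with corner $s_r$ in degree $2n-2$ and tips in degree $2n-1$.

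First I would invoke the support description from the proof of \Cref{lem: characterization acyclicity}: on a zigzag, $H_{BC}$ is supported on dots and on tips receiving arrows, while $H_A$ is supported on dots and on corners emitting arrows. Thus a dot contributes to both $\pi_{BC}$ and $\pi_A$ in its own degree, while an $\Lpic$ contributes two classes to $\pi_{BC}$ (at its tips) and one to $\pi_A$ (at its corner). Collecting by total degree $\leq 2n-2$: the dot $x$ gives $\pi_{BC}^2=\pi_A^2=\C$; the $\dim P$ dots $p$ give $\dim P$ to $\pi_{BC}^n$ and $\dim P$ to $\pi_A^n$; the $\dim P$ corner--L's $q_p$ add $2\dim P$ to $\pi_{BC}^{n+1}$ and a further $\dim P$ to $\pi_A^n$; and the $s_r$-L's put $\dim S$ into $\pi_A^{2n-2}$ while contributing nothing to $\pi_{BC}^{\leq 2n-2}$, since their tips sit in degree $2n-1$. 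Every remaining degree in the range is empty (for small $n$ some of these degrees coincide, but the contributions then simply add). This already produces all lines of both formulas, provided one knows $\dim P$ and $\dim S$.

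It then remains to evaluate the two dimensions. The identity $\dim P=b_n(X)-\delta$ is the Hodge-theoretic input: $P$ is the primitive part of $H^n$, which differs from the full middle cohomology exactly by the Lefschetz class $\omega^{n/2}$, present precisely when $n$ is even. For $\dim S=\dim R$ I would use the computation of $H_{BC}^{\leq 2n}(M_{n+1})$ preceding \Cref{prop: complete intersections}: $R$ maps isomorphically onto $\ker H_{BC}^{2n}(\varphi_{n+1})$, and since $H_{BC}^{2n}(M_{n+1})\cong\langle x^n\rangle\oplus(\Lambda P)^{2n}$ surjects onto the one-dimensional $H^{2n}(X)$ through the intersection pairing, one gets $\dim R=\dim(\Lambda P)^{2n}$. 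For $n$ odd the generators of $P$ anticommute, so $(\Lambda P)^{2n}$ is the exterior square and has dimension $\binom{b_n(X)}{2}$, giving that line immediately. The main obstacle is the even case: there the generators commute, so one must analyse the symmetric square together with the nondegeneracy of the (now symmetric) intersection form on $P$ and the interaction of the products $p_ip_j$ with the Lefschetz powers $x^{n/2}$; I expect pinning down exactly which products survive as independent double primitives, so that the count matches the stated value $(b_n(X)-1)^2$, to be the delicate step, with the odd case and all $\pi_{BC}$-entries being routine consequences of the zigzag picture.
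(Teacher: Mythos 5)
Your route is the same as the paper's: the proof in the text is precisely the dimension count you set up, namely $\dim P=b_n(X)-\delta$, $\dim Q=\dim\del Q=\dim\delbar Q=\dim P$, and $\dim S=\dim R=\dim\Lambda^2(P)$, with the translation into $\pi_{BC}$ and $\pi_A$ via the zigzag structure of the homotopy bicomplex (dots contribute to both invariants in their own degree; each $L$-shaped zigzag contributes once to $H_A$ at its corner and twice to $H_{BC}$ at its tips) left implicit. Everything you do, up to and including the odd-$n$ evaluation of $\dim R$, is correct and agrees with the paper.

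The one place you stop short --- the even case --- is not a place where a hidden ``delicate step'' will rescue the stated value, and you should not look for one. The paper's own proof simply asserts that $\dim\Lambda^2(P)$ equals $(\dim P)^2$ when $n$ is even, with no further argument. But $P$ is concentrated in even total degree $n$, so $\Lambda^2(P)$ is the symmetric square, of dimension $\binom{\dim P+1}{2}=\binom{b_n(X)}{2}$ --- exactly the number your computation produces, and the same value as in the odd case. Since $R$ is by construction a complement of $\langle x^n\rangle$ inside $\langle x^n\rangle\oplus\Lambda^2(P)$ mapping isomorphically onto $\ker H_{BC}^{2n}(\varphi_{n+1})$, its dimension is bounded above by $\dim\Lambda^2(P)$, so no finer analysis of which products $p_ip_j$ survive can raise the count to $(b_n(X)-1)^2$, which strictly exceeds $\binom{b_n(X)}{2}$ as soon as $b_n(X)>2$ (for a cubic surface: $36$ versus $21$). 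The symmetric-square count is therefore what the argument of \Cref{prop: complete intersections} actually delivers, and the entry $\delta\cdot(b_n(X)-1)^2$ in the statement appears to be a slip that should read $\binom{b_n(X)}{2}$ in both parities. Apart from flagging this discrepancy rather than trying to match the printed value, your write-up needs no repair.
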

\begin{proof}
	We have $\dim P=b_n(X)-\delta$, $\dim\del Q=\dim\delbar Q=\dim Q=\dim P$ and $\dim \del S=\dim\delbar S=\dim R=\dim \Lambda^2(P)$ which equals $(\dim P)^2$ or $\dim P\choose 2$, depending on the parity of $n$.
\end{proof}
We now prove a formality result. Recall \cite{MS22} that a complex manifold is strongly (bigradedly) formal, if $A_X$ can be connected by a chain of weak equivalences to a cbba $H$ with $\del\equiv\delbar\equiv 0$ (one may take $H=H_{BC}(X)$). Even though every $\partial\bar\partial$-manifold is formal in the usual (de Rham) sense by \cite{DGMS75}, this is not the case for this stronger notion of formality, as it is obstructed by the presence of bigraded higher operations, which may be nontrivial on $\partial\bar\partial$-manifolds by \cite{ST22}. It is an open question whether compact K\"ahler manifolds are strongly formal in general.\footnote{This question was resolved with a negative answer in \cite{PSZ24}.}
\begin{thm}\label{thm: formality complete intersections}
Any compact complex Kähler manifold of dimension $n\geq 2$ with the Hodge diamond of a complete intersection is strongly formal.
\end{thm}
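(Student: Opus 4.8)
The plan is to exhibit an explicit zig-zag of weak equivalences $A_X\xleftarrow{\ \varphi\ } M\xrightarrow{\ \psi\ }H_{BC}(X)$, where $\varphi:M\to A_X$ is the minimal model of \Cref{prop: complete intersections} and $H_{BC}(X)$ is viewed as a cbba with $\del\equiv\delbar\equiv 0$ and its Bott--Chern ring structure. Since $\varphi$ is already a weak equivalence, the whole statement reduces to constructing $\psi$ and checking that it is a bigraded quasi-isomorphism. I would define $\psi$ on algebra generators by $\psi(x)=[\omega]$, $\psi(p)=[p]$ for $p\in P$, and $\psi(g)=0$ for every remaining generator, i.e.\ for all of $Q,\del Q,\delbar Q,S,\del S,\delbar S$ and all of $D$.

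First I would verify that $\psi$ is a morphism of cbba's. Freeness makes the multiplicative part automatic, so the only point is compatibility with the differentials; as the target has zero differentials this amounts to $\psi(\del g)=\psi(\delbar g)=0$ for each generator $g$. For $x$ and $p$ this is immediate. For $q_p$ it reduces, via $\del\delbar q_p=ixp$, to $\psi(x)\psi(p)=[\omega]\cdot[p]=[\omega\wedge p]=0$, which holds because the classes in $P$ are primitive of middle degree. For $s_r$ it reduces, via $\del\delbar s_r=ir$, to $\psi(r)=0$, which holds because $R$ was chosen to span relations, i.e.\ $[r]=0\in H_{BC}(X)$. For the generators in $D$ I would use that $\dim_\C X=n$, so $H_{BC}(X)$ is concentrated in total degrees $\leq 2n$, together with $|D|\geq 2n-1$ from \Cref{prop: complete intersections}: in the square-shaped and $L$-shaped extensions produced by the algorithm the first-order parts $\del g,\delbar g$ of a $D$-generator are again $D$-generators, hence killed by $\psi$, while the second-order term $\del\delbar g$ sits in total degree $\geq 2n+1$, where the target vanishes.

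With $\psi$ a chain map, I would check that it is a bigraded quasi-isomorphism using \Cref{prop: characterization quisos}, i.e.\ that $H_{BC}(\psi)$ and $H_A(\psi)$ are isomorphisms. Because $\varphi$ is a weak equivalence we have $H_{BC}(M)\cong H_{BC}(X)$, and since $X$ is K\"ahler the $\del\delbar$-lemma gives $H_A(M)\cong H_A(X)\cong H_{BC}(X)$; in particular both are degree-wise finite dimensional. As $\psi$ is a ring map whose image contains $[\omega]$ and $P$, which generate $H_{BC}(X)$ as an algebra, both $H_{BC}(\psi)$ and $H_A(\psi)$ are surjective, and a surjection between degree-wise finite-dimensional spaces of equal dimension is an isomorphism. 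Thus $\psi$ is a weak equivalence, and the zig-zag $A_X\xleftarrow{\ \varphi\ }M\xrightarrow{\ \psi\ }H_{BC}(X)$ witnesses strong formality.

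I expect the main obstacle to be the differential check for the generators in $D$: one must control the precise shapes of the generators introduced during the existence algorithm (including the re-minimalization steps) closely enough to see that setting $\psi|_D=0$ is consistent. The hypothesis $\dim_\C X=n$, which forces $H_{BC}(X)$ and $H_A(X)$ to vanish above total degree $2n$, is exactly what removes the potential obstructions; equivalently, it is what guarantees that no bigraded higher operations survive to detect a failure of strong formality. The remaining verifications -- primitivity giving $[\omega]\cdot[p]=0$ and the dimension count in the quasi-isomorphism step -- are routine given the cohomology computations already established.
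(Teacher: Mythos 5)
Your overall strategy is the same as the paper's: take the (partial) minimal model of \Cref{prop: complete intersections}, send $x\mapsto[\omega]$, $P\ni p\mapsto [p]$, kill $Q,S$ and everything in $D$, and conclude by surjectivity plus a dimension count. The verifications you call routine (primitivity giving $[\omega]\cdot[p]=0$, vanishing of $\psi$ on $R$, and the final equidimensionality argument) are indeed carried out the same way in the paper.

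However, the step you flag as ``the main obstacle'' is a genuine gap, and your proposed resolution of it is not correct. You argue that for a $D$-generator $g$ the terms $\del g,\delbar g$ are again $D$-generators while $\del\delbar g$ lives in total degree $\geq 2n+1$, so that $\psi|_D=0$ is automatically a chain map. This fails: the algorithm necessarily produces generators in degree $2n-1$ whose differential has a nonzero component in $M'^{2n}$, where $H_{BC}(X)\neq 0$. The reason is that $M'$ contains the non-contractible summand $\llbracket PQ\oplus xS\rrbracket$ (resp. $\llbracket PQ\oplus xS\oplus PS\rrbracket$ for $n=2$), so $\ker H_A^{2n}(\varphi')\neq 0$; killing the resulting redundant classes in $\HBCred^{2n+1}(M')$ by squares based in degree $2n-1$ creates, by \Cref{lem: squared away}\eqref{it: |c|=k+1}, new indecomposable Bott--Chern classes in degree $2n$, and quotienting these away (the re-minimalization step) forces $dD^{2n-1}$ to acquire components equal to degree-$2n$ elements of $M'$ representing $\ker H_A^{2n}(\varphi')$. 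For $\psi$ to be a chain map one must know that $\psi'$ annihilates these elements, i.e.\ that $\ker H_A^{2n}(\varphi')$ admits representatives inside $PQ\oplus(\langle x\rangle\oplus P)^2S$ with \emph{no component along $x^n$} (on which $\psi'(x^n)=[\omega^n]\neq 0$). This is not automatic and is exactly the content of the paper's intermediate Claim, proved by replacing the primitives $s_j\in S$ by $\tilde s_j=s_j-\sum_i\alpha_{ij}b^j$ for a Poincar\'e-dual basis $b^j$ of $(\langle x\rangle\oplus P)^{2n-2}$, in the spirit of Fern\'andez--Mu\~noz. Without this modification of $S$ and the accompanying explicit additive decomposition of $M'$ up to degree $2n$, the definition $\psi|_D=0$ need not be compatible with the differential, and the dimension bound $H_{BC}^{>2n}(X)=0$ alone does not rescue it.
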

\begin{proof}
Pick a partial model $\varphi':M'\to A_X$ as in \Cref{prop: complete intersections}. We define a map $\psi':M'\to H_{BC}(X)$ as follows: $\psi(x)=[\omega]$ , $\psi'(p)=[p]$ for any $p\in P$, and $\psi'(Q)=\psi'(S)=0$. Note that $\psi'$ is indeed a cbba map and that it induces an isomorphism in $H_{BC}^{\leq 2n}$ and $H_A^{<2n}$. 

We now want to extend $\varphi$ to a full model $\varphi:M\to A_X$ and $\psi'$ to a full bigraded quasi-isomorphism $\psi:M\to H_{BC}(X)$. To do so, we need to analyze the kernel of $H_A^{2n}(\varphi')$. For this, we will split $M'$ additively into subcomplexes up to degree $2n$. We will use the following ad-hoc notation: For any subset $V\subseteq M'$, we denote the sub-bicomplex of $M'$ generated by $V$ by $\llbracket V\rrbracket$. For example, $\llbracket x^kQ\rrbracket=S_k(Q)$ and $\llbracket S\rrbracket$ is the contractible bicomplex with underlying vector space $S\oplus \del S\oplus \delbar S\oplus R$. We distinguish two cases and leave the verification that all sums are indeed direct to the reader:

\textbf{Case 1: $n\geq 3$.}
There is a decomposition
\begin{align*}
	M'=&~\bigoplus_{k=0}^{n} \langle x^k\rangle\oplus P\oplus  \bigoplus_{k=0}^{ \lfloor n/2\rfloor} S_k(Q)\oplus \llbracket S\rrbracket\oplus \llbracket \Lambda^2(Q)\rrbracket\oplus\llbracket PQ\oplus xS\rrbracket\\
	&\oplus \text{ a subcomplex generated in degrees }\geq 2n+1.
\end{align*}
The bicomplexes $S_k(Q)$, $\llbracket S\rrbracket$, $\llbracket \Lambda^2(Q)\rrbracket$ are contractible. On the other hand, $\del\delbar$ is not injective on $\llbracket PQ\oplus xS\rrbracket$: In fact, $\del\delbar(PQ)=x\Lambda^2(P)$ and $\del\delbar (xS)=xR$ and $xR+x\Lambda P=\langle x^{n+1}\rangle + x\Lambda^2(P)$ which has smaller dimension than $PQ\oplus xS$.

\textbf{Case 2: $n=2$.}
We have an additive decomposition into sub-bicomplexes
\begin{align*}
	M'=&~ \C\oplus\langle x\rangle  \oplus P\oplus S_0(Q)\oplus \llbracket S\rrbracket \\
	&\oplus \langle x^2\rangle\oplus  S_1(Q)\oplus \llbracket \Lambda^2(S)\rrbracket\oplus\llbracket \Lambda^2(Q)\rrbracket\oplus\llbracket QS\rrbracket\oplus \llbracket PQ\oplus xS\oplus PS\rrbracket\\
	&\oplus \text{ a subcomplex generated in degrees }\geq 5
\end{align*}
where one checks that $S_1(Q)\oplus \llbracket \Lambda^2(S)\rrbracket\oplus\llbracket \Lambda^2(Q)\rrbracket\oplus\llbracket QS\rrbracket$ is contractible, but $\llbracket PQ\oplus xS\oplus PS\rrbracket$ is not.

To summarize this discussion, $H_{A}^{2n}(M')$ is a subquotient of the space $\langle x^{n}\rangle \oplus PQ \oplus (\langle x\rangle\oplus P)^2S$ and so the kernel of $H^{2n}(\varphi)$ can be represented by elements in that space. We now show that one can do slightly better:

\textbf{Claim:} One may modify $S$ in such a way that the kernel of $H_A^{2n}(\varphi')$ has a system of representatives lying in $PQ\oplus xS$ (resp. $PQ \oplus xS\oplus PS$ ).

To prove the claim let $s_1,...,s_k$ be a basis for the vector space
\[
S_A:=\{s\in S\mid \exists a\in PQ, b\in (\langle x\rangle \oplus P)^2\text{ s.t. }\del\delbar(a+bs)=0\}
\]
and choose a complement $S=S_A\oplus S'$. Now, pick a basis $b_1,...,b_l$ for $(\langle x\rangle \oplus P)^2$ and let $\alpha_{ij}\in\C$ s.t.
\[
[\varphi'(b_is_j)]=[\alpha_{ij}s_j]\in H_A^{n,n}(X).
\]
Let $b^1,...,b^l$ be a basis for $(\langle x\rangle\oplus P)^{2n-2}$ s.t. $[b_ib^j]=[\delta_{ij}x^n]$, with $\delta_{ij}=0$ if $i\neq j$ and $\delta_{ii}=1$. Define $\tilde{s}_j:=s_j-\sum_{i=1}^l\alpha_{ij}b^j$. Then $[\varphi'(\tilde{s}_j)]=0$ and $\langle \tilde{s}_1,...,\tilde{s}_k\rangle\cap S'=\{0\}$. Set $\tilde{S}:=\langle \tilde{s}_1,...,\tilde{s}_k\rangle\oplus S'$. Since we only modified by closed elements, $\tilde{S}$ is still a space of primitives for $R$ and we have $M'=\Lambda(x,P,Q,\del Q,\delbar Q,\tilde{S},\del S,\delbar S\mid ...)$. On the other hand, if we now take any element $z=c\cdot x^{n}+a+bs\in \langle x^{n}\rangle \oplus PQ\oplus (\langle x\rangle\oplus P)^2\tilde{S}$ s.t. $\del\delbar(z)=0$, we have $[\varphi'(z)]=[c x^n]$. This finishes the proof of the claim.

From now on assume $S=\tilde{S}$ has the properties of the claim. We may apply the algorithm of \Cref{thmintro: MiMo} to complete $(M',\varphi')$ to a nilpotent model $M=M'\otimes \Lambda(D)$, where $D^{<2n-2}=0$, $dD^{2n-1}\subseteq D^{2n-1}$, $dD^{2n-1}\subseteq D^{2n}\oplus \ker\del\delbar(PQ\oplus (\langle x\rangle \oplus P)^2S)$. Thus, defining $\psi$ to extend $\psi'$ by $\psi(D)=0$ yields a cbba map which is necessarily a bigraded quasi-isomorphism as it induces surjective maps between equidimensional vector spaces.
\end{proof}

\begin{rem}
	The argument used to prove the intermediate claim is inspired by \cite[Thm. 3.1.]{FM05}. It may be possible to adapt that theorem to the cbba setting and use it to prove strong formality for larger classes of manifolds, e.g. the compact K\"ahler $n$-folds which have no cohomology in degrees below $n/2$ other than that coming from the powers of the K\"ahler class.
\end{rem}

\subsection{Non-simply connected examples} 
\subsubsection{Nilmanifolds}
A nilmanifold is a quotient $X=G/\Gamma$  of a simply connected nilpotent Lie group $G$ by an (automatically co-compact) lattice $\Gamma$. A left-invariant complex structure on $X$ (with respect to the natural $G$-action) is called nilpotent if there exists a basis $\omega_1,...,\omega_{n}$ for the space of left-$G$-invariant $(1,0)$-forms $(A_X^{l.i.})^{1,0}$ s.t. for every $i$ one has
\[
d\omega_i=\sum_{j<k<i}A_{ijk}\omega_j\wedge \omega_k + \sum_{j,k<i}B_{ijk}\omega_j\wedge\bar\omega_k
\]
for some $A_{ijk},B_{ijk}\in \C$. Examples of nilmanifolds with nilpotent complex structures are when $G$ is itself a complex Lie group or when the complex structure is abelian, meaning $d (A_X^{l.i.})^{0,1}\subseteq (A_X^{l.i.})^{1,1}$. We refer to \cite{CFGU00} for a more in-depth discussion of nilmanifolds and nilpotent complex structures on these, including a more intrinsic definition in terms of descending series.

\begin{thm}
	For a nilmanifold $X=G/\Gamma$ with nilpotent complex structure, the inclusion of left-invariant forms $A_X^{l.i.}\subseteq A_X$ is a real bigraded minimal model.
\end{thm}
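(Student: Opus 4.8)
The plan is to treat the two assertions packaged in the statement separately: that $A_X^{l.i.}$ is a real, bigraded, minimal cbba, and that the inclusion into $A_X$ is a bigraded quasi-isomorphism.

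For the first, I would observe that as a bigraded algebra $A_X^{l.i.}=\Lambda(\omega_1,\dots,\omega_n,\bar\omega_1,\dots,\bar\omega_n)$ with $|\omega_i|=(1,0)$ and $|\bar\omega_i|=(0,1)$, the real structure being complex conjugation. The given structure equations, together with their conjugates $d\bar\omega_i=\overline{d\omega_i}$, show that with respect to the ordering $\omega_1<\bar\omega_1<\dots<\omega_n<\bar\omega_n$ the differential of each generator lies in the subalgebra generated by the strictly smaller ones; hence $A_X^{l.i.}$ is a real nilpotent cbba in the sense of \Cref{def: nilpotent, minimal, model}. Minimality is then automatic: every generator sits in total degree $1$, so the bicomplex of indecomposables $Q(A_X^{l.i.})$ is concentrated in total degree $1$, where the operator $\del\delbar$ of bidegree $(1,1)$ necessarily vanishes; equivalently $\im\del\delbar$ lands in total degree $\geq 3$ and hence inside $(A_X^{l.i.})^+(A_X^{l.i.})^+$.

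For the second assertion, note first that the inclusion $f\colon A_X^{l.i.}\hookrightarrow A_X$ is a morphism of real cbba's, and that both sides are bounded: $A_X^{l.i.}$ is finite dimensional and $A_X$ is concentrated in bidegrees $0\leq p,q\leq n$. In particular both are locally bounded, so by \Cref{prop: characterization quisos} it suffices to verify that $H_\del(f)$ and $H_\delbar(f)$ are isomorphisms. Since $f$ commutes with the conjugations and conjugation exchanges $\del$ and $\delbar$, inducing antilinear identifications $H_\del^{p,q}(A)\cong\overline{H_\delbar^{q,p}(A)}$ natural in $A$, the map $H_\del(f)$ is an isomorphism as soon as $H_\delbar(f)$ is. Thus the whole problem reduces to showing that the inclusion of left-invariant forms is a \emph{Dolbeault} quasi-isomorphism.

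This last point is the crux, and the only step where nilpotency of the complex structure is genuinely used: I would invoke the Dolbeault analogue of Nomizu's theorem for nilpotent complex structures established in \cite{CFGU00}. The underlying mechanism, which I would sketch, is an induction along the central series adapted to $J$, which exhibits $X$ as an iterated principal holomorphic torus bundle. The base case is a complex torus, where invariant forms manifestly compute $H_\delbar$; in the inductive step one compares, for invariant versus all forms, the spectral sequences associated with the torus-bundle tower, the nilpotency of $J$ being exactly what forces their first pages to agree. I expect this comparison to be the main obstacle, since it is precisely the feature that fails for general left-invariant complex structures. Granting it, the conjugate statement and \Cref{prop: characterization quisos} together show that $f$ is a bigraded quasi-isomorphism, completing the proof that $A_X^{l.i.}\subseteq A_X$ is a real bigraded minimal model.
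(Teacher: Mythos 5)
Your proposal is correct and follows essentially the same route as the paper: nilpotency of $J$ gives a tower of (real) Hirsch extensions, minimality is automatic since all generators lie in total degree $1$, and the quasi-isomorphism statement reduces via boundedness and conjugation to the Dolbeault isomorphism of \cite{CFGU00}. The only difference is cosmetic — the paper adds the conjugate pair $\omega_i,\bar\omega_i$ in a single conjugation-stable Hirsch extension rather than ordering them separately, and it cites the main theorem of \cite{CFGU00} without sketching its proof.
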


\begin{proof}
	The left-invariant forms are canonically a free cbba, generated by the dual of the complexified Lie-algebra $\fg=Lie(G)$, i.e. $A_X^{l.i.}=\Lambda(\fg_\C^\vee)$. The nilpotency condition means exactly that one can write $A_X^{l.i.}$ as the union of subcbba's $A_X^{l.i.}(i):=\Lambda(\omega_1,...\omega_i,\bar\omega_1,...,\bar\omega_i)\subseteq A_X^{l.i.}$ and each inclusion $A_X^{l.i.}(i)\subseteq A_X^{l.i.}(i+1)$ is a Hirsch extension. Thus, the left invariant forms are a nilpotent cbba and, since all generators are in degree $1$, necessarily minimal. By \cite[Main Theorem]{CFGU00}, the inclusion $A_X^{l.i.}\subseteq A_X$ induces an isomorphism in Dolbeault cohomology and so by conjugation also $H_{\del}$, thus it is a pluripotential quasi-isomorphism.
\end{proof}
In particular, we see that the isomorphism type of the homotopy bicomplex does again not depend on the basepoint. Furthermore, as the left-invariant forms are the free algebra on the dual Lie-algebra, one obtains:
\begin{cor}
	For a nilmanifold $X$ with nilpotent complex structure associated to a Lie algebra $\fg$ with complex structure $J$, there is an isomorphism $\pi(X)\cong \fg_\C^\vee$ in $\Ho(\R\bico)$, where we consider the right hand side as a bicomplex concentrated in total degree $1$ with the splitting $\fg_\C^{\vee}=\fg_\C^{1,0}\oplus\fg_\C^{0,1}$ induced by $J$ and trivial differentials.
\end{cor}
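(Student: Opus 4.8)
The plan is to read the statement off from the previous theorem together with the elementary computation of indecomposables of a free cbba. By the previous theorem, the inclusion $M := A_X^{l.i.} = \Lambda(\fg_\C^\vee) \hookrightarrow A_X$ is a weak equivalence from a minimal, and hence (by \Cref{cor: nilpotent implies cofibrant}) cofibrant, object. Since $\pi$ is a functor on homotopy categories, this yields $\pi(X) = \pi(A_X) \cong \pi(M)$ in $\Ho(\R\bico)$. Moreover, for a cofibrant cbba the canonical map from the cofibrant replacement is a homotopy equivalence, which $Q$ preserves by \Cref{lem: indecomposables}; hence $\pi(M) \cong QM$, where $QM = M^+/M^+M^+$ carries the induced differentials $\del,\delbar$ and the induced real structure.

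First I would identify $QM$ as a real bigraded vector space. As $M = \Lambda(\fg_\C^\vee)$ is free on $\fg_\C^\vee$, the projection $\fg_\C^\vee \to QM$ is an isomorphism of bigraded vector spaces. The generators $\omega_i$ have bidegree $(1,0)$ and their conjugates $\bar\omega_i$ bidegree $(0,1)$, so $QM = \fg_\C^{1,0} \oplus \fg_\C^{0,1}$ is concentrated in total degree $1$, with the splitting induced by $J$; complex conjugation exchanges $\omega_i \leftrightarrow \bar\omega_i$ and thus realizes the real structure interchanging the two summands.

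It remains to check that the induced differentials vanish, and this is the only place the nilpotency of the complex structure enters. The defining relation writes $d\omega_i$ as a $\C$-linear combination of wedge products $\omega_j\wedge\omega_k$ and $\omega_j\wedge\bar\omega_k$ of pairs of generators, so $d\omega_i \in M^+M^+$; by conjugation the same holds for $d\bar\omega_i$. Therefore the full differential $d$, and a fortiori its bidegree components $\del$ and $\delbar$, induce the zero map on $QM$. Consequently $\pi(X) \cong (\fg_\C^\vee, 0, 0)$ as a bicomplex concentrated in total degree $1$, compatibly with the real structure, as claimed.

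The argument is essentially formal once the previous theorem is in hand; the single substantive input is the observation that the nilpotency condition makes each generator's differential decomposable, so that the linear part of $d$---equivalently, the differential induced on $\pi(X)$---vanishes.
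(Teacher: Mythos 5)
Your argument is correct and is exactly the reasoning the paper leaves implicit: since the preceding theorem exhibits $A_X^{l.i.}=\Lambda(\fg_\C^\vee)$ as a connected minimal (hence cofibrant) model, $\pi(X)$ is computed by its indecomposables $\fg_\C^\vee$, on which the differentials vanish because $d$ of each generator is decomposable. The only quibble is your closing remark: within the corollary the vanishing of the induced differential is automatic for any Chevalley--Eilenberg algebra (degree-one generators always have $d$ landing in $\Lambda^2$); the nilpotency of $J$ is really consumed in the preceding theorem, where it makes $\Lambda(\fg_\C^\vee)$ nilpotent as a cbba and the inclusion a bigraded quasi-isomorphism.
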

This means, as one would expect, that nilmanifolds with nilpotent complex structure are `holomorphically aspherical'. As the following example shows, the situation is different if we drop the assumption on nilpotency of the complex structure:

\begin{ex}\label{ex: non-nilpotent l.i.}(Homotopy bicomplex of a nilmanifold with non-nilpotent complex structure)
	The first real dimension in which nilmanifolds with non-nilpotent left-invariant structures appear is $6$. An example is determined by the following structure equations (for either choice of sign), c.f. \cite{UV14}:
	\[
	d\omega^1=0,\quad d\omega^2=\omega^{13}+\omega^{1\bar{3}},\quad d\omega^3=\pm i(\omega^{1\bar 2}-\omega^{2\bar 1}).
	\]
	Let us denote by $X$ any compact nilmanifold associated with the corresponding Lie-algebra. By \cite[§.4.2.]{Rol09}, the inclusion \[A_X^{l.i.}=\Lambda(\omega^1,\omega^2,\omega^3,\bar\omega^1,\bar\omega^2,\bar\omega^3)\subseteq A_X\] is a bigraded weak equivalence. Note that since $A_X^{l.i.}$ is connected the isomorphism type of $\pi^{\Cdot,\Cdot}(X,x)$ does not depend on the base point. However, while $A_X^{l.i.}$ is free as a cbba, it is not nilpotent. A minimal model $(M_X,\varphi_X)$ is given as  follows: As a bigraded algebra with conjugation we have
	\[
	M_X:=\Lambda( x,\bar x,y,\bar y,z,w,\bar w,p,\del p,\delbar p),
	\]
	where the conjugation is determined by taking $p,z$ to be real and otherwise as indicated by the superscripts. The degrees of the generators are determined by
	\[
	|p|=(0,0), \quad |x|=|y|=|w|=|\del p|=(1,0),\quad |z|=(1,1).
	\]
	A differential of type $(1,0)+(0,1)$ on $M_X$ is determined by 
	\[
	dp=\del p+\delbar p,\quad dx=0,\quad dy=iz,\quad  dw=xy+x\bar y,\quad  d\del p=i(\pm (x\bar{w}-w\bar x)-z).
	\]
	The map $\varphi:M_X\to A_X$ is determined by
	\[
	\varphi_X(x)=\omega^1,\quad\varphi_X(y)=\omega^3,\quad\varphi_X(w)=\omega^3, \quad\varphi_X(z)=\pm(\omega^{1\bar{2}}-\omega^{2\bar{1}}),\quad \varphi_X(p)=0.
	\]
	Note that by definition $\varphi$ factors as
	\[\begin{tikzcd}
		M_X\ar[r,"\varphi_X"]\ar[d,"\pr",swap]&A_X^{l.i.}\\
		M_X',\ar[ru,"\varphi_X'",swap]
	\end{tikzcd}\]
where $M_X':=\Lambda(x,\bar{x},y,\bar{y},z,w,\bar{w},z)/(\pm i(x\bar{w}-w\bar{x})-iz)$. Since $\varphi'_X$ is  an isomorphism and $\pr$ is a pluripotential quasi-isomorphism by \Cref{lem: quot vs hom-quot for indec}, the map $\varphi_X$ is indeed a pluripotential quasi-isomorphism.

From $M_X$ we may read off the isomorphism type of the homotopy bicomplex, which consists of two pairs of conjugate dots in degree $(1,0)$ and $(0,1)$ one $L$ with corner in degree $(0,0)$ and one reverse $L$ with corner in degree $(1,1)$. In particular, one has $\pi_{BC}^{1,1}(X)=\pi_A^{0,0}(X)=\C$.
\end{ex}

\subsubsection{Hopf manifolds}

Denote by $X_n:=(\C^n\setminus\{0\})/\Z$, where $\Z$ acts via scaling with a real constant $\lambda$ with $|\lambda|\neq 0,1$. One has diffeomorphisms $X_n\cong S^{2n-1}\times S^1$.

Following Greg Kuperberg \cite{Kup10}, we consider the real sub-cbba of $A\subseteq A_{X_n}$ generated by the following $1$ and $2$-forms:
\[
\alpha=\frac{\bar z\cdot d z}{z\cdot\bar{z}},\qquad \bar{\alpha}=\frac{z\cdot d \bar z}{z\cdot\bar{z}}\qquad \omega=i\cdot \frac{dz\cdot d\bar{z}}{z\cdot\bar{z}},
\]
where ${z\cdot\bar{z}}=\sum_{i=1}^nz_i\bar{z_i}$ etc. We have
\[
\del\alpha=0\qquad \delbar\alpha=\alpha\bar\alpha+i\omega\qquad \del\omega=-\alpha\omega.
\]
\begin{lem}\label{lem: fin dim model Hopf}
	The inclusion  $A\subseteq A_{X_n}$ is a pluripotential quasi-isomorphism. 
\end{lem}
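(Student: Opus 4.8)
The plan is to apply the characterisation of bigraded quasi-isomorphisms in \Cref{prop: characterization quisos}. Since $A$ is finite dimensional and $A_{X_n}$ is a first-quadrant, hence bounded (so locally bounded) bicomplex, it suffices to show that the inclusion induces isomorphisms on $H_{\del}$ and $H_{\delbar}$. Both $A$ and $A_{X_n}$ lie in $\R\cbba$ and the inclusion respects the real structures, and complex conjugation interchanges $\del$ and $\delbar$; therefore $H_{\del}$ of the inclusion is identified with the conjugate of $H_{\delbar}$ of the inclusion, and it is enough to prove that $H_{\delbar}(A)\to H_{\delbar}(X_n)$ is an isomorphism. Conjugating the given structure equations (and using that $\omega$ is real) also records the companion relations $\delbar\bar\alpha=0$ and $\delbar\omega=-\bar\alpha\omega$, which is all one needs to differentiate inside $A$.

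First I would compute $H_{\delbar}(A)$ by hand. Using $\alpha^2=\bar\alpha^2=0$ and $\omega^{n+1}=0$ (for degree reasons, as $X_n$ has complex dimension $n$), the algebra $A$ is spanned by the monomials $\omega^k,\ \alpha\omega^k,\ \bar\alpha\omega^k,\ \alpha\bar\alpha\omega^k$ in the admissible bidegrees. Applying $\delbar$ via Leibniz to $\delbar\alpha=\alpha\bar\alpha+i\omega$, $\delbar\bar\alpha=0$, $\delbar\omega=-\bar\alpha\omega$ turns $A$ into an explicit finite bicomplex whose column cohomology is a direct calculation. The expected surviving classes are $1$ in bidegree $(0,0)$, $[\bar\alpha]$ in bidegree $(0,1)$, a class $[\alpha\omega^{n-1}]$ in bidegree $(n,n-1)$, and a top class in bidegree $(n,n)$.

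The decisive input is an explicit relation among the generating forms. A short local computation on $\C^n\setminus\{0\}$ gives $\omega^n=n\,i\,\alpha\bar\alpha\,\omega^{n-1}$, from which $\delbar(\alpha\omega^{n-1})=n\,\alpha\bar\alpha\omega^{n-1}+i\,\omega^n=0$, so that $\alpha\omega^{n-1}$ is indeed $\delbar$-closed and not $\delbar$-exact; this is what produces the $(n,n-1)$ class and simultaneously shows that $\omega^n$ and $\alpha\bar\alpha\omega^{n-1}$ are proportional rather than independent. Having identified $H_{\delbar}(A)$ as four-dimensional in bidegrees $(0,0),(0,1),(n,n-1),(n,n)$, I would then match it against the classical Dolbeault cohomology of the Hopf manifold: the invariant forms $\alpha,\bar\alpha,\omega$ of \cite{Kup10} already exhaust $H_{\delbar}(X_n)$, which for $n\geq 2$ is one-dimensional in exactly these four bidegrees. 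The classes $1$ and $[\bar\alpha]$ are visibly nonzero in $H_{\delbar}(X_n)$, and the two top classes pair nontrivially with them under the duality of \Cref{lem: Dolbeault Kuenneth and duality}, giving injectivity; surjectivity then follows by the dimension count. By conjugation the analogous statement for $H_{\del}$ holds, and \Cref{prop: characterization quisos} concludes.

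The main obstacle is the bookkeeping in the explicit computation of $H_{\delbar}(A)$: one must pin down exactly which monomials are nonzero and linearly independent as forms on $X_n$ (in particular verifying the relation $\omega^n=n\,i\,\alpha\bar\alpha\omega^{n-1}$), and check that the computed $\delbar$-differentials leave no spurious cohomology in intermediate bidegrees. The low-dimensional case $n=1$, where $X_1$ is an elliptic curve and $\omega=i\,\alpha\bar\alpha$ already in degree one, should be treated separately, and one should confirm that the external identification of $H_{\delbar}(X_n)$ (rather than a purely internal argument) is what supplies surjectivity.
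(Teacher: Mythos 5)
Your proposal is correct, but it takes a genuinely different route from the paper. The paper's proof never computes $H_{\delbar}(A)$ at this stage: it observes (following Kuperberg) that $A$ is precisely the algebra of $U(n)\times S^1$-invariant forms, hence by averaging a direct summand of $A_{X_n}$ computing de Rham cohomology; it then invokes the classical computation of the Hodge numbers of $X_n$ (Ise, Borel), which forces degeneration of the Fr\"olicher spectral sequence at $E_1$, and concludes that the inclusion is an isomorphism on $H_{\del}$ and $H_{\delbar}$, finishing with the bounded case of \Cref{prop: characterization quisos} exactly as you do. Your route instead computes the column cohomology of the finite-dimensional algebra $A$ directly from the structure equations and matches it against the known Dolbeault cohomology of the Hopf manifold, with Serre duality giving injectivity (here one should note that $\int_{X_n}\omega^n\neq 0$ because $\omega^n$ is a positive multiple of the volume form) and a dimension count giving surjectivity. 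This costs more bookkeeping but is more self-contained on the $A$-side -- you only import $H_{\delbar}(X_n)$, not the averaging argument -- and the explicit decomposition of $A$ into zigzags and squares is needed by the paper anyway a few lines later to list $H_{BC}$ and $H_A$. Both proofs ultimately rest on the same external input, namely the Ise--Borel Hodge numbers. One incidental point in your favour: the relation you derive, $\omega^n=n\,i\,\alpha\bar\alpha\,\omega^{n-1}$, with the factor $n$, is the correct one (it is exactly what makes $\delbar(\alpha\omega^{n-1})=0$, as required for $[\alpha\omega^{n-1}]_{BC}$ to be defined); the version displayed in the paper without the factor $n$ appears to be a typo, valid only for $n=1$.
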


\begin{proof}
	As noted by Kuperberg, the bicomplex $A$ consists of the invariant forms for the natural action of the compact group $U(n)\times S^1$. As such, it is a direct summand in $A_{X_n}$ by averaging, which computes the de Rham cohomology. On the other hand, the Hodge numbers of $X_n$ are known \cite{Ise60}, \cite{Bor78} and as a consequence, the Fr\"olicher spectral sequence degenerates at the first page and so the inclusion induces an isomorphism in $H_{\delbar}$ and $H_{\del}$; since $A,A_{X_n}$ are bounded this is a pluripotential quasi-isomorphism by \Cref{prop: characterization quisos}.
\end{proof}

Since $A$ is finite dimensional, we may readily compute its cohomology (c.f. \cite{Ste18}, \cite{Kup10}). Namely, we have:

\[
H_{BC}(X_n)=\begin{cases}
	\C &k=0\\
	\C[\omega-i\alpha\bar\alpha]_{BC}&k=2\\
	\C[\alpha\omega^{n-1}]_{BC}\oplus \C[\bar\alpha\omega^{n-1}]_{BC}&k=2n-1\\
	\C[\omega^{n}]_{BC}&k=2n\\
	0&\text{else}
\end{cases}
\]
and
\[
H_A(X_n)=\begin{cases}
	\C&k=0\\
	\C[\alpha]_A\oplus\C[\bar\alpha]_A&k=1\\
	\C[\alpha\bar\alpha\omega^{n-2}]_A &k=2n-2\\
	\C[\omega^n]_{A} &k=2n\\
	0&\text{else.}
\end{cases}
\]
Note that the algebra $A$ is not free. In fact, one has the relation 
\[
\omega^n=i\alpha\bar\alpha\omega^{n-1}.
\]

\begin{prop}\label{thm: Model Hopf}
	A real bigraded minimal model for $X_n$ is given by $(M,\varphi)$, where
	\[
	M=\Lambda( x,\bar{x},y,z,\del z,\delbar z~|~\delbar x = y, \del\delbar z=iy^n),
	\]
	with
	\[
	|x|=(1,0),~|y|=(1,1),~|z|=(n-1,n-1)
	\]
	and \[\varphi(x)=\alpha,~\varphi(y)=\omega-i\alpha\bar\alpha,~\varphi(z)=0.\]
\end{prop}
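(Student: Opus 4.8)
The plan is to verify in turn that $M$ is a minimal real cbba, that $\varphi$ is a conjugation-compatible map of cbbas, and that it is a bigraded quasi-isomorphism. By \Cref{lem: fin dim model Hopf} the inclusion $A\subseteq A_{X_n}$ is already a bigraded quasi-isomorphism, so it suffices to regard $\varphi$ as a map $M\to A$ into the finite-dimensional algebra of $U(n)\times S^1$-invariant forms. Minimality and nilpotency are immediate: ordering the generators as $y,\del z,\delbar z,x,\bar x,z$ one sees that each differential lands in the subalgebra generated by the earlier ones (here $d(\del z)=-iy^n$, $d(\delbar z)=iy^n$, $dz=\del z+\delbar z$, $dx=d\bar x=y$), so $M$ is nilpotent; since $\del\delbar z=iy^n$ is decomposable and every other generator is $\del\delbar$-closed, $\im\del\delbar\subseteq M^+M^+$ and $M$ is bigradedly minimal. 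The assignment is conjugation-equivariant, as $\sigma$ interchanges $x,\bar x$ and $\del z,\delbar z$ while fixing the real elements $y,z$ and $\omega-i\alpha\bar\alpha$, and one checks $\sigma(\del\delbar z)=-\del\delbar z$ matches $\sigma(iy^n)=-iy^n$.

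The only nonformal point in checking that $\varphi$ is a chain map is the identity $\varphi(\del\delbar z)=\del\delbar\varphi(z)$, i.e. $i(\omega-i\alpha\bar\alpha)^n=0$. The crucial computation is that $(\omega-i\alpha\bar\alpha)^n=0$ as a form: writing $\omega-i\alpha\bar\alpha=\tfrac{i}{|z|^2}\sum_{j,k}H_{jk}\,dz_j\wedge d\bar z_k$ with $H_{jk}=\delta_{jk}-\bar z_jz_k/|z|^2$, the Hermitian matrix $H$ is the orthogonal projection onto $z^{\perp}$ (it annihilates $\bar z$), hence has rank $n-1$ and $\det H=0$; since the $n$-th exterior power of $i\sum_{j,k}H_{jk}\,dz_j\wedge d\bar z_k$ is proportional to $\det H$ times the volume form, it vanishes. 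Equivalently, this is the identity $\omega^n=n\,i\,\alpha\bar\alpha\,\omega^{n-1}$. The remaining verifications ($\del\alpha=0$, $\delbar\alpha=\omega-i\alpha\bar\alpha$, and the $\del$- and $\delbar$-closedness of $\omega-i\alpha\bar\alpha$) are direct from the structure equations.

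For the quasi-isomorphism I would first note that $\varphi$ is surjective, since $\alpha,\bar\alpha$ and $\omega=\varphi(y+ix\bar x)$ generate $A$, and then analyze the short exact sequence $0\to\ker\varphi\to M\to A\to 0$. By \Cref{cor: les Schweitzer} (or the elementary Schweitzer long exact sequence) together with \Cref{prop: characterization quisos}, it is enough to prove that $\ker\varphi$ is acyclic, i.e. a direct sum of squares; this I would establish by the same kind of explicit bookkeeping as in the proof of \Cref{lem: quot vs hom-quot for indec}, splitting $M$ as a bigraded algebra along $z,\del z,\delbar z,y^n$ and exhibiting $\ker\varphi$ as a sum of contractible four-term pieces. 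Alternatively one may decompose $M$ directly into indecomposables via \Cref{thm:decomposition}: the subalgebra $\Lambda(x,\bar x,y)$ splits as a dot, the reverse-$L$ on $\{x,\bar x,y\}$ (contributing $H_{BC}$ in degree $2$ and $H_A$ in degree $1$), and an infinite family of squares $\{x\bar xy^k,\,xy^{k+1},\,\bar xy^{k+1},\,y^{k+2}\}_{k\ge 0}$, after which one tracks how $z,\del z,\delbar z$ and their products produce precisely the degree $2n-1$ and degree $2n$ classes, matching the cohomology of $X_n$ displayed above. Since condition (4) of \Cref{prop: characterization quisos} involves only $H_{BC}$ and $H_A$, no boundedness of the infinite-dimensional $M$ is needed.

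The main obstacle is this last step. While the vanishing $(\omega-i\alpha\bar\alpha)^n=0$ is the conceptual heart that makes $\varphi$ a chain map, the genuine work lies in the cohomology bookkeeping for the unbounded algebra $M$: one must verify that the top classes $[\alpha\omega^{n-1}],[\bar\alpha\omega^{n-1}],[\omega^n]$ in Bott--Chern and $[\alpha\bar\alpha\omega^{n-2}],[\omega^n]$ in Aeppli are all hit, and in particular that the vanishing of $[y]^{\,n}$ in $H_{BC}(M)$ forces the degree-$2n$ classes to appear as products involving $z,\del z,\delbar z$ rather than as powers of $[y]$.
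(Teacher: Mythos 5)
Your proposal is correct and follows essentially the same route as the paper's (very terse) proof: check minimality and reality, reduce to the finite-dimensional algebra $A$ of invariant forms via \Cref{lem: fin dim model Hopf}, and then match the indecomposable decomposition of $M$ against the displayed formulas for $H_{BC}(X_n)$ and $H_A(X_n)$. Your one substantive addition is valuable: the identity $(\omega-i\alpha\bar\alpha)^n=0$, which is what makes $\varphi$ a chain map in the first place, does hold by your rank-$(n-1)$ projection argument, and it is equivalent to the relation $\omega^n=n\,i\,\alpha\bar\alpha\,\omega^{n-1}$ in $A$ --- note the factor $n$, which is absent from the relation as displayed in the text but is needed for $\varphi$ to be well defined when $n\geq 2$.
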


\begin{proof}
	$M$ is clearly minimal and real, so by it suffices to check that $\varphi:M\to A$ is a quasi-isomorphism, which is a straightforward calculation given the explicit formulas for the cohomology above.
\end{proof}

\begin{ex}[Dolbeault models for the Hopf surface]\label{ex: bba-model Hopf}
We use the example of the Hopf surfaces $X_2$ to compare bigraded models constructed here to those used in \cite{NT78}. Recall that \cite{NT78} considers cbba models (not necessarily with real structure) which are quasi-isomorphisms only with respect to $H_{\delbar}$. As such, the models constructed in this article fit into that framework, but others are possible. For instance, consider the following cbba which is free on two one-sided infinite zigzags: 
\[
M_{NT}:=\Lambda(x^{0,1},y^{1,1},x^{1,0}, y^{2,0}, x^{2,-1}, \hdots, x^{2,1},y^{3,1},x^{3,0},y^{4,0},\hdots),
\]
where the generators live in the indicated bidegree and, writing $y^{0,2}=y^{2,2}=0$, the differential is defined by
\[
d x^{p,q}=y^{p,q+1}+y^{p+1,q}.
\]
A map of cbba's $\varphi_{NT}:M_{NT}\to A_{X_2}$ with $H_{\delbar}(\varphi_{NT})$ an isomorphism is given by sending all generators to zero except
\[\varphi_{NT}(x^{0,1})=\bar\alpha,~\varphi_{NT}(y^{1,1})=\bar\alpha\alpha-i\omega,~\varphi_{NT}(x^{1,0})=-\alpha,~\varphi_{NT}(x^{2,1})=\alpha\omega.
\]
As noted in \Cref{rem: NT Morgan}, the spectral sequence computed from this model is the same as that of constructed here (and in fact as for any filtered model). However, $(M_{NT},\varphi_{NT})$ is not a de Rham model, since $H_{dR}(M_{NT})=\C\neq H_{dR}(X_2)$.
\end{ex}

\subsection{Manifolds without connected model} We have seen already in \Cref{ex: non-nilpotent l.i.} that in general a manifold does not have to admit a connected model, even if it is cohomologically connected (e.g. compact). This turns out to be not an uncommon phenomenon as the following two examples illustrate:
\begin{ex}[Riemann surfaces of higher genus]
Let $X=\Sigma_g$ be a Riemann surface of genus $g\geq 2$ with a fixed point $x\in X$. To build a model for $(X,x)$, one first adds a formal copy of the space of holomorphic and antiholomorphic forms as generators, $M':=\Lambda(\Omega^1(X)\oplus \bar{\Omega}^1(X))$. Then the tautological map $\varphi':M'\to A_X$ induces an isomorphism in Bott-Chern and Aeppli cohomology in degree $1$ and a surjection in degree $2$. To achieve injectivity in $H_{BC}^2$, one has to remove
\[
K:=\ker\left(H^2_{BC}(M')=\Lambda^2(\Omega^1(X)\oplus\bar\Omega^1(X))\longrightarrow H_{BC}^2(X)=A^{2}_X/\del\delbar(A_X^0)\right).
\]
To kill this kernel, one is forced to add a new spaces of generators $F_K\cong\del F_K\cong\delbar F_K$ s.t. $\del\delbar F_K=K$. This means the model will not be connected and a similar argument shows that no nilpotent bigraded model which is bigradedly quasi-isomorphic can exist. This example leads to very basic analytic questions the answer to which the author currently does not know: Is $\varphi'|_K$ necessarily injective? Is $X$ strongly formal?\footnote{In \cite{PSZ24} we (later) proved that a Riemann surface of genus $g\geq 2$ is never strongly formal.}
Also note that the space of primitives for $\varphi'(K)$ is a canonically defined finite dimensional space of functions on $X$ since the kernel of $\del\delbar$ in degree $(0,0)$ consists only of the constants and we fixed a base-point.
\end{ex}

\begin{ex}[CdF]
	The same argument as above shows that any connected compact complex manifold $X$ for which \[\Lambda^2(H_{BC}^1(X))\to H_{BC}^2(X)\]fails to be injective will not have a bigradedly quasi-isomorphic connected model. Note that there are two linearly independent forms $\omega_1,\omega_2\in H_{BC}^{1,0}(X)$ with $\omega_1\wedge\omega_2=0\in H_{BC}^{2,0}(X)$ if and only if $X$ fibres over a curve of genus at least $2$. (This is a formulation of the classical Castelnuovo de Franchis theorem without the K\"ahler hypothesis. The proof is the same as in the K\"ahler setting as it uses only that $\omega_1,\omega_2$ are closed holomorphic.).
\end{ex}

The examples in this and the preceeding sections beg the question:

\begin{que}
	Under what conditions does a compact complex manifold (resp. cohomologically connected real cbba) admit a connected model in $\R\cbba$? Under what conditions is there a model with finitely many generators in each degree?
\end{que}

We note that the condition of injectivity of $\Lambda^2H_{BC}^1(X)\to H^2_{BC}(X)$ is not sufficient to ensure existence of a connected model by \Cref{ex: non-nilpotent l.i.}.

\subsection{A question on realizability}
One can look at the theory in this article from the point of view of a complex geometer. Then it gives us new tools with a topological flavour to structure and study the realm of complex manifolds. Or one can take a topologist's perspective and ask what, if anything, this newly found extra structure tells us about the underlying homotopy type. The most basic instance of this is the following
\begin{que}[Sullivan] 
Which rational cdga's are quasi-isomorphic (over $\R$) to a real cbba?
\end{que}
To avoid technicalities, let us assume we are dealing with cohomologically simply connected rational cdga's $A$ s.t. $H(A;\Q)$ is a finite dimensional vector space. Then one finds a finite CW-complex $C_A$ s.t. the piecewise-linear forms of $C_A$ are quasi-isomorphic to $A$, \cite{Su77}, \cite{GM13}. One may embed $C_A$ into some large $\R^{2N}$ and thicken it up a little so that it is exhibited as a deformation retract $N(C_A)\simeq C_A$ of a smooth open submanifold $N(C_A)\subseteq \R^{2N}$. Restricting the standard complex structure of $\R^{2N}$ to $N(C_A)$ equips it with a complex structure and so there is a chain of quasi-isomorphisms of real cdga's between $A\otimes\R$ and the cdga of differential forms on $N(C_A)$, where the complexification of the latter naturally has the structure of a real cbba. Technically, this gives a positive answer to the above question, but one which is unsatisfying for a number of reasons: The manifolds $N(C_A)$ thus constructed are non-compact and will not satisfy Serre duality, even if $A$ did. They are of higher dimension than the cohomology of $A$ suggests and their cohomology (Dolbeault or Bott-Chern) will generally be infinite-dimensional.\footnote{ Using results of Eliashberg \cite{Eli90}, one can choose a complex structure on $N(C_A)$ which is Stein and so $H_{BC}^{p,q}(N(C_A))$ is finite dimensional whenever $p,q>0$ (but generally not for $p\cdot q=0$).} 
The following version thus remains open:
\begin{que}
	Which connected rational cdga's satisfying $2n$-dimensional Poincar\'e duality that have the cdga of forms on a compact almost complex manifold in their homotopy type, are quasi-isomorphic over $\R$ to cohomologically connected real cbba's with finite-dimensional cohomology concentrated in the first quadrant satisfying Serre duality?
\end{que}
Note that the realizability by compact almost complex manifolds is well understood, and depends only on the cohomology ring c.f. \cite{Mil22}, so that the above is -- in principle -- an algebraic question.

\vspace{1ex}

\noindent Jonas Stelzig, Mathematisches Institut der LMU M\"unchen, Theresienstra{\ss}e 39, 80333 M\"unchen.
jonas.stelzig@math.lmu.de
\end{document}